\def\bu{\bullet}
\def\marker{\>\hbox{${\vcenter{\vbox{
    \hrule height 0.4pt\hbox{\vrule width 0.4pt height 6pt
    \kern6pt\vrule width 0.4pt}\hrule height 0.4pt}}}$}\>}
\def\gpic#1{#1
     \smallskip\par\noindent{\centerline{\box\graph}} \medskip}
\newtheorem{thm}{Theorem}[section]
\newtheorem{conj}[thm]{Conjecture}
\newtheorem{lem}[thm]{Lemma}
\newtheorem{theorem}[thm]{Theorem}
\newtheorem{conjecture}[thm]{Conjecture}
\newtheorem{proposition}[thm]{Proposition}
\newtheorem{lemma}[thm]{Lemma}
\theoremstyle{definition}
\newtheorem{definition}[thm]{Definition}
\newtheorem{example}[thm]{Example}
\newtheorem{remark}[thm]{Remark}
\newtheorem{question}[thm]{Question}
\newtheorem{exercise}{Exercise}[section]
\def\FL#1{\left\lfloor#1\right\rfloor}
\def\ceil#1{\left\lceil#1\right\rceil}
\def\CL#1{\left\lceil#1\right\rceil}
\def\C#1{\left|#1\right|}
\def\VEC#1#2#3{#1_{#2},\ldots,#1_{#3}}
\def\FR{\frac}
\def\mad{{\rm{mad}}}
\def\Mad{{\rm{mad}}}
\def\avd{{\overline{d}}}
\def\chil{\chi_{\ell}}
\def\larb{{\rm la}}
\def\st{\colon\,}
\def\NN{{\mathbb N}}
\def\chia{a}
\def\chial{a_\ell}
\def\chis{s}
\def\chic{\chi_c}
\def\chil{\chi_\ell}
\def\la{\langle}
\def\ra{\rangle}
\def\esub{\subseteq}
\def\nul{\varnothing}
\def\al{\alpha}
\def\SM#1#2{\sum_{#1\in#2}}
\def\cdgh{\CL{\FR12\Delta(G)}}
\def\chic{\chi_c}
\begin{document}

\title{An Introduction to the Discharging Method\\ via Graph Coloring}
\author{
Daniel W. Cranston\thanks{Virginia Commonwealth University,
dcranston@vcu.edu}\,\and
Douglas B. West\thanks{Zhejiang Normal University and University of Illinois,
west@math.uiuc.edu.}
}

\date{\today}

\maketitle

\vspace{-1.5pc}

\begin{abstract}
We provide a ``how-to'' guide to the use and application of the Discharging
Method.  Our aim is not to exhaustively survey results proved by this
technique, but rather to demystify the technique and facilitate its wider use,
using applications in graph coloring as examples.  Along the way, we present
some new proofs and new problems.
\end{abstract}

\baselineskip 16pt
\section{Introduction}
Arguments that can be phrased in the language of the Discharging Method have
been used in graph theory for more than 100 years, though that name is much
more recent.  The most famous application of the method is the proof of the
Four Color Theorem, stating that graphs embeddable in the plane have chromatic
number at most $4$.  However, the method remains mysterious to many.  Our aim
is to explain its use and make the method more widely accessible.  Although we
mention many applications, including stronger versions of results proved here,
cataloguing applications is not our goal.  Borodin~\cite{Bsurv} presents a
survey of applications of discharging to coloring of plane graphs.

Discharging is most commonly used as a tool in a two-pronged approach to
inductive proofs, typically for sparse graphs.  In this context, it is used to
prove that a global sparseness hypothesis guarantees the existence of some
desired local structure.  The method has been applied to many types of problems
(including graph embeddings and decompositions, spread of infections in
networks, geometric problems, etc.).  Nevertheless, we present only 
applications in graph coloring (where it has been used most often), in order
to emphasize the discharging techniques.

In the simplest version of discharging involves just reallocation of vertex
degrees in the context of a global bound on the average degree.  We view each
vertex as having an initial ``charge'' equal to its degree.  To show that
average degree less than $b$ forces the presence of a desired local structure,
we show that the absence of such a structure allows charge to be moved (via
``discharging rules'') so that the final charge at each vertex is at least $b$.
This violates the hypothesis, and hence the desired structure must occur.

In an application of the resulting structure theorem, one shows that each such
local structure is ``reducible'', meaning that it cannot occur in a minimal
counterexample to the desired conclusion.  This motivates the phrase ``an
unavoidable set of reducible configurations'' to describe the overall process.

\begin{definition}
A {\it configuration} in a graph $G$ can be any structure in $G$ (often a
specified sort of subgraph).  A configuration is {\it reducible} for a graph
property $Q$ if it cannot occur in a minimal graph not having property $Q$.
Let $d_G(v)$ or simply $d(v)$ denote the degree (number of neighbors) of vertex
$v$ in $G$, and let $\avd(G)$ denote the average of the vertex degrees in $G$.
{\it Degree charging} is the assignment to each vertex $v$ of an ``initial
charge'' equal to $d(v)$.
\end{definition}

The notion of configuration is vague to permit use in various contexts.
``Minimal'' refers to some partial order on the graphs being considered;
usually it is just minimality with respect to taking subgraphs, and the
property $Q$ is {\it monotone} (preserved by taking subgraphs).

Sparse local configurations aid in inductive proofs about coloring.  For
example, when $\avd(G)<k$ with $k\in\NN$, the pigeonhole principle guarantees
a vertex with degree less than $k$ in $G$.  Also, when $d(v)<k$, a proper
$k$-coloring of $G-v$ extends to a proper $k$-coloring of $G$.  (A
{\it $k$-coloring} is a function that assigns labels to vertices from a set of
size $k$, a coloring of a graph $G$ is {\it proper} if adjacent vertices have
distinct colors, $G$ is {\it $k$-colorable} if it admits a proper $k$-coloring,
and the {\it chromatic number} $\chi(G)$ is the least $k$ such that $G$ is
$k$-colorable.)

In other words, vertices of degree less than $k$ are reducible for the property
$\chi(G)\le k$.  However, guaranteeing such a vertex from the global bound 
$\avd(G)<k$ does not need discharging.  To illustrate how discharging works and
interacts with reducibility, we consider another elementary example after
introducing notation convenient for discussing vertex degrees.

\begin{definition}
A {\it $j$-vertex}, {\it $j^+$-vertex}, or {\it $j^-$-vertex} is a vertex with
degree equal to $j$, at least $j$, or at most $j$, respectively.  A
{\it $j$-neighbor} of $v$ is a $j$-vertex that is a neighbor of $v$.
We write $\delta(G)$ for the minimum and $\Delta(G)$ for the maximum 
of the vertex degrees in $G$.
\end{definition}

\begin{lemma}\label{avd3-25}
If $\avd(G)<3$, then $G$ has a $1^-$-vertex or a $2$-vertex with a
$5^-$-neighbor.
\end{lemma}
\begin{proof}
We use degree charging; each vertex $v$ starts with charge $d(v)$.  Suppose
that $G$ has no $1^-$-vertex and that no $2$-vertex in $G$ has a
$5^-$-neighbor.  We move charge so that each vertex ends with charge at least
$3$.  The $2$-vertices need charge; $4^+$-vertices can give charge.

Let each $2$-vertex take $\FR12$ from each neighbor.  Now each $2$-vertex has
charge $3$, since no two $2$-vertices are adjacent.  Vertices of degrees
$3,4,5$ lose no charge, since we assumed that no $2$-vertex has a
$5^-$-neighbor.  Every $6^+$-vertex $v$ loses charge at most $\FR12$ to
each neighbor, leaving it with charge at least $d(v)/2$, which is at least $3$
when $d(v)\ge6$.  Thus $\avd(G)\ge3$ when no $2$-vertex has a $5^-$-neighbor.
\end{proof}

A $2$-vertex with a $5^-$-neighbor is a local sparseness condition, somehow
more sparse than a $2$-vertex with high-degree neighbors.  We first consider
its use for edge-coloring.  (A {\it $k$-edge-coloring} of a graph $G$ assigns
labels to edges from a set of size $k$; it is {\it proper} if incident edges
have distinct colors, $G$ is {\it $k$-edge-colorable} if it has a proper
$k$-edge-coloring, and the {\it edge-chromatic number} $\chi'(G)$ is the
least $k$ such that $G$ is $k$-edge-colorable.)

Here we phrase the reducibility statement in more generality.
The {\it weight} of a subgraph $H$ of a graph $G$ is $\sum_{v\in V(H)} d_G(v)$;
we sum the degrees in the full graph $G$.

\begin{lemma}\label{edgewt}
An edge with weight at most $k+1$ is a reducible configuration for the 
property of being $k$-edge-colorable.
\end{lemma}
\begin{proof}
Let $G$ be a graph having an edge $e$ of weight at most $k+1$.  If the graph
$G-e$ is $k$-edge-colorable, then a color is available to extend the coloring
to $e$, because $e$ is incident to a total of at most $k-1$ other edges at 
its two endpoints.  Thus a minimal graph $G$ with $\chi'(G)>k$ cannot contain
such a configuration.
\end{proof}

To complete an inductive proof of $\chi'(G)\le6$ from Lemmas~\ref{avd3-25}
and \ref{edgewt}, we also need average degree less than $3$ in subgraphs of
$G$.

\begin{definition}
The {\it maximum average degree} of a graph $G$, denoted $\mad(G)$,
is the maximum of the average degree over all subgraphs of $G$.
\end{definition}

The application is now easy.  Note that always $\chi'(G)\ge\Delta(G)$.  In
fact, Vizing's Theorem~\cite{G,V2} states that always $\chi'(G)\le\Delta(G)+1$,
and distinguishing between $\chi'(G)=\Delta(G)$ and
$\chi'(G)=\Delta(G)+1$ is an important and difficult problem.

\begin{theorem}\label{mad3edge}
If $\mad(G)<3$ and $\Delta(G)\ge 6$, then $\chi'(G)=\Delta(G)$.
\end{theorem}
\begin{proof}
Fix an integer $k$ at least $6$.  We prove more generally that if $\mad(G)<3$
and $\Delta(G)\le k$, then $\chi'(G)\le k$.  That is, among graphs with
$\mad(G)<3$ and $\Delta(G)\le k$ there is no minimal graph satisfying $\chi'>k$.
Note that the hypotheses also hold in subgraphs.

We may discard isolated vertices.  By Lemma~\ref{avd3-25}, $G$ then has a
$1$-vertex or has a $2$-vertex with a $5^-$-neighbor.  The edge incident
to a $1$-vertex has weight at most $\Delta(G)+1$; an edge joining a $2$-vertex
to a $5^-$-neighbor has weight at most $7$.  In either case, the weight of this
edge $e$ is at most $k+1$, and Lemma~\ref{edgewt} implies that $G$ is not a
minimal graph satisfying $\chi'(G)>k$.  Hence there is no minimal
counterexample.
\end{proof}

Before leaving Theorem~\ref{mad3edge}, we note that many reducibility arguments
for coloring problems involve deleting some parts of a graph (such as a
$1$-vertex or the edge $e$ in the proof above) and then choosing colors for the
missing pieces as they are replaced.  Suitable choices can be made if there are
enough available colors; it does not matter what the colors are.  In this
situation, the arguments yield stronger statements about coloring from lists.

\begin{definition}\label{Dlist}
A \emph{list assignment} $L$ on a graph $G$ gives each $v\in V(G)$ a set $L(v)$
of colors, called its \emph{list}.  In a \emph{$k$-uniform} list assignment,
each list has size $k$.  Given a list assignment $L$, an \emph{$L$-coloring} of
$G$ is a proper coloring $\phi$ of $G$ such that $\phi(v)\in L(v)$ for all
$v\in V(G)$.  A graph $G$ is \emph{$k$-choosable} if $G$ is $L$-colorable
whenever each list has size at least $k$ (we may assume $L$ is $k$-uniform).
The \emph{list chromatic number} or \emph{choice number} of $G$, written
$\chil(G)$, is the least $k$ such that $G$ is $k$-choosable.  Analogous
language is used for edge-colorings chosen from list assignments to edges.
\end{definition}

Since the lists can be identical, always $\chil(G)\ge \chi(G)$.  Thus
$\chil(G)\le b$ is stronger than $\chi(G)\le b$.  For example, $\mad(G)<k$
inductively yields $\chil(G)\le k$.  Similarly, an edge of weight at most
$k+1$ is reducible for $k$-edge-choosability, and the proof of
Theorem~\ref{mad3edge} yields $\chil(G)=\Delta(G)$ when $\mad(G)<3$ and
$\Delta(G)\ge6$.

\medskip

We present various classical applications, some with new proofs.  We emphasize
discharging arguments but include reducibility arguments to show how
discharging is applied.  For clarity and simplicity in illustrating the method,
we often assume more restrictive hypotheses than used in the strongest known
results.  Often those results are proved similarly, but with more detail in the
discharging arguments and more configurations to be proved reducible.

The basic idea of discharging proofs is simple, and the proofs are usually
easy to follow, though they may have many details.  The mystery arises in the
choice of reducible configurations, the rules for moving charge, and how to
find the best hypothesis.  We will explain the interplay among these and
suggest how the proofs are discovered, starting with the context of $\mad(G)<b$
in Section~\ref{secmad}.  We include related results as exercises to aid
in self-study; most exercises have relatively short solutions (items labeled
``Question'' are unsolved).

As we have illustrated, structural results proved by discharging when
$\avd(G)<b$ are applied inductively to obtain coloring conclusions under the
hypothesis $\mad(G)<b$.  The point is that every subgraph $H$ satisfies
$\mad(H)<b$.  For natural hereditary families like planar graphs, bounds on
$\mad(G)$ are easily obtained.  The families satisfying $\mad(G)<b$ for all
positive $b$ provide a rich spectrum for study.

Discharging has been used to prove many results on coloring or structure of
planar graphs (or planar graphs with large girth).  Euler's Formula implies
that (every subgraph of) a planar graph with girth at least $g$ has average
degree less than $\FR {2g}{g-2}$.  Some results on such graphs in fact hold
whenever $\Mad(G)< \FR{2g}{g-2}$, regardless of planarity, often with the same
proof by discharging.  Others, as discussed in Section~\ref{secplanar}, truly
need planarity and may assign charge to both the faces and the vertices (the
dual graph is also sparse).  This is the basic reason why discharging is so
useful for planar graphs.  Subsequent sections will discuss additional
techniques of discharging, especially with examples from ``list coloring''.

Finally, we note that in addition to its usefulness as a proof technique, the
discharging method also has algorithmic implications, often yielding fast
constructive algorithms for good colorings or embeddings.  Iterative
application of the structure theorem yields reductions to smaller graphs.
After a good coloring of a base graph is found, the intermediate graphs receive
good colorings using the reducibility arguments, until the original graph is
restored and its coloring obtained (see Section~6 of~\cite{CK}).

\section{Structure and coloring of sparse graphs}\label{secmad}

In studying discharging, the principle and the details are simple.  The mystery
is the source of the discharging rule and the hypothesis on $\avd(G)$.  The
secret is that the discharging rule is found before knowing the hypothesis of
the theorem and is used to discover it.  To explain such aspects of
discharging, we study the forcing of local configurations with small weight.

\begin{remark}\label{2bound}
{\it Finding the best bound on $\mad(G)$.}
Consider Lemma~\ref{avd3-25} more generally.  When we want $G$ with $\mad(G)<b$
to have a $1^-$-vertex or have a $2$-vertex with a $j^-$-neighbor, what is the
best choice of $b$?  Actually, we start with the proof and let it produce the
statement.  We must make $b$ at most $3$, since otherwise $G$ may be
$3$-regular with no $2$-vertex.  Given that, when we exclude $1^-$-vertices and
use degree charging, only $2$-vertices will need charge.  The most natural way
for them to obtain it is to take it from their neighbors.

If each $2$-vertex takes $\rho$ from each neighbor, then final charge is at
least $b$ at each vertex if and only if $2$-vertices obtain enough charge and
vertices with degree larger than $j$ do not lose too much.  Such vertices can
lose $\rho$ to each neighbor, so we need $2+2\rho\ge b$ and $d-d\rho\ge b$ when
$d\ge j+1$.  To find the largest $b$ that works, set $2+2\rho=(j+1)(1-\rho)$,
yielding $\rho=\FR{j-1}{j+3}$ and hence $b=2+2\rho=4\FR{j+1}{j+3}$.  When
$j=5$, we obtain Lemma~\ref{avd3-25}.

What we did was find the weakest hypothesis allowing the discharging proof to
work.  The proof also provides sharpness examples showing that the condition
$\mad(G)<b$ cannot be weakened.  If every $2$-vertex has only
$(j+1)$-neighbors, every $(j+1)$-vertex has only $2$-neighbors, and there
are no other vertices, then all the equalities are tight, no $2$-vertex has
a $j^-$-neighbor, all vertices end with charge exactly $b$, and the average
degree is $b$.  Hence we obtain a sharpness example by taking a $(j+1)$-regular
graph and subdividing every edge.

What the discharging argument does is count part of the degree of higher-degree
vertices at their $2$-neighbors.  In this sense discharging is ``amortized
counting''; the counting of the degree of a vertex is allocated to (or
``charged to'') other vertices.
\end{remark}

The discharging argument for a structure theorem guaranteeing local
configurations is quite separate from the reducibility arguments used to give
an inductive proof of the desired conclusion.  Thus the unavoidable set
resulting from a particular sparseness condition may be usable to prove other
results.  In practice, usually the configurations are those already known to be
reducible for the desired property in the application.  Nevertheless,
Lemma~\ref{avd3-25} does apply to another coloring problem.

\begin{definition}\label{Dacyclic}
An {\it acyclic coloring} of a graph is a proper coloring such that the 
union of any two color classes induces an acyclic subgraph; equivalently,
no cycle is $2$-colored.
\end{definition}

\begin{theorem}\label{acyc6}
If $\mad(G)<3$, then $G$ is acyclically $6$-choosable.
\end{theorem}
\begin{proof}
It suffices to show that the configurations forced by Lemma~\ref{avd3-25}
when $\mad(G)<3$ are reducible for the existence of an acyclic coloring chosen
from a $6$-uniform list assignment $L$.  By definition, $\mad(G-v)<3$.  To show
reducibility, we assume an acyclic $L$-coloring $\phi$ of $G-v$ and obtain such
a coloring of $G$.  The cases appear in Figure~\ref{6acyc}.
\begin{figure}[h]
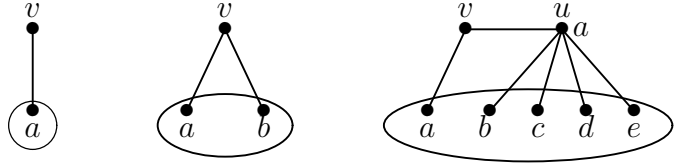

\gpic{
\expandafter\ifx\csname graph\endcsname\relax \csname newbox\endcsname\graph\fi
\expandafter\ifx\csname graphtemp\endcsname\relax \csname newdimen\endcsname\graphtemp\fi
\setbox\graph=\vtop{\vskip 0pt\hbox{%
    \graphtemp=.5ex\advance\graphtemp by 0.101in
    \rlap{\kern 0.126in\lower\graphtemp\hbox to 0pt{\hss $\bu$\hss}}%
    \graphtemp=.5ex\advance\graphtemp by 0.529in
    \rlap{\kern 0.126in\lower\graphtemp\hbox to 0pt{\hss $\bu$\hss}}%
    \graphtemp=.5ex\advance\graphtemp by 0.529in
    \rlap{\kern 0.932in\lower\graphtemp\hbox to 0pt{\hss $\bu$\hss}}%
    \graphtemp=.5ex\advance\graphtemp by 0.101in
    \rlap{\kern 1.133in\lower\graphtemp\hbox to 0pt{\hss $\bu$\hss}}%
    \graphtemp=.5ex\advance\graphtemp by 0.529in
    \rlap{\kern 1.335in\lower\graphtemp\hbox to 0pt{\hss $\bu$\hss}}%
    \special{pn 8}%
    \special{ar 126 604 126 126 0 6.28319}%
    \special{pn 11}%
    \special{ar 1133 604 353 164 0 6.28319}%
    \special{pa 126 101}%
    \special{pa 126 529}%
    \special{fp}%
    \special{pa 932 529}%
    \special{pa 1133 101}%
    \special{fp}%
    \special{pa 1133 101}%
    \special{pa 1335 529}%
    \special{fp}%
    \graphtemp=.5ex\advance\graphtemp by 0.000in
    \rlap{\kern 0.126in\lower\graphtemp\hbox to 0pt{\hss $v$\hss}}%
    \graphtemp=.5ex\advance\graphtemp by 0.629in
    \rlap{\kern 0.126in\lower\graphtemp\hbox to 0pt{\hss $a$\hss}}%
    \graphtemp=.5ex\advance\graphtemp by 0.629in
    \rlap{\kern 0.932in\lower\graphtemp\hbox to 0pt{\hss $a$\hss}}%
    \graphtemp=.5ex\advance\graphtemp by 0.000in
    \rlap{\kern 1.133in\lower\graphtemp\hbox to 0pt{\hss $v$\hss}}%
    \graphtemp=.5ex\advance\graphtemp by 0.629in
    \rlap{\kern 1.335in\lower\graphtemp\hbox to 0pt{\hss $b$\hss}}%
    \graphtemp=.5ex\advance\graphtemp by 0.529in
    \rlap{\kern 2.191in\lower\graphtemp\hbox to 0pt{\hss $\bu$\hss}}%
    \graphtemp=.5ex\advance\graphtemp by 0.101in
    \rlap{\kern 2.392in\lower\graphtemp\hbox to 0pt{\hss $\bu$\hss}}%
    \graphtemp=.5ex\advance\graphtemp by 0.101in
    \rlap{\kern 2.896in\lower\graphtemp\hbox to 0pt{\hss $\bu$\hss}}%
    \graphtemp=.5ex\advance\graphtemp by 0.529in
    \rlap{\kern 2.518in\lower\graphtemp\hbox to 0pt{\hss $\bu$\hss}}%
    \graphtemp=.5ex\advance\graphtemp by 0.529in
    \rlap{\kern 2.770in\lower\graphtemp\hbox to 0pt{\hss $\bu$\hss}}%
    \graphtemp=.5ex\advance\graphtemp by 0.529in
    \rlap{\kern 3.022in\lower\graphtemp\hbox to 0pt{\hss $\bu$\hss}}%
    \graphtemp=.5ex\advance\graphtemp by 0.529in
    \rlap{\kern 3.273in\lower\graphtemp\hbox to 0pt{\hss $\bu$\hss}}%
    \special{pa 2896 101}%
    \special{pa 2392 101}%
    \special{fp}%
    \special{pa 2896 101}%
    \special{pa 2518 529}%
    \special{fp}%
    \special{pa 2896 101}%
    \special{pa 2770 529}%
    \special{fp}%
    \special{pa 2896 101}%
    \special{pa 3022 529}%
    \special{fp}%
    \special{pa 2896 101}%
    \special{pa 3273 529}%
    \special{fp}%
    \special{pa 2191 529}%
    \special{pa 2392 101}%
    \special{fp}%
    \special{ar 2719 604 755 189 0 6.28319}%
    \graphtemp=.5ex\advance\graphtemp by 0.629in
    \rlap{\kern 2.191in\lower\graphtemp\hbox to 0pt{\hss $a$\hss}}%
    \graphtemp=.5ex\advance\graphtemp by 0.000in
    \rlap{\kern 2.392in\lower\graphtemp\hbox to 0pt{\hss $v$\hss}}%
    \graphtemp=.5ex\advance\graphtemp by 0.000in
    \rlap{\kern 2.896in\lower\graphtemp\hbox to 0pt{\hss $u$\hss}}%
    \graphtemp=.5ex\advance\graphtemp by 0.101in
    \rlap{\kern 2.996in\lower\graphtemp\hbox to 0pt{\hss $a$\hss}}%
    \graphtemp=.5ex\advance\graphtemp by 0.629in
    \rlap{\kern 2.493in\lower\graphtemp\hbox to 0pt{\hss $b$\hss}}%
    \graphtemp=.5ex\advance\graphtemp by 0.629in
    \rlap{\kern 2.770in\lower\graphtemp\hbox to 0pt{\hss $c$\hss}}%
    \graphtemp=.5ex\advance\graphtemp by 0.629in
    \rlap{\kern 3.022in\lower\graphtemp\hbox to 0pt{\hss $d$\hss}}%
    \graphtemp=.5ex\advance\graphtemp by 0.629in
    \rlap{\kern 3.273in\lower\graphtemp\hbox to 0pt{\hss $e$\hss}}%
    \graphtemp=.5ex\advance\graphtemp by 0.529in
    \rlap{\kern 3.399in\lower\graphtemp\hbox to 0pt{\hss $~$\hss}}%
    \hbox{\vrule depth0.793in width0pt height 0pt}%
    \kern 3.500in
  }%
}%
}

\vspace{-1pc}

\caption{Reducibility for acyclic $6$-coloring\label{6acyc}}
\end{figure}

If $d_G(v)\le 1$, then we extend $\phi$ by letting $\phi(v)$ be a color in
$L(v)$ not used on the neighbor of $v$.  If $d_G(v)=2$ and $\phi$ gives
distinct colors on $N_G(v)$, then again we just avoid them on $v$.
If $d_G(v)=2$ and $\phi$ gives the same color to both vertices of $N_G(v)$,
then there is danger of completing a $2$-colored cycle.  However, since $v$ has
a $5^-$-neighbor $u$, at most four other colors appear on the neighbors of $u$,
so a color in $L(v)$ remains available for $v$.
\end{proof}

If a structure theorem with hypothesis $\avd(G)<b$ is sharp, then when
$\avd(G)$ exceeds $b$ we must add other configurations to obtain a structure
theorem.  At $\avd(G)=3$ we may have no $2$-vertices with $5^-$-neighbors and
perhaps no $2$-vertices at all.  Nevertheless, when $\avd(G)<4$ the graph must
have a $2^-$-vertex or have a $3$-vertex with a $5^-$-neighbor
(Exercise~\ref{3nbr}).

In the other direction, when we reduce the bound on $\avd(G)$ we can impose
more sparseness.  By Remark~\ref{2bound}, $\avd(G)<\FR{12}5$ implies that $G$
has two adjacent $2$-vertices if it has no $1^-$-vertex.  What sparser local
configuration can we force when the average degree declines even further?

\begin{definition}
An {\it $\ell$-thread} in a graph $G$ is a trail of length $\ell+1$ in $G$
whose $\ell$ internal vertices have degree $2$ in the full graph $G$.
\end{definition}

Under this definition, an $\ell$-thread contains two $(\ell-1)$-threads, and
the ends of a thread may be the same vertex.

\begin{lemma}\label{firstavd}
If $\avd(G)<2+\FR2{3\ell-1}$ and $G$ has no $2$-regular component, then $G$
contains a $1^-$-vertex or an $\ell$-thread.
\end{lemma}
\begin{proof}
Let $\rho=\FR{1}{3\ell-1}$, so the hypothesis is $\avd(G)<2+2\rho$.
Use degree charging.  If neither stated configuration occurs, then we
redistribute charge to leave each vertex with at least $2+2\rho$.  Since $G$
has no $1^-$-vertex, $\delta(G)\ge2$.
Since $G$ has no $2$-regular component, each $2$-vertex lies in a unique
maximal thread.  Redistribute charge as follows:

\medskip\noindent
(R1) Each $2$-vertex $v$ takes charge $\rho$ from each end of its maximal
thread.
\medskip

Since each $2$-vertex lies on a unique maximal thread, it ends with charge
$2+2\rho$.  Since $\ell$-threads are forbidden, each $j$-vertex with $j\ge3$
gives charge to at most $\ell-1$ vertices along the thread started by each
incident edge, losing at most $j(\ell-1)\rho$.  To show that its final charge
is at least $2+2\rho$, we compute
$$
j-j(\ell-1)\rho\ge 3\left[1-\FR{\ell-1}{3\ell-1}\right]=2+\FR2{3\ell-1}.
$$
Hence avoiding the specified configurations requires
$\avd(G)\ge 2+\FR2{3\ell-1}$.
\end{proof}

\begin{remark}
Once again, the hypothesis of Lemma~\ref{firstavd} is discovered from the
proof, and the structure theorem is sharp.  When using degree charging with
$\avd(G)<2+2\rho$, only $2$-vertices need charge (once we restrict to
$\delta(G)\ge2$), and the natural (local) sources of charge are the nearest
vertices of larger degree.  This yields the discharging rule, taking $\rho$
from each.

We choose $\rho$ by finding the weakest hypothesis that avoids taking too much
from $3^+$-vertices.  The inequality $j-j(\ell-1)\rho\ge 2+2\rho$ implies that
the proof guarantees an $\ell$-thread when $\rho\le \FR{j-2}{j(\ell-1)+2}$ for
$j\ge3$.  Thus setting $\rho=\FR1{3\ell-1}$ both makes the proof work and gives
the weakest hypothesis where it works.

Furthermore, to achieve sharpness in the proof, all vertices should have degree
$2$ or $3$.  Replace each edge of any $3$-regular graph with an
$(\ell-1)$-thread.  By the discharging computation, the average degree is
$2+\FR2{3\ell-1}$, and there are no $\ell$-threads.
\end{remark}

Discovering a discharging argument can be fun, but its value is in applications.
To apply our result on threads inductively to a coloring problem, we replace
the bound on $\avd(G)$ by the same bound on $\mad(G)$.  The condition
$\mad(G)<3$ already implies $3$-colorability, and graphs having any odd cycle
require at least three colors, so we need another coloring model to allow a
stronger bound on $\mad(G)$ to have a chance to improve on $3$-colorability.

\begin{definition}
A {\it $t$-fold coloring} of a graph $G$ assigns each vertex a set of $t$
colors so that adjacent vertices receive disjoint sets.  The {\it $t$-fold
chromatic number} $\chi_t(G)$ is the least $k$ such that $G$ has a $t$-fold
coloring using subsets of $[k]$ (where $[n]=\{1,\dots,n\}$).  The
{\it fractional chromatic number} $\chi^*(G)$ of $G$ is $\inf_t \chi_t(G)/t$.
The {\it odd girth} of $G$, written $g_o(G)$, is the length of a shortest odd
cycle in $G$ (infinite when $G$ is bipartite).
\end{definition}

An ordinary proper coloring is a $1$-fold coloring, so always
$\chi^*(G)\le\chi(G)$.  The {\it independence number} $\alpha(G)$ of a graph
$G$ is the maximum size of an independent set of vertices.  When $G$ has $n$
vertices, always a $t$-fold coloring of $G$ needs at least $nt/\alpha(G)$
colors, since each color can only be used on an independent set.  Hence 
$\chi^*(G)\ge n/\alpha(G)$; equality holds for vertex-transitive graphs using
all automorphic images of a largest independent set.  In particular,
$\chi^*(C_{2t+1})=2+\FR 1t$, where $C_n$ denotes the $n$-vertex cycle.

\begin{theorem}\label{fracol}
If $g_o(G)\ge2t+1$ and $\mad(G)< 2+\FR 1{3t-2}$, then $G$ has a $t$-fold
coloring with $2t+1$ colors, and hence $\chi^*(G)\le 2+\FR 1t$.
\end{theorem}
\begin{proof}
We have noted that $g_o(G)\ge2t+1$ is needed.  By Lemma~\ref{firstavd}, it
suffices to show that $1^-$-vertices and $(2t-1)$-threads (which may be
contained in longer threads) are reducible for $t$-fold $(2t+1)$-colorability.
If $d(v)\le1$, then a such a coloring $\phi$ of $G-v$ easily extends to $v$,
choosing $\phi(v)$ from the complement of the set assigned to its neighbor when
$d(v)=1$.

When $G$ contains a $(2t-1)$-thread with endpoints $u$ and $v$, let $G'$ be the
graph obtained by deleting its internal vertices.  The hypotheses hold for
$G'$, so $G'$ admits a $t$-fold coloring $\phi$ using $2t+1$ colors.
We want to extend $\phi$ along the thread.  When two $t$-sets in $[2t+1]$
differ by one element, a unique $t$-set lies in the complement of both.  Hence
in two steps we can switch any color in a $t$-set to any missing color.  In
fact, this is the only change achievable in two steps (we can also return to
the same $t$-set).  Since there are $2t$ steps along the thread from $u$ to
$v$, we can thus extend $\phi$ along the thread to obtain the desired coloring
of $G$.
\end{proof}

We introduced fractional coloring to illustrate the use of threads in sparse
graphs.  Similar results are available in connection with another variation on
coloring.

\begin{definition}
A {\it $(p,q)$-coloring} $\phi$ of $G$ colors $V(G)$ by elements of 
$\{0,\ldots,p-1\}$ so that adjacent vertices receive colors cyclically at
least $q$ apart; that is, $q\le\C{\phi(u)-\phi(v)}\le p-q$ when $uv\in E(G)$.
A graph having a $(p,q)$-coloring is {\it $(p,q)$-colorable}.  The
{\it circular chromatic number} of $G$, written $\chic(G)$, is
$\inf\{\FR pq\st G$ is $(p,q)$-colorable$\}$.
\end{definition}

A $(k,1)$-coloring is just a proper $k$-coloring, so $\chic(G)\le\chi(G)$.
A $(p,q)$-coloring can be viewed as a $q$-fold coloring with $p$ colors,
where the $q$-sets used are segments of $q$ cyclically consecutive colors.
Thus circular coloring is a restricted form of fractional coloring, and always
$\chi_c(G)\ge\chi^*(G)$.  In fact, always $\CL{\chic(G)}=\chi(G)$, so
$\chic(G)$ can be viewed as a refinement of $\chi(G)$.  Zhu~\cite{Z1,Z2}
surveyed results on circular coloring.

The hypotheses of Theorem~\ref{fracol} also suffice for the stronger conclusion
$\chi_c(G)\le 2+\FR1t$, since again a $(2t-1)$-thread is reducible
(Exercise~\ref{circhr}).  In fact, we can further strengthen the result
by obtaining the same conclusion when $\mad(G)$ is allowed to be somewhat
larger.  We have seen that $\mad(G)=2+\FR1{3t-2}$ does not force
$(2t-1)$-threads, but then another structure is forced that also is reducible
for $\chi_c(G)\le 2+\FR1t$.  We prove the discharging part.

\begin{lemma}\label{avdnbhd}
If $\avd(G)<2+\FR{1}{2t-1}$ and $G$ has no $2$-regular component, then $G$
contains
(1) a $1^-$-vertex, or
(2) a $3$-vertex with at least $4t-3$ vertices of degree $2$ on its maximal
incident threads, or
(3) a $4^+$-vertex incident to a $(2t-1)$-thread.
\end{lemma}
\begin{proof}
Let $\rho=\FR12\FR{1}{2t-1}$, so $\avd(G)<2+2\rho$.  Use degree charging.
We may assume $\delta(G)\ge2$ and that $G$ is
connected.  Redistribute charge using the same rule as before.

\medskip\noindent
(R1) Each $2$-vertex $v$ takes charge $\rho$ from each end of its maximal
thread.
\medskip

As in Lemma~\ref{firstavd}, each $2$-vertex ends with charge $2+\rho$.  If
no $3$-vertex has enough $2$-vertices on its incident threads, then each
$3$-vertex $v$ loses charge at most $(4t-4)\rho$ and retains at least
$3-\FR{2t-2}{2t-1}$, which equals $2+2\rho$.

Now let $v$ be a $4^+$-vertex.  With no incident $(2t-1)$-thread, $v$ gives
charge to at most $2t-2$ vertices along the thread starting at each incident
edge.  The minimum remaining charge $d(v)[1-(2t-2)\rho]$ is minimized when
$d(v)=4$.  We compute
$4[1-(2t-2)\rho]=4-2\FR{2t-2}{2t-1}=2+4\rho$.

Every vertex ends with charge at least $2+2\rho$, so avoiding the
specified configurations requires $\avd(G)\ge2+\FR1{2t-1}$.
\end{proof}

Showing that the configurations in Lemma~\ref{avdnbhd} are reducible for
$\chi_c(G)\le 2+\FR{1}{t}$ (Exercise~\ref{circhr}) proves the following
result.

\begin{theorem}\label{madcirc}
If $g_o(G)\ge 2t+1$ and $\mad(G)<2+\FR{1}{2t-1}$, then
$\chi_c(G)\le 2+\FR{1}{t}$.
\end{theorem}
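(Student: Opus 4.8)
The plan is a minimal-counterexample induction on $|V(G)|$, using Lemma~\ref{avdnbhd} as the unavoidable set of configurations and Lemma~\ref{circextend} (with $p=2t+1$ and $q=t$) as the reducibility tool. Recall that $\chi_c(G)\le 2+\frac1t$ is equivalent to the existence of a homomorphism $G\to C_{2t+1}$, since $C_{2t+1}\cong K_{(2t+1):t}$. For these parameters $p-2q=1$, so Lemma~\ref{circextend} says that along an $\ell$-thread a fixed color at one end forbids at most $\max\{0,\,2t-1-\ell\}$ colors at the other end; in particular a $(2t-1)$-thread forbids nothing. Hence any $(2t-1)$-thread is reducible outright: delete its internal vertices, $C_{2t+1}$-color the smaller graph by induction (the hypotheses $g_o\ge 2t+1$ and $\mad<2+\frac1{2t}$ are inherited by subgraphs), and re-extend along the thread, which is unobstructed.

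So let $G$ be a vertex-minimal graph with $g_o(G)\ge 2t+1$ and $\mad(G)<2+\frac1{2t}$ admitting no homomorphism to $C_{2t+1}$; we may take $G$ connected. If $G$ is $K_1$ or a cycle then $\chi_c(G)\le 2+\frac1t$ (using $g_o(G)\ge 2t+1$ in the odd-cycle case), a contradiction; so $G$ is neither, hence has no $2$-regular component. Since $\avd(G)\le\mad(G)<2+\frac1{2t}<2+\frac1{2t-1}$, Lemma~\ref{avdnbhd} forces one of its three configurations in $G$, and I will show each is reducible. A $1^-$-vertex $u$ is reducible: $C_{2t+1}$-color $G-u$, then give $u$ a $C_{2t+1}$-neighbor of its neighbor's color (any color if $u$ is isolated). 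A $4^+$-vertex incident to a $(2t-1)$-thread is reducible by the observation above — indeed so is any $(2t-1)$-thread, regardless of its endpoints.

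The remaining configuration, a $3$-vertex $v$ with at least $4t-3$ weak $2$-neighbors, is the crux. If some thread incident to $v$ contains a $(2t-1)$-thread, we are done by the previous case; otherwise every thread $P_i$ ($i=1,2,3$) incident to $v$ has $\ell_i\le 2t-2$ internal vertices, with $\ell_1+\ell_2+\ell_3\ge 4t-3$. Delete $v$ together with all these internal vertices, $C_{2t+1}$-color the remainder by induction, and count: by Lemma~\ref{circextend}, the color already fixed at the far endpoint of $P_i$ forbids at most $\max\{0,\,2t-1-\ell_i\}=2t-1-\ell_i$ choices for $\phi(v)$. The total number of colors excluded at $v$ is therefore at most $\sum_{i=1}^{3}(2t-1-\ell_i)=3(2t-1)-\sum_{i=1}^{3}\ell_i\le 6t-3-(4t-3)=2t<2t+1$. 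Hence some color is available at $v$; fixing it, each $P_i$ extends, exactly as Lemma~\ref{circextend} guarantees once $\phi(v)$ avoids the forbidden set of $P_i$. This contradicts the choice of $G$ and completes the proof.

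The main obstacle is the bookkeeping in this last case, including the degenerate possibilities the structural lemma does not exclude: a thread incident to $v$ that closes up at $v$ (where the girth bound $g_o(G)\ge 2t+1$ or a parity argument forces such a thread to be long enough to reduce by itself), two threads sharing both endpoints, or a thread endpoint of degree at most $1$ (absorbed by the $1^-$-vertex case), as well as checking that the graph remaining after a deletion still satisfies the hypotheses — which follows from monotonicity of $\mad$ and $g_o$ under subgraphs. The numerical coincidence that makes everything fit is that $4t-3$ in Lemma~\ref{avdnbhd} is calibrated so that $6t-3-(4t-3)=2t$ lands just below $2t+1$, mirroring the choice of $\rho$ in that lemma's proof.
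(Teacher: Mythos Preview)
Your proof is correct and follows essentially the same approach as the paper: a minimal-counterexample argument using Lemma~\ref{avdnbhd} for unavoidability and Lemma~\ref{circextend} for reducibility, with the key arithmetic $3(2t-1)-(4t-3)=2t<2t+1$ showing a color survives at the $3$-vertex. The only cosmetic differences are that you separate out the $(2t-1)$-thread case before the $3$-vertex case (the paper refers back to Proposition~\ref{firstcirc} for this) and you add some discussion of degenerate thread configurations that the paper silently ignores; neither changes the substance.
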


The bound on $\mad(G)$ in Theorem~\ref{madcirc} is still not sharp for
$\chic(G)\le2+\FR1t$.  Borodin--Hartke--Ivanova--Kostochka--West~\cite{BHIKW}
proved for triangle-free graphs that $\mad(G)<\FR{12}5$ implies
$\chic(G)\le\FR52$, while Theorem~\ref{madcirc} with $t=2$ requires $\mad(G)<\FR94$
to obtain $\chic(G)\le\FR52$.  Sharpness of their result follows from $t=2$ in
the following construction.

\begin{example}\label{madsharp}
Let $G_t$ consist of two $(2t+1)$-cycles sharing a single edge, plus a
$(2t-2)$-thread joining the vertices opposite the shared edge on the two
cycles; Figure~\ref{G2G3} shows $G_2$ and $G_3$.  Note that $G_1\cong K_4$ and
$\avd(G_2)=\FR{12}5$.

Consider a possible $(2t+1,t)$-coloring of $G_t$.  Once colors are chosen on
the edge shared by the two $(2t+1)$-cycles in $G_t$, the colors on the
other two $3$-vertices are forced to be the same.  The coloring does not extend
to all of $G_t$, since a thread of odd length at most $2t-1$ cannot have the
same color on its endpoints.  In fact, $\chic(G_t)=2+\FR1{t-1/2}$ and
$\avd(G_t)=2+\FR2{3t-1}$.  For $t=2$, we have $\chic(G_2)=\FR83$ and
$\avd(G_2)=\FR{12}5$.
\end{example}

\begin{figure}[h]
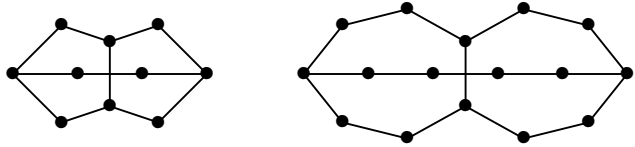

\gpic{
\expandafter\ifx\csname graph\endcsname\relax \csname newbox\endcsname\graph\fi
\expandafter\ifx\csname graphtemp\endcsname\relax \csname newdimen\endcsname\graphtemp\fi
\setbox\graph=\vtop{\vskip 0pt\hbox{%
    \graphtemp=.5ex\advance\graphtemp by 0.381in
    \rlap{\kern 0.042in\lower\graphtemp\hbox to 0pt{\hss $\bu$\hss}}%
    \graphtemp=.5ex\advance\graphtemp by 0.381in
    \rlap{\kern 0.381in\lower\graphtemp\hbox to 0pt{\hss $\bu$\hss}}%
    \graphtemp=.5ex\advance\graphtemp by 0.381in
    \rlap{\kern 0.719in\lower\graphtemp\hbox to 0pt{\hss $\bu$\hss}}%
    \graphtemp=.5ex\advance\graphtemp by 0.381in
    \rlap{\kern 1.058in\lower\graphtemp\hbox to 0pt{\hss $\bu$\hss}}%
    \graphtemp=.5ex\advance\graphtemp by 0.550in
    \rlap{\kern 0.550in\lower\graphtemp\hbox to 0pt{\hss $\bu$\hss}}%
    \graphtemp=.5ex\advance\graphtemp by 0.212in
    \rlap{\kern 0.550in\lower\graphtemp\hbox to 0pt{\hss $\bu$\hss}}%
    \special{pn 11}%
    \special{pa 42 381}%
    \special{pa 1058 381}%
    \special{fp}%
    \special{pa 550 550}%
    \special{pa 550 212}%
    \special{fp}%
    \graphtemp=.5ex\advance\graphtemp by 0.127in
    \rlap{\kern 0.296in\lower\graphtemp\hbox to 0pt{\hss $\bu$\hss}}%
    \graphtemp=.5ex\advance\graphtemp by 0.635in
    \rlap{\kern 0.296in\lower\graphtemp\hbox to 0pt{\hss $\bu$\hss}}%
    \graphtemp=.5ex\advance\graphtemp by 0.127in
    \rlap{\kern 0.804in\lower\graphtemp\hbox to 0pt{\hss $\bu$\hss}}%
    \graphtemp=.5ex\advance\graphtemp by 0.635in
    \rlap{\kern 0.804in\lower\graphtemp\hbox to 0pt{\hss $\bu$\hss}}%
    \special{pa 42 381}%
    \special{pa 296 127}%
    \special{fp}%
    \special{pa 296 127}%
    \special{pa 550 212}%
    \special{fp}%
    \special{pa 550 212}%
    \special{pa 804 127}%
    \special{fp}%
    \special{pa 804 127}%
    \special{pa 1058 381}%
    \special{fp}%
    \special{pa 42 381}%
    \special{pa 296 635}%
    \special{fp}%
    \special{pa 296 635}%
    \special{pa 550 550}%
    \special{fp}%
    \special{pa 550 550}%
    \special{pa 804 635}%
    \special{fp}%
    \special{pa 804 635}%
    \special{pa 1058 381}%
    \special{fp}%
    \graphtemp=.5ex\advance\graphtemp by 0.381in
    \rlap{\kern 1.565in\lower\graphtemp\hbox to 0pt{\hss $\bu$\hss}}%
    \graphtemp=.5ex\advance\graphtemp by 0.381in
    \rlap{\kern 1.904in\lower\graphtemp\hbox to 0pt{\hss $\bu$\hss}}%
    \graphtemp=.5ex\advance\graphtemp by 0.381in
    \rlap{\kern 2.242in\lower\graphtemp\hbox to 0pt{\hss $\bu$\hss}}%
    \graphtemp=.5ex\advance\graphtemp by 0.381in
    \rlap{\kern 2.581in\lower\graphtemp\hbox to 0pt{\hss $\bu$\hss}}%
    \graphtemp=.5ex\advance\graphtemp by 0.381in
    \rlap{\kern 2.919in\lower\graphtemp\hbox to 0pt{\hss $\bu$\hss}}%
    \graphtemp=.5ex\advance\graphtemp by 0.381in
    \rlap{\kern 3.258in\lower\graphtemp\hbox to 0pt{\hss $\bu$\hss}}%
    \graphtemp=.5ex\advance\graphtemp by 0.550in
    \rlap{\kern 2.412in\lower\graphtemp\hbox to 0pt{\hss $\bu$\hss}}%
    \graphtemp=.5ex\advance\graphtemp by 0.212in
    \rlap{\kern 2.412in\lower\graphtemp\hbox to 0pt{\hss $\bu$\hss}}%
    \special{pa 1565 381}%
    \special{pa 3258 381}%
    \special{fp}%
    \special{pa 2412 550}%
    \special{pa 2412 212}%
    \special{fp}%
    \graphtemp=.5ex\advance\graphtemp by 0.127in
    \rlap{\kern 1.768in\lower\graphtemp\hbox to 0pt{\hss $\bu$\hss}}%
    \graphtemp=.5ex\advance\graphtemp by 0.042in
    \rlap{\kern 2.107in\lower\graphtemp\hbox to 0pt{\hss $\bu$\hss}}%
    \graphtemp=.5ex\advance\graphtemp by 0.635in
    \rlap{\kern 1.768in\lower\graphtemp\hbox to 0pt{\hss $\bu$\hss}}%
    \graphtemp=.5ex\advance\graphtemp by 0.719in
    \rlap{\kern 2.107in\lower\graphtemp\hbox to 0pt{\hss $\bu$\hss}}%
    \graphtemp=.5ex\advance\graphtemp by 0.127in
    \rlap{\kern 3.055in\lower\graphtemp\hbox to 0pt{\hss $\bu$\hss}}%
    \graphtemp=.5ex\advance\graphtemp by 0.042in
    \rlap{\kern 2.716in\lower\graphtemp\hbox to 0pt{\hss $\bu$\hss}}%
    \graphtemp=.5ex\advance\graphtemp by 0.635in
    \rlap{\kern 3.055in\lower\graphtemp\hbox to 0pt{\hss $\bu$\hss}}%
    \graphtemp=.5ex\advance\graphtemp by 0.719in
    \rlap{\kern 2.716in\lower\graphtemp\hbox to 0pt{\hss $\bu$\hss}}%
    \special{pa 1565 381}%
    \special{pa 1768 127}%
    \special{fp}%
    \special{pa 1768 127}%
    \special{pa 2107 42}%
    \special{fp}%
    \special{pa 2107 42}%
    \special{pa 2412 212}%
    \special{fp}%
    \special{pa 2412 212}%
    \special{pa 2716 42}%
    \special{fp}%
    \special{pa 2716 42}%
    \special{pa 3055 127}%
    \special{fp}%
    \special{pa 3055 127}%
    \special{pa 3258 381}%
    \special{fp}%
    \special{pa 1565 381}%
    \special{pa 1768 635}%
    \special{fp}%
    \special{pa 1768 635}%
    \special{pa 2107 719}%
    \special{fp}%
    \special{pa 2107 719}%
    \special{pa 2412 550}%
    \special{fp}%
    \special{pa 2412 550}%
    \special{pa 2716 719}%
    \special{fp}%
    \special{pa 2716 719}%
    \special{pa 3055 635}%
    \special{fp}%
    \special{pa 3055 635}%
    \special{pa 3258 381}%
    \special{fp}%
    \hbox{\vrule depth0.762in width0pt height 0pt}%
    \kern 3.300in
  }%
}%
}

\vspace{-1pc}
\caption{Graphs $G_2$ and $G_3$ for Example~\ref{madsharp}\label{G2G3}}
\end{figure}

We offer a conjecture; Example~\ref{madsharp} shows that it would be sharp.

\begin{conjecture}\label{ccconj}
If $g_o(G)\ge 2t+1$ and $\mad(G)<2+\FR2{3t-1}$, then $\chic(G)\le 2+\FR1t$.
\end{conjecture}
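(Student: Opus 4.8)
The plan is to mimic the two-pronged template behind Theorem~\ref{madcirc}: prove a discharging structure theorem that, under the hypotheses $g_o(G)\ge2t+1$ and $\mad(G)<2+\FR2{3t-1}$, forces an unavoidable set of configurations, and then show each such configuration is reducible for the property $\chic(G)\le2+\FR1t$ using the circular-extension lemma. Let $G$ be a minimal counterexample; the hypotheses are hereditary, so every proper subgraph of $G$ is $(2+\FR1t)$-colourable. Cycles satisfy the conclusion, so we may assume $G$ has no $2$-regular component, and the reducibility arguments already given for Proposition~\ref{firstcirc} show that $1^-$-vertices and $(2t-1)$-threads cannot occur; hence $\delta(G)\ge2$ and every thread of $G$ has at most $2t-2$ internal vertices.

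Next I would run a discharging argument calibrated to the sharpness example $G_t$ of Example~\ref{madsharp}. Give each vertex $v$ charge $d(v)$, set $\rho=\FR1{3t-1}$, and try to leave every vertex with charge at least $2+2\rho$, contradicting $\mad(G)<2+2\rho$. Reuse the rule of Proposition~\ref{firstavd}: each $2$-vertex pulls $\rho$ from each end of its maximal thread, so every $2$-vertex finishes with exactly $2+2\rho$. For $j\ge4$ this is self-correcting: a $j$-vertex $v$ loses $\rho$ times its number $k$ of weak $2$-neighbours, so it ends with at least $2+2\rho$ whenever $k\le(j-2)(3t-1)-2$; and if $k\ge 2t(j-1)-j$, then deleting $v$ together with its weak $2$-neighbours, recolouring the rest by minimality, and applying Lemma~\ref{circextend} with $p=2t+1$ and $q=t$ leaves fewer than $2t+1$ colours forbidden at $v$, so $v$ is reducible. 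Since $(j-2)(3t-1)-2\ge 2t(j-1)-j-1$ for every $j\ge4$, these two ranges overlap and every $4^+$-vertex is handled. The obstruction is concentrated at $3$-vertices: a $3$-vertex with $k$ weak $2$-neighbours retains charge $\ge2+2\rho$ only when $k\le3t-3$, but is known reducible only when $k\ge4t-3$, and the gap $3t-2\le k\le4t-4$ is exactly what the two ``opposite'' $3$-vertices of $G_t$ realise (with $k=4t-4$).

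Closing this gap is where the real work lies, and I expect it to be the main obstacle. One must enlarge the unavoidable set by ``cluster'' configurations — two or three $3$-vertices (``branch vertices'') joined by long threads — and add a second discharging rule that lets such branch vertices recover charge along a long shared thread, ideally using the odd-girth hypothesis $g_o(G)\ge2t+1$ to limit how many long threads a single branch vertex can carry. Because $G_t$ attains $\avd(G_t)=2+\FR2{3t-1}$ exactly, the discharging must be razor-sharp: essentially every subcubic local structure that is strictly sparser than $G_t$ and has its $3$-vertices spread out must be proved reducible for $\chic\le2+\FR1t$. A single application of Lemma~\ref{circextend} no longer suffices — the ``opposite'' vertices of $G_t$ are not reducible in isolation — so one instead deletes the entire cluster of $2$-vertices, recolours the rest, orders the branch vertices, and extends greedily, carrying along each connecting thread the interval of consecutive permissible colours (growing by one per internal vertex) furnished by Lemma~\ref{circextend}. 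The danger, and the reason the statement is still only a conjecture, is a branch vertex incident to two or three long threads whose forbidden colour-sets combine to leave no admissible colour — precisely the phenomenon that makes $\chic(G_t)>2+\FR1t$ — so one must argue that being strictly sparser than $G_t$ always supplies the extra unit of slack; a handful of small base cases, such as components that are subdivisions of $K_4$ or of a theta-graph, would be treated directly.
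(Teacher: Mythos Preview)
The statement you are attempting to prove is labeled \emph{Conjecture}~\ref{ccconj} in the paper, and the paper does not give a proof.  Immediately after the conjecture the paper says: ``The conjecture is trivial for $t=1$; \cite{BHIKW} proved it for $t=2$.  The proof uses `long-distance' discharging, where charge can move along special long paths.''  So there is no proof in the paper to compare your proposal against; the general statement is open.

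Your proposal is not a proof either, and to your credit you say so yourself (``the reason the statement is still only a conjecture'').  What you have written is an accurate diagnosis of where the obstruction lies.  Your discharging/reducibility calculus is correct: with $\rho=\FR1{3t-1}$ and threads bounded by $2t-2$ internal vertices, a $j$-vertex with $k$ weak $2$-neighbours is happy when $k\le(j-2)(3t-1)-2$ and is reducible (via Lemma~\ref{circextend}) when $k\ge 2t(j-1)-j$; the inequality $(j-2)(3t-1)-2\ge 2t(j-1)-j-1$, equivalent to $t(j-4)+1\ge0$, holds for $j\ge4$, so $4^+$-vertices are handled, and the gap $3t-2\le k\le 4t-4$ at $3$-vertices is exactly what the sharpness example $G_t$ realises.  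That analysis is right and is precisely why the problem is hard.

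What your outline does not supply is the missing idea that would close the gap.  The paper hints that the known $t=2$ proof already needs ``long-distance'' discharging along special paths, i.e., charge moves that are not local to a single branch vertex and its threads; the general case would presumably need something at least as strong.  Your suggestion of deleting a whole ``cluster'' of branch vertices and extending greedily is natural, but you correctly note that exactly the phenomenon making $G_t$ fail can occur inside such a cluster, so a new reducibility argument or a genuinely new class of configurations is required.  Until that is found, the sketch remains a plan of attack rather than a proof.
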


The conjecture is trivial for $t=1$.  The proof for $t=2$ in~\cite{BHIKW} used
``long-distance'' discharging, moving charge along special long paths.

\begin{remark}
A weaker version of Conjecture~\ref{ccconj} is a special case of the
following result from Borodin--Kim--Kostochka--West~\cite{BKKW}: If $G$ has
girth at least $6t-2$ and $\mad(G)<2+\FR3{5t-2}$, then $\chic(G)\le 2+\FR1t$.
Their proof uses discharging and reducible configurations involving multiple
threads, like the $3$-vertex in Lemma~\ref{madcirc}.  The result is motivated
by the conjecture of Jaeger~\cite{Ja} that every $4t$-edge-connected graph has
``circular flow number'' at most $2+\FR1t$.  When $G$ is planar, this statement
for the dual graph $G^*$ becomes the conjecture that $\chi_c(G)\le 2+\FR1t$
when $G$ is planar with girth at least $4t$.

Lov\'asz--Thomassen--Wu--Zhang~\cite{LTWZ} proved a weaker form of Jaeger's
Conjecture, replacing $4t$ by $6t$.  Thus $\chi_c(G)\le 2+\FR1t$ when $G$ is
planar with girth at least $6t$.  By Euler's Formula, $\Mad(G)<\FR{2g}{g-2}$
when $G$ is planar with girth $g$.  Thus $\Mad(G)<2+\FR{2}{2t-1}$ when $G$ is
planar with girth at least $4t$.  Conjecture~\ref{ccconj} in some sense
proposes a trade-off: by further restricting to $\Mad(G)<2+\FR2{3t-1}$, the
girth requirement can be relaxed to $g_o(G)\ge 2t+1$ and still yield
$\chi_c(G)\le 2+\FR1t$, even without requiring planarity.
\end{remark}

We have considered only sparse graphs and small $\chic(G)$, but the problem is
more general.  Always $\chi(G)\ge\omega(G)$, where $\omega(G)$ is the maximum
number of pairwise adjacent vertices in $G$, called the {\it clique number} of
$G$.  The {\it circular clique} $K_{p:q}$ is the graph whose vertices are the
congruence classes modulo $p$, adjacent when they differ by at least $q$.  The
{\it circular clique number}, written $\omega_c(G)$, is
$\max\{p/q\st K_{p:q}\esub G\}$; always $\chic(G)\ge \omega_c(G)$.
\looseness-1

\begin{question}\label{ccques}
For graphs $G$ with $\omega_c(G)\le s$, what is the largest $\rho$ such that  
$\mad(G)<\rho$ implies $\chic(G)\le s$?
(Note: the answer is $s$ when $s$ is an integer.)
\end{question}


Next we apply a special case of Lemma~\ref{avdnbhd} to a further restriction of
acyclic coloring (Definition~\ref{Dacyclic}); like acyclic coloring, it was
introduced by Gr\"unbaum~\cite{Gru}.

\begin{definition}
A {\it star coloring} is an acyclic coloring where the union of any two color
classes induces a forest of stars; equivalently, no $4$-vertex path is
$2$-colored.  The {\it star chromatic number} $\chis(G)$ (also written
$\chi_s(G)$) is the minimum number of colors in such a coloring.

\end{definition}

Every star coloring is an acyclic coloring.  All trees are acyclically
$2$-colorable, but trees with diameter at least $3$ are not star $2$-colorable.
Extensive early results about star colorings and their relationships to other
parameters appear in papers by Fertin, Raspaud, and Reed~\cite{FRR} and by
Albertson et al.~\cite{ACKKR}, without discharging.  Our focus here is on a
structure that yields an upper bound on $\chis(G)$.

\begin{definition}
A set $I$ of vertices is a {\it $2$-independent set} if the distance between
any two vertices of $I$ exceeds $2$.  An {\it $I,F$-partition} of a graph $G$,
introduced by Albertson et al.~\cite{ACKKR}, is a partition of $V(G)$ into sets
$I$ and $F$ such that $I$ is a $2$-independent set and $G[F]$ is a forest.
\end{definition}

\begin{lemma}\label{IFpart}
Every forest is star $3$-colorable.  Hence if a graph $G$ has an
$I,F$-partition, then $\chis(G)\le 4$.
\end{lemma}
\begin{proof}
In a tree $T$, choose a root $r$ and color each vertex $v$ with $d_T(v,r)$,
reduced modulo $3$.  Each connected $2$-colored subgraph is a star consisting
of a vertex and its children.  Using a fourth color on a $2$-independent set
$I$ cannot complete a $2$-colored $4$-vertex path, since no two vertices
with that color have a common neighbor.
\end{proof}



\begin{theorem}[Timmons~\cite{Tim}]\label{star73}
If $\mad(G)<\frac73$, then $G$ has an $I,F$-partition.
\end{theorem}

\begin{proof}
We may assume that no component is a cycle, because in such a component it 
suffices to put one vertex into $I$.  Without $2$-regular components,
Lemma~\ref{avdnbhd} with $t=2$ implies that $G$ has a $1^-$-vertex, a
$3$-thread, or a $3$-vertex with at least five $2$-vertices on its incident
threads.
It therefore suffices to prove these configurations reducible for the existence
of $I,F$-partitions.  A $1^-$-vertex $v$ can be added to the forest in any such
partition of $G-v$.

Let $\la v,w,x,y,z\ra$ be a $3$-thread in $G$; vertices $w,x,y$ each have
degree $2$.  Let $I',F'$ be an $I,F$-partition of $G-\{w,x,y\}$.  If $v$ or $z$
is in $I'$, then add $\{w,x,y\}$ to $F'$ to form an $I,F$-partition of $G$.
Otherwise, add $x$ to $I'$ and $\{w,y\}$ to $F'$, as in Figure~\ref{figIF}.

\begin{figure}[h]
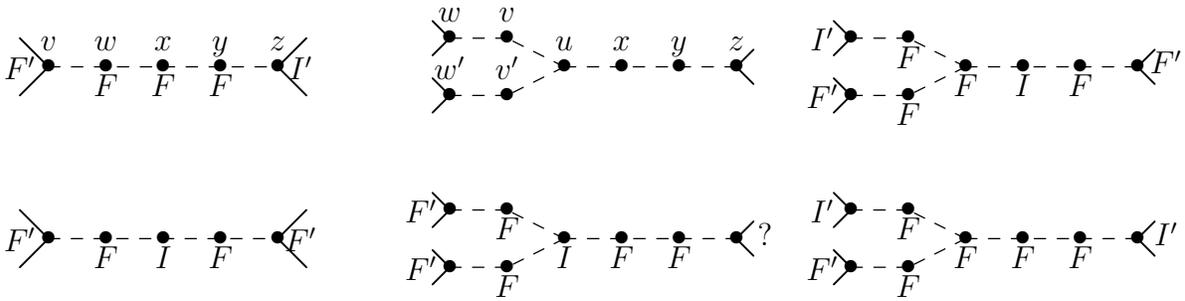

\gpic{
\expandafter\ifx\csname graph\endcsname\relax \csname newbox\endcsname\graph\fi
\expandafter\ifx\csname graphtemp\endcsname\relax \csname newdimen\endcsname\graphtemp\fi
\setbox\graph=\vtop{\vskip 0pt\hbox{%
    \graphtemp=.5ex\advance\graphtemp by 0.270in
    \rlap{\kern 0.150in\lower\graphtemp\hbox to 0pt{\hss $\bu$\hss}}%
    \graphtemp=.5ex\advance\graphtemp by 0.270in
    \rlap{\kern 0.450in\lower\graphtemp\hbox to 0pt{\hss $\bu$\hss}}%
    \graphtemp=.5ex\advance\graphtemp by 0.270in
    \rlap{\kern 0.750in\lower\graphtemp\hbox to 0pt{\hss $\bu$\hss}}%
    \graphtemp=.5ex\advance\graphtemp by 0.270in
    \rlap{\kern 1.050in\lower\graphtemp\hbox to 0pt{\hss $\bu$\hss}}%
    \graphtemp=.5ex\advance\graphtemp by 0.270in
    \rlap{\kern 1.350in\lower\graphtemp\hbox to 0pt{\hss $\bu$\hss}}%
    \special{pn 8}%
    \special{pa 150 270}%
    \special{pa 1350 270}%
    \special{da 0.060}%
    \special{pn 11}%
    \special{pa 0 120}%
    \special{pa 150 270}%
    \special{fp}%
    \special{pa 150 270}%
    \special{pa 0 420}%
    \special{fp}%
    \special{pa 1500 120}%
    \special{pa 1350 270}%
    \special{fp}%
    \special{pa 1350 270}%
    \special{pa 1500 420}%
    \special{fp}%
    \graphtemp=.5ex\advance\graphtemp by 0.300in
    \rlap{\kern 0.000in\lower\graphtemp\hbox to 0pt{\hss $F'$\hss}}%
    \graphtemp=.5ex\advance\graphtemp by 0.300in
    \rlap{\kern 1.470in\lower\graphtemp\hbox to 0pt{\hss $I'$\hss}}%
    \graphtemp=.5ex\advance\graphtemp by 0.390in
    \rlap{\kern 0.450in\lower\graphtemp\hbox to 0pt{\hss $F$\hss}}%
    \graphtemp=.5ex\advance\graphtemp by 0.390in
    \rlap{\kern 0.750in\lower\graphtemp\hbox to 0pt{\hss $F$\hss}}%
    \graphtemp=.5ex\advance\graphtemp by 0.390in
    \rlap{\kern 1.050in\lower\graphtemp\hbox to 0pt{\hss $F$\hss}}%
    \graphtemp=.5ex\advance\graphtemp by 0.150in
    \rlap{\kern 0.150in\lower\graphtemp\hbox to 0pt{\hss $v$\hss}}%
    \graphtemp=.5ex\advance\graphtemp by 0.150in
    \rlap{\kern 1.350in\lower\graphtemp\hbox to 0pt{\hss $z$\hss}}%
    \graphtemp=.5ex\advance\graphtemp by 0.150in
    \rlap{\kern 0.450in\lower\graphtemp\hbox to 0pt{\hss $w$\hss}}%
    \graphtemp=.5ex\advance\graphtemp by 0.150in
    \rlap{\kern 0.750in\lower\graphtemp\hbox to 0pt{\hss $x$\hss}}%
    \graphtemp=.5ex\advance\graphtemp by 0.150in
    \rlap{\kern 1.050in\lower\graphtemp\hbox to 0pt{\hss $y$\hss}}%
    \graphtemp=.5ex\advance\graphtemp by 1.170in
    \rlap{\kern 0.150in\lower\graphtemp\hbox to 0pt{\hss $\bu$\hss}}%
    \graphtemp=.5ex\advance\graphtemp by 1.170in
    \rlap{\kern 0.450in\lower\graphtemp\hbox to 0pt{\hss $\bu$\hss}}%
    \graphtemp=.5ex\advance\graphtemp by 1.170in
    \rlap{\kern 0.750in\lower\graphtemp\hbox to 0pt{\hss $\bu$\hss}}%
    \graphtemp=.5ex\advance\graphtemp by 1.170in
    \rlap{\kern 1.050in\lower\graphtemp\hbox to 0pt{\hss $\bu$\hss}}%
    \graphtemp=.5ex\advance\graphtemp by 1.170in
    \rlap{\kern 1.350in\lower\graphtemp\hbox to 0pt{\hss $\bu$\hss}}%
    \special{pn 8}%
    \special{pa 150 1170}%
    \special{pa 1350 1170}%
    \special{da 0.060}%
    \special{pn 11}%
    \special{pa 0 1020}%
    \special{pa 150 1170}%
    \special{fp}%
    \special{pa 150 1170}%
    \special{pa 0 1320}%
    \special{fp}%
    \special{pa 1500 1020}%
    \special{pa 1350 1170}%
    \special{fp}%
    \special{pa 1350 1170}%
    \special{pa 1500 1320}%
    \special{fp}%
    \graphtemp=.5ex\advance\graphtemp by 1.200in
    \rlap{\kern 0.000in\lower\graphtemp\hbox to 0pt{\hss $F'$\hss}}%
    \graphtemp=.5ex\advance\graphtemp by 1.200in
    \rlap{\kern 1.470in\lower\graphtemp\hbox to 0pt{\hss $F'$\hss}}%
    \graphtemp=.5ex\advance\graphtemp by 1.290in
    \rlap{\kern 0.450in\lower\graphtemp\hbox to 0pt{\hss $F$\hss}}%
    \graphtemp=.5ex\advance\graphtemp by 1.290in
    \rlap{\kern 0.750in\lower\graphtemp\hbox to 0pt{\hss $I$\hss}}%
    \graphtemp=.5ex\advance\graphtemp by 1.290in
    \rlap{\kern 1.050in\lower\graphtemp\hbox to 0pt{\hss $F$\hss}}%
    \graphtemp=.5ex\advance\graphtemp by 0.420in
    \rlap{\kern 2.250in\lower\graphtemp\hbox to 0pt{\hss $\bu$\hss}}%
    \graphtemp=.5ex\advance\graphtemp by 0.420in
    \rlap{\kern 2.550in\lower\graphtemp\hbox to 0pt{\hss $\bu$\hss}}%
    \graphtemp=.5ex\advance\graphtemp by 0.270in
    \rlap{\kern 2.850in\lower\graphtemp\hbox to 0pt{\hss $\bu$\hss}}%
    \graphtemp=.5ex\advance\graphtemp by 0.120in
    \rlap{\kern 2.550in\lower\graphtemp\hbox to 0pt{\hss $\bu$\hss}}%
    \graphtemp=.5ex\advance\graphtemp by 0.120in
    \rlap{\kern 2.250in\lower\graphtemp\hbox to 0pt{\hss $\bu$\hss}}%
    \graphtemp=.5ex\advance\graphtemp by 0.270in
    \rlap{\kern 3.150in\lower\graphtemp\hbox to 0pt{\hss $\bu$\hss}}%
    \graphtemp=.5ex\advance\graphtemp by 0.270in
    \rlap{\kern 3.450in\lower\graphtemp\hbox to 0pt{\hss $\bu$\hss}}%
    \graphtemp=.5ex\advance\graphtemp by 0.270in
    \rlap{\kern 3.750in\lower\graphtemp\hbox to 0pt{\hss $\bu$\hss}}%
    \special{pn 8}%
    \special{pa 2850 270}%
    \special{pa 2550 420}%
    \special{da 0.060}%
    \special{pa 2850 270}%
    \special{pa 2550 120}%
    \special{da 0.060}%
    \special{pa 2850 270}%
    \special{pa 3750 270}%
    \special{da 0.060}%
    \special{pa 2250 420}%
    \special{pa 2550 420}%
    \special{da 0.060}%
    \special{pa 2550 120}%
    \special{pa 2250 120}%
    \special{da 0.060}%
    \special{pn 11}%
    \special{pa 2160 330}%
    \special{pa 2250 420}%
    \special{fp}%
    \special{pa 2250 420}%
    \special{pa 2160 510}%
    \special{fp}%
    \special{pa 2160 30}%
    \special{pa 2250 120}%
    \special{fp}%
    \special{pa 2250 120}%
    \special{pa 2160 210}%
    \special{fp}%
    \special{pa 3840 180}%
    \special{pa 3750 270}%
    \special{fp}%
    \special{pa 3750 270}%
    \special{pa 3840 360}%
    \special{fp}%
    \graphtemp=.5ex\advance\graphtemp by 0.300in
    \rlap{\kern 2.250in\lower\graphtemp\hbox to 0pt{\hss $w'$\hss}}%
    \graphtemp=.5ex\advance\graphtemp by 0.300in
    \rlap{\kern 2.550in\lower\graphtemp\hbox to 0pt{\hss $v'$\hss}}%
    \graphtemp=.5ex\advance\graphtemp by 0.150in
    \rlap{\kern 2.850in\lower\graphtemp\hbox to 0pt{\hss $u$\hss}}%
    \graphtemp=.5ex\advance\graphtemp by 0.000in
    \rlap{\kern 2.550in\lower\graphtemp\hbox to 0pt{\hss $v$\hss}}%
    \graphtemp=.5ex\advance\graphtemp by 0.000in
    \rlap{\kern 2.250in\lower\graphtemp\hbox to 0pt{\hss $w$\hss}}%
    \graphtemp=.5ex\advance\graphtemp by 0.150in
    \rlap{\kern 3.150in\lower\graphtemp\hbox to 0pt{\hss $x$\hss}}%
    \graphtemp=.5ex\advance\graphtemp by 0.150in
    \rlap{\kern 3.450in\lower\graphtemp\hbox to 0pt{\hss $y$\hss}}%
    \graphtemp=.5ex\advance\graphtemp by 0.150in
    \rlap{\kern 3.750in\lower\graphtemp\hbox to 0pt{\hss $z$\hss}}%
    \graphtemp=.5ex\advance\graphtemp by 1.320in
    \rlap{\kern 2.250in\lower\graphtemp\hbox to 0pt{\hss $\bu$\hss}}%
    \graphtemp=.5ex\advance\graphtemp by 1.320in
    \rlap{\kern 2.550in\lower\graphtemp\hbox to 0pt{\hss $\bu$\hss}}%
    \graphtemp=.5ex\advance\graphtemp by 1.170in
    \rlap{\kern 2.850in\lower\graphtemp\hbox to 0pt{\hss $\bu$\hss}}%
    \graphtemp=.5ex\advance\graphtemp by 1.020in
    \rlap{\kern 2.550in\lower\graphtemp\hbox to 0pt{\hss $\bu$\hss}}%
    \graphtemp=.5ex\advance\graphtemp by 1.020in
    \rlap{\kern 2.250in\lower\graphtemp\hbox to 0pt{\hss $\bu$\hss}}%
    \graphtemp=.5ex\advance\graphtemp by 1.170in
    \rlap{\kern 3.150in\lower\graphtemp\hbox to 0pt{\hss $\bu$\hss}}%
    \graphtemp=.5ex\advance\graphtemp by 1.170in
    \rlap{\kern 3.450in\lower\graphtemp\hbox to 0pt{\hss $\bu$\hss}}%
    \graphtemp=.5ex\advance\graphtemp by 1.170in
    \rlap{\kern 3.750in\lower\graphtemp\hbox to 0pt{\hss $\bu$\hss}}%
    \special{pn 8}%
    \special{pa 2850 1170}%
    \special{pa 2550 1320}%
    \special{da 0.060}%
    \special{pa 2850 1170}%
    \special{pa 2550 1020}%
    \special{da 0.060}%
    \special{pa 2850 1170}%
    \special{pa 3750 1170}%
    \special{da 0.060}%
    \special{pa 2250 1320}%
    \special{pa 2550 1320}%
    \special{da 0.060}%
    \special{pa 2550 1020}%
    \special{pa 2250 1020}%
    \special{da 0.060}%
    \special{pn 11}%
    \special{pa 2160 1230}%
    \special{pa 2250 1320}%
    \special{fp}%
    \special{pa 2250 1320}%
    \special{pa 2160 1410}%
    \special{fp}%
    \special{pa 2160 930}%
    \special{pa 2250 1020}%
    \special{fp}%
    \special{pa 2250 1020}%
    \special{pa 2160 1110}%
    \special{fp}%
    \special{pa 3840 1080}%
    \special{pa 3750 1170}%
    \special{fp}%
    \special{pa 3750 1170}%
    \special{pa 3840 1260}%
    \special{fp}%
    \graphtemp=.5ex\advance\graphtemp by 1.350in
    \rlap{\kern 2.100in\lower\graphtemp\hbox to 0pt{\hss $F'$\hss}}%
    \graphtemp=.5ex\advance\graphtemp by 1.440in
    \rlap{\kern 2.550in\lower\graphtemp\hbox to 0pt{\hss $F$\hss}}%
    \graphtemp=.5ex\advance\graphtemp by 1.290in
    \rlap{\kern 2.850in\lower\graphtemp\hbox to 0pt{\hss $I$\hss}}%
    \graphtemp=.5ex\advance\graphtemp by 1.140in
    \rlap{\kern 2.550in\lower\graphtemp\hbox to 0pt{\hss $F$\hss}}%
    \graphtemp=.5ex\advance\graphtemp by 1.050in
    \rlap{\kern 2.100in\lower\graphtemp\hbox to 0pt{\hss $F'$\hss}}%
    \graphtemp=.5ex\advance\graphtemp by 1.290in
    \rlap{\kern 3.150in\lower\graphtemp\hbox to 0pt{\hss $F$\hss}}%
    \graphtemp=.5ex\advance\graphtemp by 1.290in
    \rlap{\kern 3.450in\lower\graphtemp\hbox to 0pt{\hss $F$\hss}}%
    \graphtemp=.5ex\advance\graphtemp by 1.170in
    \rlap{\kern 3.900in\lower\graphtemp\hbox to 0pt{\hss $?$\hss}}%
    \graphtemp=.5ex\advance\graphtemp by 0.420in
    \rlap{\kern 4.350in\lower\graphtemp\hbox to 0pt{\hss $\bu$\hss}}%
    \graphtemp=.5ex\advance\graphtemp by 0.420in
    \rlap{\kern 4.650in\lower\graphtemp\hbox to 0pt{\hss $\bu$\hss}}%
    \graphtemp=.5ex\advance\graphtemp by 0.270in
    \rlap{\kern 4.950in\lower\graphtemp\hbox to 0pt{\hss $\bu$\hss}}%
    \graphtemp=.5ex\advance\graphtemp by 0.120in
    \rlap{\kern 4.650in\lower\graphtemp\hbox to 0pt{\hss $\bu$\hss}}%
    \graphtemp=.5ex\advance\graphtemp by 0.120in
    \rlap{\kern 4.350in\lower\graphtemp\hbox to 0pt{\hss $\bu$\hss}}%
    \graphtemp=.5ex\advance\graphtemp by 0.270in
    \rlap{\kern 5.250in\lower\graphtemp\hbox to 0pt{\hss $\bu$\hss}}%
    \graphtemp=.5ex\advance\graphtemp by 0.270in
    \rlap{\kern 5.550in\lower\graphtemp\hbox to 0pt{\hss $\bu$\hss}}%
    \graphtemp=.5ex\advance\graphtemp by 0.270in
    \rlap{\kern 5.850in\lower\graphtemp\hbox to 0pt{\hss $\bu$\hss}}%
    \special{pn 8}%
    \special{pa 4950 270}%
    \special{pa 4650 420}%
    \special{da 0.060}%
    \special{pa 4950 270}%
    \special{pa 4650 120}%
    \special{da 0.060}%
    \special{pa 4950 270}%
    \special{pa 5850 270}%
    \special{da 0.060}%
    \special{pa 4350 420}%
    \special{pa 4650 420}%
    \special{da 0.060}%
    \special{pa 4650 120}%
    \special{pa 4350 120}%
    \special{da 0.060}%
    \special{pn 11}%
    \special{pa 4260 330}%
    \special{pa 4350 420}%
    \special{fp}%
    \special{pa 4350 420}%
    \special{pa 4260 510}%
    \special{fp}%
    \special{pa 4260 30}%
    \special{pa 4350 120}%
    \special{fp}%
    \special{pa 4350 120}%
    \special{pa 4260 210}%
    \special{fp}%
    \special{pa 5940 180}%
    \special{pa 5850 270}%
    \special{fp}%
    \special{pa 5850 270}%
    \special{pa 5940 360}%
    \special{fp}%
    \graphtemp=.5ex\advance\graphtemp by 0.450in
    \rlap{\kern 4.200in\lower\graphtemp\hbox to 0pt{\hss $F'$\hss}}%
    \graphtemp=.5ex\advance\graphtemp by 0.540in
    \rlap{\kern 4.650in\lower\graphtemp\hbox to 0pt{\hss $F$\hss}}%
    \graphtemp=.5ex\advance\graphtemp by 0.390in
    \rlap{\kern 4.950in\lower\graphtemp\hbox to 0pt{\hss $F$\hss}}%
    \graphtemp=.5ex\advance\graphtemp by 0.240in
    \rlap{\kern 4.650in\lower\graphtemp\hbox to 0pt{\hss $F$\hss}}%
    \graphtemp=.5ex\advance\graphtemp by 0.150in
    \rlap{\kern 4.200in\lower\graphtemp\hbox to 0pt{\hss $I'$\hss}}%
    \graphtemp=.5ex\advance\graphtemp by 0.390in
    \rlap{\kern 5.250in\lower\graphtemp\hbox to 0pt{\hss $I$\hss}}%
    \graphtemp=.5ex\advance\graphtemp by 0.390in
    \rlap{\kern 5.550in\lower\graphtemp\hbox to 0pt{\hss $F$\hss}}%
    \graphtemp=.5ex\advance\graphtemp by 0.270in
    \rlap{\kern 6.000in\lower\graphtemp\hbox to 0pt{\hss $F'$\hss}}%
    \graphtemp=.5ex\advance\graphtemp by 1.320in
    \rlap{\kern 4.350in\lower\graphtemp\hbox to 0pt{\hss $\bu$\hss}}%
    \graphtemp=.5ex\advance\graphtemp by 1.320in
    \rlap{\kern 4.650in\lower\graphtemp\hbox to 0pt{\hss $\bu$\hss}}%
    \graphtemp=.5ex\advance\graphtemp by 1.170in
    \rlap{\kern 4.950in\lower\graphtemp\hbox to 0pt{\hss $\bu$\hss}}%
    \graphtemp=.5ex\advance\graphtemp by 1.020in
    \rlap{\kern 4.650in\lower\graphtemp\hbox to 0pt{\hss $\bu$\hss}}%
    \graphtemp=.5ex\advance\graphtemp by 1.020in
    \rlap{\kern 4.350in\lower\graphtemp\hbox to 0pt{\hss $\bu$\hss}}%
    \graphtemp=.5ex\advance\graphtemp by 1.170in
    \rlap{\kern 5.250in\lower\graphtemp\hbox to 0pt{\hss $\bu$\hss}}%
    \graphtemp=.5ex\advance\graphtemp by 1.170in
    \rlap{\kern 5.550in\lower\graphtemp\hbox to 0pt{\hss $\bu$\hss}}%
    \graphtemp=.5ex\advance\graphtemp by 1.170in
    \rlap{\kern 5.850in\lower\graphtemp\hbox to 0pt{\hss $\bu$\hss}}%
    \special{pn 8}%
    \special{pa 4950 1170}%
    \special{pa 4650 1320}%
    \special{da 0.060}%
    \special{pa 4950 1170}%
    \special{pa 4650 1020}%
    \special{da 0.060}%
    \special{pa 4950 1170}%
    \special{pa 5850 1170}%
    \special{da 0.060}%
    \special{pa 4350 1320}%
    \special{pa 4650 1320}%
    \special{da 0.060}%
    \special{pa 4650 1020}%
    \special{pa 4350 1020}%
    \special{da 0.060}%
    \special{pn 11}%
    \special{pa 4260 1230}%
    \special{pa 4350 1320}%
    \special{fp}%
    \special{pa 4350 1320}%
    \special{pa 4260 1410}%
    \special{fp}%
    \special{pa 4260 930}%
    \special{pa 4350 1020}%
    \special{fp}%
    \special{pa 4350 1020}%
    \special{pa 4260 1110}%
    \special{fp}%
    \special{pa 5940 1080}%
    \special{pa 5850 1170}%
    \special{fp}%
    \special{pa 5850 1170}%
    \special{pa 5940 1260}%
    \special{fp}%
    \graphtemp=.5ex\advance\graphtemp by 1.350in
    \rlap{\kern 4.200in\lower\graphtemp\hbox to 0pt{\hss $F'$\hss}}%
    \graphtemp=.5ex\advance\graphtemp by 1.440in
    \rlap{\kern 4.650in\lower\graphtemp\hbox to 0pt{\hss $F$\hss}}%
    \graphtemp=.5ex\advance\graphtemp by 1.290in
    \rlap{\kern 4.950in\lower\graphtemp\hbox to 0pt{\hss $F$\hss}}%
    \graphtemp=.5ex\advance\graphtemp by 1.140in
    \rlap{\kern 4.650in\lower\graphtemp\hbox to 0pt{\hss $F$\hss}}%
    \graphtemp=.5ex\advance\graphtemp by 1.050in
    \rlap{\kern 4.200in\lower\graphtemp\hbox to 0pt{\hss $I'$\hss}}%
    \graphtemp=.5ex\advance\graphtemp by 1.290in
    \rlap{\kern 5.250in\lower\graphtemp\hbox to 0pt{\hss $F$\hss}}%
    \graphtemp=.5ex\advance\graphtemp by 1.290in
    \rlap{\kern 5.550in\lower\graphtemp\hbox to 0pt{\hss $F$\hss}}%
    \graphtemp=.5ex\advance\graphtemp by 1.170in
    \rlap{\kern 6.000in\lower\graphtemp\hbox to 0pt{\hss $I'$\hss}}%
    \hbox{\vrule depth1.470in width0pt height 0pt}%
    \kern 6.000in
  }%
}%
}

\vspace{-1pc}

\caption{Reducibility cases for Theorem~\ref{star73}\label{figIF}}
\end{figure}

Finally, consider a $3$-vertex $u$ with at least five $2$-vertices on its
incident threads.  If one of the threads has three $2$-vertices, then $G$ has a
$3$-thread.  Otherwise, $u$ has at least two incident $2$-threads plus a
$2$-neighbor on the third incident thread.  It suffices to consider a
$2$-thread $\la u,x,y,z\ra$ and two $1$-threads $\la u,v,w\ra$ and
$\la u,v',w'\ra$ incident to $u$ (see Figure~\ref{figIF}).  Let $R=\{w,w'\}$
and $S=\{u,x,y,v,v'\}$, and let $G'=G-S$.  Let $I',F'$ be an $I,F$-partition of
$G'$.  If $R\cap I'=\nul$, then add $u$ to $I'$ and the rest of $S$ to $F'$.
If $R\cap I'\ne\nul$ and $z\notin I'$, then add $x$ to $I'$ and the rest of $S$
to $F'$.  If $R\cap I'\ne\nul$ and $z\in I'$, then add all of $S$ to $F'$.  In
each case, the resulting sets form an $I,F$-partition of $G$.
\end{proof}

With more detailed discharging, Bu et al.~\cite{BCMRW} proved that
$\mad(G)<\FR{26}{11}$ yields an $I,F$-partition, and then
Brandt et al.~\cite{BFKLSY} proved that $\mad(G)<\FR52$ suffices.
This result is sharp, as infinitely many examples with average degree $\FR52$
have no $I,F$-partition (Exercise~\ref{noIF}).  However, the optimal value of
$\mad(G)$ implying star $4$-colorability is not known.  Chen, Raspaud, and
Wang~\cite{CRW1,CRW2} proved that $\chis(G)\le 8$ when $\mad(G)<3$ and that
$\chis(G)\le6$ when $\Delta(G)=3$ (the latter is sharp).
Unfortunately, no bound on $\chis(G)$ (or $\chia(G)$)
can be implied by $\mad(G)<4$.  The constructions for this in
Exercise~\ref{nobound} have average degree tending to $4$; what happens for
bounds between $3$ and $4$ remains open.

\begin{question}
For $3<b<4$, is it true that $\chis(G)$ or $\chia(G)$ is bounded when
$\mad(G)<b$?  Given $k$, what is the minimum number of vertices in a graph $G$
with $\mad(G)<4$ and $\chis(G)>k$ (or $\chia(G)>k$)?
\end{question}


\medskip

{\small


\begin{exercise}\label{3nbr}
Given $\avd(G)<4$, prove that $G$ has a $2^-$-vertex or a $3$-vertex with
a $5^-$-neighbor.  Explain why we cannot place any bound on the smallest
degree of a neighbor of a $2$-vertex, both by construction and by explaining
how the proof would fail.
\end{exercise}


\begin{exercise}\label{mindeg}
Given $0\le j<k$, let $G$ be a graph with $\delta(G)=k$.  Determine the largest
$\rho$ such that $\avd(G)< k+\rho$ guarantees that $G$ has a $k$-vertex having
more than $j$ neighbors of degree $k$.
\end{exercise}

\begin{exercise}\label{threadsharp}
Show that Lemma~\ref{avdnbhd} is sharp.  For each $t\in\NN$ construct
infinitely many examples with average degree $2+\FR1{2t-1}$ in which none of
the specified configurations occurs.
\end{exercise}

\begin{exercise}\label{circhr}
Prove that the configurations in Lemma~\ref{avdnbhd} are reducible
for $\chi_c(G)\le 2+\FR1t$.  Conclude that $\chi_c(G)\le 2+\FR1t$ when
$g_o(G)\ge2t+1$ and $\mad(G)<2+\FR1{2t-1}$
\end{exercise}

\begin{exercise}\label{52CKY}
(Cranston--Kim--Yu~\cite{CKY1})
Let $G$ be a connected graph with at least four vertices.
Prove that if $\avd(G)<\FR52$ and $\delta(G)\ge2$, then $G$ contains a
$2$-thread or a $3$-vertex having three $2$-neighbors, one of which has a
second $3$-neighbor.
\end{exercise}

\begin{exercise}\label{52struc}
(Cranston--Jahanbekam--West~\cite{CJW})
Prove that if $\avd(G)<\FR52$ and $G$ is connected, then $G$ contains a
$3^-$-vertex with a $1$-neighbor, a $4^-$-vertex with two $2^-$-neighbors, or a
$5^+$-vertex $v$ with at least $\FR{d(v)-1}2$ $2^-$-neighbors.  (Comment: These
configurations are reducible for the ``${1,2}$-Conjecture'' of Przyby\l o and
Woz\'niak~\cite{PW}.  Although that proves the conjecture when
$\Mad(G)<\FR52$,~\cite{PW} proved the stronger result that the conjecture holds
for $3$-colorable graphs.)
\end{exercise}

\begin{exercise}\label{madgen}
Prove that if $\avd(G)<k+\rho$ with $0<\rho\le \FR k{k+1}$, then $G$ contains
a $(k-1)^-$-vertex, two adjacent $k$-vertices, or a $(k+1)$-vertex with more
than $(\FR1\rho-1) k$ neighbors having degree $k$.  Construct sharpness
examples with $\avd(G)=k+\rho$ when $\rho=\FR12$ and when $\rho=\FR k{k+1}$
(the latter may have maximum degree $k+1$ or $k+2$).
\end{exercise}

\begin{exercise}\label{ex:square}
Prove that if $\Delta(G)=k\ge 3$ and $\avd(G)<k-\frac2{k^2+1}$, then
$G$ contains one of the following configurations: (C1) a $(k-2)^-$-vertex,
(C2) two adjacent $(k-1)$-vertices, (C3) a $k$-vertex with two
$(k-1)$-neighbors, (C4) two adjacent $k$-vertices each having a
$(k-1)$-neighbor, or (C5) a $k$-vertex having three $k$-neighbors such that
each is adjacent to a $(k-1)$-vertex.
\end{exercise}

\begin{exercise}\label{IFnonred}
Argue that the configuration consisting of two $3$-vertices each incident
to two $2$-threads is not reducible for the existence of $I,F$-partitions.
\end{exercise}

\begin{exercise}\label{noIF}
Obtain $G$ from an $n$-cycle by attaching to each vertex a new edge whose other
endpoint lies on a new triangle ($|V(G)|=4n$).  Prove that $G$ has
no $I,F$-partition, despite $\mad(G)=\FR52$.
\end{exercise}

%
%
%


\begin{exercise}\label{nobound}
Let $G_n$ be the graph obtained from the complete graph $K_n$ by subdividing
every edge once.  Determine $\mad(G_n)$.
Prove $\chis(G_n)>k$ for $n>k^2$ and $\chia(G_n)>k$ for $n>2k^2$.
\end{exercise}

}

\section{Discharging on Plane Graphs}\label{seclight}\label{secplanar}

The context of bounded $\mad(G)$ remains valid when we study planar graphs.
Euler's Formula for connected graphs embedded in the plane (``plane graphs'')
is $n-m+p=2$, where $n$, $m$, and $p$ count the vertices, edges, and faces
(``points'' of the dual graph).  When loops and multiedges are forbidden, each
face boundary has length at least $3$, yielding $m\le 3n-6$.  Since the
degree-sum is twice the number of edges, we obtain $\mad(G)<6$.  When the girth
(minimum cycle length) is $g$, the inequality generalizes to
$m\le \FR g{g-2}(n-2)$.  Deleting edges cannot create short cycles, so
$\mad(G)<\FR {2g}{g-2}$ when $G$ is a planar graph with girth $g$.

Some results on planar graphs or planar graphs with large girth hold more
generally for graphs satisfying the corresponding bound on $\mad(G)$.  However,
planar graphs form a highly restricted subfamily, and often stronger results
hold when planarity is also required.

The discharging method is well suited to exploit planarity.  The distinctive
feature of discharging for a plane graph $G$ is that charge can also be
assigned to faces, which are vertices in the dual graph $G^*$.  Since $G^*$ is
also planar, $\mad(G^*)<6$ and we can use discharging on both $G$ and $G^*$.
Even stronger is to use the two graphs in combination via Euler's Formula.
This leads to three common natural ways to assign charge on plane graphs.

\begin{proposition}
Let $V(G)$ and $F(G)$ be the sets of vertices and faces in a plane graph $G$,
and let $\ell(f)$ denote the length of a face $f$.  The following equalities
hold for $G$.
$$
\begin{array}{cll}
\SM v{V(G)}(d(v)-6)+\SM f{F(G)}(2\ell(f)-6)=-12&&{\rm vertex~charging}\\
\phantom{\Big|}
\SM v{V(G)}(2d(v)-6)+\SM f{F(G)}(\ell(f)-6)=-12&&{\rm face~charging}\\
\phantom{\Big|}
\SM v{V(G)}(d(v)-4)+\SM f{F(G)}(\ell(f)-4)=-8&&{\rm balanced~charging}
\end{array}
$$
\end{proposition}
\begin{proof}
Multiply Euler's Formula by $-6$ or $-4$ and split the term for edges to obtain
the three formulas below.
$$-6n+2m+4m-6p=-12;\quad -6n+4m+2m-6f=-12;\quad -4n+2m+2m-4f=-8.$$
Substitute $\FR12\SM v{V(G)}d(v)$ for the first occurrence of $m$ and
$\FR12\SM f{F(G)} \ell(f)$ for the second in each equation, and then 
collect the contributions by vertices and by faces.
\end{proof}

The initial charge assigned to a vertex or face is the corresponding term
in the equation being used.  The initial charges are not degree or length,
but rather an adjustment of those quantities based on Euler's Formula.
A vertex or face now is ``happy'' when it reaches nonnegative charge.  When
specified configurations are assumed not to occur, making every vertex and face
happy provides a contradiction in the same way as with degree charging;
it makes the left side nonnegative, while the right side is negative.

In principle, any result provable by one of these discharging methods can also
be proved by the others.  However, depending on the context, one type of
discharging may lead to simpler proofs than the others.
For triangulations, such as in the Four Color Problem, vertex charging is
appropriate.  All the faces have charge $0$, and often they can be ignored.
In this situation, vertex charging is very much equivalent to degree charging,
and such proofs can be phrased equally well using either approach.
For $3$-regular plane graphs, face charging is appropriate, with each vertex
given charge $0$.  Under balanced charging when $G$ and its dual $G^*$ are
simple, $3$-vertices and $3$-faces are the only objects needing charge; those
with degree or length at least $5$ have spare charge to give away.

In subsequent sections we will present some results about planar graphs that
use additional tools of discharging.  Here we emphasize the classical topic of
``light subgraphs'', that is, subgraphs of small weight (small degree-sum).
As we have seen, light subgraphs can be reducible configurations for coloring
probems; the topic was surveyed by Jendrol' and Voss~\cite{JV}.  We prove some
results by balanced charging or face charging that were originally proved by
vertex charging.

The best-known result on light edges is Kotzig's Theorem~\cite{Ko}: every
$3$-connected planar graph has an edge of weight at most $13$.
A {\it normal plane map} is a plane multigraph such that every vertex degree
and face length is at least $3$; in particular, every plane graph with
minimum degree at least $3$ is a normal plane map.  Jendrol'~\cite{Je,JV} gave
a short proof that every normal plane map $G$ has an edge with weight at most
11 or a $3$-vertex with a $10^-$-neighbor.  We modify the proof by Jendrol' to
obtain the earlier stronger extension of Kotzig's Theorem by Borodin~\cite{B89}.

\begin{lemma}[Borodin~\cite{B89}]\label{kotzig}\label{3alt4}
Every normal plane map $G$ has an edge with weight at most 11 or a $4$-cycle
through two $3$-vertices and a common $10^-$-neighbor.
\end{lemma}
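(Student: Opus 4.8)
The plan is to argue by contradiction via discharging, following and extending Jendrol's proof. Assume $G$ is a connected normal plane map in which every edge has weight at least $12$ and which contains no $4$-cycle passing through two $3$-vertices and a common $10^-$-neighbor. First I record the consequences of the edge-weight bound: every neighbor of a $3$-vertex is a $9^+$-vertex, every neighbor of a $4$-vertex is an $8^+$-vertex, every neighbor of a $5$-vertex is a $7^+$-vertex, and every $d$-vertex with $d\ge 7$ has all neighbors of degree at least $12-d$; in particular a $7$-vertex has no $3$-neighbor or $4$-neighbor, and a $10^-$-neighbor of a $3$-vertex necessarily has degree $9$ or $10$. I also record two structural facts to be used repeatedly: (a) around any vertex, two $5^-$-neighbors cannot be consecutive in the rotation with a triangular face between them, since that edge would have weight at most $10$; and (b) if $y$ is a $10^-$-vertex adjacent to two $3$-vertices $x_1,x_2$, then $x_1$ and $x_2$ have no common neighbor other than $y$, since $x_1,y,x_2,w$ would otherwise be a forbidden $4$-cycle.

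Next I set up vertex charging: give each vertex $v$ charge $d(v)-6$ and each face $f$ charge $2\ell(f)-6$, so that $\sum_{v\in V(G)}(d(v)-6)+\sum_{f\in F(G)}(2\ell(f)-6)=-12$ by Euler's Formula. The only objects with negative charge are $3$-, $4$-, and $5$-vertices; every face has nonnegative charge, $3$-faces have charge exactly $0$, and $4^+$-faces have charge at least $2$. The discharging rules, designed to make every vertex and face nonnegative, will have roughly this shape: each $5$-vertex takes $\frac15$ from each neighbor; each $4$-vertex takes $\frac12$ from each neighbor; each $3$-vertex takes (close to) $1$ from each neighbor; and each $4^+$-face distributes its charge equally among its incident vertices (so each $4$-face sends $\frac12$ to each incident vertex, longer faces more), leaving every face at charge $0$. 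Then I verify happiness. The $4$- and $5$-vertices are happy at once, since all their neighbors are large enough ($8^+$, respectively $7^+$) to be governed by the rules. The $7^+$-vertices are handled by a routine case analysis on degree: fact (a) bounds the number of $5^-$-neighbors a $d$-vertex must feed, and whenever that bound is nearly attained the vertex is incident to enough $4^+$-faces to cover the difference; one checks $d\in\{7,8,9,10\}$ and $d\ge 11$ separately, with $d\ge 11$ comfortable because $d-6$ already dominates $\FL{d/2}$.

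The heart of the argument, and the \emph{main obstacle}, is a $3$-vertex with a neighbor $y$ of degree $9$ or $10$ — exactly the situation that cannot occur in Jendrol's weaker statement, where every neighbor of a $3$-vertex has degree at least $11$. One must show such a $y$ cannot be overloaded with $3$-neighbors. Using facts (a) and (b): between two consecutive $3$-neighbors of $y$ in the rotation, a single intervening neighbor flanked by triangular faces would be a common neighbor of the two $3$-vertices (forbidden $4$-cycle), and a triangular or quadrilateral face directly between them is likewise impossible (light edge, or forbidden $4$-cycle); so each such gap either absorbs at least two further neighbors of $y$ or is bordered by an incident face of length at least $5$. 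Counting these gaps against $d(y)\in\{9,10\}$ forces the number of $3$-neighbors of $y$ to be small, and any remaining deficit is met by the surplus of those incident $4^+$-faces. Making this bookkeeping precise — fixing exactly how much a $3$-vertex pulls from a degree-$9$ or $10$ neighbor versus from incident faces, and checking that no helper face is over-drawn by two different low-degree vertices — is the real work, and is also why the lemma must be stated with this particular $4$-cycle configuration rather than the cleaner ``a $3$-vertex with a $10^-$-neighbor.'' Once every vertex and face is nonnegative, the total charge is nonnegative, contradicting $-12$, and the lemma follows.
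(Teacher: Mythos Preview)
Your overall plan is sound, but you are missing the paper's key simplification, and without it you have not actually closed the argument. Before any discharging, the paper reduces to a triangulation by repeatedly adding a chord across any $4^+$-face. Adding a chord cannot create a light edge (both endpoints gain degree), and it cannot create the forbidden $4$-cycle either: any $3$-vertex of the new graph was already a $3$-vertex with the same neighborhood in the old graph, since neither endpoint of an added chord can have degree $3$ afterward; hence all four edges of a purported new $4$-cycle through two $3$-vertices were already present, and the common $10^-$-neighbor had degree at most $10$ before as well. So one may assume every face is a triangle.

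Once that reduction is in place, everything you wrote about $4^+$-faces --- how much a $3$-vertex should pull from faces versus neighbors, gaps ``bordered by a $5^+$-face'', and checking that no helper face is over-drawn --- evaporates. A single rule suffices: each $5^-$-vertex $v$ takes $\frac{6-d(v)}{d(v)}$ from every $7^+$-neighbor. For degrees $7$, $8$, and $\ge 11$ the check is the one-liner you already sketched. For a vertex $v$ of degree $9$ or $10$, the neighbors of $v$ now form an actual cycle, and your facts (a) and (b) combine to force any two $3$-neighbors of $v$ to be at distance at least $3$ on that cycle: distance $1$ gives a light edge, and distance $2$ with intermediate neighbor $z$ gives the $4$-cycle $x_1,v,x_2,z$ with $v$ itself the $10^-$-vertex. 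Thus $v$ has at most three $3$-neighbors, and every neighbor of $v$ adjacent on the cycle to a $3$-vertex has degree at least $9$ and takes nothing. A short case count (three $3$-neighbors; two $3$-neighbors and up to two $4$-neighbors; etc.) then shows $v$ loses at most $d(v)-6$.

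So the part you flagged as ``the real work'' is entirely avoidable. Your route --- keeping faces in play and letting long faces subsidize low-degree vertices --- is not wrong in spirit and might be pushed through, but it is substantially harder than the paper's, and as written it remains a sketch rather than a proof.
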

\begin{proof}
Assume that $G$ has no light edge.  If any face $F$ has length more than $3$,
then adding a chord joining the neighbors along $F$ of a vertex on $F$ with
least degree cannot introduce a light edge.  Hence we may assume that every
face has length $3$.  Use vertex charging.

\medskip\noindent
(R1) Every $5^-$ vertex $v$ takes charge $\FR{6-d(v)}{d(v)}$ from each
$7^+$-neighbor.

\medskip
Since $G$ is a triangulation with no edges of weight at most $11$, a $k$-vertex
loses charge to at most $\FL{\FR k2}$ vertices.  Since a $7$-vertex loses
charge only to $5$-vertices, it loses at most $3\cdot \FR15$ and remains
positive.  An $8$-vertex sends charge only to $4^+$-vertices and loses at most
$4\cdot\FR12$, remaining nonnegative.  For $d(v) \ge 9$, neighbors of $v$ may
have degree $3$ and take charge $1$; but the final charge is at least 
$\ceil{\frac{d(v)}2}  − 6$, which is nonnegative when $d(v) \ge 11$.

Finally, suppose $d(v)\in\{9,10\}$.  Since faces have length $3$, the neighbors
of $v$ form a closed walk of length $d(v)$ when followed in order, and each
$3$-vertex appears only once in this walk.  With light edges and the specified 
$4$-cycles forbidden, $3$-vertices must be separated by at least three steps
along this walk.

Since there are no light edges, each $9$-vertex has at most four
$5^-$-neighbors.  If it has at least three $3$-neighbors, then it has exactly
three and loses charge to no other vertices.  Hence a $9$-vertex loses at
most $\max\{3\cdot 1,2\cdot 1+2\cdot\FR12\}$ and ends happy.  Similarly,
a $10$-vertex has at most five $5^-$-neighbors, and if it has at least three
$3$-neighbors then it has at most four $5^-$-neighbors.  It loses at most
$\max\{4\cdot 1,2\cdot 1+3\cdot\FR12\}$ and ends happy.
\end{proof}

We give an application of Lemma~\ref{3alt4}.  A {\it decomposition} of a graph
expresses it as a union of edge-disjoint subgraphs; its {\it size} is the
number of subgraphs.  The {\it arboricity} of a graph $G$, written
$\Upsilon(G)$, is the minimum size of a decomposition of $G$ into forests.  A
{\it linear forest} is a forest whose components are paths.  The {\it linear
arbority}, written $\larb(G)$, is the minimum size of a decomposition into
linear forests.

Trivially $\larb(G)\ge \CL{\Delta(G)/2}$, but equality for $2r$-regular graphs
would need $2$-regular color classes, which would contain cycles.  Akiyama,
Exoo, and Harary~\cite{AEH1,AEH2} conjectured that always
$\larb(G) \le \ceil{(\Delta(G)+1)/2}$.  Together, \cite{AEH1,AEH2,EP,Gul}
proved it for $\Delta(G)\in\{1,2,3,4,5,6,8,10\}$.  Given $\epsilon>0$,
Alon~\cite{Alon} proved $\larb(G)\le(\FR12+\epsilon)\Delta(G)$ for $\Delta(G)$
sufficiently large.  When $G$ is planar, the conjecture was proved for
$\Delta(G)\ge9$ by Wu~\cite{jlw} (presented below) and for $\Delta(G)=7$ by Wu
and Wu~\cite{WW}, so the proof is complete for planar graphs.


\begin{theorem}[Wu~\cite{jlw}]\label{larbthm}
If $G$ is a planar graph with $\Delta(G)\ge9$, then
$\larb(G) \le \ceil{\FR{\Delta(G)+1}2}$.
\end{theorem}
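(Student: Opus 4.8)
The plan is to argue by induction on $|E(G)|$ and to reduce, via Lemma~\ref{kotzig}, to two easily described reducible configurations. Fix $\Delta\ge9$ and set $k=\ceil{\FR{\Delta+1}2}$, so that $k\ge5$ and $\Delta\le 2k-1$. I would actually prove the stronger statement that \emph{every} planar graph $G$ with $\Delta(G)\le\Delta$ satisfies $\larb(G)\le k$, so that the induction hypothesis applies to every subgraph. It is convenient to regard a decomposition into $k$ linear forests as an edge-coloring with $k$ colors in which every color class is a disjoint union of paths; the colors \emph{forbidden} at an edge are those already used twice at one of its endpoints together with those whose monochromatic path would be closed into a cycle by coloring that edge.

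First I would dispose of the low-connectivity and low-degree cases. If $G$ is disconnected or has a cut vertex $v$, one takes linear $k$-decompositions of the two pieces and relabels the colors of one piece so that at $v$ no color has degree exceeding $2$; this is possible because $d_G(v)\le 2k-1$ leaves enough room to match the doubly-used colors of one side against colors unused at $v$ on the other. So we may assume $G$ is $2$-connected. A $1$-vertex is reducible: delete its edge, recolor $G-e$ by induction, and re-insert $e$ with one of the at least $k-(k-1)\ge1$ colors not used twice at its other endpoint. A $2$-vertex $v$ with neighbours $x,y$ is reducible similarly: colour $G-v$ by induction, give $vx$ a colour $c_1$ not doubly used at $x$, and give $vy$ a colour not doubly used at $y$ and distinct from $c_1$, unless $c_1$ occurs $0$ times at $y$ (in which case $c_1$ is also permissible, since then it cannot close a monochromatic cycle); a short count using $\Delta\le 2k-1$ shows such a choice exists. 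Hence we may assume $\delta(G)\ge3$, so that $G$ together with its planar embedding is a normal plane map.

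Now Lemma~\ref{kotzig} gives two cases: $G$ has an edge of weight at most $11$, or $G$ has a $4$-cycle $x_1y_1x_2y_2$ with $d(x_1)=d(x_2)=3$ and $d(y_1)\le10$. In each case I would delete a small piece, recolour the smaller graph by induction, and extend. For a weight-$\le11$ edge $uv$ with $d(u)\le5$, delete $uv$ and recolour $G-uv$; if no colour is directly available at $uv$, the obstruction is a colour whose monochromatic path joins $u$ to $v$, and a Kempe-type swap along such a path (exchanging it with a colour scarce at $u$) frees a colour, the small degree-sum keeping the bookkeeping tight enough for this to succeed. For the $3$-alternating $4$-cycle, delete the two $3$-vertices $x_1,x_2$ (six edges), recolour $G-\{x_1,x_2\}$ by induction, and colour the six edges back: first colour the two pendant edges $x_iz_i$ to the outside neighbours of $x_i$ (each has at least one available colour, since at most $k-1$ colours are doubly used at $z_i$), then colour the four cycle-edges so that $x_1,x_2,y_1,y_2$ all remain happy and the $4$-cycle is not monochromatic — here the degree-$3$ vertices $x_1,x_2$ give ample colour freedom and the bound $d(y_1)\le10$ keeps the number of colours forbidden at $y_1$ below $k$, with a swap again available if a direct choice fails. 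Either reducibility contradicts the minimality of $G$, which completes the proof.

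The hard part is precisely the reducibility of these two configurations, and more specifically the requirement, peculiar to linear arboricity, that every colour class stay \emph{acyclic} as well as of maximum degree at most $2$: a purely greedy re-colouring of the deleted edges does not suffice (for a weight-$11$ edge the naive count of forbidden colours can already reach $k$), so the arguments must permit local recolouring along monochromatic paths to prevent a cycle from closing, and one must check that in every degree pattern the relevant counts stay below $k$. The only other slightly delicate ingredient is the cut-vertex relabeling lemma, which is standard but must be stated with the correct degree inequality. All of the discharging content is imported through Lemma~\ref{kotzig} (which was itself proved by discharging), so no new discharging argument is needed.
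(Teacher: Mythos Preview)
Your overall architecture---induction on $|E(G)|$, with the two reducible configurations supplied by Lemma~\ref{kotzig}---is exactly the paper's.  But you have misjudged where the work lies, and in doing so you miss the one idea that makes the $4$-cycle case tractable.

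\textbf{The light-edge case needs no Kempe swaps.}  If $uv$ has weight at most $11\le 2k-1$, then in $G-uv$ the quantity $d(u)+d(v)$ is at most $9<2k$.  A color is unusable on $uv$ only if it appears twice at $u$, twice at $v$, or once at each; in every such case that color accounts for at least two of the fewer than $2k$ edge-ends at $\{u,v\}$, so there are fewer than $k$ such colors and one is directly available.  Your ``naive count reaching $k$'' comes from adding three separate upper bounds that cannot be attained simultaneously.  This observation in fact shows that any edge of weight at most $2k+1$ is reducible---a stronger statement than you need from the lemma, and the one the paper actually uses.  (Your separate treatment of $1$- and $2$-vertices and of cut vertices is then superfluous: any edge at a $2^-$-vertex is light.)

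\textbf{The $4$-cycle case hinges on a degree rigidity you have not identified.}  Once every edge has weight at least $2k+2$, each neighbor of a $3$-vertex has degree at least $2k-1$; since $\Delta\le 2k-1$, each of $x,y,u',v'$ has degree \emph{exactly} $2k-1$.  In a linear $k$-decomposition of $G-\{u,v\}$ this means $x$ and $y$ have $2k-3$ colored edges and $u',v'$ have $2k-2$; a pigeonhole shows that at most one color is missing at each of these four vertices.  The paper's extension argument then proceeds by a short case analysis on which of $x,y$ are ``bad'' (missing a color), and never requires recoloring the smaller graph.  Your proposal instead leans on the $d(y_1)\le 10$ clause of Lemma~\ref{kotzig}, but that bound is a red herring here: it does not give the uniform control over all four vertices that the case analysis needs, and for $k\ge 6$ it is strictly weaker than ``degree exactly $2k-1$''.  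The vague appeal to ``a swap again available if a direct choice fails'' is not a substitute for this structure.
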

\begin{proof}
We show for $t\ge5$ that every planar graph $G$ with $\Delta(G)<2t$ decomposes
into $t$ linear forests.  View the linear forests as $t$ color classes in an
edge-coloring.

For an edge $uv$ with weight at most $2t+1$, consider such a decomposition of
$G-uv$.  Since $d_{G-uv}(u)+d_{G-uv}(v)<2t$, fewer than $t$ colors appear twice
at $u$ or twice at $v$ or once at each.  Thus a color is available at $uv$ to
extend the decomposition to $G$.

Hence we may assume weight at least $2t+2$ for every edge.  Since
$\Delta(G)<2t$, this yields $\delta(G)\ge3$.  Now Lemma~\ref{3alt4} yields a
$4$-cycle $[u,x,v,y]$ in $G$ with $d(u)=d(v)=3$.  Let $u'$ and $v'$ be the
remaining neighbors of $u$ and $v$, respectively (see Figure~\ref{figlarb}).
Note that $\{u,v\}=\{u',v'\}$ is forbidden, but $u'=v'$ is possible, requiring
a similar analysis that we omit.
\looseness-1

\begin{figure}[h]
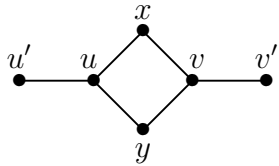

\gpic{
\expandafter\ifx\csname graph\endcsname\relax \csname newbox\endcsname\graph\fi
\expandafter\ifx\csname graphtemp\endcsname\relax \csname newdimen\endcsname\graphtemp\fi
\setbox\graph=\vtop{\vskip 0pt\hbox{%
    \graphtemp=.5ex\advance\graphtemp by 0.362in
    \rlap{\kern 0.103in\lower\graphtemp\hbox to 0pt{\hss $\bu$\hss}}%
    \graphtemp=.5ex\advance\graphtemp by 0.362in
    \rlap{\kern 0.491in\lower\graphtemp\hbox to 0pt{\hss $\bu$\hss}}%
    \graphtemp=.5ex\advance\graphtemp by 0.621in
    \rlap{\kern 0.750in\lower\graphtemp\hbox to 0pt{\hss $\bu$\hss}}%
    \graphtemp=.5ex\advance\graphtemp by 0.362in
    \rlap{\kern 1.009in\lower\graphtemp\hbox to 0pt{\hss $\bu$\hss}}%
    \graphtemp=.5ex\advance\graphtemp by 0.362in
    \rlap{\kern 1.397in\lower\graphtemp\hbox to 0pt{\hss $\bu$\hss}}%
    \graphtemp=.5ex\advance\graphtemp by 0.103in
    \rlap{\kern 0.750in\lower\graphtemp\hbox to 0pt{\hss $\bu$\hss}}%
    \special{pn 11}%
    \special{pa 491 362}%
    \special{pa 750 621}%
    \special{fp}%
    \special{pa 750 621}%
    \special{pa 1009 362}%
    \special{fp}%
    \special{pa 1009 362}%
    \special{pa 750 103}%
    \special{fp}%
    \special{pa 750 103}%
    \special{pa 491 362}%
    \special{fp}%
    \special{pa 103 362}%
    \special{pa 491 362}%
    \special{fp}%
    \special{pa 1009 362}%
    \special{pa 1397 362}%
    \special{fp}%
    \graphtemp=.5ex\advance\graphtemp by 0.259in
    \rlap{\kern 0.103in\lower\graphtemp\hbox to 0pt{\hss $u'$\hss}}%
    \graphtemp=.5ex\advance\graphtemp by 0.259in
    \rlap{\kern 0.466in\lower\graphtemp\hbox to 0pt{\hss $u$\hss}}%
    \graphtemp=.5ex\advance\graphtemp by 0.724in
    \rlap{\kern 0.750in\lower\graphtemp\hbox to 0pt{\hss $y$\hss}}%
    \graphtemp=.5ex\advance\graphtemp by 0.259in
    \rlap{\kern 1.397in\lower\graphtemp\hbox to 0pt{\hss $v'$\hss}}%
    \graphtemp=.5ex\advance\graphtemp by 0.259in
    \rlap{\kern 1.034in\lower\graphtemp\hbox to 0pt{\hss $v$\hss}}%
    \graphtemp=.5ex\advance\graphtemp by 0.000in
    \rlap{\kern 0.750in\lower\graphtemp\hbox to 0pt{\hss $x$\hss}}%
    \hbox{\vrule depth0.724in width0pt height 0pt}%
    \kern 1.500in
  }%
}%
}

\vspace{-1pc}

\caption{Reducible configuration for Theorem~\ref{larbthm}\label{figlarb}}
\end{figure}

The weight restrictions and $\Delta(G)\le2t-1$ imply that each of $x,y,u',v'$
has degree $2t-1$ in $G$.  Hence a linear $t$-decomposition of $G-\{u,v\}$
has $2t-3$ colored edges at both $x$ and $y$ and $2t-2$ colored edges at both
$u'$ and $v'$.  We conclude that at most one color is missing at each of these
vertices; call a vertex ``bad'' if a color is missing.  For a vertex $z$, let
$C(z)$ and $C'(z)$ denote the sets of colors appearing at $z$ exactly once and
at most once, respectively.

{\it Case 1: Neither $x$ nor $y$ is bad.}
Here $|C(x)|=|C(y)|=3$.
First color $uu'$ from $C'(u')$, then $ux$ from $C(x)$, and then $uy$ from
$C(y)$.  Make the colors on $ux$ and $uy$ differ.  If $u'$ is bad, then the
color on $uu'$ may be used on $ux$ or $uy$, but otherwise the three edges
have distinct colors.  Two colors from each of $C(x)$ and $C(y)$ remain, and we
ensure these remaining pairs are not equal.  Now we can choose distinct
colors on the three edges at $v$.

{\it Case 2: One of $x$ and $y$ is bad.}
We may assume by symmetry that $x$ is bad; still $|C(y)|=3$.  Give $ux$ the
color missing at $x$, and give $xv$ the one color in $C(x)$.  Color $vv'$ from
$C'(v')$, different from the color on $xv$ if $v'$ is not bad.  Now color $vy$
from $C(y)$ avoiding the colors on $xv$ and $vv'$.  Now color $uu'$ from
$C'(u')$, and choose a color for $uy$ from $C(y)$ avoiding that and the color
on $vy$.  Note that $uu'$ or $uy$ may have the same color as $ux$.

{\it Case 3: $x$ and $y$ are both bad.}
If the one missing color at each of $x$ and $y$ is the same, then use it on
$ux$ and $vy$.  Now use the color in $C(x)$ on $xv$ and the color in $C(y)$
on $yu$.  If $u'$ is bad, then its missing color can be used on $uu'$; 
otherwise, color $uu'$ from $C(u')$ to avoid the color on $yu$.  The
symmetric argument applies to color $vv'$.

If the missing colors at $x$ and $y$ are different, then use the color missing
at $x$ on $ux$ and $xv$, and use the color missing at $y$ on $uy$ and $yv$.
If a color is missing at $u'$, use it on $uu'$, and then use any color from
$C'(v')$ on $vv'$.  By symmetry, then, only $|C(u')|=|C(v')|=2$ remains, and it
suffices to give $uu'$ and $vv'$ distinct colors from these sets.
\end{proof}

When attention is confined to planar graphs, the concern about regular graphs
with large even degree vanishes.  Wu~\cite{jlw} proved that $\larb(G)=\cdgh$
when $G$ is planar with $\Delta(G)\ge 13$ (see Exercise~\ref{wu}).  Cygan,
Kowalik, and Lu\v{z}ar~\cite{CKL} proved that $\Delta(G)\ge 10$ suffices
and conjectured that $\Delta(G)\ge 6$ suffices.


A famous result on light subgraphs concerns light triangles in planar graphs
with minimum degree $5$.  It was conjectured for triangulations by Kotzig and
proved in stronger form by Borodin; see Exercise~\ref{trisharp} for a sharpness
example.  A further strengthening to a more detailed statement was proved
by Borodin and Ivanova~\cite{BI13b}.  We sketch the proof here because an
interesting wrinkle in the discharging rules can be viewed as ``redirecting''
charge.

\begin{theorem}[Borodin~\cite{B1989}]\label{lighttri}
If $G$ is a simple plane graph with $\delta(G)\ge5$, then $G$ has a $3$-face
with weight at most $17$, and the bound is sharp.
\end{theorem}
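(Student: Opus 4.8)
The plan is to argue by contradiction using discharging, and then exhibit a sharpness example. Suppose $G$ is a simple plane graph with $\delta(G)\ge5$ in which every $3$-face has weight at least $18$. I would first reduce to a near-triangulation: among all such graphs take one with the maximum number of edges. Fanning any face of length at least $5$ from a single vertex adds edges while keeping $\delta(G)\ge5$, and the fan apex then has degree at least $7$, forcing every new triangle to have weight at least $18$; hence $G$ has no face of length at least $5$. Similarly, a $4$-face incident to a $6^+$-vertex admits a chord whose two new triangles still have weight at least $18$, so (after the usual care about multi-edges and connectivity) the only non-triangular faces of $G$ are $4$-faces all of whose incident vertices are $5$-vertices; apart from these controlled $4$-faces, $G$ is a triangulation.

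Next I would record the local consequences of the hypothesis. For a $5$-vertex $v$ with cyclic neighbours $u_1,\dots,u_5$, each incident $3$-face gives $d(u_i)+d(u_{i+1})\ge13$, so the $6^-$-neighbours of $v$ form an independent set in the pentagon $u_1u_2\cdots u_5$; thus $v$ has at least three $7^+$-neighbours. Likewise a $6$-vertex or $7$-vertex has at most three $5$-neighbours (two consecutive $5$-neighbours would force weight at most $17$), whereas an $8$-vertex may have as many as eight $5$-neighbours, since $5+5=10\ge18-8$. Now use vertex charging: charge $d(v)-6$ on vertices and $2\ell(f)-6$ on faces, total $-12$. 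The only objects with negative initial charge are the $5$-vertices (charge $-1$), while faces of length at least $4$ (charge at least $2$) and $7^+$-vertices (charge at least $1$) carry surplus.

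The discharging rules push charge toward $5$-vertices: a $5$-vertex splits its needed unit evenly among its $7^+$-neighbours (taking $\tfrac13$, $\tfrac14$, or $\tfrac15$ according as it has three, four, or five of them), and each $4$-face gives $\tfrac12$ to each incident $5$-vertex. A $7$-vertex, and any vertex of degree at least $9$, has enough surplus to satisfy every demand. The delicate case — and, I expect, the main obstacle — is the $8$-vertex: its surplus is only $2$, yet naively it may be asked for $\tfrac13$ by each of up to eight $5$-neighbours. Resolving this needs a ``redirection'' wrinkle in the spirit of the rerouting in Lemma~\ref{girth7}: a $5$-vertex requires the full $\tfrac13$ from an $8$-neighbour $w$ only when it has exactly three $7^+$-neighbours, and one checks this forces a rigid pattern of weight-$18$ faces of type $(8,5,5)$ around $w$; following that pattern, part of each such demand can be redirected off $w$ (toward incident $4$-faces, or across the rigid region) so that every vertex and every face ends with nonnegative charge. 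Summing the final charges then yields a nonnegative total, contradicting the charging identity's value $-12$.

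Finally, for sharpness I would exhibit, for infinitely many orders, a triangulation with $\delta=5$ in which no $3$-face carries two $5$-vertices: the pentakis dodecahedron (the dual of the truncated icosahedron) has twelve pairwise non-adjacent $5$-vertices and twenty $6$-vertices, and the finer geodesic triangulations of the sphere keep exactly twelve $5$-vertices, pairwise far apart, with all other vertices of degree $6$. In such a graph every $3$-face is of type $(5,6,6)$ or $(6,6,6)$, hence of weight at least $17$, so the bound $17$ cannot be lowered to $16$.
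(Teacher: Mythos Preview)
Your framework matches the paper's: pass to an edge-maximal counterexample so that any $4^+$-face is bounded entirely by $5$-vertices, use vertex charging, let $4^+$-faces give $\tfrac12$ to incident $5$-vertices, and make $7^+$-vertices feed their $5$-neighbours. Your sharpness example is also the paper's (the pentakis dodecahedron is exactly ``put a degree-$5$ vertex in each face of the dodecahedron'').

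The gap is the $8$-vertex case, and your proposed ``redirection'' does not work as stated. In the bad configuration you correctly identified---an $8$-vertex $w$ whose eight neighbours $v_1,\dots,v_8$ are all $5$-vertices---each $v_i$ has cyclic neighbourhood $v_{i-1},w,v_{i+1},x_i,y_i$, and the weight-$18$ constraint on the faces $v_iv_{i+1}x_i$ and $v_iv_{i-1}y_i$ forces $d(x_i),d(y_i)\ge8$. So every $v_i$ has exactly three $7^+$-neighbours and demands $\tfrac13$ from $w$; the total is $\tfrac83>2$. There are no $4$-faces anywhere in this picture (neither at $w$ nor at any $v_i$), so your first escape route is closed. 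And each $x_i$ could itself be an $8$-vertex with the same problem, so ``across the rigid region'' does not obviously terminate.

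The paper's fix is a genuine change of rule, not a patch: an $8^+$-vertex does not give $\tfrac14$ to each $5$-neighbour, but rather gives $\tfrac14$ \emph{through each incident $3$-face}, split equally among the $5$-vertices on that face. This caps the donation of an $8$-vertex at $8\cdot\tfrac14=2$ automatically. On the receiving end, a $5$-vertex $v$ with two $5$-neighbours and three $8^+$-neighbours gets $\tfrac18$ through each of the four faces it shares with a $5$-vertex (the $8^+$-vertex on that face splits its $\tfrac14$) and $\tfrac12$ through the one face flanked by two $8^+$-vertices, totalling exactly $1$. Routing through faces, rather than directly to neighbours, is the missing idea.
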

\begin{proof}
(sketch) For sharpness, add a vertex in each face of the dodecahedron joined to
the vertices of that face.  The new vertices have degree 5, and the old ones
now have degree 6.  Every face has one new vertex and two old vertices for
total weight 17.

To prove the bound, consider an edge-maximal counterexample $G$.  Every vertex
on a $4^+$-face has degree $5$, since adding a triangular chord at a
$6^+$-vertex would create only heavy $3$-faces, producing a counterexample
containing $G$.

If there is no light triangle, then use vertex charging and move charge as
follows:

\medskip\noindent
(R1) Each $4^+$-face gives $\FR12$ to each incident vertex.

\noindent
(R2) Each $7$-vertex gives $\FR13$ to each $5$-neighbor.

\noindent
(R3) Each $8^+$-vertex gives $\FR14$ through each incident $3$-face to its
$5$-neighbors on that face, split equally if there are two such neighbors.
\medskip

It suffices to show that final charges are nonnegative.  The discharging rules
are chosen so that $4^+$-faces and $7^+$-vertices do not give away too much
charge.  (Maximality of the counterexample puts a $7$-vertex on seven
triangles, and absence of light triangles then restricts a $7$-vertex to have
at most three $5$-neighbors).

Hence the task is to prove that a $5$-vertex $v$ gains charge $1$.  When all
incident faces are triangles, avoiding light triangles restricts $v$ to have at
most two $5$-neighbors.  It also forces the other neighbors to be
$8^+$-vertices when $v$ has two $5$-neighbors.  In this and the remaining
cases (such as when $v$ lies on a $4^+$-face), it is easy to check that $v$
receives enough charge.
\end{proof}

In each discharging rule in Theorem~\ref{lighttri}, the charge given away is
the most that object can afford to lose.  Only $5$-vertices need to gain charge.
Hence it would be natural to have $8^+$-vertices give $\FR14$ to each
$5$-neighbor.  However, this would give only $\FR34$ to a $5$-vertex having two
$5$-neighbors and three $8$-neighbors.  The $5$-vertex needs to get more when
it has two $8$-neighbors on a single triangle.  Guiding the charge from
$8^+$-neighbors through the incident triangles is a way to arrange that.

\medskip
Our next structure theorem is a stronger version of the $5$-degeneracy that
follows from $\mad(G)<6$ for planar graphs.  It has several applications.
 
\begin{lemma}\label{12vert}
Every planar graph has a $5^-$-vertex with at most two $12^+$-neighbors.
\end{lemma}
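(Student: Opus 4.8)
The plan is to avoid discharging altogether and give a short double‑counting proof. Suppose, for contradiction, that $G$ is a planar graph in which \emph{every} $5^-$‑vertex has at least three $12^+$‑neighbors. First I would clear away trivialities: if $\delta(G)\le 2$, then a minimum‑degree vertex is a $5^-$‑vertex with at most two neighbors, so the statement holds; hence we may assume $\delta(G)\ge 3$, which forces $|V(G)|\ge 4$. Since the degree of a vertex is the same in $G$ as in its component, we may also assume $G$ is connected. Now partition $V(G)$ into the set $A$ of vertices of degree at most $5$, the set $C$ of vertices of degree $6$ through $11$, and the set $B$ of vertices of degree at least $12$ (so $A\cap B=\varnothing$). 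Because a planar graph always has a vertex of degree at most $5$, we have $A\ne\varnothing$, and hence also $|B|\ge3$ (any vertex of $A$ needs three distinct $B$‑neighbors).

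The proof then rests on two opposing estimates of $|A|$ in terms of $|B|$. The first uses global sparsity: from $2|E(G)|=\sum_{v}d(v)\ge 3|A|+6|C|+12|B|$ together with $|E(G)|\le 3|V(G)|-6$, a short simplification (cancelling the $6|C|$ terms) should give $|A|\ge 2|B|+4$. The second uses sparsity of a bipartite subgraph: let $H$ be the subgraph of $G$ whose edges are exactly those joining $A$ to $B$. Then $H$ is bipartite, hence triangle‑free, and planar, and by the standing assumption every vertex of $A$ has degree at least $3$ in $H$, so $|E(H)|\ge 3|A|$. On the other hand a simple triangle‑free planar graph on $p$ vertices has fewer than $2p$ edges, and all vertices of $H$ lie in $A\cup B$, so $|E(H)|<2(|A|+|B|)$. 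Combining, $3|A|<2(|A|+|B|)$, i.e.\ $|A|<2|B|$. But then $2|B|+4\le |A|<2|B|$, which is absurd; so no such $G$ exists, which is exactly the lemma.

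I expect the only point needing care to be the bipartite‑planarity estimate, since it must be applied to a graph $H$ that may be disconnected, may have isolated vertices, or may even be a forest. This is harmless: $H$ is triangle‑free and planar, so every component that contains a cycle satisfies the Euler bound $|E_i|\le 2|V_i|-4$, every acyclic component satisfies $|E_i|=|V_i|-1$, and summing gives $|E(H)|<2|V(H)|$ in all cases (alternatively one may first discard isolated vertices of $H$, noting that every nontrivial component contains an $A$‑vertex and hence has at least four vertices). It is worth remarking \emph{why} discharging is awkward here: with the usual vertex charging, a $12^+$‑vertex all of whose neighbors have degree $3$ has no spare charge to share, and the bipartite‑planarity bound above is precisely the extra structural input that rules out a profusion of such vertices — so the counting proof is the natural route.
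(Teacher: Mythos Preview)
Your proof is correct and genuinely different from the paper's. The paper argues by discharging: it first adds edges to reduce to a triangulation, gives each vertex charge equal to its degree, and has each $5^-$-vertex pull $\frac{6-d(u)}{3}$ from every $12^+$-neighbor. The heart of that argument is controlling how much a $12^+$-vertex $v$ can lose, and this is where the triangulation hypothesis does real work: the neighbors of $v$ form a cycle $C$, and the constraint that every $5^-$-vertex has three $12^+$-neighbors bounds the degree of each $5^-$-vertex in the subgraph of $C$ induced by $5^-$-vertices, so the $5^-$-neighbors of $v$ fall into short paths along $C$ interspersed with $6^+$-vertices. A grouping argument on those paths then shows $v$ loses at most $d(v)/2$.

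Your double-counting route sidesteps all of that local structure. Applying Euler's bound once to $G$ and once to the bipartite $A$--$B$ subgraph is exactly the right pair of global inequalities, and the contradiction $2|B|+4\le|A|<2|B|$ drops out cleanly. What you gain is brevity and the absence of any reduction to triangulations; what the discharging proof gains is that it fits the paper's programme and, as the paper notes, is the template later tightened (by Balogh--Kochol--Pluh\'ar--Yu) to replace $12$ by $11$. Your closing remark that discharging is ``awkward here'' overstates things: the paper's discharging proof works precisely because the triangulation structure rules out the bad scenario you describe (a $3$-vertex on the cycle around $v$ would need all three of its neighbors to be $12^+$-vertices, so its two cycle-neighbors cannot themselves be $5^-$-vertices). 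Still, your approach is a nice alternative and arguably the more natural one if one is not already committed to discharging.
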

\begin{proof}
We may assume that $G$ is a triangulation, since adding an edge cannot 
give any vertex the desired property.

Assume that $G$ has no such vertex, so $\delta(G)\ge3$.  Use degree charging;
note that $\mad(G)<6$.  Every $5^-$-vertex has at least three $12^+$-neighbors.
Let each $5^-$-vertex $u$ take $\FR{6-d(u)}3$ from each $12^+$-neighbor.
Now $5^-$-vertices are happy, and $j$-vertices with $6\le j\le 11$ lose no
charge, so it suffices to show that $12^+$-vertices do not lose too much.

Let $v$ be a $12^+$-vertex.  Since $G$ is a triangulation, the neighbors of $v$
form a cycle $C$, possibly having chords.  Let $H$ be the subgraph of $C$
induced by its vertices having degree at most $5$ in $G$.  Each $5^-$-vertex
$w$ has at least three $12^+$-neighbors, so $d_H(w)\le d_G(w)-3$.  If all
neighbors of $v$ have degree $5$, then $v$ loses $\FR{d(v)}3$ and ends with
$\FR23d(v)$, which is at least $8$.

Otherwise, the components of $H$ are paths (bold in Figure~\ref{figtwo12}).
Combine such a $k$-vertex path $P$ with the next vertex on $C$, which receives
no charge from $v$.  If $k=1$, then $v$ gives at most $1$ to these two
vertices.  If $k>1$, then $v$ gives at most $\FR13$ to internal vertices of $P$
(degree at least $5$) and at most $\FR23$ to its endpoints (degree at least
$4$).  Hence the $k+1$ vertices receive at most $0+2(\FR23)+(k-2)(\FR13)$ from
$v$.  This equals $\FR{k+2}3$, which is less than $\FR{k+1}2$ when $k>1$.
Hence $v$ loses in total at most $\FR{d(v)}2$, leaving at least $6$ when
$d(v)\ge12$.
\end{proof}

\begin{figure}[h]
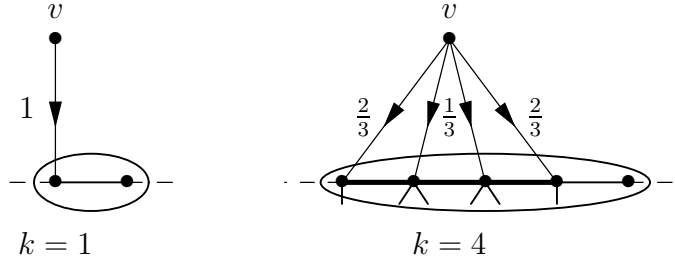

\gpic{
\expandafter\ifx\csname graph\endcsname\relax \csname newbox\endcsname\graph\fi
\expandafter\ifx\csname graphtemp\endcsname\relax \csname newdimen\endcsname\graphtemp\fi
\setbox\graph=\vtop{\vskip 0pt\hbox{%
    \graphtemp=.5ex\advance\graphtemp by 0.150in
    \rlap{\kern 0.300in\lower\graphtemp\hbox to 0pt{\hss $\bu$\hss}}%
    \graphtemp=.5ex\advance\graphtemp by 0.900in
    \rlap{\kern 0.300in\lower\graphtemp\hbox to 0pt{\hss $\bu$\hss}}%
    \graphtemp=.5ex\advance\graphtemp by 0.900in
    \rlap{\kern 0.675in\lower\graphtemp\hbox to 0pt{\hss $\bu$\hss}}%
    \special{pn 8}%
    \special{pa 300 150}%
    \special{pa 300 600}%
    \special{fp}%
    \special{sh 1.000}%
    \special{pa 330 480}%
    \special{pa 300 600}%
    \special{pa 270 480}%
    \special{pa 330 480}%
    \special{fp}%
    \special{pa 300 600}%
    \special{pa 300 900}%
    \special{fp}%
    \special{pn 11}%
    \special{pa 300 900}%
    \special{pa 675 900}%
    \special{fp}%
    \special{pn 8}%
    \special{pa 300 900}%
    \special{pa 0 900}%
    \special{da 0.075}%
    \special{pa 675 900}%
    \special{pa 975 900}%
    \special{da 0.075}%
    \special{pn 11}%
    \special{ar 488 900 300 150 0 6.28319}%
    \graphtemp=.5ex\advance\graphtemp by 0.000in
    \rlap{\kern 0.300in\lower\graphtemp\hbox to 0pt{\hss $v$\hss}}%
    \graphtemp=.5ex\advance\graphtemp by 0.525in
    \rlap{\kern 0.150in\lower\graphtemp\hbox to 0pt{\hss 1\hss}}%
    \graphtemp=.5ex\advance\graphtemp by 1.237in
    \rlap{\kern 0.300in\lower\graphtemp\hbox to 0pt{\hss $k=1$\hss}}%
    \graphtemp=.5ex\advance\graphtemp by 0.150in
    \rlap{\kern 2.362in\lower\graphtemp\hbox to 0pt{\hss $\bu$\hss}}%
    \graphtemp=.5ex\advance\graphtemp by 0.900in
    \rlap{\kern 1.800in\lower\graphtemp\hbox to 0pt{\hss $\bu$\hss}}%
    \graphtemp=.5ex\advance\graphtemp by 0.900in
    \rlap{\kern 2.175in\lower\graphtemp\hbox to 0pt{\hss $\bu$\hss}}%
    \graphtemp=.5ex\advance\graphtemp by 0.900in
    \rlap{\kern 2.550in\lower\graphtemp\hbox to 0pt{\hss $\bu$\hss}}%
    \graphtemp=.5ex\advance\graphtemp by 0.900in
    \rlap{\kern 2.925in\lower\graphtemp\hbox to 0pt{\hss $\bu$\hss}}%
    \graphtemp=.5ex\advance\graphtemp by 0.900in
    \rlap{\kern 3.300in\lower\graphtemp\hbox to 0pt{\hss $\bu$\hss}}%
    \special{pn 8}%
    \special{pa 2363 150}%
    \special{pa 2700 600}%
    \special{fp}%
    \special{sh 1.000}%
    \special{pa 2652 486}%
    \special{pa 2700 600}%
    \special{pa 2604 522}%
    \special{pa 2652 486}%
    \special{fp}%
    \special{pa 2700 600}%
    \special{pa 2925 900}%
    \special{fp}%
    \special{pa 2363 150}%
    \special{pa 2475 600}%
    \special{fp}%
    \special{sh 1.000}%
    \special{pa 2475 476}%
    \special{pa 2475 600}%
    \special{pa 2417 491}%
    \special{pa 2475 476}%
    \special{fp}%
    \special{pa 2475 600}%
    \special{pa 2550 900}%
    \special{fp}%
    \special{pa 2363 150}%
    \special{pa 2025 600}%
    \special{fp}%
    \special{sh 1.000}%
    \special{pa 2121 522}%
    \special{pa 2025 600}%
    \special{pa 2073 486}%
    \special{pa 2121 522}%
    \special{fp}%
    \special{pa 2025 600}%
    \special{pa 1800 900}%
    \special{fp}%
    \special{pa 2363 150}%
    \special{pa 2250 600}%
    \special{fp}%
    \special{sh 1.000}%
    \special{pa 2308 491}%
    \special{pa 2250 600}%
    \special{pa 2250 476}%
    \special{pa 2308 491}%
    \special{fp}%
    \special{pa 2250 600}%
    \special{pa 2175 900}%
    \special{fp}%
    \special{pn 28}%
    \special{pa 1800 900}%
    \special{pa 2925 900}%
    \special{fp}%
    \special{pn 11}%
    \special{pa 2925 900}%
    \special{pa 3300 900}%
    \special{fp}%
    \special{pn 8}%
    \special{pa 1800 900}%
    \special{pa 1500 900}%
    \special{da 0.075}%
    \special{pa 3300 900}%
    \special{pa 3600 900}%
    \special{da 0.075}%
    \special{pn 11}%
    \special{ar 2550 900 862 150 0 6.28319}%
    \graphtemp=.5ex\advance\graphtemp by 0.000in
    \rlap{\kern 2.362in\lower\graphtemp\hbox to 0pt{\hss $v$\hss}}%
    \graphtemp=.5ex\advance\graphtemp by 0.562in
    \rlap{\kern 2.362in\lower\graphtemp\hbox to 0pt{\hss $\FR13$\hss}}%
    \graphtemp=.5ex\advance\graphtemp by 1.237in
    \rlap{\kern 2.362in\lower\graphtemp\hbox to 0pt{\hss $k=4$\hss}}%
    \graphtemp=.5ex\advance\graphtemp by 0.562in
    \rlap{\kern 1.912in\lower\graphtemp\hbox to 0pt{\hss $\FR23$\hss}}%
    \graphtemp=.5ex\advance\graphtemp by 0.562in
    \rlap{\kern 2.812in\lower\graphtemp\hbox to 0pt{\hss $\FR23$\hss}}%
    \special{pa 1800 900}%
    \special{pa 1800 1013}%
    \special{fp}%
    \special{pa 2925 900}%
    \special{pa 2925 1013}%
    \special{fp}%
    \special{pa 2100 1013}%
    \special{pa 2175 900}%
    \special{fp}%
    \special{pa 2175 900}%
    \special{pa 2250 1013}%
    \special{fp}%
    \special{pa 2475 1013}%
    \special{pa 2550 900}%
    \special{fp}%
    \special{pa 2550 900}%
    \special{pa 2625 1013}%
    \special{fp}%
    \hbox{\vrule depth1.237in width0pt height 0pt}%
    \kern 3.600in
  }%
}%
}

\vspace{-.8pc}

\caption{Final case for Lemma~\ref{12vert}\label{figtwo12}}
\end{figure}

For an application, consider again the {\it arboricity} $\Upsilon(G)$.
Nash-Williams~\cite{NWi} famously proved that for every graph $G$, the
arboricity equals a trivial lower bound:
always $\Upsilon(G)={\max_{H\esub G}\FR{|E(H)|}{|V(H)|-1}}$.
Although there is a short general proof using matroids, for planar graphs the
existence of $5^-$-vertices permits an inductive decomposition into three
forests.  We include this in Exercise~\ref{planarb} to illustrate a technique
for reducibility with triangulations.

Lemma~\ref{12vert} was improved by Balogh, Kochol, Pluhar, and Yu~\cite{BKPY}
to guarantee a $5^-$vertex having at most two $11^+$-neighbors, proved by a
much longer discharging argument than in Lemma~\ref{12vert}.  The result is
sharp in that ``11'' cannot be replaced by ``10'' (the graph obtained by
adding a vertex of degree $3$ in each face of the icosahedron has three
$10$-neighbors for each $5^-$-vertex).  From this they proved that every
planar graph decomposes into three forests, with one having maximum degree
at most 8.  Lemma~\ref{12vert} allows the degree of the third forest to be
bounded by $9$ (Exercise~\ref{planarb}).  In a somewhat different direction,
a much more detailed strengthening that considers a larger set of light
subgraphs was proved by Borodin and Ivanova~\cite{BI13}; it extends or
strengthens several intermediate results.

The amount by which total charge is negative can be used to prove that actually
many light edges must occur.  Another instance of this technique occurs in
\cite{EHMS}.

\begin{theorem}[Borodin--Sanders~\cite{BS}]\label{manylight}

For any plane graph $G$ with $\delta(G)=5$,
$$2e_{5,5}+e_{5,6}+\FR27e_{5,7}\ge 60,$$
where $e_{i,j}$ is the number of edges with endpoints of degrees $i$ and $j$.
Furthermore, the coefficients in this inequality are sharp.
\end{theorem}
\begin{proof}
Add edges to obtain a triangulation $H$.  No vertex degree decreases, so
$\delta(H)=5$.  Also, since $\delta(G)=5$, no edges incident to vertices having
degree $5$ in $H$ are added.  Hence $e_{5,j}(H)\le e_{5,j}(G)$, and it
suffices to prove the desired lower bounds when $H$ is a triangulation with
minimum degree $5$.

Use vertex charging.  Each $5$-vertex takes $\FR15$ from each $8^+$-neighbor
and $\FR17$ from each $7$-neighbor.  Since $(4/5)j-6>0$ when $j\ge8$, every
$6^+$ vertex ends happy.  Negative charge remains only at $5$-vertices.  Each
$7^-$-neighbor corresponds to an edge along which a $5$-vertex fails to gain
$\FR15$, thereby leaving more negative charge there.  A $7$-neighbor sends
$\FR17$ instead of $\FR15$, thus contributing $\FR{-2}{35}$ to the negative
charge remaining at $5$-vertices.  Edges counted by $e_{5,5}$ affect both
endpoints.  Thus edges counted by $e_{5,5}$, $e_{5,6}$ and $e_{5,7}$ contribute
$\FR{-2}5$, $\FR{-1}5$, and $\FR{-2}{35}$, respectively, to the negative charge
remaining at $5$-vertices.  Since the total charge is $-12$ and there is no
negative charge elsewhere, in units of $\FR{-1}5$ we have
$2e_{5,5}+e_{5,6}+\FR27e_{5,7}\ge60$.  (Another proof sends all charge from
vertices to edges so that each vertex ends with charge $0$ and only light
edges have negative charge.)

Equality requires that no positive charge is left anywhere, since that would
require more negative charge left at $5$-vertices.  Hence a sharpness example
must be a triangulation with maximum degree at most $7$.  
We use different sharpness examples for different coefficients.

The $5$-regular icosahedron has $e_{5,5}=30$ and no other edges; thus the
coefficient on $e_{5,5}$ cannot be reduced.  The graph obtained from the
$3$-regular dodecahedron by inserting in each face a $5$-vertex adjacent to its
corners has $e_{5,6}=60$, with all other edges joining $6$-vertices; thus the
coefficient on $e_{5,6}$ cannot be reduced.


The graph in Figure~\ref{fige57} (three edges wrap around from left to right),
with $2e_{5,5}=e_{5,6}=28$ and $e_{5,7}=14$, shows that the coefficient on
$e_{5,7}$ cannot be reduced below $\FR27$.
\end{proof}

\begin{figure}[h]
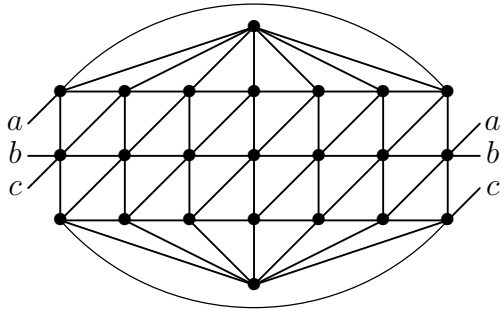

\gpic{
\expandafter\ifx\csname graph\endcsname\relax \csname newbox\endcsname\graph\fi
\expandafter\ifx\csname graphtemp\endcsname\relax \csname newdimen\endcsname\graphtemp\fi
\setbox\graph=\vtop{\vskip 0pt\hbox{%
    \graphtemp=.5ex\advance\graphtemp by 1.471in
    \rlap{\kern 1.250in\lower\graphtemp\hbox to 0pt{\hss $\bu$\hss}}%
    \graphtemp=.5ex\advance\graphtemp by 0.120in
    \rlap{\kern 1.250in\lower\graphtemp\hbox to 0pt{\hss $\bu$\hss}}%
    \graphtemp=.5ex\advance\graphtemp by 1.133in
    \rlap{\kern 0.236in\lower\graphtemp\hbox to 0pt{\hss $\bu$\hss}}%
    \graphtemp=.5ex\advance\graphtemp by 1.133in
    \rlap{\kern 0.574in\lower\graphtemp\hbox to 0pt{\hss $\bu$\hss}}%
    \graphtemp=.5ex\advance\graphtemp by 1.133in
    \rlap{\kern 0.912in\lower\graphtemp\hbox to 0pt{\hss $\bu$\hss}}%
    \graphtemp=.5ex\advance\graphtemp by 1.133in
    \rlap{\kern 1.250in\lower\graphtemp\hbox to 0pt{\hss $\bu$\hss}}%
    \graphtemp=.5ex\advance\graphtemp by 1.133in
    \rlap{\kern 1.588in\lower\graphtemp\hbox to 0pt{\hss $\bu$\hss}}%
    \graphtemp=.5ex\advance\graphtemp by 1.133in
    \rlap{\kern 1.926in\lower\graphtemp\hbox to 0pt{\hss $\bu$\hss}}%
    \graphtemp=.5ex\advance\graphtemp by 1.133in
    \rlap{\kern 2.264in\lower\graphtemp\hbox to 0pt{\hss $\bu$\hss}}%
    \special{pn 11}%
    \special{pa 1250 1471}%
    \special{pa 236 1133}%
    \special{fp}%
    \special{pa 1250 1471}%
    \special{pa 574 1133}%
    \special{fp}%
    \special{pa 1250 1471}%
    \special{pa 912 1133}%
    \special{fp}%
    \special{pa 1250 1471}%
    \special{pa 1250 1133}%
    \special{fp}%
    \special{pa 1250 1471}%
    \special{pa 1588 1133}%
    \special{fp}%
    \special{pa 1250 1471}%
    \special{pa 1926 1133}%
    \special{fp}%
    \special{pa 1250 1471}%
    \special{pa 2264 1133}%
    \special{fp}%
    \graphtemp=.5ex\advance\graphtemp by 0.795in
    \rlap{\kern 0.236in\lower\graphtemp\hbox to 0pt{\hss $\bu$\hss}}%
    \graphtemp=.5ex\advance\graphtemp by 0.795in
    \rlap{\kern 0.574in\lower\graphtemp\hbox to 0pt{\hss $\bu$\hss}}%
    \graphtemp=.5ex\advance\graphtemp by 0.795in
    \rlap{\kern 0.912in\lower\graphtemp\hbox to 0pt{\hss $\bu$\hss}}%
    \graphtemp=.5ex\advance\graphtemp by 0.795in
    \rlap{\kern 1.250in\lower\graphtemp\hbox to 0pt{\hss $\bu$\hss}}%
    \graphtemp=.5ex\advance\graphtemp by 0.795in
    \rlap{\kern 1.588in\lower\graphtemp\hbox to 0pt{\hss $\bu$\hss}}%
    \graphtemp=.5ex\advance\graphtemp by 0.795in
    \rlap{\kern 1.926in\lower\graphtemp\hbox to 0pt{\hss $\bu$\hss}}%
    \graphtemp=.5ex\advance\graphtemp by 0.795in
    \rlap{\kern 2.264in\lower\graphtemp\hbox to 0pt{\hss $\bu$\hss}}%
    \graphtemp=.5ex\advance\graphtemp by 0.458in
    \rlap{\kern 0.236in\lower\graphtemp\hbox to 0pt{\hss $\bu$\hss}}%
    \graphtemp=.5ex\advance\graphtemp by 0.458in
    \rlap{\kern 0.574in\lower\graphtemp\hbox to 0pt{\hss $\bu$\hss}}%
    \graphtemp=.5ex\advance\graphtemp by 0.458in
    \rlap{\kern 0.912in\lower\graphtemp\hbox to 0pt{\hss $\bu$\hss}}%
    \graphtemp=.5ex\advance\graphtemp by 0.458in
    \rlap{\kern 1.250in\lower\graphtemp\hbox to 0pt{\hss $\bu$\hss}}%
    \graphtemp=.5ex\advance\graphtemp by 0.458in
    \rlap{\kern 1.588in\lower\graphtemp\hbox to 0pt{\hss $\bu$\hss}}%
    \graphtemp=.5ex\advance\graphtemp by 0.458in
    \rlap{\kern 1.926in\lower\graphtemp\hbox to 0pt{\hss $\bu$\hss}}%
    \graphtemp=.5ex\advance\graphtemp by 0.458in
    \rlap{\kern 2.264in\lower\graphtemp\hbox to 0pt{\hss $\bu$\hss}}%
    \special{pa 1250 120}%
    \special{pa 236 458}%
    \special{fp}%
    \special{pa 1250 120}%
    \special{pa 574 458}%
    \special{fp}%
    \special{pa 1250 120}%
    \special{pa 912 458}%
    \special{fp}%
    \special{pa 1250 120}%
    \special{pa 1250 458}%
    \special{fp}%
    \special{pa 1250 120}%
    \special{pa 1588 458}%
    \special{fp}%
    \special{pa 1250 120}%
    \special{pa 1926 458}%
    \special{fp}%
    \special{pa 1250 120}%
    \special{pa 2264 458}%
    \special{fp}%
    \special{pa 236 1133}%
    \special{pa 2264 1133}%
    \special{fp}%
    \special{pa 68 795}%
    \special{pa 2432 795}%
    \special{fp}%
    \special{pa 236 458}%
    \special{pa 2264 458}%
    \special{fp}%
    \special{pa 236 458}%
    \special{pa 236 1133}%
    \special{fp}%
    \special{pa 236 1133}%
    \special{pa 912 458}%
    \special{fp}%
    \special{pa 912 458}%
    \special{pa 912 1133}%
    \special{fp}%
    \special{pa 912 1133}%
    \special{pa 1588 458}%
    \special{fp}%
    \special{pa 1588 458}%
    \special{pa 1588 1133}%
    \special{fp}%
    \special{pa 1588 1133}%
    \special{pa 2264 458}%
    \special{fp}%
    \special{pa 2264 458}%
    \special{pa 2264 1133}%
    \special{fp}%
    \special{pa 68 964}%
    \special{pa 574 458}%
    \special{fp}%
    \special{pa 574 458}%
    \special{pa 574 1133}%
    \special{fp}%
    \special{pa 574 1133}%
    \special{pa 1250 458}%
    \special{fp}%
    \special{pa 1250 458}%
    \special{pa 1250 1133}%
    \special{fp}%
    \special{pa 1250 1133}%
    \special{pa 1926 458}%
    \special{fp}%
    \special{pa 1926 458}%
    \special{pa 1926 1133}%
    \special{fp}%
    \special{pa 1926 1133}%
    \special{pa 2432 626}%
    \special{fp}%
    \special{pn 8}%
    \special{ar 1250 239 1351 1351 0.722734 2.418858}%
    \special{ar 1250 1351 1351 1351 -2.418858 -0.722734}%
    \special{pn 11}%
    \special{pa 236 458}%
    \special{pa 68 626}%
    \special{fp}%
    \special{pa 2264 1133}%
    \special{pa 2432 964}%
    \special{fp}%
    \graphtemp=.5ex\advance\graphtemp by 0.626in
    \rlap{\kern 0.000in\lower\graphtemp\hbox to 0pt{\hss $a$\hss}}%
    \graphtemp=.5ex\advance\graphtemp by 0.795in
    \rlap{\kern 0.000in\lower\graphtemp\hbox to 0pt{\hss $b$\hss}}%
    \graphtemp=.5ex\advance\graphtemp by 0.964in
    \rlap{\kern 0.000in\lower\graphtemp\hbox to 0pt{\hss $c$\hss}}%
    \graphtemp=.5ex\advance\graphtemp by 0.626in
    \rlap{\kern 2.500in\lower\graphtemp\hbox to 0pt{\hss $a$\hss}}%
    \graphtemp=.5ex\advance\graphtemp by 0.795in
    \rlap{\kern 2.500in\lower\graphtemp\hbox to 0pt{\hss $b$\hss}}%
    \graphtemp=.5ex\advance\graphtemp by 0.964in
    \rlap{\kern 2.500in\lower\graphtemp\hbox to 0pt{\hss $c$\hss}}%
    \hbox{\vrule depth1.591in width0pt height 0pt}%
    \kern 2.500in
  }%
}%
}
\caption{Sharpness example for Theorem~\ref{manylight}\label{fige57}}
\end{figure}

\bigskip
Let us now consider restricted families of planar graphs under girth
constraints.  We noted in the introduction that planar graphs with girth at
least $g$ satisfy $\mad(G)<\FR{2g}{g-2}$.  In some cases, planarity permits a
stronger result, meaning that obtaining the same conclusion using only a bound
on $\mad(G)$ requires $\mad(G)<b$ for some $b$ smaller than $\FR{2g}{g-2}$.

For example, consider a special case of Remark~\ref{2bound}: every graph $G$
with $\mad(G)<\FR83$ and $\delta(G)\ge 2$ has a $2$-vertex with a
$3^-$-neighbor.  Planar graphs with girth at least $8$ satisfy $\mad(G)<\FR83$.
The result in terms of $\mad(G)$ is sharp, since subdividing every edge of a
$4$-regular graph yields a graph $G$ with $\Mad(G)=\FR83$ having no such
$2$-vertex.  However, the conclusion holds for planar graphs with girth $7$,
which allow $\Mad(G)$ to be larger.  The argument illustrates typical
difficulties that may arise when discovering discharging arguments.  

\begin{lemma}\label{girth7}
Every planar graph $G$ with girth at least $7$ and $\delta(G)\ge2$ has a
$2$-vertex with a $3^-$-neighbor.
\end{lemma}
\begin{proof}
Assume that $G$ has no such configuration and use face charging.  With
initial charges $2d(v)-6$ and $\ell(f)-6$, when $G$ has girth at least $7$ the
only objects with negative initial charge are $2$-vertices.  Let each
$2$-vertex take $\FR12$ from each neighbor and each incident face.  To complete
the proof, we check that all vertices and faces end with nonnegative charge.

The discharging rule ensures that $2$-vertices end with charge $0$.
Since $3$-vertices have no $2$-neighbors, their charge remains $0$.  For
$j\ge4$, a $j$-vertex may lose $\FR12$ along each edge and ends with charge at
least $2j-6-\FR j2$, which is nonnegative.

A $j$-face has at most $\FL{\FR j2}$ incident $2$-vertices, since $2$-vertices
are not adjacent.  Hence a $j$-face has final charge at least
$j-6-\FR12\FL{\FR j2}$, which is nonnegative for $j\ge8$.
To help the $7$-faces, we add another discharging rule.  When adjacent
$4^+$-vertices form an edge $e$, direct the charge $\FR12$ that each could send
to a $2$-neighbor so that instead the two faces bounded by $e$ each receive
$\FR12$.  Now when a $7$-face gives away $\FR32$ to three $2$-vertices, it
recovers $\FR12$ from the two adjacent $4^+$-vertices on its boundary and ends
with charge $0$.
\end{proof}

\begin{figure}[h]
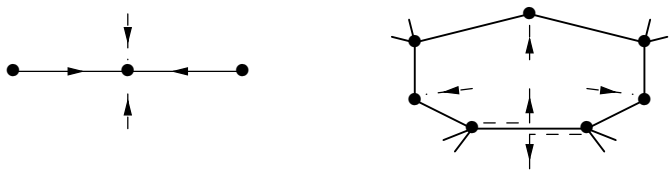

\gpic{
\expandafter\ifx\csname graph\endcsname\relax \csname newbox\endcsname\graph\fi
\expandafter\ifx\csname graphtemp\endcsname\relax \csname newdimen\endcsname\graphtemp\fi
\setbox\graph=\vtop{\vskip 0pt\hbox{%
    \graphtemp=.5ex\advance\graphtemp by 0.376in
    \rlap{\kern 0.676in\lower\graphtemp\hbox to 0pt{\hss $\bu$\hss}}%
    \graphtemp=.5ex\advance\graphtemp by 0.526in
    \rlap{\kern 2.178in\lower\graphtemp\hbox to 0pt{\hss $\bu$\hss}}%
    \graphtemp=.5ex\advance\graphtemp by 0.676in
    \rlap{\kern 2.479in\lower\graphtemp\hbox to 0pt{\hss $\bu$\hss}}%
    \graphtemp=.5ex\advance\graphtemp by 0.676in
    \rlap{\kern 3.079in\lower\graphtemp\hbox to 0pt{\hss $\bu$\hss}}%
    \graphtemp=.5ex\advance\graphtemp by 0.526in
    \rlap{\kern 3.380in\lower\graphtemp\hbox to 0pt{\hss $\bu$\hss}}%
    \graphtemp=.5ex\advance\graphtemp by 0.225in
    \rlap{\kern 3.380in\lower\graphtemp\hbox to 0pt{\hss $\bu$\hss}}%
    \graphtemp=.5ex\advance\graphtemp by 0.075in
    \rlap{\kern 2.779in\lower\graphtemp\hbox to 0pt{\hss $\bu$\hss}}%
    \graphtemp=.5ex\advance\graphtemp by 0.225in
    \rlap{\kern 2.178in\lower\graphtemp\hbox to 0pt{\hss $\bu$\hss}}%
    \special{pn 11}%
    \special{pa 2178 526}%
    \special{pa 2479 676}%
    \special{fp}%
    \special{pa 2479 676}%
    \special{pa 3079 676}%
    \special{fp}%
    \special{pa 3079 676}%
    \special{pa 3380 526}%
    \special{fp}%
    \special{pa 3380 526}%
    \special{pa 3380 225}%
    \special{fp}%
    \special{pa 3380 225}%
    \special{pa 2779 75}%
    \special{fp}%
    \special{pa 2779 75}%
    \special{pa 2178 225}%
    \special{fp}%
    \special{pa 2178 225}%
    \special{pa 2178 526}%
    \special{fp}%
    \special{pn 8}%
    \special{pa 75 376}%
    \special{pa 436 376}%
    \special{fp}%
    \special{sh 1.000}%
    \special{pa 364 358}%
    \special{pa 436 376}%
    \special{pa 364 394}%
    \special{pa 364 358}%
    \special{fp}%
    \special{pa 436 376}%
    \special{pa 676 376}%
    \special{fp}%
    \special{pa 1277 376}%
    \special{pa 916 376}%
    \special{fp}%
    \special{sh 1.000}%
    \special{pa 988 394}%
    \special{pa 916 376}%
    \special{pa 988 358}%
    \special{pa 988 394}%
    \special{fp}%
    \special{pa 916 376}%
    \special{pa 676 376}%
    \special{fp}%
    \special{pa 676 75}%
    \special{pa 676 315}%
    \special{da 0.060}%
    \special{pa 676 171}%
    \special{pa 676 219}%
    \special{fp}%
    \special{sh 1.000}%
    \special{pa 694 147}%
    \special{pa 676 219}%
    \special{pa 658 147}%
    \special{pa 694 147}%
    \special{fp}%
    \special{pa 676 676}%
    \special{pa 676 436}%
    \special{da 0.060}%
    \special{pa 676 580}%
    \special{pa 676 532}%
    \special{fp}%
    \special{sh 1.000}%
    \special{pa 658 604}%
    \special{pa 676 532}%
    \special{pa 694 604}%
    \special{pa 658 604}%
    \special{fp}%
    \graphtemp=.5ex\advance\graphtemp by 0.376in
    \rlap{\kern 0.075in\lower\graphtemp\hbox to 0pt{\hss $\bu$\hss}}%
    \graphtemp=.5ex\advance\graphtemp by 0.376in
    \rlap{\kern 1.277in\lower\graphtemp\hbox to 0pt{\hss $\bu$\hss}}%
    \special{pn 11}%
    \special{pa 3410 105}%
    \special{pa 3380 225}%
    \special{fp}%
    \special{pa 3380 225}%
    \special{pa 3500 195}%
    \special{fp}%
    \special{pa 2148 105}%
    \special{pa 2178 225}%
    \special{fp}%
    \special{pa 2178 225}%
    \special{pa 2058 195}%
    \special{fp}%
    \special{pa 2328 736}%
    \special{pa 2479 676}%
    \special{fp}%
    \special{pa 2479 676}%
    \special{pa 2388 796}%
    \special{fp}%
    \special{pa 3230 736}%
    \special{pa 3079 676}%
    \special{fp}%
    \special{pa 3079 676}%
    \special{pa 3170 796}%
    \special{fp}%
    \special{pn 8}%
    \special{pa 2539 646}%
    \special{pa 2779 646}%
    \special{da 0.060}%
    \special{pa 3049 706}%
    \special{pa 2779 706}%
    \special{da 0.060}%
    \special{pa 2779 646}%
    \special{pa 2779 436}%
    \special{da 0.060}%
    \special{pa 2779 562}%
    \special{pa 2779 520}%
    \special{fp}%
    \special{sh 1.000}%
    \special{pa 2761 592}%
    \special{pa 2779 520}%
    \special{pa 2797 592}%
    \special{pa 2761 592}%
    \special{fp}%
    \special{pa 2779 706}%
    \special{pa 2779 916}%
    \special{da 0.060}%
    \special{pa 2779 790}%
    \special{pa 2779 832}%
    \special{fp}%
    \special{sh 1.000}%
    \special{pa 2797 760}%
    \special{pa 2779 832}%
    \special{pa 2761 760}%
    \special{pa 2797 760}%
    \special{fp}%
    \special{pa 2479 466}%
    \special{pa 2238 496}%
    \special{da 0.060}%
    \special{pa 2382 478}%
    \special{pa 2334 484}%
    \special{fp}%
    \special{sh 1.000}%
    \special{pa 2408 493}%
    \special{pa 2334 484}%
    \special{pa 2404 457}%
    \special{pa 2408 493}%
    \special{fp}%
    \special{pa 3079 466}%
    \special{pa 3320 496}%
    \special{da 0.060}%
    \special{pa 3176 478}%
    \special{pa 3224 484}%
    \special{fp}%
    \special{sh 1.000}%
    \special{pa 3154 457}%
    \special{pa 3224 484}%
    \special{pa 3150 493}%
    \special{pa 3154 457}%
    \special{fp}%
    \special{pa 2779 315}%
    \special{pa 2779 135}%
    \special{da 0.060}%
    \special{pa 2779 243}%
    \special{pa 2779 207}%
    \special{fp}%
    \special{sh 1.000}%
    \special{pa 2761 279}%
    \special{pa 2779 207}%
    \special{pa 2797 279}%
    \special{pa 2761 279}%
    \special{fp}%
    \hbox{\vrule depth0.916in width0pt height 0pt}%
    \kern 3.500in
  }%
}%
}
\caption{Discharging for Lemma~\ref{girth7}\label{figredir}}
\end{figure}

This proof illustrates both ``redirection'' of transmitted charge and the
phenomenon of designing discharging rules initially to make deficient vertices
or faces happy but discovering later that additional rules are then needed to
repair others that lost too much.  It turns out that balanced charging, where
$2$-vertices and $3$-vertices both need charge, yields a simpler discharging
proof of this result; see Exercise~\ref{edgir7}.

Lemma~\ref{girth7} yields a stronger result for planar graphs with girth $7$
than is possible for the corresponding bound on $\mad(G)$.  A graph $G$ is
\emph{dynamically $k$-choosable} if for every $k$-uniform list assignment $L$,
there is a dynamic $L$-coloring of $G$, meaning a proper $L$-coloring with the
additional property that the neighbors of a vertex cannot all have the same
color if the vertex has degree at least $2$.  By showing that the configuration
in Lemma~\ref{girth7} (a $2$-vertex with a $3^-$-neighbor is reducible
(Exercise~\ref{4dynam}), Kim and Park~\cite{KP} showed that every planar graph
with girth at least $7$ is dynamically $4$-choosable.  This result is sharp,
since it is well known that there are planar graphs that are not $4$-choosable
(Voigt~\cite{Voigt}), and subdividing every edge of such a planar graph yields
a planar graph with girth $6$ that is not dynamically $4$-choosable.

Lemma~\ref{girth7} also has an application to acyclic coloring.
Configurations consisting of a $1^-$-vertex or a $2$-vertex with a
$3^-$-neighbor are reducible for acyclic $4$-choosability, so this holds
for planar graphs with girth at least $7$.  Gr\"unbaum~\cite{Gru} conjectured
that all planar graphs are acyclically $5$-colorable.  This was proved by
Borodin~\cite{Bacy} after successive improvements to Gr\"unbaum's initial
upper bound of $9$.  The bound of $5$ is sharp, even among bipartite planar
graphs~\cite{KMe} (Exercise~\ref{noac4}).  Borodin's proof used discharging
with some $450$ reducible configurations but no computers, an enormous effort.

Borodin et al.~\cite{BFKRS} conjectured the stronger statement that all planar
graphs are in fact acyclically $5$-choosable.  Toward the conjecture, Borodin
and Ivanova~\cite{BI4} proved that planar graphs without $4$-cycles are
acyclically $5$-choosable.  In~\cite{BI3}, only special $4$-cycles are
forbidden.  Montassier, Raspaud, and Wang~\cite{MRW06} conjectured that planar
graphs without $4$-cycles are acyclically $4$-choosable and proved this in some
cases; it holds when both $4$-cycles and $5$-cycles are
forbidden~\cite{BI5,CR13}.

Larger girth (or smaller $\Mad(G)$) makes acyclic coloring easier.  We have
already observed that planar graphs with girth at least $7$ are acyclically
$4$-choosable; Montassier~\cite{Mon} proved that girth at least $5$ suffices,
while Borodin et al.~\cite{BCIR} proved that girth at least $7$ yields acyclic
$3$-choosability.  We saw in Theorem~\ref{acyc6} that $\mad(G)<3$ yields
acyclic $6$-choosability.  The condition holds for planar graphs with girth at
least $6$, but using planarity allows us to relax the girth restriction.  For
planar graphs with girth at least $5$, we prove a structure theorem that yields
acyclic $6$-choosability and has other applications.  Note that for planar
graphs with girth $5$ and minimum degree $3$, it guarantees an edge of weight
$6$.

\begin{lemma}[Cranston--Yu~\cite{CY}]\label{gir5dis}
If $G$ is a planar graph with girth at least $5$ and $\delta(G)\ge2$, then $G$
has a $2$-vertex with a $5^-$-neighbor or a $5$-face whose incident vertices
are four $3$-vertices and a $5^-$-vertex.
\end{lemma}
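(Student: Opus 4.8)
The plan is to prove this by discharging, using the face-charging scheme from the Proposition above: give each vertex $v$ charge $2d(v)-6$ and each face $f$ charge $\ell(f)-6$, so the total is $-12$. We may assume $G$ is connected (no component contains a forbidden configuration if $G$ does not), and the minor adjustments for cut-vertices are routine, so I will argue as if $G$ is $2$-connected; then each $2$-vertex lies on exactly two faces, each containing both of its neighbors. Suppose neither configuration occurs. Then $\delta(G)\ge 2$, and, crucially, every $2$-vertex has both neighbors of degree at least $6$; hence the $2$-vertices form an independent set, and every face incident to a $2$-vertex $u$ is also incident to the two $6^+$-neighbors of $u$. Also, every $5$-face with no incident $6^+$-vertex must have at least two incident $4^+$-vertices, since otherwise (no $2$-vertex can lie on such a face) it has at least four incident $3$-vertices and is one of the forbidden $5$-faces.

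I would use three discharging rules. (R1) Each $2$-vertex takes $1$ from each of its two incident faces. (R2) Each $5$-face with no incident $6^+$-vertex takes $\FR12$ from each of two of its incident $4^+$-vertices. (R3) Each face whose charge after (R1) and (R2) is still negative distributes that deficit equally among the $6^+$-vertices incident to it. The essential point of (R1) is that a $2$-vertex pulls its entire deficit of $2$ off its incident faces and never burdens its neighbors; this is what prevents $6^+$-vertices from being over-drained.

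Next I would verify that every vertex and face finishes with nonnegative charge. For vertices: $2$-vertices end at $0$ by (R1); $3$-vertices never move charge; a $4$- or $5$-vertex has no $2$-neighbor and loses at most $\FR12$ per incident face under (R2), hence at most $2$ or $\FR52$, so it survives; a vertex of degree $d\ge 6$ loses charge only under (R3), at most $1$ to each of its at most $d$ incident faces, so it retains at least $2d-6-d=d-6\ge 0$. For faces: a face of length $k$ incident to $t$ of the $2$-vertices has charge $k-6-t$ after (R1) and so needs $\max\{0,\,6-k+t\}$ more. A $5$-face with no $6^+$-vertex is repaired by (R2); a $5$-face incident to some $2$-vertex is incident to at least $t+1$ vertices of degree $\ge 6$ (the neighbors of its incident $2$-vertices), so (R3) takes at most $\FR{t+1}{t+1}=1$ from each; a $5$-face with $t=0$ and a $6^+$-vertex needs $1$ and gets it via (R3); and for $k\ge 6$ the incident $2$-vertices force at least $t$ incident $6^+$-vertices, so (R3) takes at most $\FR{6-k+t}{t}\le 1$ from each. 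Thus all final charges are nonnegative, while the total is $-12$, a contradiction; so one of the two configurations must occur.

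The main obstacle, and the reason the rules must be arranged exactly this way, is the interaction between $2$-vertices and high-degree vertices: a $6^+$-vertex can be adjacent to many $2$-vertices and simultaneously lie on many short faces, so the naive rule ``each $6^+$-vertex feeds its $2$-neighbors directly'' fails — the arithmetic already breaks near degree $6$ when the $2$-neighbors are clustered. Routing every $2$-vertex's charge through its two incident faces, and then exploiting the fact that any face forced into deficit is automatically rich in $6^+$-vertices (namely the neighbors of the $2$-vertices it carries), is the device that makes the count close. The one place needing genuine care is establishing the ``at least $t+1$ (for $k=5$), resp.\ at least $t$ (for $k\ge 6$), incident $6^+$-vertices'' bound for the deficient faces; this is a short combinatorial claim about independent sets on the boundary cycle, and it uses the girth hypothesis $k\ge 5$ together with the fact that two $2$-vertices are never adjacent.
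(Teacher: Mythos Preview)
Your proof is correct. The approach differs from the paper's: the paper uses \emph{balanced charging} (initial charges $d(v)-4$ and $\ell(f)-4$) with two symmetric rules --- each $3^-$-vertex $v$ takes $\frac{4-d(v)}{d(v)}$ from every incident face, and each $6^+$-vertex $v$ gives $\frac{d(v)-4}{d(v)}$ to every incident face --- which immediately zero out every vertex, so all the checking falls on faces (and the paper in fact leaves that verification as an exercise). Your face-charging scheme routes the flow differently: $2$-vertices pull from faces, certain $5$-faces pull from mid-degree vertices via (R2), and remaining deficient faces pull from $6^+$-vertices via (R3). The paper's route is cleaner --- no case split on whether a $5$-face has a $6^+$-vertex, no ``choose two of the $4^+$-vertices,'' and no separate face-repair stage --- but yours has the virtue of making the role of high-degree vertices explicit: each $6^+$-vertex gives at most $1$ to each incident face, and that bound is exactly what closes the books for a degree-$6$ vertex. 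Your key combinatorial claim (that a deficient $k$-face has at least $t+1$, resp.\ $t$, incident $6^+$-vertices for $k=5$, resp.\ $k\ge 6$) is correct and is where the girth hypothesis and the independence of $2$-vertices are used.
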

\begin{proof}
(sketch)
Let $G$ be such a graph containing none of the specified configurations.
Assign charges by balanced charging; discharging will make all vertices and
faces happy when the specified configurations do not occur.

\medskip\noindent
(R1) Each $3^-$-vertex $v$ takes $\FR{4-d(v)}{d(v)}$ from each incident face.

\noindent
(R2) Each $6^+$-vertex $v$ gives $\FR{d(v)-4}{d(v)}$ to each incident face.

\medskip
The rules immediately make each vertex happy ($5$-vertices end positive),
and it remains only to check that each face ends happy.  The configurations in
Figure~\ref{figgir5} show faces that end with charge $0$; Exercise~\ref{gir5}
requests the verification that other faces end happy.
\end{proof}

\begin{figure}[hbt]
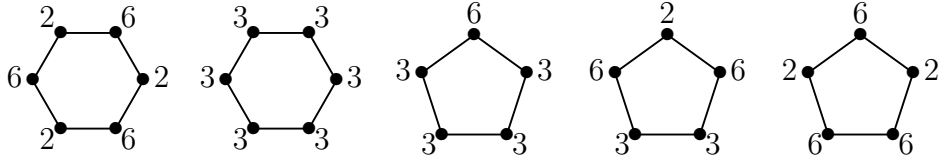

\gpic{
\expandafter\ifx\csname graph\endcsname\relax \csname newbox\endcsname\graph\fi
\expandafter\ifx\csname graphtemp\endcsname\relax \csname newdimen\endcsname\graphtemp\fi
\setbox\graph=\vtop{\vskip 0pt\hbox{%
    \graphtemp=.5ex\advance\graphtemp by 0.096in
    \rlap{\kern 0.530in\lower\graphtemp\hbox to 0pt{\hss $\bu$\hss}}%
    \graphtemp=.5ex\advance\graphtemp by 0.346in
    \rlap{\kern 0.674in\lower\graphtemp\hbox to 0pt{\hss $\bu$\hss}}%
    \graphtemp=.5ex\advance\graphtemp by 0.597in
    \rlap{\kern 0.530in\lower\graphtemp\hbox to 0pt{\hss $\bu$\hss}}%
    \graphtemp=.5ex\advance\graphtemp by 0.597in
    \rlap{\kern 0.241in\lower\graphtemp\hbox to 0pt{\hss $\bu$\hss}}%
    \graphtemp=.5ex\advance\graphtemp by 0.346in
    \rlap{\kern 0.096in\lower\graphtemp\hbox to 0pt{\hss $\bu$\hss}}%
    \graphtemp=.5ex\advance\graphtemp by 0.096in
    \rlap{\kern 0.241in\lower\graphtemp\hbox to 0pt{\hss $\bu$\hss}}%
    \special{pn 11}%
    \special{pa 530 96}%
    \special{pa 674 346}%
    \special{fp}%
    \special{pa 674 346}%
    \special{pa 530 597}%
    \special{fp}%
    \special{pa 530 597}%
    \special{pa 241 597}%
    \special{fp}%
    \special{pa 241 597}%
    \special{pa 96 346}%
    \special{fp}%
    \special{pa 96 346}%
    \special{pa 241 96}%
    \special{fp}%
    \special{pa 241 96}%
    \special{pa 530 96}%
    \special{fp}%
    \graphtemp=.5ex\advance\graphtemp by 0.028in
    \rlap{\kern 0.598in\lower\graphtemp\hbox to 0pt{\hss 6\hss}}%
    \graphtemp=.5ex\advance\graphtemp by 0.346in
    \rlap{\kern 0.770in\lower\graphtemp\hbox to 0pt{\hss 2\hss}}%
    \graphtemp=.5ex\advance\graphtemp by 0.665in
    \rlap{\kern 0.598in\lower\graphtemp\hbox to 0pt{\hss 6\hss}}%
    \graphtemp=.5ex\advance\graphtemp by 0.665in
    \rlap{\kern 0.173in\lower\graphtemp\hbox to 0pt{\hss 2\hss}}%
    \graphtemp=.5ex\advance\graphtemp by 0.346in
    \rlap{\kern 0.000in\lower\graphtemp\hbox to 0pt{\hss 6\hss}}%
    \graphtemp=.5ex\advance\graphtemp by 0.028in
    \rlap{\kern 0.173in\lower\graphtemp\hbox to 0pt{\hss 2\hss}}%
    \graphtemp=.5ex\advance\graphtemp by 0.096in
    \rlap{\kern 1.541in\lower\graphtemp\hbox to 0pt{\hss $\bu$\hss}}%
    \graphtemp=.5ex\advance\graphtemp by 0.346in
    \rlap{\kern 1.685in\lower\graphtemp\hbox to 0pt{\hss $\bu$\hss}}%
    \graphtemp=.5ex\advance\graphtemp by 0.597in
    \rlap{\kern 1.541in\lower\graphtemp\hbox to 0pt{\hss $\bu$\hss}}%
    \graphtemp=.5ex\advance\graphtemp by 0.597in
    \rlap{\kern 1.252in\lower\graphtemp\hbox to 0pt{\hss $\bu$\hss}}%
    \graphtemp=.5ex\advance\graphtemp by 0.346in
    \rlap{\kern 1.107in\lower\graphtemp\hbox to 0pt{\hss $\bu$\hss}}%
    \graphtemp=.5ex\advance\graphtemp by 0.096in
    \rlap{\kern 1.252in\lower\graphtemp\hbox to 0pt{\hss $\bu$\hss}}%
    \special{pa 1541 96}%
    \special{pa 1685 346}%
    \special{fp}%
    \special{pa 1685 346}%
    \special{pa 1541 597}%
    \special{fp}%
    \special{pa 1541 597}%
    \special{pa 1252 597}%
    \special{fp}%
    \special{pa 1252 597}%
    \special{pa 1107 346}%
    \special{fp}%
    \special{pa 1107 346}%
    \special{pa 1252 96}%
    \special{fp}%
    \special{pa 1252 96}%
    \special{pa 1541 96}%
    \special{fp}%
    \graphtemp=.5ex\advance\graphtemp by 0.028in
    \rlap{\kern 1.609in\lower\graphtemp\hbox to 0pt{\hss 3\hss}}%
    \graphtemp=.5ex\advance\graphtemp by 0.346in
    \rlap{\kern 1.781in\lower\graphtemp\hbox to 0pt{\hss 3\hss}}%
    \graphtemp=.5ex\advance\graphtemp by 0.665in
    \rlap{\kern 1.609in\lower\graphtemp\hbox to 0pt{\hss 3\hss}}%
    \graphtemp=.5ex\advance\graphtemp by 0.665in
    \rlap{\kern 1.184in\lower\graphtemp\hbox to 0pt{\hss 3\hss}}%
    \graphtemp=.5ex\advance\graphtemp by 0.346in
    \rlap{\kern 1.011in\lower\graphtemp\hbox to 0pt{\hss 3\hss}}%
    \graphtemp=.5ex\advance\graphtemp by 0.028in
    \rlap{\kern 1.184in\lower\graphtemp\hbox to 0pt{\hss 3\hss}}%
    \graphtemp=.5ex\advance\graphtemp by 0.106in
    \rlap{\kern 2.407in\lower\graphtemp\hbox to 0pt{\hss $\bu$\hss}}%
    \graphtemp=.5ex\advance\graphtemp by 0.305in
    \rlap{\kern 2.682in\lower\graphtemp\hbox to 0pt{\hss $\bu$\hss}}%
    \graphtemp=.5ex\advance\graphtemp by 0.628in
    \rlap{\kern 2.577in\lower\graphtemp\hbox to 0pt{\hss $\bu$\hss}}%
    \graphtemp=.5ex\advance\graphtemp by 0.628in
    \rlap{\kern 2.237in\lower\graphtemp\hbox to 0pt{\hss $\bu$\hss}}%
    \graphtemp=.5ex\advance\graphtemp by 0.305in
    \rlap{\kern 2.132in\lower\graphtemp\hbox to 0pt{\hss $\bu$\hss}}%
    \special{pa 2407 106}%
    \special{pa 2682 305}%
    \special{fp}%
    \special{pa 2682 305}%
    \special{pa 2577 628}%
    \special{fp}%
    \special{pa 2577 628}%
    \special{pa 2237 628}%
    \special{fp}%
    \special{pa 2237 628}%
    \special{pa 2132 305}%
    \special{fp}%
    \special{pa 2132 305}%
    \special{pa 2407 106}%
    \special{fp}%
    \graphtemp=.5ex\advance\graphtemp by 0.009in
    \rlap{\kern 2.407in\lower\graphtemp\hbox to 0pt{\hss 6\hss}}%
    \graphtemp=.5ex\advance\graphtemp by 0.305in
    \rlap{\kern 2.778in\lower\graphtemp\hbox to 0pt{\hss 3\hss}}%
    \graphtemp=.5ex\advance\graphtemp by 0.696in
    \rlap{\kern 2.645in\lower\graphtemp\hbox to 0pt{\hss 3\hss}}%
    \graphtemp=.5ex\advance\graphtemp by 0.696in
    \rlap{\kern 2.169in\lower\graphtemp\hbox to 0pt{\hss 3\hss}}%
    \graphtemp=.5ex\advance\graphtemp by 0.305in
    \rlap{\kern 2.036in\lower\graphtemp\hbox to 0pt{\hss 3\hss}}%
    \graphtemp=.5ex\advance\graphtemp by 0.106in
    \rlap{\kern 3.418in\lower\graphtemp\hbox to 0pt{\hss $\bu$\hss}}%
    \graphtemp=.5ex\advance\graphtemp by 0.305in
    \rlap{\kern 3.693in\lower\graphtemp\hbox to 0pt{\hss $\bu$\hss}}%
    \graphtemp=.5ex\advance\graphtemp by 0.628in
    \rlap{\kern 3.588in\lower\graphtemp\hbox to 0pt{\hss $\bu$\hss}}%
    \graphtemp=.5ex\advance\graphtemp by 0.628in
    \rlap{\kern 3.248in\lower\graphtemp\hbox to 0pt{\hss $\bu$\hss}}%
    \graphtemp=.5ex\advance\graphtemp by 0.305in
    \rlap{\kern 3.143in\lower\graphtemp\hbox to 0pt{\hss $\bu$\hss}}%
    \special{pa 3418 106}%
    \special{pa 3693 305}%
    \special{fp}%
    \special{pa 3693 305}%
    \special{pa 3588 628}%
    \special{fp}%
    \special{pa 3588 628}%
    \special{pa 3248 628}%
    \special{fp}%
    \special{pa 3248 628}%
    \special{pa 3143 305}%
    \special{fp}%
    \special{pa 3143 305}%
    \special{pa 3418 106}%
    \special{fp}%
    \graphtemp=.5ex\advance\graphtemp by 0.009in
    \rlap{\kern 3.418in\lower\graphtemp\hbox to 0pt{\hss 2\hss}}%
    \graphtemp=.5ex\advance\graphtemp by 0.305in
    \rlap{\kern 3.789in\lower\graphtemp\hbox to 0pt{\hss 6\hss}}%
    \graphtemp=.5ex\advance\graphtemp by 0.696in
    \rlap{\kern 3.656in\lower\graphtemp\hbox to 0pt{\hss 3\hss}}%
    \graphtemp=.5ex\advance\graphtemp by 0.696in
    \rlap{\kern 3.180in\lower\graphtemp\hbox to 0pt{\hss 3\hss}}%
    \graphtemp=.5ex\advance\graphtemp by 0.305in
    \rlap{\kern 3.047in\lower\graphtemp\hbox to 0pt{\hss 6\hss}}%
    \graphtemp=.5ex\advance\graphtemp by 0.106in
    \rlap{\kern 4.429in\lower\graphtemp\hbox to 0pt{\hss $\bu$\hss}}%
    \graphtemp=.5ex\advance\graphtemp by 0.305in
    \rlap{\kern 4.704in\lower\graphtemp\hbox to 0pt{\hss $\bu$\hss}}%
    \graphtemp=.5ex\advance\graphtemp by 0.628in
    \rlap{\kern 4.599in\lower\graphtemp\hbox to 0pt{\hss $\bu$\hss}}%
    \graphtemp=.5ex\advance\graphtemp by 0.628in
    \rlap{\kern 4.259in\lower\graphtemp\hbox to 0pt{\hss $\bu$\hss}}%
    \graphtemp=.5ex\advance\graphtemp by 0.305in
    \rlap{\kern 4.154in\lower\graphtemp\hbox to 0pt{\hss $\bu$\hss}}%
    \special{pa 4429 106}%
    \special{pa 4704 305}%
    \special{fp}%
    \special{pa 4704 305}%
    \special{pa 4599 628}%
    \special{fp}%
    \special{pa 4599 628}%
    \special{pa 4259 628}%
    \special{fp}%
    \special{pa 4259 628}%
    \special{pa 4154 305}%
    \special{fp}%
    \special{pa 4154 305}%
    \special{pa 4429 106}%
    \special{fp}%
    \graphtemp=.5ex\advance\graphtemp by 0.009in
    \rlap{\kern 4.429in\lower\graphtemp\hbox to 0pt{\hss 6\hss}}%
    \graphtemp=.5ex\advance\graphtemp by 0.305in
    \rlap{\kern 4.800in\lower\graphtemp\hbox to 0pt{\hss 2\hss}}%
    \graphtemp=.5ex\advance\graphtemp by 0.696in
    \rlap{\kern 4.667in\lower\graphtemp\hbox to 0pt{\hss 6\hss}}%
    \graphtemp=.5ex\advance\graphtemp by 0.696in
    \rlap{\kern 4.191in\lower\graphtemp\hbox to 0pt{\hss 6\hss}}%
    \graphtemp=.5ex\advance\graphtemp by 0.305in
    \rlap{\kern 4.058in\lower\graphtemp\hbox to 0pt{\hss 2\hss}}%
    \hbox{\vrule depth0.725in width0pt height 0pt}%
    \kern 4.800in
  }%
}%
}
\caption{Sharp configurations for Lemma~\ref{gir5dis}\label{figgir5}}
\end{figure}

\begin{theorem}\label{acyclic6}
If $G$ is a planar graph with girth at least $5$, then $G$ is 
acyclically $6$-choosable.
\end{theorem}
\begin{proof}
Since a $1^-$-vertex lies in no cycle, its color need only avoid that of its
(possible) neighbor.  Hence a $1^-$-vertex is reducible for $\chial(G)\le6$,
and we may assume $\delta(G)\ge2$.  It therefore suffices to show that the
configurations in Lemma~\ref{gir5dis} are reducible for $\chial(G)\le6$.
Let $L$ be a $6$-uniform list assignment for $G$.

First consider a $2$-vertex $v$ with a $5^-$-neighbor $u$.  Let $\phi$ be an
acyclic $L$-coloring of $G-v$.  If the colors on $N_G(v)$ are distinct, then
color $v$ with a color in $L(v)$ other than those.  If the colors on $N(v)$ are
equal, then color $v$ with a color not used on $N_G(v)\cup N_{G-v}(u)$.  Since
$|N_{G-v}(u)|\le4$, this forbids at most five colors, and a color remains
available in $L(v)$.  Now there are no $2$-colored cycles in $G-v$ and none
through $v$.

For the remaining configuration, let $v_1,v_2,v_3,v_4,w$ be the vertices on a
$5$-face, with each $v_i$ of degree $3$ and $d(w)\le5$.  Let $x_i$ be the
neighbor of $v_i$ outside the $5$-cycle (see Figure~\ref{figacy6}).

\begin{figure}[hbt]
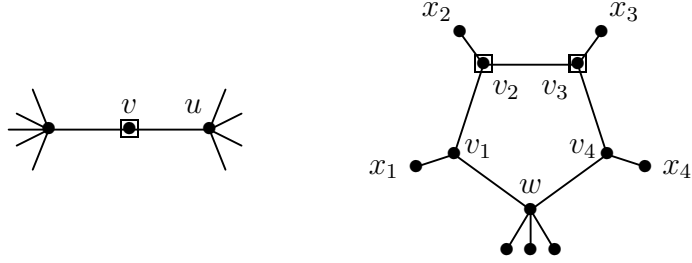

\gpic{
\expandafter\ifx\csname graph\endcsname\relax \csname newbox\endcsname\graph\fi
\expandafter\ifx\csname graphtemp\endcsname\relax \csname newdimen\endcsname\graphtemp\fi
\setbox\graph=\vtop{\vskip 0pt\hbox{%
    \graphtemp=.5ex\advance\graphtemp by 0.678in
    \rlap{\kern 0.210in\lower\graphtemp\hbox to 0pt{\hss $\bu$\hss}}%
    \graphtemp=.5ex\advance\graphtemp by 0.678in
    \rlap{\kern 0.631in\lower\graphtemp\hbox to 0pt{\hss $\bu$\hss}}%
    \graphtemp=.5ex\advance\graphtemp by 0.678in
    \rlap{\kern 1.051in\lower\graphtemp\hbox to 0pt{\hss $\bu$\hss}}%
    \special{pn 11}%
    \special{pa 210 678}%
    \special{pa 1051 678}%
    \special{fp}%
    \graphtemp=.5ex\advance\graphtemp by 0.552in
    \rlap{\kern 0.631in\lower\graphtemp\hbox to 0pt{\hss $v$\hss}}%
    \graphtemp=.5ex\advance\graphtemp by 0.552in
    \rlap{\kern 0.967in\lower\graphtemp\hbox to 0pt{\hss $u$\hss}}%
    \special{pa 1051 678}%
    \special{pa 1135 468}%
    \special{fp}%
    \special{pa 1051 678}%
    \special{pa 1219 594}%
    \special{fp}%
    \special{pa 1051 678}%
    \special{pa 1219 762}%
    \special{fp}%
    \special{pa 1051 678}%
    \special{pa 1135 888}%
    \special{fp}%
    \special{pa 210 678}%
    \special{pa 126 468}%
    \special{fp}%
    \special{pa 210 678}%
    \special{pa 42 594}%
    \special{fp}%
    \special{pa 210 678}%
    \special{pa 0 678}%
    \special{fp}%
    \special{pa 210 678}%
    \special{pa 42 762}%
    \special{fp}%
    \special{pa 210 678}%
    \special{pa 126 888}%
    \special{fp}%
    \graphtemp=.5ex\advance\graphtemp by 0.678in
    \rlap{\kern 0.631in\lower\graphtemp\hbox to 0pt{\hss $\marker$\hss}}%
    \graphtemp=.5ex\advance\graphtemp by 1.099in
    \rlap{\kern 2.732in\lower\graphtemp\hbox to 0pt{\hss $\bu$\hss}}%
    \graphtemp=.5ex\advance\graphtemp by 0.808in
    \rlap{\kern 2.332in\lower\graphtemp\hbox to 0pt{\hss $\bu$\hss}}%
    \graphtemp=.5ex\advance\graphtemp by 0.338in
    \rlap{\kern 2.485in\lower\graphtemp\hbox to 0pt{\hss $\bu$\hss}}%
    \graphtemp=.5ex\advance\graphtemp by 0.338in
    \rlap{\kern 2.979in\lower\graphtemp\hbox to 0pt{\hss $\bu$\hss}}%
    \graphtemp=.5ex\advance\graphtemp by 0.808in
    \rlap{\kern 3.132in\lower\graphtemp\hbox to 0pt{\hss $\bu$\hss}}%
    \special{pa 2732 1099}%
    \special{pa 2332 808}%
    \special{fp}%
    \special{pa 2332 808}%
    \special{pa 2485 338}%
    \special{fp}%
    \special{pa 2485 338}%
    \special{pa 2979 338}%
    \special{fp}%
    \special{pa 2979 338}%
    \special{pa 3132 808}%
    \special{fp}%
    \special{pa 3132 808}%
    \special{pa 2732 1099}%
    \special{fp}%
    \graphtemp=.5ex\advance\graphtemp by 0.972in
    \rlap{\kern 2.732in\lower\graphtemp\hbox to 0pt{\hss $w$\hss}}%
    \graphtemp=.5ex\advance\graphtemp by 0.766in
    \rlap{\kern 2.459in\lower\graphtemp\hbox to 0pt{\hss $v_1$\hss}}%
    \graphtemp=.5ex\advance\graphtemp by 0.457in
    \rlap{\kern 2.604in\lower\graphtemp\hbox to 0pt{\hss $v_2$\hss}}%
    \graphtemp=.5ex\advance\graphtemp by 0.457in
    \rlap{\kern 2.861in\lower\graphtemp\hbox to 0pt{\hss $v_3$\hss}}%
    \graphtemp=.5ex\advance\graphtemp by 0.766in
    \rlap{\kern 3.006in\lower\graphtemp\hbox to 0pt{\hss $v_4$\hss}}%
    \graphtemp=.5ex\advance\graphtemp by 0.338in
    \rlap{\kern 2.485in\lower\graphtemp\hbox to 0pt{\hss $\marker$\hss}}%
    \graphtemp=.5ex\advance\graphtemp by 0.338in
    \rlap{\kern 2.979in\lower\graphtemp\hbox to 0pt{\hss $\marker$\hss}}%
    \graphtemp=.5ex\advance\graphtemp by 1.309in
    \rlap{\kern 2.732in\lower\graphtemp\hbox to 0pt{\hss $\bu$\hss}}%
    \graphtemp=.5ex\advance\graphtemp by 0.873in
    \rlap{\kern 2.133in\lower\graphtemp\hbox to 0pt{\hss $\bu$\hss}}%
    \graphtemp=.5ex\advance\graphtemp by 0.168in
    \rlap{\kern 2.361in\lower\graphtemp\hbox to 0pt{\hss $\bu$\hss}}%
    \graphtemp=.5ex\advance\graphtemp by 0.168in
    \rlap{\kern 3.103in\lower\graphtemp\hbox to 0pt{\hss $\bu$\hss}}%
    \graphtemp=.5ex\advance\graphtemp by 0.873in
    \rlap{\kern 3.332in\lower\graphtemp\hbox to 0pt{\hss $\bu$\hss}}%
    \special{pa 2332 808}%
    \special{pa 2133 873}%
    \special{fp}%
    \special{pa 2485 338}%
    \special{pa 2361 168}%
    \special{fp}%
    \special{pa 2979 338}%
    \special{pa 3103 168}%
    \special{fp}%
    \special{pa 3132 808}%
    \special{pa 3332 873}%
    \special{fp}%
    \special{pa 2732 1099}%
    \special{pa 2732 1309}%
    \special{fp}%
    \special{pa 2732 1099}%
    \special{pa 2606 1309}%
    \special{fp}%
    \special{pa 2732 1099}%
    \special{pa 2858 1309}%
    \special{fp}%
    \graphtemp=.5ex\advance\graphtemp by 1.309in
    \rlap{\kern 2.606in\lower\graphtemp\hbox to 0pt{\hss $\bu$\hss}}%
    \graphtemp=.5ex\advance\graphtemp by 1.309in
    \rlap{\kern 2.858in\lower\graphtemp\hbox to 0pt{\hss $\bu$\hss}}%
    \graphtemp=.5ex\advance\graphtemp by 0.873in
    \rlap{\kern 1.964in\lower\graphtemp\hbox to 0pt{\hss $x_1$\hss}}%
    \graphtemp=.5ex\advance\graphtemp by 0.049in
    \rlap{\kern 2.243in\lower\graphtemp\hbox to 0pt{\hss $x_2$\hss}}%
    \graphtemp=.5ex\advance\graphtemp by 0.049in
    \rlap{\kern 3.222in\lower\graphtemp\hbox to 0pt{\hss $x_3$\hss}}%
    \graphtemp=.5ex\advance\graphtemp by 0.873in
    \rlap{\kern 3.500in\lower\graphtemp\hbox to 0pt{\hss $x_4$\hss}}%
    \hbox{\vrule depth1.414in width0pt height 0pt}%
    \kern 3.500in
  }%
}%
}

\vspace{-1pc}

\caption{Reducible configurations for Theorem~\ref{acyclic6}\label{figacy6}}
\end{figure}

Let $\phi$ be an acyclic $L$-coloring of $G-\{v_2,v_3\}$.  We consider three
cases, depending on whether $\phi$ uses one color or two colors on $N_G(v_2)$
and on $N_G(v_3)$.
(a)  If $\phi(v_1) \ne \phi(x_2)$ and $\phi(v_4) \ne \phi(x_3)$, then choose
$\phi(v_2)$ and $\phi(v_3)$ distinct and outside
$\{\phi(v_1), \phi(x_2), \phi(x_3), \phi(v_4)\}$.
(b) If $\phi(v_1) = \phi(x_2)$ but $\phi(v_4) \ne \phi(x_3)$, then choose
$\phi(v_2) \notin \{\phi(w), \phi(x_1), \phi(x_2)\}$ and
$\phi(v_3)\!\notin\!\{\phi(v_4), \phi(x_3), \phi(v_2), \phi(v_1)\}$.
(c) If $\phi(v_1)\!=\!\phi(x_2)$ and $\phi(v_4) = \phi(x_3)$, then choose
$\phi(v_2) \notin \{\phi(w), \phi(v_1), \phi(x_1), \phi(v_4)\}$ and
$\phi(v_3) \notin \{\phi(v_2), \phi(v_4), \phi(w), \phi(x_4)\}$.  In each
case, the coloring is proper, and the new vertices lie in no $2$-colored cycle.
\end{proof}


\bigskip

We will not discuss the proof of the Four Color Theorem here.
It is well known that after a hundred years of failed attempts, Appel and Haken
(working with Koch) found ``an unavoidable set of reducible configurations''
using the discharging method.  The discharging rules and reducibility arguments
were far more complicated than anything we present here.  The initial proof
involved 1936 reducible configurations.  The unavoidable set was generated by
hand, but reducibility was checked by computer.  The publication comprised
nearly 140 pages in two papers~\cite{AH771,AHK} plus over 400 pages of
microfiche
that became a 741-page book~\cite{AH89}. 

Some people objected to the use of computers, but the proof is now generally
accepted.  Robertson, Sanders, Seymour, and Thomas~\cite{RSST} looked
for a simpler proof but eventually used the same approach.  Their unavoidable
set had only 633 configurations and 32 discharging rules, but they still needed
a computer.  With the increases in computing power and simpler arguments, their
proof ran in only 20 minutes instead of the original 1200 hours.

With the Four Color Theorem proved, attention has turned to making use of it
(a notable example is Robertson, Seymour, and Thomas~\cite{RSTh} using it
to prove the case $k=6$ of Hadwiger's Conjecture, for which they won the 1994
Fulkerson Prize) and to understanding which planar graphs are $3$-colorable.
Computationally, testing $3$-colorability of a planar graph is
NP-hard~\cite{Stock}, but many sufficient conditions are known.

The most natural condition is to increase the girth; already
Gr\"otzsch~\cite{Gro} proved that planar graphs with girth at least $4$
are $3$-colorable.  There have been a number of proofs of this
(\cite{DKT,KY,ThomG1,ThomG3}), all using discharging at some point.
Thomassen~\cite{ThomG3} showed that girth at least $5$ suffices for
$3$-choosability.

Steinberg~\cite{St} conjectured that every planar graph without $4$-cycles or
$5$-cycles is $3$-colorable.  Eventually, Cohen-Addad et al.~\cite{CHKLS} found
counterexamples.  Results on this family can be compared with the
family where $\mad(G)<4$; see Exercise~\ref{no45}.

During the 40 years between~\cite{St} and~\cite{CHKLS}, many papers used
discharging to prove $3$-colorability under various conditions excluding sets
of cycle lengths.  For example, Borodin et al.~\cite{BGRS} proved that
excluding cycles of lengths $4$ through $7$ suffices.  Earlier,
Borodin~\cite{B1996} and Sanders and Zhao~\cite{SZ0} proved that excluding
$4$-cycles and faces of lengths $5$ through $9$ is sufficient.  The traditional
proof (presented in the survey~\cite{Bsurv}) uses balanced charging, but face
charging yields a somewhat simpler proof.

\begin{lemma}[\cite{B1996}]\label{10facelem}
Every plane graph $G$ with $\delta(G)\ge3$ has two 3-faces with a common edge,
or a $j$-face with $4\le j\le 9$, or a $10$-face whose vertices all have degree
$3$.
\end{lemma}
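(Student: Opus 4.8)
The plan is to use \emph{face charging}: give each vertex $v$ the charge $2d(v)-6$ and each face $f$ the charge $\ell(f)-6$, so that the total charge is $-12$ by the Proposition. Suppose the conclusion fails, and fix a plane graph $G$ with $\delta(G)\ge 3$ having none of the three configurations; we may assume $G$ is connected. Then no two $3$-faces share an edge, there is no face of length between $4$ and $9$, and every $10$-face has a vertex of degree at least $4$. In particular every face is a $3$-face or has length at least $10$. The only objects carrying negative initial charge are the $3$-faces, each with charge $-3$: a vertex of degree $3$ has charge $0$, while every vertex of degree at least $4$ and every face of length at least $10$ has nonnegative — indeed positive — charge. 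The goal is to redistribute charge so that every vertex and every face ends nonnegative, contradicting the total $-12$.

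First I would record the structural facts that drive the discharging. Since $3$-faces are pairwise nonadjacent and there are no faces of length $4$ through $9$, each edge of a $3$-face is shared with a face of length at least $10$; call such a face \emph{big}, and call an edge of a big face \emph{bad} if the face on its other side is a $3$-face. The key local fact is that \emph{at a degree-$3$ vertex $v$ on a big face $f$, the two edges of $f$ incident to $v$ cannot both be bad}: otherwise the two $3$-faces across those edges would both contain $v$, and a short check using $d(v)=3$ shows they would then share an edge. Consequently, on a big face all of whose vertices have degree $3$, the bad edges form an independent set in the boundary cycle, and a degree-$3$ vertex lies on at most one $3$-face. Two more easy remarks: a big face and a $3$-face share at most one edge (sharing two forces a degree-$2$ vertex), so a big face sheds charge once per bad edge; and the three big faces surrounding a $3$-face all of whose vertices have degree $3$ are pairwise distinct (again because those vertices have degree $3$).

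The discharging then routes the surplus on big faces and on vertices of degree at least $4$ into the $3$-faces. Roughly: each big face contributes, across each of its bad edges, an amount close to $1$ to the incident $3$-face — exactly $1$ when both endpoints of the edge have degree $3$, and somewhat less (say $\frac34$ or $\frac12$) as endpoints of degree at least $4$ appear — while each vertex of degree at least $4$ distributes its surplus $2d(v)-6$ evenly among its incident faces. The amounts are chosen so that (i) a $3$-face all of whose vertices have degree $3$ receives exactly $3$ — it must, since such a face draws nothing from its own vertices, so each of its three distinct big neighbors must supply $1$ — and (ii) every $3$-face having a vertex of degree at least $4$ is still raised to $0$, the charge it loses from big faces being made up by its own high-degree vertices. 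With these rules: $3$-faces finish at $0$; a degree-$3$ vertex is untouched and finishes at $0$; a vertex of degree at least $4$ gives away at most its surplus and finishes nonnegative; a big face of length at least $12$ finishes nonnegative because its surplus $\ell-6$ dominates what it sheds (bounded by roughly $\ell/2$ via the independence fact when its vertices have degree $3$, and paid for at a reduced rate and partly refunded by incident high-degree vertices otherwise — a short case analysis on the boundary degrees); a big face of length $11$ finishes nonnegative because, in the all-degree-$3$ case, it has at most five bad edges by independence and surplus $5$, other cases following from the reduced rates; and a big face of length $10$ finishes nonnegative \emph{only} because the third excluded configuration (a $10$-face with all vertices of degree $3$) forces a vertex of degree at least $4$ on it, whose contribution, together with the reduced per-edge rates near that vertex, covers the shortfall from its meagre surplus $4$. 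Summing all final charges gives a nonnegative number equal to $-12$: contradiction.

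The delicate point, and where I expect to spend the most effort, is exactly that last calibration for faces of length $10$ (and to a lesser extent $11$): their surplus is only barely enough, so the per-edge amounts, the way each high-degree vertex splits its surplus among incident faces, and the use of the excluded $10$-face configuration must be balanced very precisely — indeed one must be careful that a single degree-$4$ vertex on a $10$-face, which may be forced to fund several adjacent triangles, still suffices. This is also exactly why the range ``$4$ through $9$'' and the all-degree-$3$ $10$-face are sharp features of the statement. A minor bookkeeping matter is the reduction to connected $G$ (the charging identity and all the local facts are insensitive to cut-vertices and cut-edges), together with allowing two big faces around a $3$-face, or a big face and a $3$-face, to meet along a vertex in degenerate ways without disturbing the bounds, which are stated as inequalities on the amount of charge shed.
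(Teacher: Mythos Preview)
Your setup matches the paper's---face charging, the observation that only $3$-faces start negative, and the key structural fact that at a $3$-vertex on a big face the two incident boundary edges cannot both be bad. The gap is in the discharging rules. With big faces paying a reduced rate $b<1$ across bad edges having a $4^+$-endpoint, and $4^+$-vertices splitting their surplus $2d(v)-6$ evenly among all incident faces, the constraints on $b$ are inconsistent. Consider a $10$-face $f$ with one $4$-vertex $v_0$ and nine $3$-vertices, and bad edges $e_0,e_2,e_4,e_6,e_8$, so that $v_0$ meets exactly one of them, $e_0$; your independence constraint bites only at $3$-vertices, so this pattern is allowed. Then $f$ gives $4+b$ and receives only $\tfrac12$ from $v_0$, forcing $b\le\tfrac12$. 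But a triangle with vertex degrees $3,3,4$ (for instance the triangle across $e_0$, if its apex has degree $3$) receives $1+2b$ from big faces and $\tfrac12$ from its $4$-vertex, forcing $b\ge\tfrac34$. No rate works; routing the $4$-vertex's surplus only to triangles fails too, since then the $10$-face receives nothing and needs $b\le0$.

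The missing idea is to reverse the direction of the vertex contribution. In the paper's rules, every triangle takes a flat $1$ from each neighboring face (so every triangle reaches $0$ with no case analysis), and every big face then \emph{recoups} $1$ from each incident $4^+$-vertex lying on a triangle that shares an edge with it. Along a big $j$-face, a maximal run of $k$ consecutive bad edges has $k-1$ internal vertices, all necessarily $4^+$ by your own observation, so the net loss over the run is $k-(k-1)=1$; and if an endpoint of the run is a $4^+$-vertex the net loss drops to $0$. Hence the total net loss is the number of maximal runs with both endpoints of degree $3$, which is at most $\lfloor j/2\rfloor$, giving final charge at least $\lceil j/2\rceil-6\ge0$ for $j\ge11$. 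A $10$-face loses $5$ only with five disjoint single-edge runs and all ten vertices of degree $3$---the excluded configuration. In your problematic example the run $\{e_0\}$ has endpoint $v_0$ of degree $4$ and contributes net loss $0$, so $f$ finishes at exactly $0$.
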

\begin{proof}
Let $G$ be a plane graph with $\delta(G)\ge3$ having none of the listed
configurations.  Use face charging: assign charge $2d(v)-6$ to each vertex $v$
and charge $\ell(f)-6$ to each face $f$.  The total charge is $-12$.

Since no faces have lengths $4$ through $9$, the only objects with initial
negative charge are triangles; they begin with charge $-3$.  Each triangle
takes $1$ from each neighboring face.  To repair faces that may lose too much,
each face $f$ takes 1 from each incident $4^+$-vertex lying on at least one
triangle sharing an edge with $f$ (see Figure~\ref{fig10face}).

We have made $3$-faces happy, and $3$-vertices remain at charge $0$.  Other
vertices remain happy because $3$-faces do not share edges.  For $j\ge4$, a
$j$-vertex loses charge at most $\FL{\FR{2j}3}$ and ends with at least
$\CL{\FR {4j}3}-6$, which is nonnegative for $j\ge4$.

Now consider a $j$-face $f$ for $j\ge10$.  It loses $1$ for every path along
its boundary such that the neighboring faces are triangles and the endpoints
have degree $3$; $f$ gives $1$ to each of those faces but regains $1$ from each
intervening vertex.  If an endpoint of a maximal such path has degree at least
$4$, then there is no net loss.  Hence the net loss for $f$ is at most
$\FL{\FR j2}$, and the final charge is at least $\CL{\FR j2}-6$, which is
nonnegative when $j\ge 11$.

Hence negative charge can occur only at $10$-faces.  A $10$-face $f$ must
lose more than $4$ to become negative.  This requires five paths through which
$f$ loses $1$.  The paths must be single edges sharing no vertices, and all
the vertices incident to $f$ must have degree $3$.
\end{proof}

\begin{figure}[h]
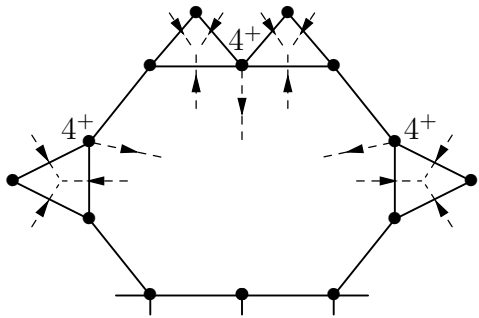

\gpic{
\expandafter\ifx\csname graph\endcsname\relax \csname newbox\endcsname\graph\fi
\expandafter\ifx\csname graphtemp\endcsname\relax \csname newdimen\endcsname\graphtemp\fi
\setbox\graph=\vtop{\vskip 0pt\hbox{%
    \graphtemp=.5ex\advance\graphtemp by 1.530in
    \rlap{\kern 1.250in\lower\graphtemp\hbox to 0pt{\hss $\bu$\hss}}%
    \graphtemp=.5ex\advance\graphtemp by 1.530in
    \rlap{\kern 1.730in\lower\graphtemp\hbox to 0pt{\hss $\bu$\hss}}%
    \graphtemp=.5ex\advance\graphtemp by 1.130in
    \rlap{\kern 2.050in\lower\graphtemp\hbox to 0pt{\hss $\bu$\hss}}%
    \graphtemp=.5ex\advance\graphtemp by 0.930in
    \rlap{\kern 2.450in\lower\graphtemp\hbox to 0pt{\hss $\bu$\hss}}%
    \graphtemp=.5ex\advance\graphtemp by 0.730in
    \rlap{\kern 2.050in\lower\graphtemp\hbox to 0pt{\hss $\bu$\hss}}%
    \graphtemp=.5ex\advance\graphtemp by 0.330in
    \rlap{\kern 1.730in\lower\graphtemp\hbox to 0pt{\hss $\bu$\hss}}%
    \graphtemp=.5ex\advance\graphtemp by 0.050in
    \rlap{\kern 1.490in\lower\graphtemp\hbox to 0pt{\hss $\bu$\hss}}%
    \graphtemp=.5ex\advance\graphtemp by 0.330in
    \rlap{\kern 1.250in\lower\graphtemp\hbox to 0pt{\hss $\bu$\hss}}%
    \special{pn 11}%
    \special{pa 1250 1530}%
    \special{pa 1730 1530}%
    \special{fp}%
    \special{pa 1730 1530}%
    \special{pa 2050 1130}%
    \special{fp}%
    \special{pa 2050 1130}%
    \special{pa 2450 930}%
    \special{fp}%
    \special{pa 2450 930}%
    \special{pa 2050 730}%
    \special{fp}%
    \special{pa 2050 730}%
    \special{pa 1730 330}%
    \special{fp}%
    \special{pa 1730 330}%
    \special{pa 1490 50}%
    \special{fp}%
    \special{pa 1490 50}%
    \special{pa 1250 330}%
    \special{fp}%
    \graphtemp=.5ex\advance\graphtemp by 1.530in
    \rlap{\kern 1.250in\lower\graphtemp\hbox to 0pt{\hss $\bu$\hss}}%
    \graphtemp=.5ex\advance\graphtemp by 1.530in
    \rlap{\kern 0.770in\lower\graphtemp\hbox to 0pt{\hss $\bu$\hss}}%
    \graphtemp=.5ex\advance\graphtemp by 1.130in
    \rlap{\kern 0.450in\lower\graphtemp\hbox to 0pt{\hss $\bu$\hss}}%
    \graphtemp=.5ex\advance\graphtemp by 0.930in
    \rlap{\kern 0.050in\lower\graphtemp\hbox to 0pt{\hss $\bu$\hss}}%
    \graphtemp=.5ex\advance\graphtemp by 0.730in
    \rlap{\kern 0.450in\lower\graphtemp\hbox to 0pt{\hss $\bu$\hss}}%
    \graphtemp=.5ex\advance\graphtemp by 0.330in
    \rlap{\kern 0.770in\lower\graphtemp\hbox to 0pt{\hss $\bu$\hss}}%
    \graphtemp=.5ex\advance\graphtemp by 0.050in
    \rlap{\kern 1.010in\lower\graphtemp\hbox to 0pt{\hss $\bu$\hss}}%
    \graphtemp=.5ex\advance\graphtemp by 0.330in
    \rlap{\kern 1.250in\lower\graphtemp\hbox to 0pt{\hss $\bu$\hss}}%
    \special{pa 1250 1530}%
    \special{pa 770 1530}%
    \special{fp}%
    \special{pa 770 1530}%
    \special{pa 450 1130}%
    \special{fp}%
    \special{pa 450 1130}%
    \special{pa 50 930}%
    \special{fp}%
    \special{pa 50 930}%
    \special{pa 450 730}%
    \special{fp}%
    \special{pa 450 730}%
    \special{pa 770 330}%
    \special{fp}%
    \special{pa 770 330}%
    \special{pa 1010 50}%
    \special{fp}%
    \special{pa 1010 50}%
    \special{pa 1250 330}%
    \special{fp}%
    \special{pa 450 1130}%
    \special{pa 450 730}%
    \special{fp}%
    \special{pa 2050 1130}%
    \special{pa 2050 730}%
    \special{fp}%
    \special{pa 770 330}%
    \special{pa 1730 330}%
    \special{fp}%
    \special{pa 1250 1530}%
    \special{pa 1250 1630}%
    \special{fp}%
    \special{pa 590 1530}%
    \special{pa 770 1530}%
    \special{fp}%
    \special{pa 770 1530}%
    \special{pa 770 1630}%
    \special{fp}%
    \special{pa 1910 1530}%
    \special{pa 1730 1530}%
    \special{fp}%
    \special{pa 1730 1530}%
    \special{pa 1730 1630}%
    \special{fp}%
    \graphtemp=.5ex\advance\graphtemp by 0.673in
    \rlap{\kern 0.393in\lower\graphtemp\hbox to 0pt{\hss $4^+$\hss}}%
    \graphtemp=.5ex\advance\graphtemp by 0.210in
    \rlap{\kern 1.270in\lower\graphtemp\hbox to 0pt{\hss $4^+$\hss}}%
    \graphtemp=.5ex\advance\graphtemp by 0.673in
    \rlap{\kern 2.187in\lower\graphtemp\hbox to 0pt{\hss $4^+$\hss}}%
    \special{pn 8}%
    \special{pa 650 930}%
    \special{pa 310 930}%
    \special{da 0.040}%
    \special{pa 514 930}%
    \special{pa 446 930}%
    \special{fp}%
    \special{sh 1.000}%
    \special{pa 526 950}%
    \special{pa 446 930}%
    \special{pa 526 910}%
    \special{pa 526 950}%
    \special{fp}%
    \special{pa 150 690}%
    \special{pa 290 910}%
    \special{da 0.040}%
    \special{pa 206 778}%
    \special{pa 234 822}%
    \special{fp}%
    \special{sh 1.000}%
    \special{pa 208 744}%
    \special{pa 234 822}%
    \special{pa 174 765}%
    \special{pa 208 744}%
    \special{fp}%
    \special{pa 150 1170}%
    \special{pa 290 950}%
    \special{da 0.040}%
    \special{pa 206 1082}%
    \special{pa 234 1038}%
    \special{fp}%
    \special{sh 1.000}%
    \special{pa 174 1095}%
    \special{pa 234 1038}%
    \special{pa 208 1116}%
    \special{pa 174 1095}%
    \special{fp}%
    \special{pa 1850 930}%
    \special{pa 2190 930}%
    \special{da 0.040}%
    \special{pa 1986 930}%
    \special{pa 2054 930}%
    \special{fp}%
    \special{sh 1.000}%
    \special{pa 1974 910}%
    \special{pa 2054 930}%
    \special{pa 1974 950}%
    \special{pa 1974 910}%
    \special{fp}%
    \special{pa 2350 690}%
    \special{pa 2210 910}%
    \special{da 0.040}%
    \special{pa 2294 778}%
    \special{pa 2266 822}%
    \special{fp}%
    \special{sh 1.000}%
    \special{pa 2326 765}%
    \special{pa 2266 822}%
    \special{pa 2292 744}%
    \special{pa 2326 765}%
    \special{fp}%
    \special{pa 2350 1170}%
    \special{pa 2210 950}%
    \special{da 0.040}%
    \special{pa 2294 1082}%
    \special{pa 2266 1038}%
    \special{fp}%
    \special{sh 1.000}%
    \special{pa 2292 1116}%
    \special{pa 2266 1038}%
    \special{pa 2326 1095}%
    \special{pa 2292 1116}%
    \special{fp}%
    \special{pa 1010 550}%
    \special{pa 1010 270}%
    \special{da 0.040}%
    \special{pa 1010 438}%
    \special{pa 1010 382}%
    \special{fp}%
    \special{sh 1.000}%
    \special{pa 990 462}%
    \special{pa 1010 382}%
    \special{pa 1030 462}%
    \special{pa 990 462}%
    \special{fp}%
    \special{pa 1150 50}%
    \special{pa 1030 230}%
    \special{da 0.040}%
    \special{pa 1102 122}%
    \special{pa 1078 158}%
    \special{fp}%
    \special{sh 1.000}%
    \special{pa 1139 103}%
    \special{pa 1078 158}%
    \special{pa 1106 80}%
    \special{pa 1139 103}%
    \special{fp}%
    \special{pa 870 50}%
    \special{pa 990 230}%
    \special{da 0.040}%
    \special{pa 918 122}%
    \special{pa 942 158}%
    \special{fp}%
    \special{sh 1.000}%
    \special{pa 914 80}%
    \special{pa 942 158}%
    \special{pa 881 103}%
    \special{pa 914 80}%
    \special{fp}%
    \special{pa 1490 550}%
    \special{pa 1490 270}%
    \special{da 0.040}%
    \special{pa 1490 438}%
    \special{pa 1490 382}%
    \special{fp}%
    \special{sh 1.000}%
    \special{pa 1470 462}%
    \special{pa 1490 382}%
    \special{pa 1510 462}%
    \special{pa 1470 462}%
    \special{fp}%
    \special{pa 1630 50}%
    \special{pa 1510 230}%
    \special{da 0.040}%
    \special{pa 1582 122}%
    \special{pa 1558 158}%
    \special{fp}%
    \special{sh 1.000}%
    \special{pa 1619 103}%
    \special{pa 1558 158}%
    \special{pa 1586 80}%
    \special{pa 1619 103}%
    \special{fp}%
    \special{pa 1350 50}%
    \special{pa 1470 230}%
    \special{da 0.040}%
    \special{pa 1398 122}%
    \special{pa 1422 158}%
    \special{fp}%
    \special{sh 1.000}%
    \special{pa 1394 80}%
    \special{pa 1422 158}%
    \special{pa 1361 103}%
    \special{pa 1394 80}%
    \special{fp}%
    \special{pa 470 730}%
    \special{pa 850 810}%
    \special{da 0.040}%
    \special{pa 622 762}%
    \special{pa 698 778}%
    \special{fp}%
    \special{sh 1.000}%
    \special{pa 624 742}%
    \special{pa 698 778}%
    \special{pa 616 781}%
    \special{pa 624 742}%
    \special{fp}%
    \special{pa 2030 730}%
    \special{pa 1650 810}%
    \special{da 0.040}%
    \special{pa 1878 762}%
    \special{pa 1802 778}%
    \special{fp}%
    \special{sh 1.000}%
    \special{pa 1884 781}%
    \special{pa 1802 778}%
    \special{pa 1876 742}%
    \special{pa 1884 781}%
    \special{fp}%
    \special{pa 1250 350}%
    \special{pa 1250 730}%
    \special{da 0.040}%
    \special{pa 1250 502}%
    \special{pa 1250 578}%
    \special{fp}%
    \special{sh 1.000}%
    \special{pa 1270 498}%
    \special{pa 1250 578}%
    \special{pa 1230 498}%
    \special{pa 1270 498}%
    \special{fp}%
    \hbox{\vrule depth1.630in width0pt height 0pt}%
    \kern 2.500in
  }%
}%
}
\caption{Discharging for Lemma~\ref{10facelem}\label{fig10face}}
\end{figure}

\vspace{-1pc}

\begin{theorem}[\cite{B1996,SZ0}]\label{10face}
Every plane graph having no 4-cycle and no $j$-face with $5\le j\le 9$ is
3-colorable.
\end{theorem}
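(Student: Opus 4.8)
The plan is to argue from a minimal counterexample and apply Lemma~\ref{10facelem}. Let $\mathcal{C}$ be the class of plane graphs with no $4$-cycle and no $j$-face for $5\le j\le 9$, and let $G\in\mathcal{C}$ be non-$3$-colorable with $|V(G)|$ minimum. I would first reduce to a $2$-connected $G$ with $\delta(G)\ge 3$: the usual block decomposition handles cut vertices, and if $v$ has degree at most $2$ then deleting $v$ merges two faces, each of length $3$ or at least $10$, into a face whose length is not in $\{4,\dots,9\}$, so the smaller graph is again in $\mathcal{C}$, is $3$-colorable by minimality, and the coloring extends since at most two colors are forbidden at $v$. Being $2$-connected, every face of $G$ is bounded by a cycle, so Lemma~\ref{10facelem} gives one of three configurations. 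Two $3$-faces sharing an edge $xy$, with distinct third vertices $z$ and $w$, contain the $4$-cycle $xzyw$; and a $j$-face with $4\le j\le 9$ is a $4$-cycle when $j=4$ and is excluded by hypothesis when $5\le j\le 9$. Hence $G$ must contain the remaining configuration: a $10$-face $f$, bounded by a cycle $v_1v_2\cdots v_{10}$ all of whose vertices have degree $3$.

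It remains to show this configuration is reducible. Let $u_i$ be the third neighbour of $v_i$, and for each $i$ (indices mod $10$) let $A_i$ be the face incident to $v_iv_{i+1}$ other than $f$; its boundary walk contains $u_i,v_i,v_{i+1},u_{i+1}$. Every face of $G$ has length $3$ or at least $10$, and $\ell(A_i)=3$ precisely when $u_i=u_{i+1}$, while $u_i=u_{i+1}=u_{i+2}$ would produce the $4$-cycle $v_iv_{i+1}v_{i+2}u_i$; so no two consecutive $A_i$ are triangles, at most five of them have length $3$, and $\sum_{i=1}^{10}\ell(A_i)\ge 5\cdot3+5\cdot10=65$. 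Put $G'=G-\{v_1,\dots,v_{10}\}$. Deleting these vertices merges $f$ with $A_1,\dots,A_{10}$ into a single face of length $\sum_{i=1}^{10}(\ell(A_i)-3)=\sum_i\ell(A_i)-30\ge 35$, leaves all other faces unchanged, and creates no $4$-cycle, so $G'\in\mathcal{C}$ and by minimality has a $3$-coloring $\phi$. Assign each $v_i$ the list $\{1,2,3\}\setminus\{\phi(u_i)\}$, which has size at least $2$; since $v_1\cdots v_{10}$ is an even cycle, Lemma~\ref{evencyc} yields a proper coloring of it from these lists, and together with $\phi$ this is a proper $3$-coloring of $G$ --- the desired contradiction.

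I expect the real work to lie not in the coloring extensions, which are immediate, but in guaranteeing that every intermediate graph stays in $\mathcal{C}$; this is delicate because $\mathcal{C}$ is a condition on the embedding rather than a hereditary graph property. Three spots need care: the block decomposition reducing to a $2$-connected $G$; the degree reduction, where the borderline case is a $2$-vertex lying on both a triangle and a $10$-face (a naive deletion then produces a $9$-face, so a short separate argument is required); and the $10$-face reduction when $f$ has a chord. A chord $v_iv_j$ whose shorter arc has length $3$ (or $7$) contains a $4$-cycle and cannot occur, but other chords can, and then $\{v_1,\dots,v_{10}\}$ induces an outerplanar graph (the $10$-cycle together with non-crossing chords) rather than a single cycle, so extending $\phi$ calls for a brief list-coloring argument in place of the bare appeal to Lemma~\ref{evencyc}, and the face-length bound above must be re-examined. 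Handling these cases is the substance of the proof; the rest follows the standard unavoidable-set-of-reducible-configurations pattern.
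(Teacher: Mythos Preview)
Your overall strategy matches the paper's---minimal counterexample, Lemma~\ref{10facelem}, then extend a $3$-coloring across the $10$-face via $2$-choosability of even cycles---but you are making the reductions much harder than necessary, and the gaps you flag are real in your approach. The paper sidesteps every one of them with a single observation: a smallest counterexample $G$ is $4$-\emph{critical}. Since no two $3$-faces of $G$ share an edge (that would force a $4$-cycle), deleting any edge merges faces of lengths $a,b\in\{3\}\cup[10,\infty)$ with $\max\{a,b\}\ge 10$ into a face of length $a+b-2\ge 11$; hence $G-e\in\mathcal C$ for every edge $e$, and edge-minimality gives $\chi(G-e)\le 3$. Now $4$-criticality yields $\delta(G)\ge 3$ and $2$-connectedness for free (no block decomposition, no special argument for a $2$-vertex on a triangle and a $10$-face), and---this is the point you are missing---it makes \emph{every} proper subgraph of $G$ automatically $3$-colorable. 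So $G-V(C)$ is $3$-colorable with no need whatsoever to verify that it stays in $\mathcal C$; your face-length computation for the merged face is correct but entirely unnecessary.

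Your concern about chords of $C$ also dissolves under $4$-criticality: a chord $v_iv_j$ would make $\{v_i,v_j\}$ a $2$-cut of $G$ (the chord together with $C$ separates the exterior of $C$ into two regions, each containing other vertices of $C$), and it is a standard fact that a $4$-critical graph has no $2$-cut consisting of two adjacent vertices. So every $v_i$ genuinely has exactly one neighbor outside $C$, each list has size exactly $2$, and Lemma~\ref{evencyc} applies directly with no further list-coloring argument needed.
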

\begin{proof}
A smallest counterexample $G$ must be 4-critical, and hence it has minimum
degree at least 3 and is 2-connected.  Since there is no $4$-cycle, no two
3-faces share an edge.
By Lemma~\ref{10facelem}, we may thus assume that $G$ is embedded with at least
one 10-face $C$, whose vertices all have degree 3.  Let $f$ be a proper
3-coloring of $G-V(C)$.  Since each vertex on $C$ has exactly one neighbor
outside $C$, two colors remain available at each vertex of $C$.  Since even
cycles are 2-choosable, the coloring can be completed.
\end{proof}

\medskip

{\small

\begin{exercise}
Let $G$ be a simple plane graph with $\delta(G)\ge 3$.  Prove that $G$ has a
$3$-vertex on a $5^-$-face or a $5^-$-vertex on a triangle.
\end{exercise}

\begin{exercise}
(Lebesgue~\cite{Le})
Strengthen the previous exercise by proving that every plane graph $G$ with
$\delta(G)\ge3$ contains a $3$-vertex on a $5^-$-face, a $4$-vertex on a
$3$-face, or a $5$-vertex with four incident $3$-faces.  (Comment: Lebesgue
phrased the proof only for $3$-connected plane graphs.)
\end{exercise}

\begin{exercise}\label{kotzsharp}
Construct planar graphs to show that the bounds in Lemma~\ref{kotzig} are sharp.
That is, none of the values $10$, $7$, $6$ can be reduced in the statement that
normal plane maps have a $3$-vertex with a $10^-$-neighbor, a $4$-vertex with a
$7^-$-neighbor, or a $5$-vertex with a $6^-$-neighbor.
\end{exercise}

\begin{exercise}\label{Franklin}
Prove that planarity is needed in Lemma~\ref{kotzig} by showing
that a graph $G$ with $\Mad(G)<6$ and $\delta(G)=5$ need not have a $5$-vertex
with any $6^-$-neighbor.
\end{exercise}

\begin{exercise}\label{trisharp}
Prove that requiring minimum degree $5$ in Theorem~\ref{lighttri} is necessary,
by constructing for each $k\in\NN$ a planar graph with minimum degree $4$
having no triangle with weight at most $k$.
\end{exercise}

\begin{exercise}
(Cranston~\cite{C1})
Let $G$ be a plane graph with $\Delta(G)\ge 7$.  Prove that $G$ has either two
$3$-faces with a common edge or an edge with weight at most $\Delta(G)+2$.
Conclude that if $G$ is a plane graph with $\Delta(G)\ge 7$ and no two
$3$-faces sharing an edge, then $G$ is $(\Delta(G)+1)$-edge-choosable.
(Hint: Use balanced charging.  Comment: Cranston proved that the same
conditions are also sufficient when $\Delta(G)\ge6$, which implies several
earlier results.)
\end{exercise}

\begin{exercise}\label{edgir1}
The argument in Remark~\ref{2bound} shows that $\FR{12}5$ is the largest $b$
such that $\Mad(G)<b$ guarantees adjacent $2$-vertices in $G$.  Note that
$\Mad(G)<\FR{12}5$ when $G$ is planar with girth at least $12$.  Prove that
planar graphs with girth at least $11$ have adjacent $2$-vertices, and provide
a construction to show that the conclusion fails for some planar graph with
girth $10$.
\end{exercise}

\begin{exercise}
(Gr\"unbaum~\cite{Gru})
Prove that if a planar graph has no edges joining $5$-vertices, then it has
at least $60$ edges whose endpoints have degrees $5$ and $6$.
\end{exercise}

\begin{exercise}
Planar graphs with girth at least $6$ satisfy $\Mad(G)<3$, so by
Lemma~\ref{avd3-25} each such graph has a $2$-vertex with a $5^-$-neighbor.
Show that this is sharp even for planar graphs by constructing a planar graph
with girth $6$ having no edge of weight at most $6$.
\end{exercise}


\begin{exercise}\label{edgirn}
Determine whether a planar graph with girth at least $4$ and minimum degree
$3$ must have a $3$-vertex with a $4^-$-neighbor.  Construct a planar graph
$G_k$ with girth $4$ and minimum degree $3$ in which the distance between
$3$-vertices is at least $k$.  Construct a planar graph $H_k$ with minimum
degree $5$ in which the distance between $5$-vertices is at least $k$.
\end{exercise}

\begin{exercise}
Let $G$ be a graph with $\delta(G)=3$ and $\mad(G)<\FR{10}3$.  Prove that
$G$ has a $3$-vertex whose neighbors have degree-sum at most $10$.  Prove that
this result is sharp even in the family of planar graphs with girth at least
$5$ by constructing such a graph in which no $3$-vertex has three $3$-neighbors.
(Comment: G. Tardos constructed such a graph with $98$ vertices.)
\end{exercise}

\begin{exercise}\label{exfirst}
(Borodin~\cite{B1996e})
Prove that every planar graph with minimum degree $5$ contains two $3$-faces
sharing an edge with weight at most $11$.
(Hint: Use vertex charging, with $5$-vertices taking $\FR12$ from incident
$4^+$-faces and the remaining needed charge from $7^+$-neighbors.)
\end{exercise}

\begin{exercise}\label{albertson}
Prove that every plane triangulation with minimum degree 5 has two $3$-faces
sharing an edge such that the non-shared vertices have degree-sum at most $11$.
(Hint: Use vertex charging; $6$-vertices that give charge to $5$-neighbors will
need charge from $7^+$-neighbors.
Comment: Albertson~\cite{A} used this configuration in a proof that
$\al(G)\ge \FR{2n}9$ when $G$ is an $n$-vertex planar graph with no separating
triangle, without using the Four Color Theorem or the language of discharging.)
\end{exercise}


\begin{exercise}\label{planarb}
Prove inductively that every planar graph decomposes into three forests.
(Hint: Reduce to triangulations, and then apply the induction hypothesis
to a smaller graph obtained by deleting a light vertex and triangulating
the resulting face.  There are a number of cases when the deleted vertex has
degree $5$, depending on the usage of the two added edges.)  Use
Lemma~\ref{12vert} and more detailed analysis to prove that the third
forest can be guaranteed to have maximum degree at most $9$.  (Comment:
The second part of this exercise is long.  See Balogh et al.~\cite{BKPY} for
maximum degree at most $8$.)
\end{exercise}

\begin{exercise}\label{e55}
Let $G$ be a planar graph with $\delta(G)=5$.  With $e_{i,j}$ denoting the
numbers of edges with endpoints of degrees $i$ and $j$, prove
$\FR{26}{11}e_{5,5}+e_{5,6}\ge60$.  (Comment: Borodin and Sanders~\cite{BS}
proved the stronger result $\FR73e_{5,5}+e_{5,6}\ge60$; the coefficients
are sharp.)
\end{exercise}

\begin{exercise}\label{edgir7}
Reprove Lemma~\ref{girth7} by using balanced charging to prove that every
planar graph with girth at least $7$ and minimum degree at least $2$ has a
$2$-vertex adjacent to a $3^-$-vertex.  Prove that the conclusion does not
always hold when $\Mad(G)<\FR{14}5$ (thus planarity is needed).  Show that the
conclusion does not hold for all planar graphs with girth $6$.
\end{exercise}

\begin{exercise}\label{4dynam}
(Kim--Park~\cite{KP})
Prove that among the planar graphs with girth at least $7$, a minimal graph
that is not dynamically $4$-choosable cannot contain a $2$-vertex with a
$3^-$-neighbor.  (Comment: With Lemma~\ref{girth7}, this proves that every
planar graph with girth at least $7$ is dynamically $4$-choosable.  Note that
$\Mad(G)<\FR{14}5$ when $G$ is planar with girth at least $7$, but
$\Mad(G)<\FR{14}5$ is not sufficient for dynamic $4$-choosability.)
\end{exercise}

\begin{exercise}\label{noac4}
(Gr\"unbaum~\cite{Gru}, Kostochka--Mel'nikov~\cite{KMe})
Prove that the two graphs in Figure~\ref{fignoac} are not acylically
$4$-colorable.  The half-edges leaving the figure on the right lead to an
additional vertex having the same neighborhood as the central vertex.
\end{exercise}

\begin{figure}[hbt]
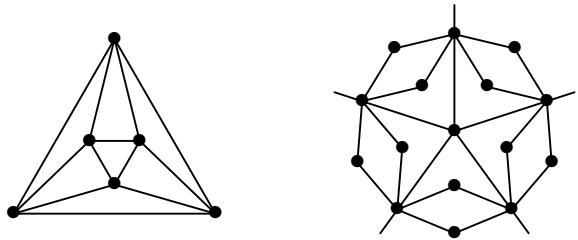

\gpic{
\expandafter\ifx\csname graph\endcsname\relax \csname newbox\endcsname\graph\fi
\expandafter\ifx\csname graphtemp\endcsname\relax \csname newdimen\endcsname\graphtemp\fi
\setbox\graph=\vtop{\vskip 0pt\hbox{%
    \graphtemp=.5ex\advance\graphtemp by 0.178in
    \rlap{\kern 0.592in\lower\graphtemp\hbox to 0pt{\hss $\bu$\hss}}%
    \graphtemp=.5ex\advance\graphtemp by 1.093in
    \rlap{\kern 1.120in\lower\graphtemp\hbox to 0pt{\hss $\bu$\hss}}%
    \graphtemp=.5ex\advance\graphtemp by 1.093in
    \rlap{\kern 0.064in\lower\graphtemp\hbox to 0pt{\hss $\bu$\hss}}%
    \special{pn 11}%
    \special{pa 592 178}%
    \special{pa 1120 1093}%
    \special{fp}%
    \special{pa 1120 1093}%
    \special{pa 64 1093}%
    \special{fp}%
    \special{pa 64 1093}%
    \special{pa 592 178}%
    \special{fp}%
    \graphtemp=.5ex\advance\graphtemp by 0.941in
    \rlap{\kern 0.592in\lower\graphtemp\hbox to 0pt{\hss $\bu$\hss}}%
    \graphtemp=.5ex\advance\graphtemp by 0.712in
    \rlap{\kern 0.460in\lower\graphtemp\hbox to 0pt{\hss $\bu$\hss}}%
    \graphtemp=.5ex\advance\graphtemp by 0.712in
    \rlap{\kern 0.724in\lower\graphtemp\hbox to 0pt{\hss $\bu$\hss}}%
    \special{pa 592 941}%
    \special{pa 460 712}%
    \special{fp}%
    \special{pa 460 712}%
    \special{pa 724 712}%
    \special{fp}%
    \special{pa 724 712}%
    \special{pa 592 941}%
    \special{fp}%
    \special{pa 592 941}%
    \special{pa 64 1093}%
    \special{fp}%
    \special{pa 64 1093}%
    \special{pa 460 712}%
    \special{fp}%
    \special{pa 460 712}%
    \special{pa 592 178}%
    \special{fp}%
    \special{pa 592 178}%
    \special{pa 724 712}%
    \special{fp}%
    \special{pa 724 712}%
    \special{pa 1120 1093}%
    \special{fp}%
    \special{pa 1120 1093}%
    \special{pa 592 941}%
    \special{fp}%
    \graphtemp=.5ex\advance\graphtemp by 0.153in
    \rlap{\kern 2.371in\lower\graphtemp\hbox to 0pt{\hss $\bu$\hss}}%
    \graphtemp=.5ex\advance\graphtemp by 0.504in
    \rlap{\kern 2.855in\lower\graphtemp\hbox to 0pt{\hss $\bu$\hss}}%
    \graphtemp=.5ex\advance\graphtemp by 1.072in
    \rlap{\kern 2.670in\lower\graphtemp\hbox to 0pt{\hss $\bu$\hss}}%
    \graphtemp=.5ex\advance\graphtemp by 1.072in
    \rlap{\kern 2.072in\lower\graphtemp\hbox to 0pt{\hss $\bu$\hss}}%
    \graphtemp=.5ex\advance\graphtemp by 0.504in
    \rlap{\kern 1.888in\lower\graphtemp\hbox to 0pt{\hss $\bu$\hss}}%
    \graphtemp=.5ex\advance\graphtemp by 0.661in
    \rlap{\kern 2.371in\lower\graphtemp\hbox to 0pt{\hss $\bu$\hss}}%
    \special{pa 2371 661}%
    \special{pa 2371 0}%
    \special{fp}%
    \special{pa 2371 661}%
    \special{pa 3000 457}%
    \special{fp}%
    \special{pa 2371 661}%
    \special{pa 2760 1196}%
    \special{fp}%
    \special{pa 2371 661}%
    \special{pa 1983 1196}%
    \special{fp}%
    \special{pa 2371 661}%
    \special{pa 1743 457}%
    \special{fp}%
    \special{pa 2371 153}%
    \special{pa 2541 428}%
    \special{fp}%
    \special{pa 2541 428}%
    \special{pa 2855 504}%
    \special{fp}%
    \special{pa 2855 504}%
    \special{pa 2686 228}%
    \special{fp}%
    \special{pa 2686 228}%
    \special{pa 2371 153}%
    \special{fp}%
    \special{pa 2855 504}%
    \special{pa 2645 750}%
    \special{fp}%
    \special{pa 2645 750}%
    \special{pa 2670 1072}%
    \special{fp}%
    \special{pa 2670 1072}%
    \special{pa 2880 826}%
    \special{fp}%
    \special{pa 2880 826}%
    \special{pa 2855 504}%
    \special{fp}%
    \special{pa 2670 1072}%
    \special{pa 2371 949}%
    \special{fp}%
    \special{pa 2371 949}%
    \special{pa 2072 1072}%
    \special{fp}%
    \special{pa 2072 1072}%
    \special{pa 2371 1196}%
    \special{fp}%
    \special{pa 2371 1196}%
    \special{pa 2670 1072}%
    \special{fp}%
    \special{pa 2072 1072}%
    \special{pa 2098 750}%
    \special{fp}%
    \special{pa 2098 750}%
    \special{pa 1888 504}%
    \special{fp}%
    \special{pa 1888 504}%
    \special{pa 1863 826}%
    \special{fp}%
    \special{pa 1863 826}%
    \special{pa 2072 1072}%
    \special{fp}%
    \special{pa 1888 504}%
    \special{pa 2202 428}%
    \special{fp}%
    \special{pa 2202 428}%
    \special{pa 2371 153}%
    \special{fp}%
    \special{pa 2371 153}%
    \special{pa 2057 228}%
    \special{fp}%
    \special{pa 2057 228}%
    \special{pa 1888 504}%
    \special{fp}%
    \graphtemp=.5ex\advance\graphtemp by 0.428in
    \rlap{\kern 2.541in\lower\graphtemp\hbox to 0pt{\hss $\bu$\hss}}%
    \graphtemp=.5ex\advance\graphtemp by 0.228in
    \rlap{\kern 2.686in\lower\graphtemp\hbox to 0pt{\hss $\bu$\hss}}%
    \graphtemp=.5ex\advance\graphtemp by 0.750in
    \rlap{\kern 2.645in\lower\graphtemp\hbox to 0pt{\hss $\bu$\hss}}%
    \graphtemp=.5ex\advance\graphtemp by 0.826in
    \rlap{\kern 2.880in\lower\graphtemp\hbox to 0pt{\hss $\bu$\hss}}%
    \graphtemp=.5ex\advance\graphtemp by 0.949in
    \rlap{\kern 2.371in\lower\graphtemp\hbox to 0pt{\hss $\bu$\hss}}%
    \graphtemp=.5ex\advance\graphtemp by 1.196in
    \rlap{\kern 2.371in\lower\graphtemp\hbox to 0pt{\hss $\bu$\hss}}%
    \graphtemp=.5ex\advance\graphtemp by 0.750in
    \rlap{\kern 2.098in\lower\graphtemp\hbox to 0pt{\hss $\bu$\hss}}%
    \graphtemp=.5ex\advance\graphtemp by 0.826in
    \rlap{\kern 1.863in\lower\graphtemp\hbox to 0pt{\hss $\bu$\hss}}%
    \graphtemp=.5ex\advance\graphtemp by 0.428in
    \rlap{\kern 2.202in\lower\graphtemp\hbox to 0pt{\hss $\bu$\hss}}%
    \graphtemp=.5ex\advance\graphtemp by 0.228in
    \rlap{\kern 2.057in\lower\graphtemp\hbox to 0pt{\hss $\bu$\hss}}%
    \hbox{\vrule depth1.259in width0pt height 0pt}%
    \kern 3.000in
  }%
}%
}
\caption{Graphs that are not acyclically $4$-colorable\label{fignoac}}
\end{figure}

\begin{exercise}\label{gir5}
Complete the proof of Lemma~\ref{gir5dis}.
\end{exercise}

\begin{exercise}
(Dvo\v{r}ak--Kawarabayashi--Thomas~\cite{DKT})
Let $C$ be the outer boundary in a 2-connected triangle-free plane graph $G$
that is not a cycle.  If $C$ has length at most $6$, and every vertex not on
$C$ has degree at least $3$, then $G$ contains a bounded $4$-face or a proper
$5$-face, where a $5$-face is {\it proper} if (at least) four of its vertices
have degree $3$ and are not on $C$.  (Comment: This result was used
in~\cite{DKT} to give a new proof of Gr\"otzsch's Theorem~\cite{Gro} that
triangle-free planar graphs are $3$-colorable.  The proof in~\cite{DKT} used
vertex charging, but using face charging is simpler.)
\end{exercise}

\begin{exercise}\label{no45}
Let $G$ be a plane graph having no $4$-cycle and no face with length in
$\{4,\ldots,k\}$.  Use discharging to prove that the average face length in $G$
is at least $6-\FR{18}{k+4}$.  Conclude that $\mad(G)<3+\FR9{2k-1}$.  In
particular, $\mad(G)<4$ when $G$ is a plane graph having no $4$-face or
$5$-face.
\end{exercise}

\begin{exercise}\label{wu}
(Wu~\cite{jlw}) Strengthen Lemma~\ref{altcyc} to show that every planar graph
contains an edge of weight at most $15$ or a $2$-alternating cycle such that
some high-degree vertex on the cycle has an additional $2$-neighbor outside the
cycle.  Conclude that $\larb(G)=\cdgh$ for every planar graph $G$ with
$\Delta(G)\ge 13$.
\end{exercise}

\begin{exercise}
Construct a planar graph with no $5$-cycle and a planar graph with no $4$-cycle
that are not $3$-colorable.
\end{exercise}

}

\section{List Coloring}\label{seclist}

List coloring (Definition~\ref{Dlist}) was invented in the 1970s by
Vizing~\cite{V4} and by Erd\H{o}s, Rubin, and Taylor~\cite{ERT}.  As we noted,
reducibility arguments for a coloring property often extend to reducibility for
the corresponding list coloring property, especially when made just by choosing
colors for vertices in a particular order.  For example, the famous result of
Brooks~\cite{Bro} that $\chi(G)\le\Delta(G)$ when $G$ is a connected graph
that is not a complete graph or odd cycle was strengthened to
$\chil(G)\le\Delta(G)$ for such graphs in~\cite{V4} and in~\cite{ERT}
(without discharging).  For planar graphs, the beautiful result by
Thomassen~\cite{T94} that planar graphs are $5$-choosable (sharp
by~\cite{Voigt}) also does not use discharging.  Thomassen~\cite{T3cho}  also
provedthat planar graphs with girth at least $5$ are $3$-choosable.

In this section we take a closer look at several problems involving list
coloring in order to develop further techniques for discharging arguments.
We begin with a useful lemma.


\begin{lemma}[\cite{ERT}]\label{evencyc}
Even cycles are $2$-choosable.
\end{lemma}
\begin{proof}
We show that $C_{2t}$ is $L$-colorable when every list has size $2$.  If the
lists are identical, then choose the colors to alternate.  Otherwise, there are
adjacent vertices $x$ and $y$ such that $L(x)$ contains a color $c$ not in
$L(y)$.  Use $c$ on $x$, and then follow the path $C_{2t}-x$ from $x$ to $y$
to color the vertices other than $x$: at each new vertex, choose a color from
its list that was not chosen for the previous vertex.  Such a choice is always
available, and the chosen colors satisfy every edge because the colors chosen
on $x$ and $y$ differ.
\end{proof}

Coloring and list-coloring have been studied extensively for squares of graphs.
Given a graph $G$, let $G^2$ be the graph obtained from $G$ by adding edges to
join vertices that are distance $2$ apart in $G$.  The neighbors of a vertex $v$
in $G$ form a clique with $v$ in $G^2$, so always $\chi(G^2)\ge \Delta(G)+1$.
Proper coloring of $G^2$ has also been called {\it $2$-distance coloring} of
$G$, since vertices with the same color must be separated by distance at least
$2$.

Kostochka and Woodall~\cite{KW} conjectured that always $\chil(G^2)=\chi(G^2)$.
This was proved in special cases, but Kim and Park~\cite{KP0} provided
counterexamples.  They used orthogonal families of Latin squares to construct a
graph $G$ for prime $p$ such that $G^2$ is the complete $(2p-1)$-partite graph
$K_{p,\ldots,p}$; on such graphs, $\chil-\chi$ is unbounded.

Thus sufficient conditions for $\chil(G^2)=\Delta(G)+1$ hold only on special
classes but establish a strong property.  We present such a result to show how
a discharging proof is discovered.  The discharging method often begins with
configurations that are easy to show reducible.  A discharging proof of
unavoidability of a set of such configurations starts by forbidding them.
When discharging, we may encounter a situation that does not guarantee the
desired final charge on some vertices.  Instead of trying to adjust the
discharging rules, we may try to add this configuration to the unavoidable set,
allowing us to assume that it does not occur.  This approach succeeds if we can
show that the new configuration is reducible.

We use $N_G(v)$ for the neighborhood of a vertex $v$ in a graph $G$,
with $N_G[v]=N_G(v)\cup\{v\}$.  When $L$ is a list assignment for $G$,
an $L$-coloring of a subgraph $G'$ of $G$ is with respect to the restriction
of $L$ to the vertices of $G'$.

\begin{lemma}[Borodin--Ivanova--Neustroeva~\cite{BIN}]\label{sqred}
Fix $k\ge 4$.  Among graphs $G$ with $\Delta(G)\le k$, the following
configurations are reducible for the property $\chil(G^2)\le k+1$.

\noindent
(A) a $1^-$-vertex,

\noindent
(B) a $2$-thread joining a $(k-1)^-$-vertex and a $(k-2)^-$-vertex,

\noindent
(C) a cycle of length divisible by $4$ composed of $3$-threads whose endpoints
have degree $k$.

\end{lemma}
\begin{proof}
Let $L$ be a $(k+1)$-uniform assignment on $G$; Figure~\ref{figsqred}
shows (B) and (C).

If (A) occurs at a $1^-$-vertex $v$, then let $G'=G-v$.  An $L$-coloring of
$G'^2$ extends to an $L$-coloring of $G^2$, because at most $k$ colors need to
be avoided at $v$.

If (B) occurs, then $G$ has a path $\la x,u,v,y\ra$ such that $d(u)=d(v)=2$,
$d(x)\le k-1$, and $d(y)\le k-2$.  With distance $3$ between $x$ and $y$, we
have $(G-\{u,v\})^2=G^2-\{u,v\}$.  Let $G'=G-\{u,v\}$.  By minimality, $G'^2$
has an $L$-coloring $\phi$.  In $G$, the color on $u$ must avoid the colors on
$\{x,y\}\cup N_{G'}(x)$.  Since $d(x)\le k-1$ and $|L(u)|=k+1$, a color is
available for $u$.  Now the color on $v$ must avoid those on
$\{x,y,u\}\cup N_{G'}(y)$.  Since $d(y)\le k-2$ and $|L(u)|=k+1$, a color is
available for $v$.

\begin{figure}[h]
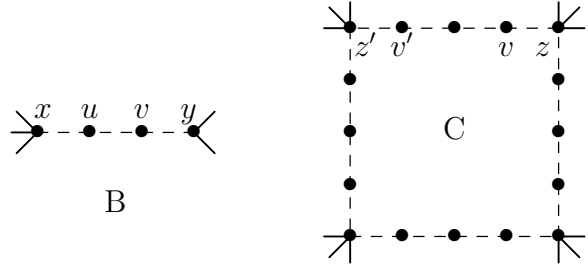

\gpic{
\expandafter\ifx\csname graph\endcsname\relax \csname newbox\endcsname\graph\fi
\expandafter\ifx\csname graphtemp\endcsname\relax \csname newdimen\endcsname\graphtemp\fi
\setbox\graph=\vtop{\vskip 0pt\hbox{%
    \graphtemp=.5ex\advance\graphtemp by 1.227in
    \rlap{\kern 1.773in\lower\graphtemp\hbox to 0pt{\hss $\bu$\hss}}%
    \graphtemp=.5ex\advance\graphtemp by 1.227in
    \rlap{\kern 2.045in\lower\graphtemp\hbox to 0pt{\hss $\bu$\hss}}%
    \graphtemp=.5ex\advance\graphtemp by 1.227in
    \rlap{\kern 2.318in\lower\graphtemp\hbox to 0pt{\hss $\bu$\hss}}%
    \graphtemp=.5ex\advance\graphtemp by 1.227in
    \rlap{\kern 2.591in\lower\graphtemp\hbox to 0pt{\hss $\bu$\hss}}%
    \graphtemp=.5ex\advance\graphtemp by 1.227in
    \rlap{\kern 2.864in\lower\graphtemp\hbox to 0pt{\hss $\bu$\hss}}%
    \graphtemp=.5ex\advance\graphtemp by 0.955in
    \rlap{\kern 2.864in\lower\graphtemp\hbox to 0pt{\hss $\bu$\hss}}%
    \graphtemp=.5ex\advance\graphtemp by 0.682in
    \rlap{\kern 2.864in\lower\graphtemp\hbox to 0pt{\hss $\bu$\hss}}%
    \graphtemp=.5ex\advance\graphtemp by 0.409in
    \rlap{\kern 2.864in\lower\graphtemp\hbox to 0pt{\hss $\bu$\hss}}%
    \graphtemp=.5ex\advance\graphtemp by 0.955in
    \rlap{\kern 1.773in\lower\graphtemp\hbox to 0pt{\hss $\bu$\hss}}%
    \graphtemp=.5ex\advance\graphtemp by 0.682in
    \rlap{\kern 1.773in\lower\graphtemp\hbox to 0pt{\hss $\bu$\hss}}%
    \graphtemp=.5ex\advance\graphtemp by 0.409in
    \rlap{\kern 1.773in\lower\graphtemp\hbox to 0pt{\hss $\bu$\hss}}%
    \graphtemp=.5ex\advance\graphtemp by 0.136in
    \rlap{\kern 1.773in\lower\graphtemp\hbox to 0pt{\hss $\bu$\hss}}%
    \graphtemp=.5ex\advance\graphtemp by 0.136in
    \rlap{\kern 2.045in\lower\graphtemp\hbox to 0pt{\hss $\bu$\hss}}%
    \graphtemp=.5ex\advance\graphtemp by 0.136in
    \rlap{\kern 2.318in\lower\graphtemp\hbox to 0pt{\hss $\bu$\hss}}%
    \graphtemp=.5ex\advance\graphtemp by 0.136in
    \rlap{\kern 2.591in\lower\graphtemp\hbox to 0pt{\hss $\bu$\hss}}%
    \graphtemp=.5ex\advance\graphtemp by 0.136in
    \rlap{\kern 2.864in\lower\graphtemp\hbox to 0pt{\hss $\bu$\hss}}%
    \special{pn 8}%
    \special{pa 1773 1227}%
    \special{pa 2864 1227}%
    \special{pa 2864 136}%
    \special{pa 1773 136}%
    \special{pa 1773 1227}%
    \special{da 0.055}%
    \special{pn 11}%
    \special{pa 1773 1227}%
    \special{pa 1636 1227}%
    \special{fp}%
    \special{pa 1773 1227}%
    \special{pa 1773 1364}%
    \special{fp}%
    \special{pa 1773 1227}%
    \special{pa 1664 1336}%
    \special{fp}%
    \special{pa 1773 136}%
    \special{pa 1636 136}%
    \special{fp}%
    \special{pa 1773 136}%
    \special{pa 1773 0}%
    \special{fp}%
    \special{pa 1773 136}%
    \special{pa 1664 27}%
    \special{fp}%
    \special{pa 2864 1227}%
    \special{pa 3000 1227}%
    \special{fp}%
    \special{pa 2864 1227}%
    \special{pa 2864 1364}%
    \special{fp}%
    \special{pa 2864 1227}%
    \special{pa 2973 1336}%
    \special{fp}%
    \special{pa 2864 136}%
    \special{pa 3000 136}%
    \special{fp}%
    \special{pa 2864 136}%
    \special{pa 2864 0}%
    \special{fp}%
    \special{pa 2864 136}%
    \special{pa 2973 27}%
    \special{fp}%
    \graphtemp=.5ex\advance\graphtemp by 0.245in
    \rlap{\kern 2.591in\lower\graphtemp\hbox to 0pt{\hss $v$\hss}}%
    \graphtemp=.5ex\advance\graphtemp by 0.245in
    \rlap{\kern 2.782in\lower\graphtemp\hbox to 0pt{\hss $z$\hss}}%
    \graphtemp=.5ex\advance\graphtemp by 0.245in
    \rlap{\kern 2.045in\lower\graphtemp\hbox to 0pt{\hss $v'$\hss}}%
    \graphtemp=.5ex\advance\graphtemp by 0.245in
    \rlap{\kern 1.855in\lower\graphtemp\hbox to 0pt{\hss $z'$\hss}}%
    \graphtemp=.5ex\advance\graphtemp by 0.682in
    \rlap{\kern 0.136in\lower\graphtemp\hbox to 0pt{\hss $\bu$\hss}}%
    \graphtemp=.5ex\advance\graphtemp by 0.682in
    \rlap{\kern 0.409in\lower\graphtemp\hbox to 0pt{\hss $\bu$\hss}}%
    \graphtemp=.5ex\advance\graphtemp by 0.682in
    \rlap{\kern 0.682in\lower\graphtemp\hbox to 0pt{\hss $\bu$\hss}}%
    \graphtemp=.5ex\advance\graphtemp by 0.682in
    \rlap{\kern 0.955in\lower\graphtemp\hbox to 0pt{\hss $\bu$\hss}}%
    \special{pn 8}%
    \special{pa 136 682}%
    \special{pa 955 682}%
    \special{da 0.055}%
    \special{pn 11}%
    \special{pa 1064 573}%
    \special{pa 955 682}%
    \special{fp}%
    \special{pa 955 682}%
    \special{pa 1064 791}%
    \special{fp}%
    \special{pa 136 682}%
    \special{pa 0 682}%
    \special{fp}%
    \special{pa 136 682}%
    \special{pa 27 573}%
    \special{fp}%
    \special{pa 136 682}%
    \special{pa 27 791}%
    \special{fp}%
    \graphtemp=.5ex\advance\graphtemp by 0.573in
    \rlap{\kern 0.164in\lower\graphtemp\hbox to 0pt{\hss $x$\hss}}%
    \graphtemp=.5ex\advance\graphtemp by 0.573in
    \rlap{\kern 0.409in\lower\graphtemp\hbox to 0pt{\hss $u$\hss}}%
    \graphtemp=.5ex\advance\graphtemp by 0.573in
    \rlap{\kern 0.682in\lower\graphtemp\hbox to 0pt{\hss $v$\hss}}%
    \graphtemp=.5ex\advance\graphtemp by 0.573in
    \rlap{\kern 0.927in\lower\graphtemp\hbox to 0pt{\hss $y$\hss}}%
    \graphtemp=.5ex\advance\graphtemp by 1.064in
    \rlap{\kern 0.545in\lower\graphtemp\hbox to 0pt{\hss B\hss}}%
    \graphtemp=.5ex\advance\graphtemp by 0.682in
    \rlap{\kern 2.318in\lower\graphtemp\hbox to 0pt{\hss C\hss}}%
    \hbox{\vrule depth1.364in width0pt height 0pt}%
    \kern 3.000in
  }%
}%
}
\caption{Reducible configurations in $G$ for $\chil(G^2)\le k+1$ (with $k=5$)
\label{figsqred}}
\end{figure}

If (C) occurs, then obtain $G'$ from $G$ by deleting the $2$-vertices on
the given cycle $C$.  Again $G'^2$ is the subgraph of $G^2$ induced by $V(G')$.
Let $v$ be a deleted vertex having a $k$-neighbor $z$ in $G$.  The color on
$v$ must avoid those on $z$ and all $k-2$ neighbors of $z$ in $G'$.  Since
$|L(v)|=k+1$, at least two colors are available for $v$.  These neighbors of
$k$-vertices on $C$ induce an even cycle in $G^2$.  By Lemma~\ref{evencyc}, we
can extend the coloring of $G'$ to these vertices.  Finally, the $2$-vertices
at the centers of the $3$-threads have only four neighbors in $G^2$, all of
which are now colored.  Since $k\ge4$, a color remains available at each such
vertex.
\end{proof}

The discharging argument to guarantee these reducible configurations is our
first encounter with a global notion of discharging.  We introduce a {\it pot}
of charge.  By allowing vertices to contribute charge to the pot or draw 
charge from it, we permit charge to move long distances in the graph.
The pot starts with charge $0$ and must end with nonnegative charge.  This
prevents the pot from supplying charge to the graph, so making all the 
vertices happy still contradicts the initial hypothesis on average degree.

We will also see that the configurations originally found to be 
reducible may not suffice.

\begin{theorem}[\cite{BLP2,CS}]\label{listsq}
If $\Delta(G)\le6$ and $\mad(G)< \FR52$, then $\chil(G^2)\le7$.
\end{theorem}
\begin{proof}
Let $G$ be a minimal counterexample. Let $k=6$.  By Lemma~\ref{sqred}(A),
we may assume $\delta(G)\ge 2$.  By Lemma~\ref{sqred}(B), $G$ has no $4$-thread
(or longer), and $3$-threads have $k$-vertices at both ends.  By
Lemma~\ref{sqred}(C), the union of the $3$-threads is an acyclic subgraph $H$.
Hence the number of $6$-vertices is greater than the number of $3$-threads.


We now seek discharging rules to prove that if $\mad(G)<\FR52$ and
$\delta(G)\ge2$, then some configuration of type (B) or (C) in
Lemma~\ref{sqred} must occur.  This will not quite work; we will need to add
more configurations to the set, but they will be reducible.

\medskip\noindent
(R1) Vertices with degree $5$ or $6$ give $\FR12$ to each neighbor.

\noindent
(R2) A $2$-vertex with one neighbor of degree $2$ and one of degree
$3$ or $4$ takes $\FR12$ from the higher-degree neighbor.

\noindent
(R3) A $2$-vertex whose neighbors both have degree $3$ or $4$ takes $\FR14$
from each neighbor.

\noindent
(R4) Each $6$-vertex contributes $\FR12$ to the pot, and each $2$-vertex 
at the center of a $3$-thread takes $\FR12$ from the pot.

\medskip
Since there are more $6$-vertices than $3$-threads, the pot ends with
positive charge.  By the discharging rules, each $2$-vertex explicitly gains
charge $\FR12$ and ends happy.  A $5$-vertex can afford to give $\FR52$, and a
$6$-vertex can afford to give $\FR62$ to its neighbors plus $\FR12$ to the pot.

A $4$-vertex is unhappy if it loses more than $\FR32$ without having a
$5^+$-neighbor.  A $3$-vertex is unhappy if it loses more than $\FR12$
without having a $5^+$-neighbor.  Fortunately, the configurations in which
vertices can become unhappy are reducible for $\chil(G^2)\le7$, so their
occurrence causes no difficulty.  See Figure~\ref{fig433}.

\begin{figure}[h]
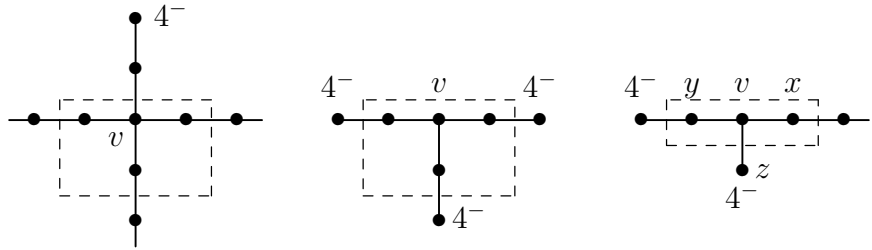

\gpic{
\expandafter\ifx\csname graph\endcsname\relax \csname newbox\endcsname\graph\fi
\expandafter\ifx\csname graphtemp\endcsname\relax \csname newdimen\endcsname\graphtemp\fi
\setbox\graph=\vtop{\vskip 0pt\hbox{%
    \graphtemp=.5ex\advance\graphtemp by 1.165in
    \rlap{\kern 0.662in\lower\graphtemp\hbox to 0pt{\hss $\bu$\hss}}%
    \graphtemp=.5ex\advance\graphtemp by 0.900in
    \rlap{\kern 0.662in\lower\graphtemp\hbox to 0pt{\hss $\bu$\hss}}%
    \graphtemp=.5ex\advance\graphtemp by 0.635in
    \rlap{\kern 0.132in\lower\graphtemp\hbox to 0pt{\hss $\bu$\hss}}%
    \graphtemp=.5ex\advance\graphtemp by 0.635in
    \rlap{\kern 0.397in\lower\graphtemp\hbox to 0pt{\hss $\bu$\hss}}%
    \graphtemp=.5ex\advance\graphtemp by 0.635in
    \rlap{\kern 0.662in\lower\graphtemp\hbox to 0pt{\hss $\bu$\hss}}%
    \graphtemp=.5ex\advance\graphtemp by 0.371in
    \rlap{\kern 0.662in\lower\graphtemp\hbox to 0pt{\hss $\bu$\hss}}%
    \graphtemp=.5ex\advance\graphtemp by 0.635in
    \rlap{\kern 0.926in\lower\graphtemp\hbox to 0pt{\hss $\bu$\hss}}%
    \graphtemp=.5ex\advance\graphtemp by 0.635in
    \rlap{\kern 1.191in\lower\graphtemp\hbox to 0pt{\hss $\bu$\hss}}%
    \graphtemp=.5ex\advance\graphtemp by 0.106in
    \rlap{\kern 0.662in\lower\graphtemp\hbox to 0pt{\hss $\bu$\hss}}%
    \special{pn 11}%
    \special{pa 662 1297}%
    \special{pa 662 106}%
    \special{fp}%
    \special{pa 0 635}%
    \special{pa 1324 635}%
    \special{fp}%
    \graphtemp=.5ex\advance\graphtemp by 0.106in
    \rlap{\kern 0.847in\lower\graphtemp\hbox to 0pt{\hss $4^-$\hss}}%
    \graphtemp=.5ex\advance\graphtemp by 0.737in
    \rlap{\kern 0.560in\lower\graphtemp\hbox to 0pt{\hss $v$\hss}}%
    \special{pn 8}%
    \special{pa 265 1032}%
    \special{pa 1059 1032}%
    \special{pa 1059 529}%
    \special{pa 265 529}%
    \special{pa 265 1032}%
    \special{da 0.053}%
    \graphtemp=.5ex\advance\graphtemp by 0.900in
    \rlap{\kern 2.250in\lower\graphtemp\hbox to 0pt{\hss $\bu$\hss}}%
    \graphtemp=.5ex\advance\graphtemp by 0.635in
    \rlap{\kern 1.985in\lower\graphtemp\hbox to 0pt{\hss $\bu$\hss}}%
    \graphtemp=.5ex\advance\graphtemp by 0.635in
    \rlap{\kern 2.250in\lower\graphtemp\hbox to 0pt{\hss $\bu$\hss}}%
    \graphtemp=.5ex\advance\graphtemp by 0.635in
    \rlap{\kern 2.515in\lower\graphtemp\hbox to 0pt{\hss $\bu$\hss}}%
    \graphtemp=.5ex\advance\graphtemp by 1.165in
    \rlap{\kern 2.250in\lower\graphtemp\hbox to 0pt{\hss $\bu$\hss}}%
    \graphtemp=.5ex\advance\graphtemp by 0.635in
    \rlap{\kern 1.721in\lower\graphtemp\hbox to 0pt{\hss $\bu$\hss}}%
    \graphtemp=.5ex\advance\graphtemp by 0.635in
    \rlap{\kern 2.779in\lower\graphtemp\hbox to 0pt{\hss $\bu$\hss}}%
    \special{pn 11}%
    \special{pa 2250 1165}%
    \special{pa 2250 635}%
    \special{fp}%
    \special{pa 1721 635}%
    \special{pa 2779 635}%
    \special{fp}%
    \graphtemp=.5ex\advance\graphtemp by 0.450in
    \rlap{\kern 2.250in\lower\graphtemp\hbox to 0pt{\hss $v$\hss}}%
    \special{pn 8}%
    \special{pa 1853 1032}%
    \special{pa 2647 1032}%
    \special{pa 2647 529}%
    \special{pa 1853 529}%
    \special{pa 1853 1032}%
    \special{da 0.053}%
    \graphtemp=.5ex\advance\graphtemp by 1.165in
    \rlap{\kern 2.409in\lower\graphtemp\hbox to 0pt{\hss $4^-$\hss}}%
    \graphtemp=.5ex\advance\graphtemp by 0.476in
    \rlap{\kern 1.721in\lower\graphtemp\hbox to 0pt{\hss $4^-$\hss}}%
    \graphtemp=.5ex\advance\graphtemp by 0.476in
    \rlap{\kern 2.779in\lower\graphtemp\hbox to 0pt{\hss $4^-$\hss}}%
    \graphtemp=.5ex\advance\graphtemp by 0.900in
    \rlap{\kern 3.838in\lower\graphtemp\hbox to 0pt{\hss $\bu$\hss}}%
    \graphtemp=.5ex\advance\graphtemp by 0.635in
    \rlap{\kern 3.574in\lower\graphtemp\hbox to 0pt{\hss $\bu$\hss}}%
    \graphtemp=.5ex\advance\graphtemp by 0.635in
    \rlap{\kern 3.838in\lower\graphtemp\hbox to 0pt{\hss $\bu$\hss}}%
    \graphtemp=.5ex\advance\graphtemp by 0.635in
    \rlap{\kern 4.103in\lower\graphtemp\hbox to 0pt{\hss $\bu$\hss}}%
    \graphtemp=.5ex\advance\graphtemp by 0.635in
    \rlap{\kern 4.368in\lower\graphtemp\hbox to 0pt{\hss $\bu$\hss}}%
    \graphtemp=.5ex\advance\graphtemp by 0.635in
    \rlap{\kern 3.309in\lower\graphtemp\hbox to 0pt{\hss $\bu$\hss}}%
    \special{pn 11}%
    \special{pa 3838 900}%
    \special{pa 3838 635}%
    \special{fp}%
    \special{pa 3309 635}%
    \special{pa 4500 635}%
    \special{fp}%
    \graphtemp=.5ex\advance\graphtemp by 1.059in
    \rlap{\kern 3.838in\lower\graphtemp\hbox to 0pt{\hss $4^-$\hss}}%
    \graphtemp=.5ex\advance\graphtemp by 0.476in
    \rlap{\kern 3.309in\lower\graphtemp\hbox to 0pt{\hss $4^-$\hss}}%
    \graphtemp=.5ex\advance\graphtemp by 0.450in
    \rlap{\kern 3.838in\lower\graphtemp\hbox to 0pt{\hss $v$\hss}}%
    \graphtemp=.5ex\advance\graphtemp by 0.900in
    \rlap{\kern 3.944in\lower\graphtemp\hbox to 0pt{\hss $z$\hss}}%
    \graphtemp=.5ex\advance\graphtemp by 0.450in
    \rlap{\kern 3.574in\lower\graphtemp\hbox to 0pt{\hss $y$\hss}}%
    \graphtemp=.5ex\advance\graphtemp by 0.450in
    \rlap{\kern 4.103in\lower\graphtemp\hbox to 0pt{\hss $x$\hss}}%
    \special{pn 8}%
    \special{pa 3441 768}%
    \special{pa 4235 768}%
    \special{pa 4235 529}%
    \special{pa 3441 529}%
    \special{pa 3441 768}%
    \special{da 0.053}%
    \hbox{\vrule depth1.297in width0pt height 0pt}%
    \kern 4.500in
  }%
}%
}

\vspace{-1pc}
\caption{Additional reducible configurations for
Theorem~\ref{listsq}\label{fig433}}
\end{figure}

For a $4$-vertex $v$ to lose more than $\FR32$ all its neighbors must be
$2$-vertices and at least three of the incident threads must be $2$-threads.
We show that this configuration is reducible.  Define $G'$ from $G$ by deleting
$v$ and its neighbors on three incident $2$-threads; note that $G'^2$ is the
subgraph of $G^2$ induced by $V(G')$.  Since $|N_{G^2}(v)\cap V(G')|=5$, we can
extend an $L$-coloring of $G'^2$ to $v$.  When we restore the deleted
$2$-neighbors of $v$, the numbers of vertices whose colors they must avoid are
$4,5,6$, respectively, so at each step a color is available.

For a $3$-vertex to end unhappy by losing more than $\FR12$, it must have
no $5^+$-neighbor (since it gives at most $\FR12$ to each neighbor).
It may give at least $\FR14$ to each of three neighbors or give $\FR12$ to one
neighbor and at least $\FR14$ to another.

In the first case, let $N_G(v)=\{x_1,x_2,x_3\}$, and let $G'=G-N_G[v]$.
The neighbor of $x_i$ other than $v$ has degree at most $4$.  As we restore
$N_G(v)$, the number of vertices whose colors they must avoid are $4,5,6$,
so at each step a color is available.  We can then replace $v$; it must avoid
the colors on six vertices.

In the second case, $v$ gives charge to exactly two $2$-neighbors, $x$ and
$y$, where $x$ lies on a $2$-thread and takes $\FR12$ from $v$, and the 
other neighbor of $y$ is a $4^-$-vertex.  Let $z$ be the third neighbor of $v$;
note that $d(z)\le4$.  With $S=\{v,x,y\}$, let $G'=G-S$; again $G'^2=G^2-S$.
Restore $v$, then $y$, then $x$.  As each is restored, its color is chosen from
its list to avoid the colors on at most six other vertices.
\end{proof}

Cranston and \v{S}krekovski~\cite{CS} proved more generally that if
$\Delta(G)\ge6$ and $\mad(G)<2+\FR{4\Delta(G)-8}{5\Delta(G)+2}$, then 
$\chil(G^2)=\Delta(G)+1$.  Thus when $\Mad(G)$ is sufficiently small compared
to $\Delta(G)$, the trivial lower bounds on $\chi(G^2)$ and $\chil(G^2)$ are
tight.  With a similar but shorter proof, Bonamy, L\'ev\^eque, and
Pinlou~\cite{BLP2} proved the less precise statement that for each positive
$\epsilon$, there exists $k_\epsilon$ such that $\chil(G^2)=\Delta(G)+1$ for
$\Delta(G)\ge k_\epsilon$ when $\mad(G)<\FR{14}5-\epsilon$.
In~\cite{BLP3} they extended this to $\mad(G)<3-\epsilon$.

Even for planar graphs and ordinary coloring, $\Mad(G)<4$ does not yield
$\chi(G)\le\Delta(G^2)+c$ for any constant $c$.  Note that girth $4$
implies $\Mad(G)<4$ when $G$ is planar.  Consider the $3$-vertex multigraph in
which each pair has multiplicity $k$; this is sometimes called the
{\it fat triangle}.
Subdividing each edge once yields a planar graph with girth $4$ and maximum
degree $2k$ whose square has chromatic number $3k$ (see Figure~\ref{figfat}).
Nevertheless,~\cite{BLP2} obtained a function $c$ such that if
$\Mad(G)<4-\epsilon$, then $\chil(G^2)\le\Delta(G)+c(\epsilon)$.
Yancey~\cite{Yan} refined this for large $\Delta(G)$, proving for $c\ge3$
that if $\mad(G)<4-\FR4{c+1}$ and $\Delta(G)$ is sufficiently large, then
$\chil(G^2)\le\Delta(G)+c$.

\begin{figure}[h]
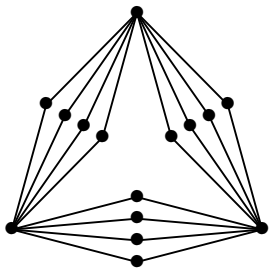

\gpic{
\expandafter\ifx\csname graph\endcsname\relax \csname newbox\endcsname\graph\fi
\expandafter\ifx\csname graphtemp\endcsname\relax \csname newdimen\endcsname\graphtemp\fi
\setbox\graph=\vtop{\vskip 0pt\hbox{%
    \graphtemp=.5ex\advance\graphtemp by 0.095in
    \rlap{\kern 0.750in\lower\graphtemp\hbox to 0pt{\hss $\bu$\hss}}%
    \graphtemp=.5ex\advance\graphtemp by 1.230in
    \rlap{\kern 1.405in\lower\graphtemp\hbox to 0pt{\hss $\bu$\hss}}%
    \graphtemp=.5ex\advance\graphtemp by 1.230in
    \rlap{\kern 0.095in\lower\graphtemp\hbox to 0pt{\hss $\bu$\hss}}%
    \graphtemp=.5ex\advance\graphtemp by 0.747in
    \rlap{\kern 0.930in\lower\graphtemp\hbox to 0pt{\hss $\bu$\hss}}%
    \graphtemp=.5ex\advance\graphtemp by 0.691in
    \rlap{\kern 1.029in\lower\graphtemp\hbox to 0pt{\hss $\bu$\hss}}%
    \graphtemp=.5ex\advance\graphtemp by 0.634in
    \rlap{\kern 1.127in\lower\graphtemp\hbox to 0pt{\hss $\bu$\hss}}%
    \graphtemp=.5ex\advance\graphtemp by 0.577in
    \rlap{\kern 1.225in\lower\graphtemp\hbox to 0pt{\hss $\bu$\hss}}%
    \special{pn 11}%
    \special{pa 750 95}%
    \special{pa 930 747}%
    \special{fp}%
    \special{pa 930 747}%
    \special{pa 1405 1230}%
    \special{fp}%
    \special{pa 1405 1230}%
    \special{pa 1029 691}%
    \special{fp}%
    \special{pa 1029 691}%
    \special{pa 750 95}%
    \special{fp}%
    \special{pa 750 95}%
    \special{pa 1127 634}%
    \special{fp}%
    \special{pa 1127 634}%
    \special{pa 1405 1230}%
    \special{fp}%
    \special{pa 1405 1230}%
    \special{pa 1225 577}%
    \special{fp}%
    \special{pa 1225 577}%
    \special{pa 750 95}%
    \special{fp}%
    \graphtemp=.5ex\advance\graphtemp by 1.060in
    \rlap{\kern 0.750in\lower\graphtemp\hbox to 0pt{\hss $\bu$\hss}}%
    \graphtemp=.5ex\advance\graphtemp by 1.173in
    \rlap{\kern 0.750in\lower\graphtemp\hbox to 0pt{\hss $\bu$\hss}}%
    \graphtemp=.5ex\advance\graphtemp by 1.287in
    \rlap{\kern 0.750in\lower\graphtemp\hbox to 0pt{\hss $\bu$\hss}}%
    \graphtemp=.5ex\advance\graphtemp by 1.400in
    \rlap{\kern 0.750in\lower\graphtemp\hbox to 0pt{\hss $\bu$\hss}}%
    \special{pa 1405 1230}%
    \special{pa 750 1060}%
    \special{fp}%
    \special{pa 750 1060}%
    \special{pa 95 1230}%
    \special{fp}%
    \special{pa 95 1230}%
    \special{pa 750 1173}%
    \special{fp}%
    \special{pa 750 1173}%
    \special{pa 1405 1230}%
    \special{fp}%
    \special{pa 1405 1230}%
    \special{pa 750 1287}%
    \special{fp}%
    \special{pa 750 1287}%
    \special{pa 95 1230}%
    \special{fp}%
    \special{pa 95 1230}%
    \special{pa 750 1400}%
    \special{fp}%
    \special{pa 750 1400}%
    \special{pa 1405 1230}%
    \special{fp}%
    \graphtemp=.5ex\advance\graphtemp by 0.747in
    \rlap{\kern 0.570in\lower\graphtemp\hbox to 0pt{\hss $\bu$\hss}}%
    \graphtemp=.5ex\advance\graphtemp by 0.691in
    \rlap{\kern 0.471in\lower\graphtemp\hbox to 0pt{\hss $\bu$\hss}}%
    \graphtemp=.5ex\advance\graphtemp by 0.634in
    \rlap{\kern 0.373in\lower\graphtemp\hbox to 0pt{\hss $\bu$\hss}}%
    \graphtemp=.5ex\advance\graphtemp by 0.577in
    \rlap{\kern 0.275in\lower\graphtemp\hbox to 0pt{\hss $\bu$\hss}}%
    \special{pa 95 1230}%
    \special{pa 570 747}%
    \special{fp}%
    \special{pa 570 747}%
    \special{pa 750 95}%
    \special{fp}%
    \special{pa 750 95}%
    \special{pa 471 691}%
    \special{fp}%
    \special{pa 471 691}%
    \special{pa 95 1230}%
    \special{fp}%
    \special{pa 95 1230}%
    \special{pa 373 634}%
    \special{fp}%
    \special{pa 373 634}%
    \special{pa 750 95}%
    \special{fp}%
    \special{pa 750 95}%
    \special{pa 275 577}%
    \special{fp}%
    \special{pa 275 577}%
    \special{pa 95 1230}%
    \special{fp}%
    \hbox{\vrule depth1.495in width0pt height 0pt}%
    \kern 1.500in
  }%
}%
}

\vspace{-1pc}

\caption{Construction with girth $4$ and $\chi(G^2)=3k$ (here $k=4$)
\label{figfat}}
\end{figure}
 
When $G$ is planar, larger girth restricts $\Mad(G)$ more tightly.  Motivated
by the subdivided fat triangle, Wang and Lih~\cite{WL} conjectured that for
$g\ge 5$, there exists $k_g$ such that $\Delta(G)\ge k_g$ implies
$\chi(G^2)=\Delta(G)+1$ when $G$ is a planar graph with girth at least $g$.
The conjecture is false for $g\in\{5,6\}$; \cite{BGINT} and \cite{DKNS} both
contain infinite sequences of planar graphs with girth 6, growing maximum
degree, and $\chi(G^2)=\Delta(G)+2$.

However, the Wang--Lih Conjecture holds and can be strengthened to list
coloring when $g\ge7$.  Ivanova~\cite{I} proved $\chil(G^2)=\Delta(G)+1$ for
planar $G$ having girth at least $7$ and $\Delta(G)\ge16$ (improving on
$\Delta(G)\ge30$ from~\cite{BGINT}), and she also showed that the thresholds
$10,6,5$ on $\Delta(G)$ are sufficient when $G$ has girth at least $8,10,12$,
respectively.

For girth $6$, Dvo\v{r}\'ak, Kr\'al', Nejedl\'y, and
\v{S}krekovski~\cite{DKNS} proved $\chi(G^2)\le \Delta(G)+2$ for planar $G$
with $\Delta(G)\ge 8821$ (they also conjectured $\chi(G^2)\le\Delta(G)+2$
for girth $5$ when $\Delta(G)$ is large enough).  For girth $6$,
Borodin and Ivanova~\cite{BI1} improved $\Delta(G)\ge 8821$ to $\Delta(G)\ge18$;
in \cite{BI2} and then \cite{BI2+} they showed that $\Delta(G)\ge 36$ and then
$\Delta(G)\ge24$ yields $\chil(G^2)\le \Delta(G)+2$~\cite{BI2}.

Bonamy, L\'ev\^eque, and Pinlou~\cite{BLP1} proved $\chil(G^2)\le \Delta(G)+2$
when $\Delta(G)\ge 17$ and $\Mad(G)<3$, regardless of planarity.  As we have
noted, the hypothesis ``$\mad(G)<3$'' in place of ``planar with girth at least
$6$'' yields a stronger result.

Now consider again the result of Cranston and \v{S}krekovski~\cite{CS}.
Reducing the bound on $\Mad(G)$ from $3$ to $2+\FR{4\Delta(G)-8}{5\Delta(G)+2}$
yields $\chil(G^2)=\Delta(G)+1$ rather than $\chil(G^2)\le\Delta(G)+2$, even
for the larger family where $\Delta(G)\ge6$.  Furthermore, as $\Delta(G)$
grows, the needed bound on $\Mad(G)$ tends to $\FR{14}5$, which is the bound
guaranteed for planar graphs with girth at least $7$.  Hence it seems plausible
that $\chil(G^2)=\Delta(G)+1$ for planar graphs with girth at least $7$ even
when $\Delta(G)\ge6$.  For fuller exploration, we suggest an open question.

\begin{question}\label{G2prob}
Among the family of graphs such that $\Delta(G)\ge k$, what is the largest
value $b_{j,k}$ such that $\Mad(G)<b_{j,k}$ implies $\chil(G^2)\le \Delta(G)+j$?
\end{question}

Next we weaken the requirements.  A coloring where vertices at distance $2$
have distinct colors but adjacent vertices need not is an {\it injective
coloring} (the coloring is injective on each vertex neighborhood).  For 
motivation, consider a network of transmitters that broadcast on fixed
frequencies; frequencies in a neighborhood must differ so that a receiver can
know which neighbor is sending the message.  The {\it injective chromatic
number}, written $\chi^i(G)$, is the minimum number of colors needed, and the
{\it injective choice number}, $\chil^i(G)$, is the least $k$ such that $G$ has
an injective $L$-coloring when $L$ is any $k$-uniform list assignment.

From the definition, always $\chi^i(G)\le \chi(G^2)$ and
$\chil^i(G)\le\chil(G^2)$.  The trivial lower bound on $\chi^i(G)$ is
$\Delta(G)$ rather than $\Delta(G)+1$.  We seek results like those above, with
a bound on $\chi^i(G)$ or $\chil^i(G)$ that is one less than the corresponding
bound for $\chi(G^2)$ or $\chil(G^2)$.  Again when $\Mad(G)$ is small relative
to $\Delta(G)$, the value is close to the lower bound.  In~\cite{BLP1}, for
example, it is noted that the proof there also yields
$\chil^i(G)\le\Delta(G)+1$ when $\Delta(G)\ge 17$ and $\Mad(G)<3$ (\cite{BLP3}
and~\cite{CS} also translate to injective coloring).

Nevertheless, the analogue of Problem~\ref{G2prob} for injective coloring
remains largely open.  When $j=0$, rather tight bounds on $\mad(G)$ suffice.
Cranston, Kim, and Yu~\cite{CKY1} proved that $\chi^i(G)=\Delta(G)$ when
$\Mad(G)<\FR{42}{19}$ and $\Delta(G)\ge3$.  Sharpness is not known, even
for $\Delta(G)=3$.  Subdividing one edge of $K_4$ yields a graph $H$ such that
$\chi'(H)>\Delta(H)$, and then subdividing every edge of $H$ yields a bipartite
graph $G$ such that $\chi^i(G)>\Delta(G)$ and $\Mad(G)=\FR73$.  The largest $b$
such that $\Mad(G)<b$ implies $\chi^i(G)=\Delta(G)$ when $\Delta(G)=3$ is not
known; it is at least $\FR{42}{19}$ and at most $\FR73$.

To yield $\chil^i(G)\le\Delta(G)+1$, it suffices to have $\Mad(G)\le\FR52$ when
$\Delta(G)\ge3$~\cite{CKY1}.  For $\Delta(G)\ge4$ this is fairly easy (it uses
Exercise~\ref{52CKY}); for $\Delta(G)\ge6$ it follows from~\cite{CS}.

To yield $\chil^i(G)\le\Delta(G)+2$, it suffices to have $\mad(G)<\FR{36}{13}$
when $\Delta(G)=3$~\cite{CKY2}; we will see that this is sharp.  For
$\Delta(G)\ge4$, it suffices to have $\Mad(G)<\FR{14}5$~\cite{CKY2}; the cases
$\Delta(G)\in\{4,5\}$ are difficult, and sharpness is not known.  Note that
when $\Delta(G)\ge4$ the allowed values of $\Mad(G)$ are larger than when
$\Delta(G)=3$; the loosest condition on $\Mad(G)$ that suffices for a given
bound on $\chi^i(G)-\Delta(G)$ should grow (somewhat) as $\Delta(G)$ grows.

We use one of these results to further explore how discharging arguments are
found.  In the discharging process, charge may travel distance $2$.

\begin{theorem}\label{36.13dis}
{\rm(\cite{CKY2})} If $\Delta(G)\le3$ and $\Mad(G)<\FR{36}{13}$, then
$\chil^i(G)\le 5$.
\vspace{-.5pc}
\end{theorem}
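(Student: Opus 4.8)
The plan is to argue by minimal counterexample. Suppose $G$ has the fewest vertices among graphs with $\Delta(G)\le 3$ and $\Mad(G)<\FR{36}{13}$ for which $\chil^i(G)>5$, and fix a $5$-uniform list assignment $L$ admitting no injective $L$-coloring of $G$. Both hypotheses are inherited by subgraphs, so every proper subgraph of $G$ does have an injective $L$-coloring; these colorings are the raw material for reducibility.

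First I would record the elementary bookkeeping for injective coloring. An injective coloring is one in which each neighborhood is rainbow, so a color at a vertex $v$ conflicts only with colors at vertices having a common neighbor with $v$, of which there are at most $2(\Delta(G)-1)\le 4$. Hence, given an injective $L$-coloring $\phi$ of $G-S$ for a set $S$ of vertices to be re-inserted, the genuine obstruction to extending $\phi$ is never a shortage of colors at the re-inserted vertices but rather the requirement that the neighborhood of each such vertex be rainbow in the final coloring — a condition not visible to $\phi$ on $G-S$. Managing this is precisely why the reducible configurations must involve \emph{pairs} of $3$-vertices joined by short threads, not merely isolated small-degree vertices. The easy reducible configurations come first: a $1^-$-vertex has a neighborhood of size at most $1$, so it can always be deleted and re-inserted, giving $\delta(G)\ge 2$; and a thread that is long enough can be deleted and recolored greedily from one end, since each $2$-vertex in its interior conflicts with only two other vertices, leaving enough freedom to recolor the two thread endpoints so that their neighborhoods stay rainbow. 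The subtler configurations concern clusters of $2$-vertices near $3$-vertices — a $3$-vertex with too many weak $2$-neighbors, two $3$-vertices joined by a short thread with additional $2$-vertices attached, and similar small patterns — each shown reducible by deleting the $2$-vertices of the pattern (and occasionally a $3$-vertex), applying the induction hypothesis to the smaller graph, and re-inserting the deleted vertices in a carefully chosen order while exploiting the slack in the $5$-element lists to repair any broken rainbow condition.

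For the discharging, give each vertex $v$ charge $d(v)$. Since $\Mad(G)<\FR{36}{13}$ forces $\sum_v d(v)=2|E(G)|<\FR{36}{13}|V(G)|$, it suffices to move charge so that every vertex ends with at least $\FR{36}{13}$, a contradiction. With $\delta(G)\ge2$, a $2$-vertex starts with $\FR{26}{13}$ and must gain $\FR{10}{13}$, whereas a $3$-vertex starts with $\FR{39}{13}$ and can afford to lose only $\FR{3}{13}$. This wide gap is the crux: no single $3$-vertex can finance even one neighboring $2$-vertex, so each $2$-vertex must collect its deficit from several $3$-vertices within distance $2$ — the phenomenon that charge travels distance $2$ — and the reducible configurations must be exactly strong enough to guarantee both that each such collection exists and that no $3$-vertex is overdrawn. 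Concretely I would let each maximal thread pull fixed amounts from its two endpoints and from the second-neighbors of those endpoints along other edges, choosing the rates so that $\FR{36}{13}$ is attained with equality in the extremal local picture, and then verify — $2$-vertices thread by thread, then $3$-vertices according to the degrees of their neighbors and second-neighbors — that every vertex is happy, each case closing exactly because the corresponding bad pattern has been excluded as reducible.

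The main obstacle is concentrated entirely in the reducibility analysis, and it is twofold: handling the rainbow-neighborhood condition correctly, which is what makes short threads between $3$-vertices genuinely delicate rather than trivial; and choosing the reducible set finely enough that the tight constant $\FR{36}{13}$ — equivalently the mismatch between the $\FR{3}{13}$ surplus of a $3$-vertex and the $\FR{10}{13}$ deficit of a $2$-vertex — can in fact be balanced by a discharging rule that moves charge only distance $2$. Once the right configurations are in hand, the discharging arithmetic is routine.
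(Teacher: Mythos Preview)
Your framework is right, and your arithmetic observation (a $3$-vertex has only $\FR3{13}$ to spare while a $2$-vertex needs $\FR{10}{13}$, so charge must travel distance $2$) is exactly the key. But your choice of reducible configurations is off, and this matters because it drives the discharging.

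You reach for threads and weak $2$-neighbors, but those are the wrong objects here. The very first nontrivial reducible configuration is a pair of \emph{adjacent $2$-vertices}; once that is excluded, every $2$-vertex has both neighbors of degree $3$, and there are no threads of length greater than one at all. So ``let each maximal thread pull from its endpoints'' collapses to ``let each $2$-vertex pull from its two $3$-neighbors,'' and the weak-$2$-neighbor language is idle. The paper's unavoidable set is: a $1^-$-vertex; two adjacent $2$-vertices; a $3$-vertex with two $2$-neighbors; two adjacent $3$-vertices each having a $2$-neighbor. With these forbidden, each $2$-vertex $v$ takes $\FR3{13}$ from each neighbor and $\FR1{13}$ along each path of length $2$; the four second-neighbors of $v$ are all $3$-vertices with no $2$-neighbor (by the last configuration), so they can afford $\FR1{13}$ on each of three edges, while the immediate $3$-neighbors of $v$ give $\FR3{13}$ once and nothing else. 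The rates $\FR3{13}$ and $\FR1{13}$ are not guessed: they arise by equating the final charges $2+2a+4b$, $3-a$, $3-3b$ of the three vertex types, as the paper's remark explains.

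Two smaller points. Your bound ``at most $2(\Delta-1)\le4$ vertices share a common neighbor with $v$'' is the $d(v)=2$ case of $d(v)(\Delta-1)$; for a $3$-vertex the count is $6$, so your claim that there is ``never a shortage of colors'' fails exactly when a $3$-vertex is reinserted, and the reducibility arguments must (and do) avoid that situation. And your worry about the rainbow-neighborhood condition being invisible in $G-S$ is legitimate, but once the configurations are as local as the four above, it is handled by deleting the right one or two $2$-vertices and recoloring them last; no recoloring of $3$-vertices is needed.
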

\begin{proof}
We present the discharging argument and leave the reducibility of the
configurations in the resulting unavoidable set to Exercise~\ref{36.13red}.  We
claim that every graph $G$ with $\Delta(G)=3$ and $\avd(G)<\FR{36}{13}$
contains one of the following configurations: a $1^-$-vertex, adjacent
$2$-vertices, a $3$-vertex with two $2$-neighbors, or adjacent $3$-vertices
each having a $2$-neighbor.

If none of these configurations occurs, then $\delta(G)\ge2$.  With initial
charge equal to degree, only $2$-vertices need charge; all other vertices are
$3$-vertices.  A way to allow $2$-vertices to reach charge $\FR{36}{13}$
without taking too much from $3$-vertices is as follows:

\medskip\noindent
(R1) Every $2$-vertex takes $\FR3{13}$ from each neighbor.

\noindent
(R2) Every $2$-vertex takes $\FR1{13}$ via each path of length $2$ from a
$3$-vertex.
\medskip

\begin{figure}[h]
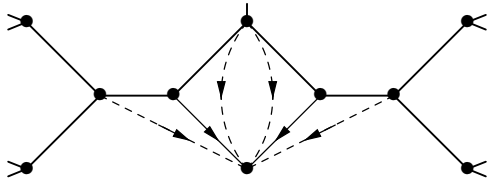

\gpic{
\expandafter\ifx\csname graph\endcsname\relax \csname newbox\endcsname\graph\fi
\expandafter\ifx\csname graphtemp\endcsname\relax \csname newdimen\endcsname\graphtemp\fi
\setbox\graph=\vtop{\vskip 0pt\hbox{%
    \graphtemp=.5ex\advance\graphtemp by 0.865in
    \rlap{\kern 1.250in\lower\graphtemp\hbox to 0pt{\hss $\bu$\hss}}%
    \graphtemp=.5ex\advance\graphtemp by 0.865in
    \rlap{\kern 0.096in\lower\graphtemp\hbox to 0pt{\hss $\bu$\hss}}%
    \graphtemp=.5ex\advance\graphtemp by 0.096in
    \rlap{\kern 0.096in\lower\graphtemp\hbox to 0pt{\hss $\bu$\hss}}%
    \graphtemp=.5ex\advance\graphtemp by 0.481in
    \rlap{\kern 0.481in\lower\graphtemp\hbox to 0pt{\hss $\bu$\hss}}%
    \graphtemp=.5ex\advance\graphtemp by 0.481in
    \rlap{\kern 0.865in\lower\graphtemp\hbox to 0pt{\hss $\bu$\hss}}%
    \graphtemp=.5ex\advance\graphtemp by 0.096in
    \rlap{\kern 1.250in\lower\graphtemp\hbox to 0pt{\hss $\bu$\hss}}%
    \graphtemp=.5ex\advance\graphtemp by 0.481in
    \rlap{\kern 1.635in\lower\graphtemp\hbox to 0pt{\hss $\bu$\hss}}%
    \graphtemp=.5ex\advance\graphtemp by 0.481in
    \rlap{\kern 2.019in\lower\graphtemp\hbox to 0pt{\hss $\bu$\hss}}%
    \graphtemp=.5ex\advance\graphtemp by 0.865in
    \rlap{\kern 2.404in\lower\graphtemp\hbox to 0pt{\hss $\bu$\hss}}%
    \graphtemp=.5ex\advance\graphtemp by 0.096in
    \rlap{\kern 2.404in\lower\graphtemp\hbox to 0pt{\hss $\bu$\hss}}%
    \special{pn 11}%
    \special{pa 481 481}%
    \special{pa 96 865}%
    \special{fp}%
    \special{pa 481 481}%
    \special{pa 96 96}%
    \special{fp}%
    \special{pa 481 481}%
    \special{pa 865 481}%
    \special{fp}%
    \special{pa 2019 481}%
    \special{pa 1635 481}%
    \special{fp}%
    \special{pa 2019 481}%
    \special{pa 2404 865}%
    \special{fp}%
    \special{pa 2019 481}%
    \special{pa 2404 96}%
    \special{fp}%
    \special{pa 1250 96}%
    \special{pa 865 481}%
    \special{fp}%
    \special{pa 1250 96}%
    \special{pa 1635 481}%
    \special{fp}%
    \special{pa 1250 96}%
    \special{pa 1250 0}%
    \special{fp}%
    \special{pa 2500 58}%
    \special{pa 2404 96}%
    \special{fp}%
    \special{pa 2404 96}%
    \special{pa 2500 135}%
    \special{fp}%
    \special{pa 2500 827}%
    \special{pa 2404 865}%
    \special{fp}%
    \special{pa 2404 865}%
    \special{pa 2500 904}%
    \special{fp}%
    \special{pa 0 827}%
    \special{pa 96 865}%
    \special{fp}%
    \special{pa 96 865}%
    \special{pa 0 904}%
    \special{fp}%
    \special{pa 0 58}%
    \special{pa 96 96}%
    \special{fp}%
    \special{pa 96 96}%
    \special{pa 0 135}%
    \special{fp}%
    \special{pn 8}%
    \special{pa 865 481}%
    \special{pa 1096 712}%
    \special{fp}%
    \special{sh 1.000}%
    \special{pa 1055 644}%
    \special{pa 1096 712}%
    \special{pa 1028 671}%
    \special{pa 1055 644}%
    \special{fp}%
    \special{pa 1096 712}%
    \special{pa 1250 865}%
    \special{fp}%
    \special{pa 1635 481}%
    \special{pa 1404 712}%
    \special{fp}%
    \special{sh 1.000}%
    \special{pa 1472 671}%
    \special{pa 1404 712}%
    \special{pa 1445 644}%
    \special{pa 1472 671}%
    \special{fp}%
    \special{pa 1404 712}%
    \special{pa 1250 865}%
    \special{fp}%
    \special{pa 481 481}%
    \special{pa 1250 865}%
    \special{da 0.038}%
    \special{pa 788 635}%
    \special{pa 942 712}%
    \special{fp}%
    \special{sh 1.000}%
    \special{pa 882 660}%
    \special{pa 942 712}%
    \special{pa 865 694}%
    \special{pa 882 660}%
    \special{fp}%
    \special{pa 2019 481}%
    \special{pa 1250 865}%
    \special{da 0.038}%
    \special{pa 1712 635}%
    \special{pa 1558 712}%
    \special{fp}%
    \special{sh 1.000}%
    \special{pa 1635 694}%
    \special{pa 1558 712}%
    \special{pa 1618 660}%
    \special{pa 1635 694}%
    \special{fp}%
    \special{ar 770 481 615 615 0.612632 0.675132}%
    \special{ar 770 481 615 615 0.483855 0.546355}%
    \special{ar 770 481 615 615 0.355079 0.417579}%
    \special{ar 770 481 615 615 0.226303 0.288803}%
    \special{ar 770 481 615 615 0.097526 0.160026}%
    \special{ar 770 481 615 615 -0.031250 0.031250}%
    \special{ar 770 481 615 615 -0.160026 -0.097526}%
    \special{ar 770 481 615 615 -0.288803 -0.226303}%
    \special{ar 770 481 615 615 -0.417579 -0.355079}%
    \special{ar 770 481 615 615 -0.546355 -0.483855}%
    \special{ar 770 481 615 615 -0.675132 -0.612632}%
    \special{ar 1730 481 615 615 -2.528961 -2.466461}%
    \special{ar 1730 481 615 615 -2.657737 -2.595237}%
    \special{ar 1730 481 615 615 -2.786514 -2.724014}%
    \special{ar 1730 481 615 615 -2.915290 -2.852790}%
    \special{ar 1730 481 615 615 -3.044066 -2.981566}%
    \special{ar 1730 481 615 615 -3.172843 -3.110343}%
    \special{ar 1730 481 615 615 -3.301619 -3.239119}%
    \special{ar 1730 481 615 615 -3.430395 -3.367895}%
    \special{ar 1730 481 615 615 -3.559172 -3.496672}%
    \special{ar 1730 481 615 615 -3.687948 -3.625448}%
    \special{ar 1730 481 615 615 -3.816724 -3.754224}%
    \special{pa 1115 462}%
    \special{pa 1115 485}%
    \special{fp}%
    \special{sh 1.000}%
    \special{pa 1135 408}%
    \special{pa 1115 485}%
    \special{pa 1096 408}%
    \special{pa 1135 408}%
    \special{fp}%
    \special{pa 1115 485}%
    \special{pa 1115 500}%
    \special{fp}%
    \special{pa 1385 462}%
    \special{pa 1385 485}%
    \special{fp}%
    \special{sh 1.000}%
    \special{pa 1404 408}%
    \special{pa 1385 485}%
    \special{pa 1365 408}%
    \special{pa 1404 408}%
    \special{fp}%
    \special{pa 1385 485}%
    \special{pa 1385 500}%
    \special{fp}%
    \hbox{\vrule depth0.913in width0pt height 0pt}%
    \kern 2.500in
  }%
}%
}

\vspace{-1pc}

\caption{Discharging rules for Theorem~\ref{36.13dis}; dashes move $\FR1{13}$
\label{fig3613}}
\end{figure}

Each $3$-vertex $v$ having a $2$-neighbor gives it $\FR3{13}$.  Since no two
$2$-vertices are adjacent, and adjacent $3$-vertices cannot both have
$2$-neighbors, $v$ loses no other charge.  Each $3$-vertex $w$ having no
$2$-neighbor loses at most $\FR1{13}$ along each incident edge, because its
$3$-neighbors do not have two $2$-neighbors.  (Under (R2), a $3$-vertex
opposite a $2$-vertex $x$ on a $4$-cycle gives $\FR2{13}$ to $x$.)  Thus every
$3$-vertex ends with charge at least $\FR{36}{13}$.

A $2$-vertex gains $\FR3{13}$ from each neighbor, and it also gains $\FR1{13}$
along each of the two other edges incident to each neighbor (see
Figure~\ref{fig3613}).  Hence it gains $\FR{10}{13}$ and reaches charge
$\FR{36}{13}$.  (With no adjacent $3$-vertices having $2$-neighbors, no
$2$-vertex lies on a triangle.)

We have shown that $\avd(G)\ge\FR{36}{13}$ when the specified configurations
do not occur.
\end{proof}

\begin{remark}
The proof of Theorem~\ref{36.13dis} allows every vertex to end with charge
exactly $\FR{36}{13}$.  This can happen, making the structure theorem sharp.
In fact, here also the coloring result is sharp.  Deleting one vertex from the
Heawood graph (the incidence graph of the Fano plane) yields a graph $H$ with
$\avd(H)=\FR{36}{13}$, $\Delta(H)=3$, and $\chi^i(G)=6$.  

The discharging rules in Theorem~\ref{36.13dis} follow naturally from the bound
on $\Mad(G)$ and the forbidden configurations, but how are those found?  To
discover the structure theorem, first study the coloring problem to find
reducible configurations.  A $1^-$-vertex and two adjacent $2$-vertices are
easy to show reducible.  With a bit more thought, a $3$-vertex with two
$2$-neighbors is reducible.  These configurations form an unavoidable set for
$\Mad(G)<\FR83$, using the discharging rule that each $2$-vertex takes $\FR13$
from each neighbor.  That yields the desired conclusion when $\Mad(G)<\FR83$,
but we can do better.

After adding the reducible configuration consisting of two adjacent
$3$-vertices having $2$-neighbors, we seek the loosest bound on $\Mad(G)$ under
which this larger set is unavoidable.  It will exceed $\FR83$.  The
$2$-vertices can take charge only from $3$-vertices, but when $\Mad(G)>\FR83$
their neighbors cannot afford to give enough to satisfy them.  When two
adjacent $3$-vertices with $2$-neighbors are forbidden, the $2$-vertices can
also gain charge along paths of length $2$.

Now we have the ``avenues'' of discharging.  Let each $2$-vertex take $a$
from each neighbor and $b$ along each path of length $2$.  Now $2$-vertices
end with $2+2a+4b$, $3$-vertices having $2$-neighbors end with $3-a$, and
$3$-vertices without $2$-neighbors end with as little as $3-3b$.  We seek $a$
and $b$ to maximize the minimum of $\{2+2a+4b,3-a,3-3b\}$.  If $3-a$ and $3-3b$
are not equal, then the value can be improved, so take $a=3b$.  Now
$\min\{2+10b,3-3b\}$ is maximized when $2+10b=3-3b$, or $b=\FR1{13}$.  Hence
the proof works when $\Mad(G)<\FR{36}{13}$ and fails for any larger bound (as
also implied by the sharpness example).
\end{remark}

\medskip
We also apply Lemma~\ref{12vert} to the problem of coloring the square of a
planar graph, where there is another well-known conjecture (the original 
conjecture was more general).

\begin{conjecture}[Wegner's Conjecture~\cite{Weg}]\label{wegner}
If $G$ is planar, then $\chi(G^2)\le \FL{\FR32\Delta(G)}+1$ for
$\Delta(G)\ge8$; also $\chi(G^2)\le \Delta(G)+5$ for $4\le \Delta(G)\le 7$
and $\chi(G^2)\le 7$ for $\Delta(G)\le3$.
\end{conjecture}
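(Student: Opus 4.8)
The plan is to attack Wegner's Conjecture with the discharging-and-reducibility scheme used throughout this section, taking a minimal planar graph $G$ whose square is not $k$-colorable, where $k=\FL{\FR32\Delta(G)}+1$ when $\Delta(G)\ge8$, $k=\Delta(G)+5$ when $4\le\Delta(G)\le7$, and $k=7$ when $\Delta(G)\le3$; the three degree regimes are handled separately but by the same strategy. First I would record the cheap reducibility facts: a vertex $v$ with $|N_{G^2}[v]|\le k$ can be colored last, so in a minimal counterexample every vertex has a large second neighborhood. This already forces $\delta(G)\ge2$ (and, with a little more work when $\Delta(G)$ is large, $\delta(G)\ge3$), and it constrains the degrees near any low-degree vertex. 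Since ``planar'' and ``$\Delta(G)\le D$'' are hereditary, these reductions pass freely to subgraphs, so the task reduces to producing an unavoidable set of reducible configurations via discharging on $V(G)\cup F(G)$; by adding edges we may even assume $G$ is a triangulation, since that only enlarges $G^2$.

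Second, I would feed in the structural information already available. Lemma~\ref{12vert} supplies a $5^-$-vertex $v$ with at most two $12^+$-neighbors; together with the light-edge data of Lemma~\ref{kotzig} this pins down the local picture around $v$, since each of its $\le d(v)-2$ low-degree neighbors contributes at most $10$ further vertices to $N_{G^2}[v]$ while the $\le2$ heavy neighbors contribute at most $2(\Delta(G)-1)$. For $\Delta(G)\le7$ this is essentially enough: colouring $N_{G^2}(v)$ first and $v$ last, and using a Kempe-type interchange on an appropriate $2$-colored subgraph when the naive count is tight, makes such a configuration reducible against the bound $\Delta(G)+5$ after a short case analysis. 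A parallel but more delicate argument, exploiting that a subcubic planar graph has many vertices lying in sparse regions and recolouring along alternating cycles via the $2$-choosability of even cycles (Lemma~\ref{evencyc}), would handle $\Delta(G)\le3$, where the target $\chi(G^2)\le7$ is still below the trivial degeneracy estimate.

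The hard part --- and the reason the conjecture remains open for $\Delta(G)\ge8$ --- is that the counting above delivers roughly $2\Delta(G)$ rather than $\FR32\Delta(G)$: two heavy neighbors of a $5^-$-vertex can by themselves inject about $2\Delta(G)$ vertices into $N_{G^2}[v]$. To reach Wegner's bound one must use that, by planarity, the second neighborhoods of the heavy neighbors of $v$ overlap heavily and are constrained by the rotation system around $v$, so that the effective set of colours forbidden on the disc bounded by $N_{G^2}[v]$ is only about $\FR32\Delta(G)$, not the worst-case degree sum. Making this precise --- colouring the disc around a light vertex together with its heavy neighbors simultaneously rather than greedily, and showing the interface to the rest of $G$ forbids only $\FR32\Delta(G)+O(1)$ colours --- is exactly the step nobody has completed; the strongest unconditional results, the asymptotic bound of Havet, van den Heuvel, McDiarmid and Reed and a bound of the shape $\CL{\FR53\Delta(G)}+O(1)$ due to Molloy and Salavatipour, stop short of it. Accordingly the realistic deliverable here is a complete discharging proof for $\Delta(G)\le7$ along the lines above, together with a clean isolation of the ``overlap'' lemma that would suffice for $\Delta(G)\ge8$, flagged as the genuine obstacle.
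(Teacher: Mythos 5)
This statement is Wegner's Conjecture, and the paper does not prove it --- it is stated as an open conjecture, with the text around it only recording partial results (the sharpness constructions, the asymptotic bound of Havet--van den Heuvel--McDiarmid--Reed, Molloy--Salavatipour's $\FR53\Delta(G)+78$, and the paper's own $\chil(G^2)\le 2\Delta(G)+34$ via Lemmas~\ref{kotzig566} and~\ref{12vert}). So your proposal cannot be measured against a proof in the paper; it has to stand on its own, and it does not. The decisive gap is your claim that for $4\le\Delta(G)\le7$ the configurations from Lemmas~\ref{kotzig} and~\ref{12vert} are ``essentially enough'' and become reducible against the bound $\Delta(G)+5$ ``after a short case analysis.'' The counting you sketch does not support this: a $3$-vertex whose two other neighbors have degree $\Delta(G)$ already has about $2\Delta(G)$ vertices in its second neighborhood, far above $\Delta(G)+5$ once $\Delta(G)\ge 7$, so coloring $v$ last fails outright, and there is no established Kempe-type recoloring tool for squares of graphs (color classes of $G^2$ do not induce paths/even cycles the way they do in edge-coloring, which is what Lemma~\ref{evencyc} exploits in the proofs of Theorems~\ref{listsq} and~\ref{listedge9}). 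These cases of the conjecture are in fact open, so no routine case analysis of this kind can close them. The same objection applies to your $\Delta(G)\le3$ paragraph: $\chi(G^2)\le7$ for subcubic planar graphs is not a consequence of the cited lemmas plus alternating-cycle recoloring, and you give no actual reducible configurations or discharging rules for it.

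There are also two technical missteps worth flagging. First, the reduction ``we may assume $G$ is a triangulation, since that only enlarges $G^2$'' is invalid here: adding edges can increase $\Delta(G)$, and the target bound $\FL{\FR32\Delta(G)}+1$ (or $\Delta(G)+5$) changes with it, so a counterexample need not survive triangulating --- this trick works in Lemma~\ref{12vert} only because the conclusion there does not reference $\Delta(G)$. Second, for $\Delta(G)\ge8$ you correctly observe that greedy counting gives roughly $2\Delta(G)$ and that one must exploit overlap of the second neighborhoods of the two heavy neighbors of a light vertex, but the ``overlap lemma'' you isolate is not a step toward a proof; it is a restatement of precisely what is unknown. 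In short, the proposal proves none of the three regimes of the conjecture, and the portions presented as finished (the $\Delta(G)\le7$ and $\Delta(G)\le3$ cases) rest on unsubstantiated reducibility claims rather than on arguments.
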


The case $\Delta(G)=3$ was recently proved by Hartke, Jahanbekam, and
Thomas~\cite{HJT} using discharging and computerized checking of reducibility.
Wegner gave sharpness constructions; fixing $\Delta(G)$, these are planar
graphs of diameter $2$ (so $\chi(G^2)=|V(G)|$) with the most vertices.  The
general situation uses graphs studied by Erd\H{o}s and R\'enyi~\cite{ER}, shown
on the left in Figure~\ref{figweg}.  The other graphs there with maximum degree
$k$ have diameter $2$ with $k+5$ vertices for $4\le k\le 7$, taken
from~\cite{HeS} (the half-edges in the graph for $k=6$ meet at the eleventh
vertex).  Wegner found the three leftmost graphs.

\begin{figure}[hbt]
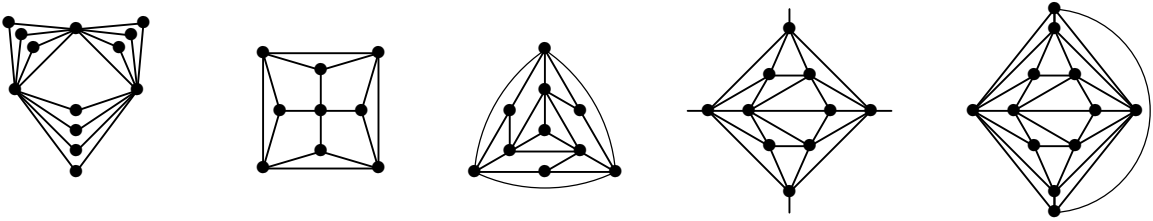

\gpic{
\expandafter\ifx\csname graph\endcsname\relax \csname newbox\endcsname\graph\fi
\expandafter\ifx\csname graphtemp\endcsname\relax \csname newdimen\endcsname\graphtemp\fi
\setbox\graph=\vtop{\vskip 0pt\hbox{%
    \graphtemp=.5ex\advance\graphtemp by 0.133in
    \rlap{\kern 0.379in\lower\graphtemp\hbox to 0pt{\hss $\bu$\hss}}%
    \graphtemp=.5ex\advance\graphtemp by 0.101in
    \rlap{\kern 0.027in\lower\graphtemp\hbox to 0pt{\hss $\bu$\hss}}%
    \graphtemp=.5ex\advance\graphtemp by 0.165in
    \rlap{\kern 0.091in\lower\graphtemp\hbox to 0pt{\hss $\bu$\hss}}%
    \graphtemp=.5ex\advance\graphtemp by 0.229in
    \rlap{\kern 0.155in\lower\graphtemp\hbox to 0pt{\hss $\bu$\hss}}%
    \graphtemp=.5ex\advance\graphtemp by 0.229in
    \rlap{\kern 0.603in\lower\graphtemp\hbox to 0pt{\hss $\bu$\hss}}%
    \graphtemp=.5ex\advance\graphtemp by 0.165in
    \rlap{\kern 0.667in\lower\graphtemp\hbox to 0pt{\hss $\bu$\hss}}%
    \graphtemp=.5ex\advance\graphtemp by 0.101in
    \rlap{\kern 0.731in\lower\graphtemp\hbox to 0pt{\hss $\bu$\hss}}%
    \graphtemp=.5ex\advance\graphtemp by 0.453in
    \rlap{\kern 0.059in\lower\graphtemp\hbox to 0pt{\hss $\bu$\hss}}%
    \graphtemp=.5ex\advance\graphtemp by 0.453in
    \rlap{\kern 0.699in\lower\graphtemp\hbox to 0pt{\hss $\bu$\hss}}%
    \special{pn 11}%
    \special{pa 379 133}%
    \special{pa 27 101}%
    \special{fp}%
    \special{pa 379 133}%
    \special{pa 91 165}%
    \special{fp}%
    \special{pa 379 133}%
    \special{pa 155 229}%
    \special{fp}%
    \special{pa 379 133}%
    \special{pa 603 229}%
    \special{fp}%
    \special{pa 379 133}%
    \special{pa 667 165}%
    \special{fp}%
    \special{pa 379 133}%
    \special{pa 731 101}%
    \special{fp}%
    \special{pa 379 133}%
    \special{pa 59 453}%
    \special{fp}%
    \special{pa 379 133}%
    \special{pa 699 453}%
    \special{fp}%
    \special{pa 59 453}%
    \special{pa 27 101}%
    \special{fp}%
    \special{pa 59 453}%
    \special{pa 91 165}%
    \special{fp}%
    \special{pa 59 453}%
    \special{pa 155 229}%
    \special{fp}%
    \special{pa 699 453}%
    \special{pa 603 229}%
    \special{fp}%
    \special{pa 699 453}%
    \special{pa 667 165}%
    \special{fp}%
    \special{pa 699 453}%
    \special{pa 731 101}%
    \special{fp}%
    \graphtemp=.5ex\advance\graphtemp by 0.773in
    \rlap{\kern 0.379in\lower\graphtemp\hbox to 0pt{\hss $\bu$\hss}}%
    \graphtemp=.5ex\advance\graphtemp by 0.667in
    \rlap{\kern 0.379in\lower\graphtemp\hbox to 0pt{\hss $\bu$\hss}}%
    \graphtemp=.5ex\advance\graphtemp by 0.560in
    \rlap{\kern 0.379in\lower\graphtemp\hbox to 0pt{\hss $\bu$\hss}}%
    \graphtemp=.5ex\advance\graphtemp by 0.880in
    \rlap{\kern 0.379in\lower\graphtemp\hbox to 0pt{\hss $\bu$\hss}}%
    \special{pa 59 453}%
    \special{pa 379 773}%
    \special{fp}%
    \special{pa 59 453}%
    \special{pa 379 667}%
    \special{fp}%
    \special{pa 59 453}%
    \special{pa 379 560}%
    \special{fp}%
    \special{pa 59 453}%
    \special{pa 379 880}%
    \special{fp}%
    \special{pa 699 453}%
    \special{pa 379 773}%
    \special{fp}%
    \special{pa 699 453}%
    \special{pa 379 667}%
    \special{fp}%
    \special{pa 699 453}%
    \special{pa 379 560}%
    \special{fp}%
    \special{pa 699 453}%
    \special{pa 379 880}%
    \special{fp}%
    \graphtemp=.5ex\advance\graphtemp by 0.258in
    \rlap{\kern 1.961in\lower\graphtemp\hbox to 0pt{\hss $\bu$\hss}}%
    \graphtemp=.5ex\advance\graphtemp by 0.862in
    \rlap{\kern 1.961in\lower\graphtemp\hbox to 0pt{\hss $\bu$\hss}}%
    \graphtemp=.5ex\advance\graphtemp by 0.862in
    \rlap{\kern 1.357in\lower\graphtemp\hbox to 0pt{\hss $\bu$\hss}}%
    \graphtemp=.5ex\advance\graphtemp by 0.258in
    \rlap{\kern 1.357in\lower\graphtemp\hbox to 0pt{\hss $\bu$\hss}}%
    \special{pa 1961 258}%
    \special{pa 1961 862}%
    \special{fp}%
    \special{pa 1961 862}%
    \special{pa 1357 862}%
    \special{fp}%
    \special{pa 1357 862}%
    \special{pa 1357 258}%
    \special{fp}%
    \special{pa 1357 258}%
    \special{pa 1961 258}%
    \special{fp}%
    \graphtemp=.5ex\advance\graphtemp by 0.347in
    \rlap{\kern 1.659in\lower\graphtemp\hbox to 0pt{\hss $\bu$\hss}}%
    \graphtemp=.5ex\advance\graphtemp by 0.560in
    \rlap{\kern 1.872in\lower\graphtemp\hbox to 0pt{\hss $\bu$\hss}}%
    \graphtemp=.5ex\advance\graphtemp by 0.773in
    \rlap{\kern 1.659in\lower\graphtemp\hbox to 0pt{\hss $\bu$\hss}}%
    \graphtemp=.5ex\advance\graphtemp by 0.560in
    \rlap{\kern 1.445in\lower\graphtemp\hbox to 0pt{\hss $\bu$\hss}}%
    \special{pa 1659 560}%
    \special{pa 1659 347}%
    \special{fp}%
    \special{pa 1659 560}%
    \special{pa 1872 560}%
    \special{fp}%
    \special{pa 1659 560}%
    \special{pa 1659 773}%
    \special{fp}%
    \special{pa 1659 560}%
    \special{pa 1445 560}%
    \special{fp}%
    \graphtemp=.5ex\advance\graphtemp by 0.560in
    \rlap{\kern 1.659in\lower\graphtemp\hbox to 0pt{\hss $\bu$\hss}}%
    \special{pa 1659 347}%
    \special{pa 1961 258}%
    \special{fp}%
    \special{pa 1961 258}%
    \special{pa 1872 560}%
    \special{fp}%
    \special{pa 1872 560}%
    \special{pa 1961 862}%
    \special{fp}%
    \special{pa 1961 862}%
    \special{pa 1659 773}%
    \special{fp}%
    \special{pa 1659 773}%
    \special{pa 1357 862}%
    \special{fp}%
    \special{pa 1357 862}%
    \special{pa 1445 560}%
    \special{fp}%
    \special{pa 1445 560}%
    \special{pa 1357 258}%
    \special{fp}%
    \special{pa 1357 258}%
    \special{pa 1659 347}%
    \special{fp}%
    \graphtemp=.5ex\advance\graphtemp by 0.667in
    \rlap{\kern 2.832in\lower\graphtemp\hbox to 0pt{\hss $\bu$\hss}}%
    \graphtemp=.5ex\advance\graphtemp by 0.453in
    \rlap{\kern 2.832in\lower\graphtemp\hbox to 0pt{\hss $\bu$\hss}}%
    \graphtemp=.5ex\advance\graphtemp by 0.560in
    \rlap{\kern 3.017in\lower\graphtemp\hbox to 0pt{\hss $\bu$\hss}}%
    \graphtemp=.5ex\advance\graphtemp by 0.773in
    \rlap{\kern 3.017in\lower\graphtemp\hbox to 0pt{\hss $\bu$\hss}}%
    \graphtemp=.5ex\advance\graphtemp by 0.880in
    \rlap{\kern 2.832in\lower\graphtemp\hbox to 0pt{\hss $\bu$\hss}}%
    \graphtemp=.5ex\advance\graphtemp by 0.773in
    \rlap{\kern 2.648in\lower\graphtemp\hbox to 0pt{\hss $\bu$\hss}}%
    \graphtemp=.5ex\advance\graphtemp by 0.560in
    \rlap{\kern 2.648in\lower\graphtemp\hbox to 0pt{\hss $\bu$\hss}}%
    \special{pa 2832 453}%
    \special{pa 3017 773}%
    \special{fp}%
    \special{pa 3017 773}%
    \special{pa 2648 773}%
    \special{fp}%
    \special{pa 2648 773}%
    \special{pa 2832 453}%
    \special{fp}%
    \special{pa 2832 453}%
    \special{pa 3017 560}%
    \special{fp}%
    \special{pa 3017 773}%
    \special{pa 2832 880}%
    \special{fp}%
    \special{pa 2648 773}%
    \special{pa 2648 560}%
    \special{fp}%
    \graphtemp=.5ex\advance\graphtemp by 0.240in
    \rlap{\kern 2.832in\lower\graphtemp\hbox to 0pt{\hss $\bu$\hss}}%
    \graphtemp=.5ex\advance\graphtemp by 0.880in
    \rlap{\kern 3.202in\lower\graphtemp\hbox to 0pt{\hss $\bu$\hss}}%
    \graphtemp=.5ex\advance\graphtemp by 0.880in
    \rlap{\kern 2.463in\lower\graphtemp\hbox to 0pt{\hss $\bu$\hss}}%
    \special{pa 2832 667}%
    \special{pa 2832 240}%
    \special{fp}%
    \special{pa 2832 240}%
    \special{pa 3202 880}%
    \special{fp}%
    \special{pa 3202 880}%
    \special{pa 2463 880}%
    \special{fp}%
    \special{pa 2832 240}%
    \special{pa 2463 880}%
    \special{fp}%
    \special{pa 2463 880}%
    \special{pa 2832 667}%
    \special{fp}%
    \special{pa 2832 667}%
    \special{pa 3202 880}%
    \special{fp}%
    \special{pn 8}%
    \special{ar 3314 945 853 853 -3.065835 -2.170178}%
    \special{ar 2832 111 853 853 1.122978 2.018615}%
    \special{ar 2351 945 853 853 -0.971415 -0.075757}%
    \graphtemp=.5ex\advance\graphtemp by 0.375in
    \rlap{\kern 4.219in\lower\graphtemp\hbox to 0pt{\hss $\bu$\hss}}%
    \graphtemp=.5ex\advance\graphtemp by 0.560in
    \rlap{\kern 4.326in\lower\graphtemp\hbox to 0pt{\hss $\bu$\hss}}%
    \graphtemp=.5ex\advance\graphtemp by 0.745in
    \rlap{\kern 4.219in\lower\graphtemp\hbox to 0pt{\hss $\bu$\hss}}%
    \graphtemp=.5ex\advance\graphtemp by 0.745in
    \rlap{\kern 4.006in\lower\graphtemp\hbox to 0pt{\hss $\bu$\hss}}%
    \graphtemp=.5ex\advance\graphtemp by 0.560in
    \rlap{\kern 3.899in\lower\graphtemp\hbox to 0pt{\hss $\bu$\hss}}%
    \graphtemp=.5ex\advance\graphtemp by 0.375in
    \rlap{\kern 4.006in\lower\graphtemp\hbox to 0pt{\hss $\bu$\hss}}%
    \special{pn 11}%
    \special{pa 4219 375}%
    \special{pa 4326 560}%
    \special{fp}%
    \special{pa 4326 560}%
    \special{pa 4219 745}%
    \special{fp}%
    \special{pa 4219 745}%
    \special{pa 4006 745}%
    \special{fp}%
    \special{pa 4006 745}%
    \special{pa 3899 560}%
    \special{fp}%
    \special{pa 3899 560}%
    \special{pa 4006 375}%
    \special{fp}%
    \special{pa 4006 375}%
    \special{pa 4219 375}%
    \special{fp}%
    \special{pa 4219 375}%
    \special{pa 3899 560}%
    \special{fp}%
    \special{pa 3899 560}%
    \special{pa 4219 745}%
    \special{fp}%
    \graphtemp=.5ex\advance\graphtemp by 0.133in
    \rlap{\kern 4.112in\lower\graphtemp\hbox to 0pt{\hss $\bu$\hss}}%
    \graphtemp=.5ex\advance\graphtemp by 0.560in
    \rlap{\kern 4.539in\lower\graphtemp\hbox to 0pt{\hss $\bu$\hss}}%
    \graphtemp=.5ex\advance\graphtemp by 0.987in
    \rlap{\kern 4.112in\lower\graphtemp\hbox to 0pt{\hss $\bu$\hss}}%
    \graphtemp=.5ex\advance\graphtemp by 0.560in
    \rlap{\kern 3.686in\lower\graphtemp\hbox to 0pt{\hss $\bu$\hss}}%
    \special{pa 4112 133}%
    \special{pa 4539 560}%
    \special{fp}%
    \special{pa 4539 560}%
    \special{pa 4112 987}%
    \special{fp}%
    \special{pa 4112 987}%
    \special{pa 3686 560}%
    \special{fp}%
    \special{pa 3686 560}%
    \special{pa 4112 133}%
    \special{fp}%
    \special{pa 4112 133}%
    \special{pa 4219 375}%
    \special{fp}%
    \special{pa 4219 375}%
    \special{pa 4539 560}%
    \special{fp}%
    \special{pa 4539 560}%
    \special{pa 4219 745}%
    \special{fp}%
    \special{pa 4219 745}%
    \special{pa 4112 987}%
    \special{fp}%
    \special{pa 4112 987}%
    \special{pa 4006 745}%
    \special{fp}%
    \special{pa 4006 745}%
    \special{pa 3686 560}%
    \special{fp}%
    \special{pa 3686 560}%
    \special{pa 4006 375}%
    \special{fp}%
    \special{pa 4006 375}%
    \special{pa 4112 133}%
    \special{fp}%
    \special{pa 4112 133}%
    \special{pa 4112 27}%
    \special{fp}%
    \special{pa 4646 560}%
    \special{pa 3579 560}%
    \special{fp}%
    \special{pa 4112 987}%
    \special{pa 4112 1093}%
    \special{fp}%
    \graphtemp=.5ex\advance\graphtemp by 0.375in
    \rlap{\kern 5.606in\lower\graphtemp\hbox to 0pt{\hss $\bu$\hss}}%
    \graphtemp=.5ex\advance\graphtemp by 0.560in
    \rlap{\kern 5.713in\lower\graphtemp\hbox to 0pt{\hss $\bu$\hss}}%
    \graphtemp=.5ex\advance\graphtemp by 0.745in
    \rlap{\kern 5.606in\lower\graphtemp\hbox to 0pt{\hss $\bu$\hss}}%
    \graphtemp=.5ex\advance\graphtemp by 0.745in
    \rlap{\kern 5.393in\lower\graphtemp\hbox to 0pt{\hss $\bu$\hss}}%
    \graphtemp=.5ex\advance\graphtemp by 0.560in
    \rlap{\kern 5.286in\lower\graphtemp\hbox to 0pt{\hss $\bu$\hss}}%
    \graphtemp=.5ex\advance\graphtemp by 0.375in
    \rlap{\kern 5.393in\lower\graphtemp\hbox to 0pt{\hss $\bu$\hss}}%
    \special{pa 5606 375}%
    \special{pa 5713 560}%
    \special{fp}%
    \special{pa 5713 560}%
    \special{pa 5606 745}%
    \special{fp}%
    \special{pa 5606 745}%
    \special{pa 5393 745}%
    \special{fp}%
    \special{pa 5393 745}%
    \special{pa 5286 560}%
    \special{fp}%
    \special{pa 5286 560}%
    \special{pa 5393 375}%
    \special{fp}%
    \special{pa 5393 375}%
    \special{pa 5606 375}%
    \special{fp}%
    \special{pa 5606 375}%
    \special{pa 5286 560}%
    \special{fp}%
    \special{pa 5286 560}%
    \special{pa 5606 745}%
    \special{fp}%
    \graphtemp=.5ex\advance\graphtemp by 0.133in
    \rlap{\kern 5.499in\lower\graphtemp\hbox to 0pt{\hss $\bu$\hss}}%
    \graphtemp=.5ex\advance\graphtemp by 0.560in
    \rlap{\kern 5.926in\lower\graphtemp\hbox to 0pt{\hss $\bu$\hss}}%
    \graphtemp=.5ex\advance\graphtemp by 0.987in
    \rlap{\kern 5.499in\lower\graphtemp\hbox to 0pt{\hss $\bu$\hss}}%
    \graphtemp=.5ex\advance\graphtemp by 0.560in
    \rlap{\kern 5.073in\lower\graphtemp\hbox to 0pt{\hss $\bu$\hss}}%
    \special{pa 5499 133}%
    \special{pa 5926 560}%
    \special{fp}%
    \special{pa 5926 560}%
    \special{pa 5499 987}%
    \special{fp}%
    \special{pa 5499 987}%
    \special{pa 5073 560}%
    \special{fp}%
    \special{pa 5073 560}%
    \special{pa 5499 133}%
    \special{fp}%
    \special{pa 5499 133}%
    \special{pa 5606 375}%
    \special{fp}%
    \special{pa 5606 375}%
    \special{pa 5926 560}%
    \special{fp}%
    \special{pa 5926 560}%
    \special{pa 5606 745}%
    \special{fp}%
    \special{pa 5606 745}%
    \special{pa 5499 987}%
    \special{fp}%
    \special{pa 5499 987}%
    \special{pa 5393 745}%
    \special{fp}%
    \special{pa 5393 745}%
    \special{pa 5073 560}%
    \special{fp}%
    \special{pa 5073 560}%
    \special{pa 5393 375}%
    \special{fp}%
    \special{pa 5393 375}%
    \special{pa 5499 133}%
    \special{fp}%
    \special{pa 5499 133}%
    \special{pa 5499 27}%
    \special{fp}%
    \special{pa 5926 560}%
    \special{pa 5073 560}%
    \special{fp}%
    \special{pa 5499 987}%
    \special{pa 5499 1093}%
    \special{fp}%
    \graphtemp=.5ex\advance\graphtemp by 0.027in
    \rlap{\kern 5.499in\lower\graphtemp\hbox to 0pt{\hss $\bu$\hss}}%
    \graphtemp=.5ex\advance\graphtemp by 1.093in
    \rlap{\kern 5.499in\lower\graphtemp\hbox to 0pt{\hss $\bu$\hss}}%
    \special{pa 5499 27}%
    \special{pa 5073 560}%
    \special{fp}%
    \special{pa 5499 27}%
    \special{pa 5499 133}%
    \special{fp}%
    \special{pa 5499 27}%
    \special{pa 5926 560}%
    \special{fp}%
    \special{pa 5499 1093}%
    \special{pa 5073 560}%
    \special{fp}%
    \special{pa 5499 1093}%
    \special{pa 5499 987}%
    \special{fp}%
    \special{pa 5499 1093}%
    \special{pa 5926 560}%
    \special{fp}%
    \special{pn 8}%
    \special{ar 5466 560 534 534 -1.507603 1.507603}%
    \hbox{\vrule depth1.120in width0pt height 0pt}%
    \kern 6.000in
  }%
}%
}
\caption{Large graphs with diameter $2$ and fixed maximum degree\label{figweg}}
\end{figure}

For upper bounds, van den Heuvel and McGuinness~\cite{HM} proved
$\chi(G^2)\le 2\Delta(G)+25$, but their argument also yields
$\chil(G^2)\le 2\Delta(G)+25$ (we list tighter bounds later).  We present a
weaker version of this, showing
that $\chil(G^2)\le 2\Delta(G)+34$ when $G$ is planar.  To keep our additive
constant small, we use an enhanced version of Lemma~\ref{kotzig}.  Instead of
letting each $5^-$-vertex $v$ take $\FR{6-d(v)}{d(v)}$ from each
$7^+$-neighbor, change the rule for $5$-vertices to take $\FR14$ from each
$7^+$-neighbor.  Now a $5$-vertex becomes happy when it has at least four
$7^+$-neighbors.  With $4^-$-neighbors forbidden, a $7$-vertex lost at most
$\FR35$ before, now at most $\FR34$, so again it remains happy.  Hence we
conclude the following.

\begin{lemma}\label{kotzig566}
Every normal plane map $G$ has a $3$-vertex with a $10^-$-neighbor, or a
$4$-vertex with a $7^-$-neighbor, or a $5$-vertex with two $6^-$-neighbors.
\end{lemma}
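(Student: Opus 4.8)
The plan is to argue by contradiction with a discharging argument, reusing the whole setup of Lemma~\ref{kotzig} and altering only the rate at which $5$-vertices draw charge. Suppose some normal plane map $G$ contains none of the three configurations; since $G$ is a normal plane map, $\delta(G)\ge3$ automatically, and, exactly as in the proof of Lemma~\ref{kotzig}, adding a chord to any face of length more than $3$ cannot create any of the configurations, so I may assume $G$ is a plane triangulation. The real content is the list of degree restrictions this forces: every $3$-vertex has all neighbors of degree at least $11$; every $4$-vertex has all neighbors of degree at least $8$; and every $5$-vertex has at least four neighbors of degree at least $7$. The last statement combines the first two: a neighbor of degree at most $6$ of a $5$-vertex cannot have degree $3$ (that would be a $3$-vertex with a $5$-neighbor) or degree $4$ (a $4$-vertex with a $5$-neighbor), so it has degree $5$ or $6$, and forbidding the third configuration permits at most one such neighbor. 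Because $G$ is a triangulation, consecutive vertices in the cyclic neighborhood $N_G(v)$ are adjacent, so these restrictions also say: each $3$-neighbor of $v$ is flanked in $N_G(v)$ by two vertices of degree at least $11$; each $4$-neighbor of $v$ by two vertices of degree at least $8$; and each $5$-neighbor of $v$ has at least one flank of degree at least $7$.

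Next I would run vertex charging, giving each vertex initial charge $d(v)-6$ and each (triangular) face charge $0$, for a total of $-12$, and apply rule (R1) of Lemma~\ref{kotzig} with the single change that each $5$-vertex takes $\FR14$ from each $7^+$-neighbor instead of $\FR15$. A $3$-vertex then gains $3$, a $4$-vertex gains $4\cdot\FR12=2$, and a $5$-vertex gains at least $4\cdot\FR14=1$; so each of these ends happy, a $6$-vertex is never involved and stays at $0$, and every face stays at $0$. It remains only to make the $7^+$-vertices happy.

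For the donors I would split on $d(v)$. A $7$-vertex gives charge only to its $5$-neighbors (its $3$- and $4$-neighbors are excluded outright), and the ``one heavy flank'' property caps their number at four, so it loses at most $1$ and finishes at charge $0$. For $d(v)\in\{8,9,10\}$ the vertex may in addition have $4$-neighbors; combining the fact that each $4$-neighbor is isolated in $N_G(v)$ with both flanks of degree at least $8$ with the ``one heavy flank'' property of the $5$-neighbors, one checks that the total charge sent out is at most $d(v)-6$, which equals $2$, $3$, $4$ respectively, so $v$ finishes at charge at least $0$. Finally, an $11^+$-vertex $v$ may also have $3$-neighbors, but these are pairwise nonadjacent in $N_G(v)$, hence number at most $\FL{\FR{d(v)}2}$; since a $3$-neighbor is the costliest recipient, the total outflow is at most $\FL{\FR{d(v)}2}$, leaving final charge at least $\CL{\FR{d(v)}2}-6\ge0$. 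Every vertex and every face is then happy, contradicting the total charge $-12$, and the lemma follows.

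I expect the bookkeeping for the $8$-, $9$-, and $10$-vertices to be the main obstacle: here $4$-neighbors (absorbing $\FR12$ each) and $5$-neighbors (absorbing $\FR14$ each) can occur together around $v$, and one must use the rigidity that the second forbidden configuration imposes on $4$-neighbors together with the weaker constraint on $5$-neighbors to show the outflow never exceeds the narrow surplus $d(v)-6$. A lesser point needing a word of care is the reduction to triangulations when $G$ has parallel edges, handled exactly as in Lemma~\ref{kotzig}.
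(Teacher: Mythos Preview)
Your proposal is correct and takes exactly the paper's approach: rerun the discharging of Lemma~\ref{kotzig} with the single change that $5$-vertices draw $\tfrac14$ (rather than $\tfrac15$) from each $7^+$-neighbor, then recheck that donors stay nonnegative. The paper's proof is just the short paragraph preceding the lemma and only explicitly checks the $5$-vertex and the $7$-vertex, leaving the higher-degree donors implicit; you correctly identify the $8$--$10$ cases as the residual bookkeeping (and your bound of four $5$-neighbors for a $7$-vertex is in fact the right one---the paper's ``at most $\tfrac34$'' tacitly reuses the independence of $5^-$-vertices from Lemma~\ref{kotzig}, which no longer holds once a single $5$--$5$ edge is permitted, but the conclusion survives since $4\cdot\tfrac14=1$).
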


Lemma~\ref{kotzig566} strengthens an early result of Franklin~\cite{F}: if $G$
is planar with minimum degree $5$, then $G$ has a $5$-vertex with a
$5$-neighbor or two $6$-neighbors.

Our first bound on $\chil(G^2)$ helps when $\Delta(G)$ is small.
It slightly refines an idea from~\cite{Jo}.

\begin{thm}\label{sq7k}
If $G$ is a planar graph, then
$\chil(G^2)\le\begin{cases}
 \Delta(G)^2+1& \textrm{when }\Delta(G)\le5,\\
 7\Delta(G)-7& \textrm{when }\Delta(G)\ge6.  \end{cases} $
\end{thm}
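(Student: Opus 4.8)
The argument splits on the value of $\Delta(G)$. If $\Delta(G)\le5$, then every vertex of $G$ lies within distance $2$ of at most $\Delta(G)^2$ other vertices, so $\Delta(G^2)\le\Delta(G)^2$; hence $G^2$ is $\Delta(G)^2$-degenerate and is greedily colourable from any lists of size $\Delta(G)^2+1$, giving $\chil(G^2)\le\Delta(G)^2+1$. So the real task is the range $\Delta(G)\ge6$, and there the plan is to show that $G^2$ is $(7\Delta(G)-8)$-degenerate; colouring greedily along a degeneracy order then gives $\chil(G^2)\le7\Delta(G)-7$, since each vertex has at most $7\Delta(G)-8$ already-coloured neighbours in $G^2$ while its list has $7\Delta(G)-7$ colours.

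Write $\Delta=\Delta(G)$. The engine is Lemma~\ref{12vert}, which supplies a $5^-$-vertex $v$ with at most two $12^+$-neighbours; for such a vertex
\[
d_{G^2}(v)\ \le\ d_G(v)+\sum_{w\in N_G(v)}\bigl(d_G(w)-1\bigr).
\]
If $6\le\Delta\le11$ then $G$ has no $12^+$-vertex at all, so this is at most $d_G(v)+d_G(v)(\Delta-1)\le5\Delta\le7\Delta-8$. If $\Delta\ge12$, then at most two summands attain $\Delta-1$ and the at most three others are at most $10$, so the bound is $5+2(\Delta-1)+30=2\Delta+33\le7\Delta-8$; the threshold $12$ in Lemma~\ref{12vert} is precisely what makes these two regimes meet. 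Hence in every case the vertex of Lemma~\ref{12vert} has $G^2$-degree at most $7\Delta-8$.

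What remains is to promote ``$G$ contains such a vertex'' to ``every induced subgraph $G^2[R]$ of $G^2$ has a vertex of degree at most $7\Delta-8$'', and I expect this to be the main obstacle, since squaring does not commute with deleting vertices: removing a $5^-$-vertex $v$ from $G$ erases the distance-$2$ pairs through $v$, so these ``phantom'' edges of $G^2$ are absent from the square of the smaller graph but still have to be counted. The way I would handle this is to work, for the surviving set $R$, with the planar graph $G_R$ induced by $R$ together with every already-deleted vertex that still has two or more neighbours in $R$: one checks that $G_R$ is planar with $\Delta(G_R)\le\Delta$ and that $G^2[R]$ is exactly the square of $G_R$ restricted to $R$ (a common neighbour of two vertices of $R$ lying outside $R$ necessarily belongs to $G_R$), so Lemma~\ref{12vert} may be applied to a genuinely planar graph at each stage, and the estimate above---with $G_R$ in place of $G$, and using that $v$ has at most five neighbours in $G_R$---bounds the $G^2[R]$-degree of the chosen $v$ by $7\Delta-8$. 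The one remaining subtlety is to guarantee that Lemma~\ref{12vert} yields a vertex lying in $R$ rather than among the extra connector vertices: this is immediate when $\lvert R\rvert\le7\Delta-7$, since then every degree in $G^2[R]$ is already below the target, and I expect the complementary case to be the genuinely fiddly part of the proof. The arithmetic and the invocation of Lemma~\ref{12vert} are routine; reconciling squares with vertex deletion is where the care goes.
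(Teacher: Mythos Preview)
Your arithmetic for the $5^-$-vertex in $G$ itself is fine, and you have correctly identified the real obstacle: squaring does not commute with vertex deletion, so the phantom edges through removed vertices must be accounted for. But your proposed fix via $G_R$ does not close the gap, as you yourself concede. Connector vertices in $V(G_R)\setminus R$ have $G_R$-degree at least $2$ but can easily be $5^-$-vertices with at most two $12^+$-neighbours, so Lemma~\ref{12vert} may well hand you one of them; and there is no evident way to suppress such a vertex while preserving both planarity and the edge set of $G^2[R]$ (replacing it by a clique on its up to five $R$-neighbours destroys planarity in general). The case $|R|>7\Delta-7$ is not a residual technicality; it is the whole problem.

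The paper sidesteps this by building the ordering \emph{in $G$} rather than in $G^2$. One iteratively removes from $G_i=G[\{v_1,\dots,v_i\}]$ a vertex $v_i$ of minimum degree if $\delta(G_i)\le3$, and otherwise a vertex supplied by Lemma~\ref{kotzig566} (not Lemma~\ref{12vert}): a $3$-vertex with a $10^-$-neighbour, a $4$-vertex with a $7^-$-neighbour, or a $5$-vertex with two $6^-$-neighbours. Either way $d_{G_i}(v_i)\le5$. The payoff is that \emph{every} later $G$-neighbour $u=v_k$ of $v_i$ also satisfied $d_{G_k}(v_k)\le5$ when it was removed, and hence has at most four $G$-neighbours in $\{v_1,\dots,v_{i-1}\}$, since $v_i$ itself accounts for one of its at most five neighbours in $S_k\supseteq S_i$. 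Thus the phantom contribution through later neighbours is at most $4(\Delta-j)$, where $j=d_{G_i}(v_i)$. Combined with the contribution $\sum_{u\in N_{G_i}(v_i)} d_{G_i}(u)$ from the $j$ earlier neighbours---where the refined information from Lemma~\ref{kotzig566} (a $7^-$-neighbour when $j=4$, two $6^-$-neighbours when $j=5$) is exactly what is needed---one obtains at most $7\Delta-8$ earlier $G^2$-neighbours in every case. The idea you are missing is that choosing the ordering so that \emph{every} vertex has at most five earlier $G$-neighbours controls the phantom edges globally; nothing needs to be re-embedded at any step.
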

\begin{proof}
For $\Delta(G)\le5$, the claim is just the trivial upper bound from
$\Delta(G^2)$, so we may assume $\Delta(G)\ge6$.

\nobreak
Index the vertices from $v_n$ to $v_1$ as follows.  Having chosen 
$\VEC vn{i+1}$, let $G_i=G-\{\VEC vn{i+1}\}$.  If $\delta(G_i)\le 3$,
then let $v_i$ be a vertex of minimum degree; otherwise let $v$ be a vertex
as guaranteed by Lemma~\ref{kotzig566}.  Let $S_i=\{\VEC v1i\}$.  We choose
colors for vertices in the order $\VEC v1n$ so that the coloring of $S_i$
satisfies all the constraints in the full graph $G^2$ from pairs of vertices in
$S_i$.  

Let $j=|N(v_i)\cap S_i|$; by the choice of the ordering, $j\le 5$.  The other
neighbors of $v_i$ occur later and are not yet colored.  However, $v_i$ must
avoid the colors on the neighbors in $S_i$ of these vertices; there may be up
to $4(\Delta(G)-j)$ such colors.  For $u\in N(v_i)\cap S_i$, all neighbors of
$u$ may lie in $S_i$, so there may be as many as $d(u)$ colors that $v_i$ must
avoid due to $u$.

If $j\le 3$, then the number of colors $v_i$ must avoid is at most
$4(\Delta(G)-j)+j\Delta(G)$.  If $j=4$, then the bound is $7\Delta(G)-9$, since
$v_i$ has a $7^-$-neighbor in $S_i$.  If $j=5$, then it is $7\Delta(G)-8$,
since $v_i$ has two $6^-$-neighbors in $S_i$.  Hence always the bound is at
most $7\Delta(G)-8$.
\end{proof}

Using Lemma~\ref{12vert} instead of Lemma~\ref{kotzig566}, a vertex with $j$
earlier neighbors must avoid at most $6\Delta(G)+7j-22$ colors.  Hence
$\chil(G^2)\le 6\Delta(G)+14$.  We strengthen this bound.

\begin{thm}\label{sqdelta}
If $G$ is a planar graph, then $\chil(G^2)\le 2\Delta(G)+34$.
\end{thm}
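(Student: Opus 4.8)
The plan is to follow the scheme of van den Heuvel and McGuinness~\cite{HM}, trading a slightly larger additive constant for a much shorter argument, with the sharpened light-edge input of Lemma~\ref{kotzig566} doing the quantitative work. Let $G$ be a minimal counterexample, equipped with a $(2\Delta(G)+34)$-uniform list assignment $L$ on $V(G^2)$ admitting no proper $L$-coloring of $G^2$; we may assume $\Delta(G)\ge 9$, since for $\Delta(G)\le 8$ the bound $7\Delta(G)-7$ of Theorem~\ref{sq7k} is already at most $2\Delta(G)+34$. First I would strip away the low-degree vertices. A vertex of degree at most $1$ is reducible because deleting it changes neither $G^2$ on the other vertices nor the relevant distances, and the deleted vertex then has at most $\Delta(G)$ forbidden colors. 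A thread (a path of $2$-vertices) of length at least $2$ is reducible: delete its internal vertices (this does not disturb $G^2$ elsewhere), color what remains by minimality, and then recolor the thread greedily from its endpoints inward, where each internal vertex avoids at most $\Delta(G)+1$ colors. A $2$-vertex $v$ whose neighbors $u_1,u_2$ are nonadjacent and have no other common neighbor is reducible by first adding the edge $u_1u_2$ inside the face vacated by $v$, which keeps the graph planar and does not raise $\Delta$; the other $2$-vertices are handled similarly. Hence $\delta(G)\ge 3$, and (after the usual reduction past cut-vertices) $G$ is a normal plane map.

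Now Lemma~\ref{kotzig566} guarantees one of: a $3$-vertex with a $10^-$-neighbor, a $4$-vertex with a $7^-$-neighbor, or a $5$-vertex with two $6^-$-neighbors. In each case I would delete the small-degree vertex $v$ and argue reducibility. The point of the lemma is the color count: if $v$ has degree at most $5$ with at most two "heavy" neighbors (those of degree at least $12$), then $|N_{G^2}(v)|\le d(v)+\sum_{u\in N(v)}(d(u)-1)\le 2\Delta(G)+33$, so a color from $L(v)$ of size $2\Delta(G)+34$ always remains once everything else is colored. The genuine difficulty is that deleting $v$ removes up to $\binom{d(v)}{2}$ of the distance-$2$ adjacencies of $G^2$ among $N(v)$, and these must be re-established after we color $(G-v)^2$ by minimality. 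When $G$ is locally triangulated at $v$ these adjacencies already survive in $(G-v)^2$ and nothing extra is needed; otherwise one either adds the missing edges among $N(v)$ (using that $v$'s light neighbor has slack, so $\Delta$ does not increase) before invoking minimality, or, when that is not available, recolors the affected low-degree vertices. As in the proof of Theorem~\ref{listsq}, the set of conflicts that must be repaired can be arranged to decompose into vertex-disjoint paths and even cycles whose vertices have bounded degree, and these are recolored from lists of size $2$ via the $2$-choosability of even cycles, Lemma~\ref{evencyc}.

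The main obstacle is exactly this reducibility bookkeeping: one must verify that each configuration supplied by Lemma~\ref{kotzig566} can be deleted and reinserted without ever forcing a vertex to avoid more colors than its list allows, being careful that adding repair-edges among $N(v)$ does not push the maximum degree past $\Delta(G)$ (this is where the lightness thresholds $10$, $7$, and $6$, rather than the weaker weight-$13$ edge of classical Kotzig-type lemmas, are used), and that the conflict graph left over after deletion really is a disjoint union of paths and even cycles on low-degree vertices so that Lemma~\ref{evencyc} applies. Once all of this is checked, the coloring of $(G-v)^2$ extends to $G^2$, contradicting the choice of $G$; iterating over $\Delta(G)\ge 9$ (with Theorem~\ref{sq7k} covering $\Delta(G)\le 8$) yields $\chil(G^2)\le 2\Delta(G)+34$ for every planar $G$. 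The additive constant is not optimized here; a more elaborate unavoidable set of reducible configurations lowers it to the $2\Delta(G)+25$ of~\cite{HM}.
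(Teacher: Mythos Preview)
Your approach has two genuine gaps, and the paper's proof avoids both with a device you do not use.

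First, you have conflated Lemma~\ref{kotzig566} with Lemma~\ref{12vert}. Lemma~\ref{kotzig566} does \emph{not} produce a $5^-$-vertex with at most two $12^+$-neighbors; that is Lemma~\ref{12vert}. What Lemma~\ref{kotzig566} gives is, for instance, a $5$-vertex $v$ with two $6^-$-neighbors and three neighbors that may all have degree $\Delta(G)$. Then $|N_{G^2}(v)|$ can be as large as $5+2\cdot5+3(\Delta(G)-1)=3\Delta(G)+12$, which exceeds $2\Delta(G)+33$ once $\Delta(G)>21$. Similarly the $4$-vertex case gives up to $3\Delta(G)+7$. So your color count ``$\le 2\Delta(G)+33$'' is simply false for the configurations Lemma~\ref{kotzig566} actually supplies, and the argument collapses for large $\Delta(G)$. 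The paper handles this by splitting into two regimes: Lemma~\ref{kotzig566} is used only for $9\le\Delta(G)\le13$ (where a fixed bound of $52$ is proved), and Lemma~\ref{12vert} takes over for $\Delta(G)\ge14$.

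Second, and more fundamentally, your reducibility scheme of deleting $v$ and then ``repairing'' the lost distance-$2$ adjacencies among $N_G(v)$ is not workable as stated. The neighbors of $v$ that receive clashing colors may all be $\Delta(G)$-vertices; they have up to $\Delta(G)^2$ neighbors in $G^2$ and cannot be recolored from lists of size $2\Delta(G)+34$. Nothing forces the conflict set to be a union of paths and even cycles on low-degree vertices, and the analogy with Theorem~\ref{listsq} (where the structure is very specific) does not carry over. Adding edges among $N_G(v)$ to preserve the adjacencies also fails in general, since those neighbors may already have degree $\Delta(G)$.

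The paper sidesteps this entirely by \emph{contracting} an edge $uv$ rather than deleting $v$. Contraction has the crucial property that any two vertices of $V(G)-\{v\}$ at distance at most $2$ in $G$ remain at distance at most $2$ in $G'=G/uv$, so a proper $L$-coloring of $G'^2$ is automatically valid on $V(G)-\{v\}$ in $G^2$; no repair is needed, and only $v$ remains to be colored. The edge to contract is chosen (via Lemma~\ref{kotzig566} or Lemma~\ref{12vert}) so that the merged vertex has degree at most $\max\{\Delta(G),13\}$, keeping the induction hypothesis available.
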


\begin{proof}
Theorem~\ref{sq7k} provides upper bounds that are at most $2\Delta(G)+34$ when
$\Delta(G)\le 8$.  Hence we may assume $\Delta(G)\ge 9$.

Let $G$ be a minimal counterexample, with list assignment $L$ from which no
such coloring can be chosen.  We will form $G'$ by contracting an edge
of $G$ incident to a $5^-$-vertex $v$, viewed as absorbing $v$ into the other
endpoint $u$; the new vertex retains the list assigned to $u$.  All the
constraints forcing vertices of $G$ to have distinct colors are present also in
$G'$, so any proper coloring of the square of $G'$ can also be used on $V(G')$
in $G$.  If $\Delta(G')$ is small enough, then the induction hypothesis applies
to properly color the square of $G'$ from lists of the desired size, and the
task is only to show that few enough other vertices are within distance $2$ of
$v$ in $G$, leaving a color available for $v$ to complete an $L$-coloring of
$G^2$.

\nobreak
If $\delta(G)\le 2$, then let $v$ be a vertex of minimum degree.  The
contracted vertex has degree at most $\Delta(G)$.  At most $2\Delta(G)$
vertices are within distance $2$ of $v$, which leaves a color available for
$v$.  Hence we may assume $\delta(G)\ge3$,
so Lemmas~\ref{kotzig566} and \ref{12vert} apply.

{\it Case 1: $9\le \Delta(G)\le 13$.}
In this case we prove $\chil(G^2)\le 52\le 2\Delta(G)+34$.  Let $v$ be a vertex
as guaranteed by Lemma~\ref{kotzig566}: a $3$-vertex with a $10^-$-neighbor
$u$, a $4$-vertex with a $7^-$-neighbor $u$, or a $5$-vertex with
$6^-$-neighbors $u$ and $u'$.  Contract the edge $uv$ into a new vertex;
it has degree at most $11$.  Thus $\Delta(G')\le 13$, and the induction
hypothesis yields a proper $L$-coloring of the square of $G'$ from lists of
size $52$ (if $\Delta(G')=8$, then $\chil(G'^2)\le 2\Delta(G')+34\le52$).  The
bound on the number of other vertices within distance $2$ of $v$ is
$2\Delta(G)+10$ for $d(v)=3$, $3\Delta(G)+7$ for $d(v)=4$, and $3\Delta(G)+12$
for $d(v)=5$.  Since $\Delta(G)\le13$, the value in each case is at most $51$.

{\it Case 2: $\Delta(G)\ge 14$.}
Let $v$ be a vertex as guaranteed by Lemma~\ref{12vert}; note that $d(v)\le 5$.
Since $d(v)\ge3$ and $v$ has at most two $12^+$-neighbors, $v$ has an
$11^-$-neighbor $u$; contract the edge $uv$.  The contracted vertex has degree
at most $14$, so the induction hypothesis applies.  Also, the number of
vertices within distance $2$ of $v$ in $G$ is bounded by $2\Delta(G)+33$.
\end{proof}

The improved upper bound of $2\Delta(G)+25$ in~\cite{HM} uses a slightly
stronger version of Lemma~\ref{12vert} to improve the argument for large
$\Delta(G)$ (see Exercise~\ref{HMex}).  The main additional work was proving
a second lemma specifically for graphs with small maximum degree.

Havet, van den Heuvel, McDiarmid, and Reed~\cite{HHMR} proved for planar graphs
that $\chil(G^2)\le (\FR32+o(1))\Delta(G)$ as $\Delta(G)\to\infty$, by
probabilistic methods.
Hence we also seek bounds below $2\Delta(G)$ when $\Delta(G)$ is ``small''.
Borodin et al.~\cite{BBGH2} proved for planar graphs that $\chil(G^2) \le 59$
when $\Delta(G)\le20$ and
$\chil(G^2)\le\max\{\Delta(G)+39,\CL{\FR95\Delta(G)}+1\}$ when
$\Delta(G)>20$.  In particular, if $\Delta(G)\ge47$, then
$\chil(G^2)\le\CL{\FR95\Delta(G)}+1$ (\cite{AH} proved this bound for
$\Delta(G)\ge750$).
Also,~\cite{BBGH2} proved that $G^2$ is $k$-degenerate, where
$k =\max\{\Delta(G)+38,\CL{\FR95\Delta(G)}\}$.
For the coloring problem alone, Molloy and Salavatipour~\cite{MS} proved 
$\chi(G^2)\le \FR53\Delta(G)+78$ for all planar $G$.
We explore results in terms of $\mad(G)$ (without planarity) in the exercises.

\medskip
{\small


\begin{exercise}
(Cranston--Kim~\cite{CK})
Apply Exercise~\ref{ex:square} to prove that if $\Delta(G)\le 3$ and
$\mad(G)\le \FR{14}5$, then $\chil(G^2)\le 7$.
\end{exercise}


\begin{exercise}\label{dynamick}
(Kim--Park~\cite{KP})
Prove that if $\delta(G)\ge 2$ and $\avd(G)< \FR{4k}{k+2}$ with $k\ge4$, then
$G$ has a $3^-$-vertex with a $(k-1)^-$-neighbor.  Guarantee a $2$-vertex with
a $(k-1)^-$-neighbor when $k\le6$.  Conclude that if $\mad(G)<\FR{4k}{k+2}$ with
$k\ge4$ (and no components are $5$-cycles if $k=4$), then from any lists of
size at least $k$ a proper coloring of $G$ can be chosen so that every vertex
with degree at least $2$ has neighbors with distinct colors.  Show also that
this is sharp: there exists $G$ with $\mad(G)=\FR{4k}{k+2}$ and an assignment
of $k$-lists from which no such coloring can be chosen.
\end{exercise}

\begin{exercise}
In Problem~\ref{G2prob}, prove that $b_{1,k}\ge2$.  Show that equality holds
when $k\in\{2,3\}$.
\end{exercise}

\begin{exercise}
(Cranston--Erman--\v{S}krekovski~\cite{CES})
Prove that a cycle of length divisible by $3$ with vertices whose degrees
cycle repeatedly through $2,2,3$ is reducible for $5$-choosability of $G^2$.
Use discharging to conclude that if $\Delta(G)\le4$ and $\mad(G)<16/7$.
then $\chil(G^2)\le 5$.
\end{exercise}

\begin{exercise}
(Cranston--Erman--\v{S}krekovski~\cite{CES})
Prove that if $\Delta(G)\le 4$ and $\avd(G)< \FR{18}7$, then $G$ contains one
of: (C1) a $1^-$-vertex, (C2) two adjacent $2$-vertices, (C3) a $3$-vertex with
three $2$-neighbors, or (C4) a four-vertex path alternating between
$2$-vertices and $3$-vertices.  Conclude that if $\Delta(G)\le 4$ and
$\mad(G)< \FR{18}7$, then $\chil(G^2)\le 7$.
\end{exercise}

\begin{exercise}
(Cranston--Erman--\v{S}krekovski~\cite{CES})
Prove that if $\Delta(G)\le 4$ and $\avd(G)\le \FR{10}3$, then $G$ contains one
of: (C1) a $1^-$-vertex, (C2) a $2$-vertex with a $3^-$-neighbor, (C3) a
$3$-vertex with two $3$-neighbors, or (C4) a $4$-vertex with a $2$-neighbor and
a $3^-$-neighbor.  Construct infinitely many graphs with average degree
$\FR{10}3$ and maximum degree $4$ that contain no such configuration.  Prove
that if $\Delta(G)\le 4$ and $\Mad(G)<\FR{10}3$, then $\chi_l(G^2)\le 12$.
\end{exercise}

\begin{exercise}\label{36.13red}
(Cranston--Kim--Yu~\cite{CKY2})
Complete the proof of Theorem~\ref{36.13dis} by showing that those
configurations are reducible for $\chi^i(G)\le5$ in the family of graphs with
$\Delta(G)\le3$.
\end{exercise}

\begin{exercise}
(Cranston--Kim--Yu~\cite{CKY2})
Prove that if $\avd(G)<\FR{14}5$ and $\Delta(G)\ge6$, then $G$ contains one of
the following configurations: (C1) a $1^-$-vertex, (C2) adjacent $2$-vertices,
(C3) a $3$-vertex with neighbors of degrees $2,a,b$, where $a+b\le\Delta(G)+2$,
or (C4) a $4$-vertex having four $2$-neighbors, one of which has other neighbor
of degree less than $\Delta(G)$.  Argue that none of these configurations can
appear in a minimal graph $G$ such that $\Delta(G)\ge 6$ and
$\chi^i(G)>\Delta(G)+2$.  Reducibility of the first two configurations and
part of (C3) is already requested in Exercise~\ref{36.13red}.
\end{exercise}


\begin{exercise}\label{HMex}
(van den Heuvel--McGuinness~\cite{HM})
Prove that every planar graph $G$ with $\delta(G)\ge3$ has a $5^-$-vertex $v$
with at most two $12^+$-neighbors such that $v$ has a $7^-$-neighbor if
$d(v)\in\{4,5\}$, and $v$ has an additional $6^-$-neighbor if $d(v)=5$.
Use this to prove that $\chi(G^2)\le 2\Delta(G)+25$ when $\Delta(G)\ge12$.
(Hint: Extend the proof of Lemma~\ref{12vert} by allowing $5^-$-vertices to
take some charge from their $11^-$-neighbors.)
\end{exercise}

}

\section{Edge-coloring and List Edge-coloring}\label{seclistedge}

We have mentioned the famous result of Vizing~\cite{V0,V2} and Gupta~\cite{G}
known as {\it Vizing's Theorem}.  It gives an upper bound for $\chi'(G)$ when
$G$ is a multigraph (allowing multiedges) and specializes to
$\chi'(G)\le \Delta(G)+1$ when $G$ is a graph.  Deciding whether $\chi'(G)$
equals $\Delta(G)$ or $\Delta(G)+1$ is NP-complete~\cite{Hol}, so we seek
sufficient conditions for equality.

\begin{conj}[Vizing's Planar Graph Conjecture~\cite{V1,V3}]\label{vizconj}
\label{planar-graph-conj}
If $G$ is a planar graph and $\Delta(G)\ge6$, then $\chi'(G) = \Delta(G)$.
\end{conj}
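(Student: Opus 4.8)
This is in fact an open conjecture, and the natural line of attack is precisely the two-pronged ``unavoidable set of reducible configurations'' paradigm described above, with discharging carried out on \emph{both} the vertices and the faces of the plane graph. Suppose the conjecture fails for some $\Delta\ge 6$, and let $G$ be a plane graph with $\Delta(G)\le\Delta$, $\chi'(G)>\Delta$, and the fewest edges among all such graphs. By Vizing's Theorem, $\chi'(G)=\Delta+1$, and $G$ is \emph{edge-critical}: $\chi'(G-e)\le\Delta$ for every edge $e$. Classical Vizing-style recoloring (Kempe-chain swaps on a pair of colors missing at the two endpoints of $e$) then supplies the reducibility toolkit: every edge $uv$ satisfies $d(u)+d(v)\ge\Delta+2$, so $\delta(G)\ge2$ and every $2$-vertex has both neighbors of degree $\Delta$; more generally, Vizing's Adjacency Lemma states that if $uv\in E(G)$ then $u$ has at least $\Delta-d(v)+1$ neighbors of degree $\Delta$. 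Additional critical-graph lemmas (bounding how many $2$-vertices are adjacent to a given vertex, restricting the local structure around triangles, etc.) further enrich the picture.

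The discharging step then proceeds as follows. Assign each vertex $v$ charge $d(v)-6$ and each face $f$ charge $2d(f)-6$; by Euler's Formula the total charge is $-12$, so if the discharging rules conserve charge then some element must finish with negative charge. The only elements with negative initial charge are the vertices of degree at most $5$ and the triangular faces. One designs rules that send charge from vertices of degree at least $7$ (which carry surplus $d(v)-6\ge1$) and from large faces toward the deficient vertices and triangles, routing the surplus through the degree-$\Delta$ neighbors guaranteed by the Adjacency Lemma. For $\Delta\ge 7$ this can be made to work: a vertex of degree $k\le 6$ is surrounded entirely by vertices of degree $\Delta\ge 7$, each carrying at least a full unit of excess, and a case analysis on the local face structure (above all, how many incident faces are triangles) shows that enough charge reaches every deficient element. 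Indeed this is how Vizing settled $\Delta\ge 8$ and Sanders--Zhao settled $\Delta=7$.

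The main obstacle --- and the reason the conjecture remains open --- is exactly the case $\Delta=6$. When $\Delta=6$, a $6$-vertex has initial charge $d(v)-6=0$, so the very vertices that the Adjacency Lemma forces to surround the low-degree vertices now carry no surplus. No fixed linear combination of the vertex and face weights gives positive charge to the $\Delta$-vertices while keeping the total negative, so the naive scheme collapses. One is forced instead to draw charge from the faces, or to squeeze surplus out of $6$-vertices using subtler critical-graph lemmas that limit how many $2$-vertices and triangles can crowd around a single $6$-vertex, or to enlarge the reducible set with more delicate configurations clustered near triangles; essentially all of the difficulty of the $\Delta=6$ case lives there. Concretely, I would first reprove the $\Delta\ge 7$ case cleanly in order to pin down which discharging inequalities are tight, then concentrate the search for new reducible configurations at those tight spots --- a $6$-vertex incident to several triangles and several $2$-vertices --- in the hope that such a dense local configuration can be shown reducible by a more careful Kempe-chain analysis, thereby freeing up just enough charge to close the argument.
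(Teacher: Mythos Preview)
You are right that this is a conjecture, not a theorem; the paper states it as Conjecture~4.1 and does not prove it. Your summary of the settled cases (Vizing for $\Delta\ge8$, Sanders--Zhao for $\Delta=7$) and your diagnosis of why $\Delta=6$ resists the method are accurate and match the paper's discussion.

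One point of comparison with what the paper actually does for the known cases: its proof of the $\Delta\ge8$ case (Theorem~4.3) does not use faces at all. It exploits only the consequence $\mad(G)<6$ of planarity, gives each vertex initial charge $d(v)$, and discharges via rules (R1)--(R2) so that every vertex ends with charge at least $6$, using VAL to control how many low-degree neighbors a vertex can have. This vertex-only argument in fact proves the stronger statement for all graphs with $\mad(G)<6$ and $\Delta\ge8$, not just planar ones. Your vertex-plus-face scheme is closer in spirit to arguments (like Sanders--Zhao for $\Delta=7$) where planarity is used in an essential way; either setup works for the settled range.

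One small inaccuracy in your sketch: you say a vertex of degree $k\le6$ is ``surrounded entirely by vertices of degree $\Delta\ge7$.'' Neither VAL nor the light-edge bound $d(u)+d(v)\ge\Delta+2$ yields that. The weight bound gives only $d(u)\ge\Delta+2-k$ for each neighbor $u$ (so a $6$-vertex when $\Delta=7$ may have $3$-neighbors), and VAL constrains the neighborhood of $u$, not of $v$. The mechanism the paper actually uses is: for a vertex $v$ with minimum neighbor-degree $j$, VAL forces at least $\Delta+1-j$ of $v$'s neighbors to have degree $\Delta$, which in turn bounds $j\ge\Delta+2-d(v)$ and caps how much charge $v$ must give away.
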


Both conditions in Vizing's Conjecture are needed.  The complete graph $K_7$ is 
$6$-regular but not planar.  Each color can be used on at most three edges,
so $\chi'(K_7)\ge \FR{21}3=7$.  Similarly, obtain $G$ from a $5$-regular planar
graph with $2k$ vertices by subdividing one edge.  Since $G$ has $5k+1$ edges,
and at most $k$ edges can receive the same color, $\chi'(G)\ge6$.  This
difficulty does not arise for $\Delta(G)\ge6$, because regular planar graphs
have degree at most $5$.

Vizing~\cite{V1} proved Conjecture~\ref{vizconj} for $\Delta(G)\ge8$, using
Vizing's Adjacency Lemma (VAL).  It is common to say that $G$ is \emph{Class 1}
if $\chi'(G)=\Delta(G)$, \emph{Class 2} otherwise.  An
{\it edge-critical graph} $G$ is then a Class 2 graph such that
$\chi'(G-e)=\Delta(G)$ for all $e\in E(G)$.  In fact, VAL implies that
every edge-critical graph has at least three vertices of maximum degree,
so $\Delta(G)=\Delta(G-e)$.  Note also that every Class 2 graph contains an
edge-critical graph with the same maximum degree.

\begin{theorem}[Vizing's Adjacency Lemma \cite{V1}]\label{VAL}
If $x$ and $y$ are adjacent in an edge-critical graph $G$, then at least
$\max\{1+\Delta(G)-d(y),2\}$ neighbors of $x$ have degree $\Delta(G)$.
\end{theorem}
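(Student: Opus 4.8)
The plan is to run the standard ``fan plus Kempe chain'' argument behind Vizing's Theorem inside the critical graph $G$. Write $k=\Delta(G)$ and $e=xy$. Since $G$ is edge-critical, $G-e$ has a proper $k$-edge-coloring $\phi$. For a vertex $v$ let $\overline{\phi}(v)$ be the set of colors in $\{1,\dots,k\}$ \emph{missing} at $v$, i.e.\ used on no edge at $v$; since $d_{G-e}(v)\le k-1$ for $v\in\{x,y\}$ we have $|\overline{\phi}(x)|=1+k-d(x)\ge1$ and $|\overline{\phi}(y)|=1+k-d(y)\ge1$. If some color were missing at both $x$ and $y$ we could color $e$ with it and obtain a proper $k$-edge-coloring of $G$, contradicting that $G$ is Class~2; hence $\overline{\phi}(x)\cap\overline{\phi}(y)=\varnothing$. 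Consequently each $\gamma\in\overline{\phi}(y)$ is present at $x$, say $\gamma=\phi(xz_\gamma)$; the vertices $z_\gamma$ are pairwise distinct (the edges at $x$ carry distinct colors) and each satisfies $z_\gamma\ne y$ (the edge $xy$ is uncolored). So $x$ has at least $1+k-d(y)$ neighbors among the $z_\gamma$, and the theorem reduces to showing $d(z_\gamma)=k$ for every such $\gamma$: this already gives $1+k-d(y)$ neighbors of $x$ of degree $k$, and in the one case where that number drops to $1$, namely $d(y)=k$, the neighbor $y$ itself has degree $k$, raising the total to $2=\max\{1,2\}$.

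So fix $\gamma\in\overline{\phi}(y)$, put $z=z_\gamma$, and suppose for contradiction that $d(z)<k$. Then $\overline{\phi}(z)\ne\varnothing$; pick $\beta\in\overline{\phi}(z)$, noting $\beta\ne\gamma$ since $\gamma$ is present at $z$ on $xz$. If $\beta\in\overline{\phi}(x)$, recolor $xz$ with $\beta$ (legal, as $\beta$ is missing at both ends): now $\gamma$ is missing at $x$ and still at $y$, so coloring $e$ with $\gamma$ extends $\phi$ to $G$ --- a contradiction. Hence we may assume $\overline{\phi}(x)\cap\overline{\phi}(z)=\varnothing$. Fix $\alpha\in\overline{\phi}(x)$; by the two disjointness facts $\alpha$ is then present at both $z$ and $y$, and $\alpha\notin\{\beta,\gamma\}$.

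Now consider the subgraph of edges colored $\alpha$ or $\beta$, whose components are paths and even cycles. Because $\alpha$ is missing and $\beta$ present at $x$, the component $Q_x$ through $x$ is a path ending at $x$; because $\beta$ is missing and $\alpha$ present at $z$, the component through $z$ is a path ending at $z$. If $z\notin Q_x$, then interchanging $\alpha$ and $\beta$ along $Q_x$ makes $\beta$ missing at $x$ while disturbing neither $z$ nor the edge $xz$ (still colored $\gamma$), returning us to the easy case of the previous paragraph --- a contradiction. So $x$ and $z$ lie on a common $\{\alpha,\beta\}$-path $Q$ that joins them and avoids the edge $xz$. At this point I would bring in the subgraph colored $\gamma$ or $\alpha$: since $\alpha$ is missing at $x$ and $\gamma$ is present at $x$ only on $xz$, the $\{\gamma,\alpha\}$-component of $x$ is a path leaving $x$ through $xz$, while $y$ (where $\gamma$ is missing and $\alpha$ present) is an endpoint of its own $\{\gamma,\alpha\}$-component; if $y$ does not lie on the $\{\gamma,\alpha\}$-component of $x$, swapping those two colors along it recolors $xz$ with $\alpha$, frees $\gamma$ at $x$, and lets us color $e$ with $\gamma$ --- a contradiction. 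The remaining configuration --- the $\{\gamma,\alpha\}$-path running $x\to z\to\cdots\to y$ superimposed on the $\{\alpha,\beta\}$-path $Q$ from $x$ to $z$ --- is the one that must still be ruled out; here one uses the extra slack that $z$ has at least two missing colors (so $\beta$ can be re-chosen, producing a second $\{\alpha,\beta'\}$-path between $x$ and $z$) together with the fan-rotation fact that no color is simultaneously missing at $x$ and at any vertex reachable from $y$ by a fan at $x$. I expect precisely this last step --- carefully excluding every way the relevant Kempe chains can be forced to interlock around $x$, $y$, and $z$ --- to be the main obstacle; the rest is routine bookkeeping with missing-color sets, and it is cleanest to organize the whole argument around a maximal fan at $x$ beginning $y,z$ and the associated rotation lemma.
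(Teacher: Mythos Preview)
The paper does not prove Vizing's Adjacency Lemma; it is quoted from \cite{V1} as a known tool and used without proof. So there is no ``paper's proof'' to compare against, only the question of whether your argument stands on its own.

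Your setup and reductions are the standard ones, and the intermediate target---that every neighbor $z_\gamma$ of $x$ with $\phi(xz_\gamma)\in\overline\phi(y)$ has degree $k$---is correct and is exactly what the classical proof establishes. But your proposal is, by your own admission, incomplete, and the sketch you offer for the ``remaining configuration'' contains a concrete error. You assert that ``$z$ has at least two missing colors,'' yet the hypothesis $d(z)<k$ gives only $|\overline\phi(z)|=k-d(z)\ge1$; the boundary case $d(z)=k-1$ yields a single missing color $\beta$, so there is no $\beta'$ to re-choose. Even when two missing colors are available, two paths $\{\alpha,\beta\}$ and $\{\alpha,\beta'\}$ from $x$ to $z$ coexist without contradiction: both simply enter $z$ along its unique $\alpha$-edge and diverge thereafter.

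The missing idea is a case split on whether $\beta\in\overline\phi(y)$. If $\beta\in\overline\phi(y)$, then since you have already forced the $\{\alpha,\beta\}$-chain from $x$ to terminate at $z$, the vertex $y$ (also an endpoint, as $\beta$ is missing there) must lie on a different $\{\alpha,\beta\}$-component; swapping the $x$--$z$ chain makes $\beta$ missing at $x$ while leaving it missing at $y$, and you color $xy$ with $\beta$---done. The case $\beta\notin\overline\phi(y)$ is the genuinely hard one, and there is no local contradiction at the level of $(y,z)$ alone: instead $\beta=\phi(xw)$ for a \emph{new} neighbor $w\notin\{y\}\cup\{z_\gamma:\gamma\in\overline\phi(y)\}$, the fan extends to $(y,z,w)$, and one repeats the dichotomy at $w$. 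Finiteness of $N(x)$ forces termination in the first branch. In short, the ``main obstacle'' is not resolved by squeezing more out of $z$ but by letting the fan grow past it; your remark about organizing everything around a maximal fan is the right instinct, but it has to be carried out, not just invoked.
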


Using VAL, Vizing proved the conjecture for $\Delta(G)\geq 8$ via counting
arguments about vertices of various degrees.  The proof is clearer in the
language of discharging, which was not then in use.  Luo and Zhang~\cite{LZ}
used VAL and discharging to prove $\chi'(G)=\Delta(G)$ for the larger family of
graphs $G$ with $\mad(G)\le6$ and $\Delta(G)\ge8$.  We present a slightly
simpler proof of a slightly weaker result, requiring $\mad(G)<6$.  In fact,
Miao and Sun~\cite{LQ} proved $\chi'(G)=\Delta(G)$ also when $\Delta(G)\ge8$
and $\mad(G)<\FR{13}2$.  Their result (and that of~\cite{LZ}) uses additional
adjacency lemmas.  Here VAL is used instead of reducibility arguments.

\begin{thm}[\cite{LZ}]\label{vizingplanar}
If $G$ is a graph with $\mad(G)<6$ and $\Delta(G)\ge 8$, then
$\chi'(G)=\Delta(G)$.
\end{thm}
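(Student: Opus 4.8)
The plan is to argue by contradiction, using Vizing's Adjacency Lemma (Theorem~\ref{VAL}) in place of reducibility arguments and a discharging argument on vertex degrees.

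\textbf{Setup.} Suppose $G$ is a counterexample, so $\chi'(G)\ge\Delta+2$ where $\Delta=\Delta(G)\ge8$. Then $G$ is Class~2, so it contains an edge-critical subgraph $H$ with $\Delta(H)=\Delta$, and $\mad(H)\le\mad(G)<6$. Give each $v\in V(H)$ charge $d_H(v)$; the total charge is $2|E(H)|<6|V(H)|$. I will redistribute charge so that every vertex ends with charge at least $6$, contradicting this bound (and the strictness of $\mad(G)<6$ is exactly what makes the contradiction work).

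\textbf{Structure from VAL.} First I extract what I need about $H$. VAL gives $\delta(H)\ge2$, and for every edge $xy$ at least $\max\{1+\Delta-d(y),2\}$ neighbors of $x$ have degree $\Delta$. Two consequences: (i) if $d(x)=j\le5$, then every neighbor of $x$ has degree at least $\Delta+2-j$ (otherwise all $j$ neighbors of $x$ would be forced to have degree $\Delta$), and at least two neighbors of $x$ have degree $\Delta$; in particular a $2$-vertex has two $\Delta$-neighbors, a $3$-vertex has all neighbors of degree $\ge\Delta-1\ge7$, a $4$-vertex all of degree $\ge\Delta-2\ge6$, and a $5$-vertex all of degree $\ge\Delta-3\ge5$. (ii) If $v$ has a neighbor $y$ with $d(y)\le\Delta-2$, then $v$ has at most $d(y)-1-(\Delta-d(v))$ neighbors of degree less than $\Delta$. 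From (ii): a $\Delta$-vertex adjacent to a $2$-vertex has no other non-$\Delta$-neighbor; a $4$-vertex with a $6$-neighbor (possible only when $\Delta=8$) has exactly three $\Delta$-neighbors and one $6$-neighbor; a $7$-vertex has at most one $3$-neighbor (and then no other small neighbor), and in general at most a couple of small neighbors; and a $\Delta$-vertex whose smallest neighbor degree is $\mu\le5$ has at most $\mu-1$ neighbors of degree less than $\Delta$, all of degree at least $\mu$.

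\textbf{Discharging.} Call a vertex \emph{needy} if its degree is at most $5$. Use the rules: (R1) each $2$-vertex takes $2$ from each of its two neighbors; (R2) each $3$-vertex takes $1$ from each of its three neighbors; (R3) each $4$-vertex having a $6$-neighbor takes $\FR23$ from each of its three $\Delta$-neighbors; (R4) each other $4$-vertex takes $\FR12$ from each of its four neighbors; (R5) each $5$-vertex takes $1/k$ from each of its $k$ neighbors of degree at least $7$, where $k\ge2$ by (i). By (i) the donors in each rule have degree at least $7$, so $6$-vertices are never touched, and each needy vertex ends with charge exactly $6$. It remains to check that no vertex of degree $d\ge7$ ends below $6$, i.e.\ that it gives away at most $d-6$ in total; this is where the bound (ii) is used, organized by the minimum neighbor degree $\mu$. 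For instance a $7$-vertex gives at most $1$: if it has a $3$-neighbor that is its only small neighbor, or it has at most two needy neighbors of degree $4$ or $5$ each receiving at most $\FR12$. A $\Delta$-vertex with $\mu\in\{3,4,5\}$ has at most $\mu-1$ needy neighbors, all of degree at least $\mu$, and the amounts $1,\FR23,\FR12$ assigned to degrees $3,4,5$ are calibrated so that any $\mu-1$ of them sum to at most $2\le\Delta-6$; the case of a $\Delta$-vertex with a $2$-neighbor is immediate from (ii). Larger $\Delta$ only makes more surplus available, and when $\Delta\ge9$ several of the tight cases (e.g.\ $7$-neighbors of $3$-vertices, $6$-neighbors of $4$-vertices) simply disappear.

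\textbf{Main obstacle.} The real work is this last verification: vertices of degree $7$ carry surplus only $1$ and those of degree $8$ only $2$, so the discharging amounts must be chosen so tightly that the VAL bound (ii) on the number of small neighbors of each given degree exactly pays for the charge sent out, across a handful of subcases (indexed by the minimum neighbor degree and, when $\Delta=8$, by whether a $4$-vertex has a $6$-neighbor). Getting those constants right—and checking that every small vertex still reaches charge $6$—is the delicate part; everything else is bookkeeping.
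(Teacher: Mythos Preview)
Your overall strategy matches the paper's: pass to an edge-critical subgraph and use VAL together with discharging on vertex degrees to push every vertex to charge at least $6$. But your $7$-vertex verification has a gap. You assert that a $7$-vertex has ``at most two needy neighbors of degree $4$ or $5$''; when $\Delta=8$ and the minimum neighbor degree is $5$, your bound (ii) gives $5-1-(8-7)=3$, not two, so three $5$-neighbors are possible. Your own rule (R5) actually rescues this case, though you do not use it: if a $5$-vertex $u$ is adjacent to a $7$-vertex $v$, then VAL applied to the edge $uv$ gives $u$ at least two $\Delta$-neighbors, and these are distinct from $v$ since $d(v)=7<\Delta$; hence $u$ has $k\ge3$ neighbors of degree at least $7$, and $v$ gives at most $\frac13$ to $u$. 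With three such $5$-neighbors the total is still at most $1$. You must make this argument explicit rather than defaulting to the worst case $\frac12$. (A minor slip in your setup: a counterexample has $\chi'(G)=\Delta+1$, not $\ge\Delta+2$.)

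For comparison, the paper's rules are simpler and avoid both the (R3)/(R4) case split and the variable rate in (R5): each $j$-vertex with $j\le4$ takes $\frac{6-j}{j}$ from every neighbor, and each $5$- or $6$-vertex takes $\frac14$ from each $6^+$-neighbor. The cost is that $6$-vertices also participate in the exchange. The paper's organizing observation is that the minimum neighbor degree of a $j$-vertex is at least $10-j$ (since $\Delta\ge8$), after which a short case analysis by $j$ and by minimum neighbor degree finishes the proof without any variable-rate rules.
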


\begin{proof}
Let $G$ be a minimal counterexample, and let $k=\Delta(G)$.  Since $\chi'(G)>k$
requires an edge-critical subgraph with the same maximum degree, we may assume
that $G$ is edge-critical.  By VAL, each vertex has at least two $k$-neighbors,
so $\delta(G)\geq 2$.  We use discharging with initial charge
$d(v)$; it suffices to show that each vertex ends with charge at least $6$.

\medskip\noindent
(R1) If $d(v)\le 4$, then $v$ takes $\FR{6-d(v)}{d(v)}$ from each neighbor.

\noindent
(R2) If $d(v)\in\{5,6\}$, then $v$ takes $\FR14$ from each $6^+$-neighbor.
\medskip

For $v\in V(G)$, let $j$ be the least degree among vertices in $N_G(v)$. 
If $j<k$, then $v$ has at least $k+1-j$ neighbors of degree $k$, by VAL.
Hence $k+1-j\le d(v)-1$, which yields $j\ge 10-d(v)$ since $k\ge8$.  
Note that $7^+$-vertices take no charge.

If $d(v)\le 4$, then $j\ge6$, so $v$ loses no charge, and (R1) sends enough to
make $v$ happy.

If $d(v)=5$, then $j\ge5$.  Furthermore, $j=5$ yields $k-4$ neighbors with
degree $k$.  Since $k\ge8$, charge at least $4(\FR14)$ comes to $v$, no charge
is given away, and $v$ is happy.

The remaining cases are all similar.  We show representative cases in
Figure~\ref{figVAL}.  Note that $v$ has at most $j+d(v)-9$ neighbors with
degree less than $k$.

If $d(v)=6$, then $j\ge4$.  At most $j-3$ neighbors have degree less than $k$.
For $j\in\{4,5,6\}$, $v$ gives at most $\FR24,\FR24,\FR34$ and receives at
least $\FR54,\FR44,\FR64$, respectively, ending happy.

If $d(v)=7$, then $j\ge3$.  At most $j-2$ neighbors have degree less than $k$.
For $j\in\{3,4,5,6\}$, $v$ gives at most $\FR33,\FR44,\FR34,\FR44$,
respectively, and remains happy.

If $d(v)\ge8$, then $j\ge2$.  At most $j-1$ neighbors have degree less than $k$.
For $j\in\{2,3,4,5,6\}$, $v$ gives at most $\FR42,\FR63,\FR64,\FR44,\FR54$,
respectively, and remains happy.
\end{proof}

\begin{figure}[h]
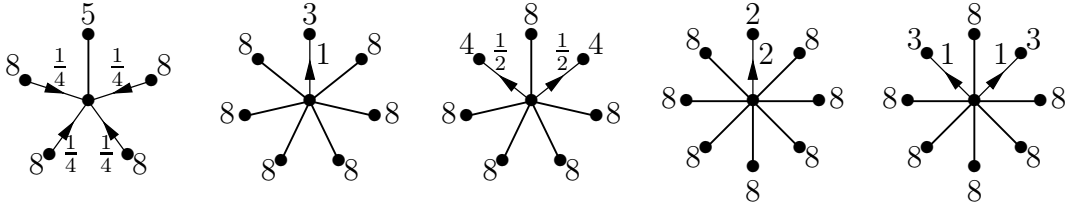

\gpic{
\expandafter\ifx\csname graph\endcsname\relax \csname newbox\endcsname\graph\fi
\expandafter\ifx\csname graphtemp\endcsname\relax \csname newdimen\endcsname\graphtemp\fi
\setbox\graph=\vtop{\vskip 0pt\hbox{%
    \graphtemp=.5ex\advance\graphtemp by 0.093in
    \rlap{\kern 0.423in\lower\graphtemp\hbox to 0pt{\hss $\bu$\hss}}%
    \graphtemp=.5ex\advance\graphtemp by 0.333in
    \rlap{\kern 0.754in\lower\graphtemp\hbox to 0pt{\hss $\bu$\hss}}%
    \graphtemp=.5ex\advance\graphtemp by 0.722in
    \rlap{\kern 0.628in\lower\graphtemp\hbox to 0pt{\hss $\bu$\hss}}%
    \graphtemp=.5ex\advance\graphtemp by 0.722in
    \rlap{\kern 0.219in\lower\graphtemp\hbox to 0pt{\hss $\bu$\hss}}%
    \graphtemp=.5ex\advance\graphtemp by 0.333in
    \rlap{\kern 0.093in\lower\graphtemp\hbox to 0pt{\hss $\bu$\hss}}%
    \graphtemp=.5ex\advance\graphtemp by 0.440in
    \rlap{\kern 0.423in\lower\graphtemp\hbox to 0pt{\hss $\bu$\hss}}%
    \special{pn 8}%
    \special{pa 754 333}%
    \special{pa 556 397}%
    \special{fp}%
    \special{sh 1.000}%
    \special{pa 651 391}%
    \special{pa 556 397}%
    \special{pa 637 347}%
    \special{pa 651 391}%
    \special{fp}%
    \special{pa 556 397}%
    \special{pa 423 440}%
    \special{fp}%
    \special{pa 628 722}%
    \special{pa 505 553}%
    \special{fp}%
    \special{sh 1.000}%
    \special{pa 541 642}%
    \special{pa 505 553}%
    \special{pa 578 614}%
    \special{pa 541 642}%
    \special{fp}%
    \special{pa 505 553}%
    \special{pa 423 440}%
    \special{fp}%
    \special{pa 219 722}%
    \special{pa 342 553}%
    \special{fp}%
    \special{sh 1.000}%
    \special{pa 268 614}%
    \special{pa 342 553}%
    \special{pa 306 642}%
    \special{pa 268 614}%
    \special{fp}%
    \special{pa 342 553}%
    \special{pa 423 440}%
    \special{fp}%
    \special{pa 93 333}%
    \special{pa 291 397}%
    \special{fp}%
    \special{sh 1.000}%
    \special{pa 210 347}%
    \special{pa 291 397}%
    \special{pa 196 391}%
    \special{pa 210 347}%
    \special{fp}%
    \special{pa 291 397}%
    \special{pa 423 440}%
    \special{fp}%
    \special{pn 11}%
    \special{pa 423 93}%
    \special{pa 423 440}%
    \special{fp}%
    \graphtemp=.5ex\advance\graphtemp by 0.719in
    \rlap{\kern 0.516in\lower\graphtemp\hbox to 0pt{\hss $\FR14$\hss}}%
    \graphtemp=.5ex\advance\graphtemp by 0.267in
    \rlap{\kern 0.574in\lower\graphtemp\hbox to 0pt{\hss $\FR14$\hss}}%
    \graphtemp=.5ex\advance\graphtemp by 0.267in
    \rlap{\kern 0.273in\lower\graphtemp\hbox to 0pt{\hss $\FR14$\hss}}%
    \graphtemp=.5ex\advance\graphtemp by 0.000in
    \rlap{\kern 0.423in\lower\graphtemp\hbox to 0pt{\hss 5\hss}}%
    \graphtemp=.5ex\advance\graphtemp by 0.267in
    \rlap{\kern 0.820in\lower\graphtemp\hbox to 0pt{\hss 8\hss}}%
    \graphtemp=.5ex\advance\graphtemp by 0.787in
    \rlap{\kern 0.693in\lower\graphtemp\hbox to 0pt{\hss 8\hss}}%
    \graphtemp=.5ex\advance\graphtemp by 0.787in
    \rlap{\kern 0.153in\lower\graphtemp\hbox to 0pt{\hss 8\hss}}%
    \graphtemp=.5ex\advance\graphtemp by 0.267in
    \rlap{\kern 0.027in\lower\graphtemp\hbox to 0pt{\hss 8\hss}}%
    \graphtemp=.5ex\advance\graphtemp by 0.719in
    \rlap{\kern 0.331in\lower\graphtemp\hbox to 0pt{\hss $\FR14$\hss}}%
    \graphtemp=.5ex\advance\graphtemp by 0.093in
    \rlap{\kern 1.582in\lower\graphtemp\hbox to 0pt{\hss $\bu$\hss}}%
    \graphtemp=.5ex\advance\graphtemp by 0.223in
    \rlap{\kern 1.854in\lower\graphtemp\hbox to 0pt{\hss $\bu$\hss}}%
    \graphtemp=.5ex\advance\graphtemp by 0.518in
    \rlap{\kern 1.921in\lower\graphtemp\hbox to 0pt{\hss $\bu$\hss}}%
    \graphtemp=.5ex\advance\graphtemp by 0.754in
    \rlap{\kern 1.734in\lower\graphtemp\hbox to 0pt{\hss $\bu$\hss}}%
    \graphtemp=.5ex\advance\graphtemp by 0.754in
    \rlap{\kern 1.431in\lower\graphtemp\hbox to 0pt{\hss $\bu$\hss}}%
    \graphtemp=.5ex\advance\graphtemp by 0.518in
    \rlap{\kern 1.243in\lower\graphtemp\hbox to 0pt{\hss $\bu$\hss}}%
    \graphtemp=.5ex\advance\graphtemp by 0.223in
    \rlap{\kern 1.311in\lower\graphtemp\hbox to 0pt{\hss $\bu$\hss}}%
    \graphtemp=.5ex\advance\graphtemp by 0.440in
    \rlap{\kern 1.582in\lower\graphtemp\hbox to 0pt{\hss $\bu$\hss}}%
    \special{pa 1582 440}%
    \special{pa 1854 223}%
    \special{fp}%
    \special{pa 1582 440}%
    \special{pa 1921 518}%
    \special{fp}%
    \special{pa 1582 440}%
    \special{pa 1734 754}%
    \special{fp}%
    \special{pa 1582 440}%
    \special{pa 1431 754}%
    \special{fp}%
    \special{pa 1582 440}%
    \special{pa 1243 518}%
    \special{fp}%
    \special{pa 1582 440}%
    \special{pa 1311 223}%
    \special{fp}%
    \special{pn 8}%
    \special{pa 1582 440}%
    \special{pa 1582 232}%
    \special{fp}%
    \special{sh 1.000}%
    \special{pa 1559 325}%
    \special{pa 1582 232}%
    \special{pa 1606 325}%
    \special{pa 1559 325}%
    \special{fp}%
    \special{pa 1582 232}%
    \special{pa 1582 93}%
    \special{fp}%
    \graphtemp=.5ex\advance\graphtemp by 0.158in
    \rlap{\kern 1.920in\lower\graphtemp\hbox to 0pt{\hss 8\hss}}%
    \graphtemp=.5ex\advance\graphtemp by 0.518in
    \rlap{\kern 2.014in\lower\graphtemp\hbox to 0pt{\hss 8\hss}}%
    \graphtemp=.5ex\advance\graphtemp by 0.819in
    \rlap{\kern 1.799in\lower\graphtemp\hbox to 0pt{\hss 8\hss}}%
    \graphtemp=.5ex\advance\graphtemp by 0.819in
    \rlap{\kern 1.366in\lower\graphtemp\hbox to 0pt{\hss 8\hss}}%
    \graphtemp=.5ex\advance\graphtemp by 0.518in
    \rlap{\kern 1.151in\lower\graphtemp\hbox to 0pt{\hss 8\hss}}%
    \graphtemp=.5ex\advance\graphtemp by 0.158in
    \rlap{\kern 1.245in\lower\graphtemp\hbox to 0pt{\hss 8\hss}}%
    \graphtemp=.5ex\advance\graphtemp by 0.000in
    \rlap{\kern 1.582in\lower\graphtemp\hbox to 0pt{\hss 3\hss}}%
    \graphtemp=.5ex\advance\graphtemp by 0.209in
    \rlap{\kern 1.652in\lower\graphtemp\hbox to 0pt{\hss 1\hss}}%
    \graphtemp=.5ex\advance\graphtemp by 0.093in
    \rlap{\kern 2.741in\lower\graphtemp\hbox to 0pt{\hss $\bu$\hss}}%
    \graphtemp=.5ex\advance\graphtemp by 0.223in
    \rlap{\kern 3.013in\lower\graphtemp\hbox to 0pt{\hss $\bu$\hss}}%
    \graphtemp=.5ex\advance\graphtemp by 0.518in
    \rlap{\kern 3.081in\lower\graphtemp\hbox to 0pt{\hss $\bu$\hss}}%
    \graphtemp=.5ex\advance\graphtemp by 0.754in
    \rlap{\kern 2.893in\lower\graphtemp\hbox to 0pt{\hss $\bu$\hss}}%
    \graphtemp=.5ex\advance\graphtemp by 0.754in
    \rlap{\kern 2.590in\lower\graphtemp\hbox to 0pt{\hss $\bu$\hss}}%
    \graphtemp=.5ex\advance\graphtemp by 0.518in
    \rlap{\kern 2.402in\lower\graphtemp\hbox to 0pt{\hss $\bu$\hss}}%
    \graphtemp=.5ex\advance\graphtemp by 0.223in
    \rlap{\kern 2.470in\lower\graphtemp\hbox to 0pt{\hss $\bu$\hss}}%
    \graphtemp=.5ex\advance\graphtemp by 0.440in
    \rlap{\kern 2.741in\lower\graphtemp\hbox to 0pt{\hss $\bu$\hss}}%
    \special{pn 11}%
    \special{pa 2741 440}%
    \special{pa 2741 93}%
    \special{fp}%
    \special{pa 2741 440}%
    \special{pa 3081 518}%
    \special{fp}%
    \special{pa 2741 440}%
    \special{pa 2893 754}%
    \special{fp}%
    \special{pa 2741 440}%
    \special{pa 2590 754}%
    \special{fp}%
    \special{pa 2741 440}%
    \special{pa 2402 518}%
    \special{fp}%
    \special{pn 8}%
    \special{pa 2741 440}%
    \special{pa 2904 310}%
    \special{fp}%
    \special{sh 1.000}%
    \special{pa 2818 350}%
    \special{pa 2904 310}%
    \special{pa 2846 386}%
    \special{pa 2818 350}%
    \special{fp}%
    \special{pa 2904 310}%
    \special{pa 3013 223}%
    \special{fp}%
    \special{pa 2741 440}%
    \special{pa 2579 310}%
    \special{fp}%
    \special{sh 1.000}%
    \special{pa 2637 386}%
    \special{pa 2579 310}%
    \special{pa 2665 350}%
    \special{pa 2637 386}%
    \special{fp}%
    \special{pa 2579 310}%
    \special{pa 2470 223}%
    \special{fp}%
    \graphtemp=.5ex\advance\graphtemp by 0.000in
    \rlap{\kern 2.741in\lower\graphtemp\hbox to 0pt{\hss 8\hss}}%
    \graphtemp=.5ex\advance\graphtemp by 0.518in
    \rlap{\kern 3.173in\lower\graphtemp\hbox to 0pt{\hss 8\hss}}%
    \graphtemp=.5ex\advance\graphtemp by 0.819in
    \rlap{\kern 2.958in\lower\graphtemp\hbox to 0pt{\hss 8\hss}}%
    \graphtemp=.5ex\advance\graphtemp by 0.819in
    \rlap{\kern 2.525in\lower\graphtemp\hbox to 0pt{\hss 8\hss}}%
    \graphtemp=.5ex\advance\graphtemp by 0.518in
    \rlap{\kern 2.310in\lower\graphtemp\hbox to 0pt{\hss 8\hss}}%
    \graphtemp=.5ex\advance\graphtemp by 0.158in
    \rlap{\kern 2.404in\lower\graphtemp\hbox to 0pt{\hss 4\hss}}%
    \graphtemp=.5ex\advance\graphtemp by 0.158in
    \rlap{\kern 3.079in\lower\graphtemp\hbox to 0pt{\hss 4\hss}}%
    \graphtemp=.5ex\advance\graphtemp by 0.185in
    \rlap{\kern 2.904in\lower\graphtemp\hbox to 0pt{\hss $\FR12$\hss}}%
    \graphtemp=.5ex\advance\graphtemp by 0.185in
    \rlap{\kern 2.579in\lower\graphtemp\hbox to 0pt{\hss $\FR12$\hss}}%
    \graphtemp=.5ex\advance\graphtemp by 0.093in
    \rlap{\kern 3.901in\lower\graphtemp\hbox to 0pt{\hss $\bu$\hss}}%
    \graphtemp=.5ex\advance\graphtemp by 0.195in
    \rlap{\kern 4.146in\lower\graphtemp\hbox to 0pt{\hss $\bu$\hss}}%
    \graphtemp=.5ex\advance\graphtemp by 0.440in
    \rlap{\kern 4.248in\lower\graphtemp\hbox to 0pt{\hss $\bu$\hss}}%
    \graphtemp=.5ex\advance\graphtemp by 0.686in
    \rlap{\kern 4.146in\lower\graphtemp\hbox to 0pt{\hss $\bu$\hss}}%
    \graphtemp=.5ex\advance\graphtemp by 0.788in
    \rlap{\kern 3.901in\lower\graphtemp\hbox to 0pt{\hss $\bu$\hss}}%
    \graphtemp=.5ex\advance\graphtemp by 0.686in
    \rlap{\kern 3.655in\lower\graphtemp\hbox to 0pt{\hss $\bu$\hss}}%
    \graphtemp=.5ex\advance\graphtemp by 0.440in
    \rlap{\kern 3.553in\lower\graphtemp\hbox to 0pt{\hss $\bu$\hss}}%
    \graphtemp=.5ex\advance\graphtemp by 0.195in
    \rlap{\kern 3.655in\lower\graphtemp\hbox to 0pt{\hss $\bu$\hss}}%
    \graphtemp=.5ex\advance\graphtemp by 0.440in
    \rlap{\kern 3.901in\lower\graphtemp\hbox to 0pt{\hss $\bu$\hss}}%
    \special{pn 11}%
    \special{pa 3901 440}%
    \special{pa 4146 195}%
    \special{fp}%
    \special{pa 3901 440}%
    \special{pa 4248 440}%
    \special{fp}%
    \special{pa 3901 440}%
    \special{pa 4146 686}%
    \special{fp}%
    \special{pa 3901 440}%
    \special{pa 3901 788}%
    \special{fp}%
    \special{pa 3901 440}%
    \special{pa 3655 686}%
    \special{fp}%
    \special{pa 3901 440}%
    \special{pa 3553 440}%
    \special{fp}%
    \special{pa 3901 440}%
    \special{pa 3655 195}%
    \special{fp}%
    \special{pn 8}%
    \special{pa 3901 440}%
    \special{pa 3901 232}%
    \special{fp}%
    \special{sh 1.000}%
    \special{pa 3877 325}%
    \special{pa 3901 232}%
    \special{pa 3924 325}%
    \special{pa 3877 325}%
    \special{fp}%
    \special{pa 3901 232}%
    \special{pa 3901 93}%
    \special{fp}%
    \graphtemp=.5ex\advance\graphtemp by 0.129in
    \rlap{\kern 4.212in\lower\graphtemp\hbox to 0pt{\hss 8\hss}}%
    \graphtemp=.5ex\advance\graphtemp by 0.440in
    \rlap{\kern 4.341in\lower\graphtemp\hbox to 0pt{\hss 8\hss}}%
    \graphtemp=.5ex\advance\graphtemp by 0.752in
    \rlap{\kern 4.212in\lower\graphtemp\hbox to 0pt{\hss 8\hss}}%
    \graphtemp=.5ex\advance\graphtemp by 0.927in
    \rlap{\kern 3.901in\lower\graphtemp\hbox to 0pt{\hss 8\hss}}%
    \graphtemp=.5ex\advance\graphtemp by 0.752in
    \rlap{\kern 3.589in\lower\graphtemp\hbox to 0pt{\hss 8\hss}}%
    \graphtemp=.5ex\advance\graphtemp by 0.440in
    \rlap{\kern 3.460in\lower\graphtemp\hbox to 0pt{\hss 8\hss}}%
    \graphtemp=.5ex\advance\graphtemp by 0.129in
    \rlap{\kern 3.589in\lower\graphtemp\hbox to 0pt{\hss 8\hss}}%
    \graphtemp=.5ex\advance\graphtemp by 0.000in
    \rlap{\kern 3.901in\lower\graphtemp\hbox to 0pt{\hss 2\hss}}%
    \graphtemp=.5ex\advance\graphtemp by 0.209in
    \rlap{\kern 3.970in\lower\graphtemp\hbox to 0pt{\hss 2\hss}}%
    \graphtemp=.5ex\advance\graphtemp by 0.093in
    \rlap{\kern 5.060in\lower\graphtemp\hbox to 0pt{\hss $\bu$\hss}}%
    \graphtemp=.5ex\advance\graphtemp by 0.195in
    \rlap{\kern 5.305in\lower\graphtemp\hbox to 0pt{\hss $\bu$\hss}}%
    \graphtemp=.5ex\advance\graphtemp by 0.440in
    \rlap{\kern 5.407in\lower\graphtemp\hbox to 0pt{\hss $\bu$\hss}}%
    \graphtemp=.5ex\advance\graphtemp by 0.686in
    \rlap{\kern 5.305in\lower\graphtemp\hbox to 0pt{\hss $\bu$\hss}}%
    \graphtemp=.5ex\advance\graphtemp by 0.788in
    \rlap{\kern 5.060in\lower\graphtemp\hbox to 0pt{\hss $\bu$\hss}}%
    \graphtemp=.5ex\advance\graphtemp by 0.686in
    \rlap{\kern 4.814in\lower\graphtemp\hbox to 0pt{\hss $\bu$\hss}}%
    \graphtemp=.5ex\advance\graphtemp by 0.440in
    \rlap{\kern 4.712in\lower\graphtemp\hbox to 0pt{\hss $\bu$\hss}}%
    \graphtemp=.5ex\advance\graphtemp by 0.195in
    \rlap{\kern 4.814in\lower\graphtemp\hbox to 0pt{\hss $\bu$\hss}}%
    \graphtemp=.5ex\advance\graphtemp by 0.440in
    \rlap{\kern 5.060in\lower\graphtemp\hbox to 0pt{\hss $\bu$\hss}}%
    \special{pn 11}%
    \special{pa 5060 440}%
    \special{pa 5060 93}%
    \special{fp}%
    \special{pa 5060 440}%
    \special{pa 5407 440}%
    \special{fp}%
    \special{pa 5060 440}%
    \special{pa 5305 686}%
    \special{fp}%
    \special{pa 5060 440}%
    \special{pa 5060 788}%
    \special{fp}%
    \special{pa 5060 440}%
    \special{pa 4814 686}%
    \special{fp}%
    \special{pa 5060 440}%
    \special{pa 4712 440}%
    \special{fp}%
    \special{pn 8}%
    \special{pa 5060 440}%
    \special{pa 5207 293}%
    \special{fp}%
    \special{sh 1.000}%
    \special{pa 5125 342}%
    \special{pa 5207 293}%
    \special{pa 5158 375}%
    \special{pa 5125 342}%
    \special{fp}%
    \special{pa 5207 293}%
    \special{pa 5305 195}%
    \special{fp}%
    \special{pa 5060 440}%
    \special{pa 4912 293}%
    \special{fp}%
    \special{sh 1.000}%
    \special{pa 4961 375}%
    \special{pa 4912 293}%
    \special{pa 4994 342}%
    \special{pa 4961 375}%
    \special{fp}%
    \special{pa 4912 293}%
    \special{pa 4814 195}%
    \special{fp}%
    \graphtemp=.5ex\advance\graphtemp by 0.000in
    \rlap{\kern 5.060in\lower\graphtemp\hbox to 0pt{\hss 8\hss}}%
    \graphtemp=.5ex\advance\graphtemp by 0.440in
    \rlap{\kern 5.500in\lower\graphtemp\hbox to 0pt{\hss 8\hss}}%
    \graphtemp=.5ex\advance\graphtemp by 0.752in
    \rlap{\kern 5.371in\lower\graphtemp\hbox to 0pt{\hss 8\hss}}%
    \graphtemp=.5ex\advance\graphtemp by 0.927in
    \rlap{\kern 5.060in\lower\graphtemp\hbox to 0pt{\hss 8\hss}}%
    \graphtemp=.5ex\advance\graphtemp by 0.752in
    \rlap{\kern 4.748in\lower\graphtemp\hbox to 0pt{\hss 8\hss}}%
    \graphtemp=.5ex\advance\graphtemp by 0.440in
    \rlap{\kern 4.619in\lower\graphtemp\hbox to 0pt{\hss 8\hss}}%
    \graphtemp=.5ex\advance\graphtemp by 0.129in
    \rlap{\kern 4.748in\lower\graphtemp\hbox to 0pt{\hss 3\hss}}%
    \graphtemp=.5ex\advance\graphtemp by 0.129in
    \rlap{\kern 5.371in\lower\graphtemp\hbox to 0pt{\hss 3\hss}}%
    \graphtemp=.5ex\advance\graphtemp by 0.209in
    \rlap{\kern 4.920in\lower\graphtemp\hbox to 0pt{\hss 1\hss}}%
    \graphtemp=.5ex\advance\graphtemp by 0.209in
    \rlap{\kern 5.199in\lower\graphtemp\hbox to 0pt{\hss 1\hss}}%
    \hbox{\vrule depth0.927in width0pt height 0pt}%
    \kern 5.500in
  }%
}%
}
\caption{Some cases in Theorem~\ref{vizingplanar} ending with charge $6$\label{figVAL}}
\end{figure}

Sanders and Zhao~\cite{SZ1} and Zhang~\cite{Zhang} proved
Conjecture~\ref{planar-graph-conj} for planar graphs with $\Delta(G)=7$,
extended in~\cite{SZ3} to graphs with maximum degree at least $7$ that embed in
a surface of nonnegative Euler characteristic.  Since the proof above uses
only $\mad(G)<6$, it holds also for graphs in the projective plane.  Graphs
on the torus (or Klein bottle) also satisfy $\mad(G)<6$ unless they triangulate
the surface, in which case $\avd(G)=6$.

Although Conjecture~\ref{planar-graph-conj} remains open when $\Delta(G)=6$, it
has been proved for various classes of planar graphs with certain subgraphs
forbidden, such as short cycles with chords (see~\cite{BW,WC1,WX}).  Note that
$\mad(G)<6$ is not sufficient when $\Delta(G)=6$; planarity really is needed.
Although $K_7$ is forbidden by $\mad(G)<6$, consider the graph $G$ obtained
from $K_7$ by subdividing one edge with a new $2$-vertex $v$; we have
$\Delta(G)=6$ and $\mad(G)<6$.  In a proper edge-coloring of $G$, only two
colors can appear four times (using edges at $v$); hence six colors can cover
only $20$ edges, but $G$ has $22$ edges.

Proper edge-coloring of $G$ is equivalent to proper coloring of the line
graph $L(G)$.  Since the line graph has a clique of size $\Delta(G)$,
Vizing's Theorem states that the optimization problem of proper coloring  
behaves much better when restricted to line graphs.
The same phenomenon seems to occur with the list version of the problem.

\begin{definition}
An \emph{edge-list assignment} $L$ assigns lists of available colors to the
edges of a graph $G$.  Given an edge-list assignment $L$, an
\emph{$L$-edge-coloring} of $G$ is a proper edge-coloring
$\phi$ such that $\phi(e)\in L(e)$ for all $e\in E(G)$.
A graph $G$ is \emph{$k$-edge-choosable} if $G$ is $L$-edge-colorable whenever
each list has size at least $k$.  The \emph{list edge-chromatic number} of $G$,
written $\chil'(G)$, is the least $k$ such that $G$ is $k$-edge-choosable.
\end{definition}

\begin{conjecture}[List Coloring Conjecture]\label{lcc}
$\chil'(G)=\chi'(G)$ for every graph $G$.
\looseness-1
\end{conjecture}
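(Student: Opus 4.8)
A complete proof is not known --- Conjecture~\ref{lcc} is among the central open problems in graph coloring --- so what follows is a plan of attack together with the point at which every known approach stalls. Since an $L$-edge-coloring of $G$ is exactly an $L$-coloring of the line graph $L(G)$, and $\chi'(G)=\chi(L(G))$, the conjecture asserts that $\chil(H)=\chi(H)$ for every line graph $H$, so one may hope to exploit structural features of line graphs. The model result, due to Galvin, settles the case when $G$ is bipartite, and it is the template I would build on. Fix a proper $\Delta(G)$-edge-coloring of $G$ (it exists by K\"onig's Theorem) and use it to \emph{orient} the edges of $L(G)$: at a vertex $v$ of $G$, given two incident edges, orient the corresponding edge of $L(G)$ toward the one of larger color if $v$ lies in one part of the bipartition, toward the one of smaller color otherwise. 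A stable-matching argument shows that every induced subdigraph of this orientation has a kernel (an independent dominating set), and the outdegrees are at most $\Delta(G)-1$; the lemma of Bondy, Boppana, and Siegel (a kernel-perfect orientation with all outdegrees below $k$ forces $k$-choosability) then yields $\Delta(G)$-choosability.

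Two partial results carry the program forward on sparse and planar graphs. First, the discharging machinery of Section~\ref{seclistedge} is already list-friendly: its reducibility steps only delete edges and later reinsert them with a color taken from the relevant list, so they are indifferent to whether the palette is global or per-edge. Hence the discharging argument behind Theorem~\ref{vizingplanar}, together with list analogues of Vizing's Adjacency Lemma (Theorem~\ref{VAL}) for graphs minimal with respect to $\chil'(G)>\Delta(G)$, yields $\chil'(G)=\Delta(G)$ whenever $\mad(G)<6$ and $\Delta(G)\ge 8$, and the same scheme recovers the conjecture for planar graphs of large maximum degree. Second, for graphs of large maximum degree, Kahn's semi-random ``nibble'' proves the asymptotic form $\chil'(G)=(1+o(1))\Delta(G)$: color a small random fraction of the still-uncolored edges each round and, using the Lov\'asz Local Lemma and concentration inequalities, maintain the invariant that each uncolored edge has a surviving palette strictly larger than its uncolored degree, so that a greedy finish succeeds.

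The obstruction --- and what makes a full proof hard --- is structural. Line graphs of non-bipartite graphs need not admit kernel-perfect orientations of small outdegree, so Galvin's argument does not generalize; and the classical recoloring tool for ordinary edge-coloring, swapping the two colors along a Kempe chain, is useless here, since such a swap can place a color on an edge that is absent from its list. For this reason even the list analogue of Vizing's Theorem, namely $\chil'(G)\le\Delta(G)+1$ for every graph, is open, and sharpening Kahn's $(1+o(1))\Delta(G)$ down to $\Delta(G)+1$ (let alone to $\chi'(G)$) is precisely the barrier: the per-edge list constraint breaks the color-class symmetry underlying both Vizing-type fan-and-Kempe-chain recoloring and the unavoidable-set-of-reducible-configurations method. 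A proof in full generality seems to require either a genuinely new orientation-theoretic or matroidal structure valid for every line graph, or a probabilistic argument substantially sharper than what is currently available.
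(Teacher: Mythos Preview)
Your assessment is correct on the main point: the paper does not prove this statement. It is stated as an open conjecture, and the surrounding text only records its history and partial results (Galvin's bipartite case, Kahn's asymptotic bound, and the paper's own discharging results for planar graphs of large maximum degree). There is no ``paper's proof'' to compare against, and you rightly identify it as open and outline the standard partial approaches.

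One claim in your write-up overreaches, however, and is internally inconsistent with your own later remarks. You assert that the discharging argument behind Theorem~\ref{vizingplanar}, ``together with list analogues of Vizing's Adjacency Lemma,'' yields $\chil'(G)=\Delta(G)$ whenever $\mad(G)<6$ and $\Delta(G)\ge 8$. But the proof of Theorem~\ref{vizingplanar} uses VAL in place of explicit reducibility arguments, and VAL itself is proved by Vizing-fan and Kempe-chain recoloring --- exactly the technique you correctly identify two paragraphs later as unavailable in the list setting. No general list analogue of VAL is known, and indeed the paper's list-edge-coloring results (Theorems~\ref{woodall}, \ref{listedge9}, \ref{list14}) do \emph{not} proceed via a list-VAL; they use different reducible configurations (light edges, $i$-alternating subgraphs, $2$-alternating cycles) whose reducibility is established by Galvin-type arguments (Theorem~\ref{BKW}) or by direct greedy extension. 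So the claim that Theorem~\ref{vizingplanar} transfers to $\chil'$ under $\mad(G)<6$ is unsupported, and you should drop it or replace it with the weaker statements the paper actually establishes.
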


This conjecture was posed independently by many researchers.  It was first
published by Bollob\'as and Harris~\cite{BH}, but it was independently
formulated earlier by Albertson and Collins in 1981 and by Vizing as early as
1975 (both unpublished).  Kahn~\cite{Ka} proved the conjecture
asymptotically: $\chil'(G)\le (1+o(1))\chi'(G)$.

We first consider Vizing's weaker conjecture that always
$\chil'(G)\le \Delta(G)+1$.  Borodin~\cite{ovb} proved it for
planar $G$ with $\Delta(G)\ge9$.  Bonamy~\cite{Bon} extended this
to $\Delta(G)=8$, by a much longer proof with 11 reducible configurations.
It was also proved for planar graphs with $\Delta(G)\ge6$ having no two
$3$-faces sharing an edge~\cite{C1}.  It was proved in~\cite{JMS} for
$\Delta(G)\le4$ (including nonplanar graphs), and for $\Delta(G)=5$ it is
known for planar graphs with no $3$-cycle~\cite{ZW}, no $4$-cycle~\cite{C1}, or
no $5$-cycle~\cite{WL}.  The proofs for $\Delta(G)=5$ use discharging.

We present a recent use of balanced charging~\cite{CH} to prove the result of
Borodin~\cite{ovb}.  Balanced charging is natural when neither the graphs nor
their duals are triangulations.  Again we use a pot of charge (see
Theorem~\ref{listsq}).  In this proof, the pot facilitates moving charge from
maximum-degree vertices to $3$-vertices; we need not name specific recipients.

\begin{theorem}[\cite{ovb}]\label{listedge9}
If $G$ is a planar graph and $\Delta(G)\ge9$, then $\chil'(G)\le \Delta(G)+1$.
\end{theorem}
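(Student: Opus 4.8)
The plan is to run a minimal-counterexample argument whose reducibility side starts from Proposition~\ref{edgered} and whose discharging side uses \emph{balanced} charging together with a global reservoir (a ``pot'') of charge. Let $G$ be a planar graph with $\Delta:=\Delta(G)\ge 9$ having the fewest edges among counterexamples to $\chil'(G)\le\Delta+1$, put $k:=\Delta+1$, and fix a $k$-uniform edge-list assignment $L$ admitting no $L$-edge-coloring. By minimality, using the case $\Delta=8$ of the conclusion (Bonamy~\cite{Bon}) for the few proper subgraphs whose maximum degree drops, every proper subgraph of $G$ is $L$-edge-colorable; a standard reduction at cut vertices lets us assume $G$ is $2$-connected, so each vertex of degree $d$ lies on exactly $d$ faces.

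The first structural consequence, from Proposition~\ref{edgered} with $k=\Delta+1$, is that $G$ has no edge of weight at most $\Delta+2$, i.e. $d(u)+d(v)\ge\Delta+3$ for every edge $uv$. I would extract: $\delta(G)\ge 3$; every neighbor of a $3$-vertex has degree exactly $\Delta$ (hence no vertex of degree in $[3,\Delta-1]$ is adjacent to a $3$-vertex); on a $3$-face at most one vertex has degree below $6$, and a $3$-face with a vertex of degree at most $j\ (\le 5)$ has its other two vertices of degree at least $\Delta+3-j$. Also, if a $\Delta$-vertex lies on many triangular faces then consecutive neighbors around it are adjacent, which together with the previous point caps the number of its $3$-neighbors. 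On top of this one has to add a short list of further reducible configurations — each handled, as in the proof of Proposition~\ref{edgered}, by deleting a few edges, coloring the rest, and extending greedily — describing ``overloaded'' medium-degree vertices (incident to too many triangular $3$-faces of an unfavorable degree pattern); deciding exactly which configurations are needed is part of the design.

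With the unavoidable set in hand, I would assign $\mu(v)=d(v)-4$, $\mu(f)=\ell(f)-4$, and a pot initialized to $0$, so the total charge is $-8$. Only $3$-vertices and $3$-faces are in deficit; everything of degree or length at least $5$ has surplus. The rules, in spirit, are: each $3$-face collects $1$ from its incident vertices of degree at least $6$, apportioned so that no vertex gives more than $\tfrac13$ per incident $3$-face (with the two big vertices giving $\tfrac12$ each when a face has a low-degree vertex); each $\Delta$-vertex deposits $\tfrac13$ into the pot for each of its $3$-neighbors; and each $3$-vertex withdraws $1$ from the pot. The pot balances exactly: there are $3n_3$ incidences between $3$-vertices and $\Delta$-vertices, so it receives $3n_3\cdot\tfrac13=n_3$ and pays out $n_3$, and its role is precisely that we never have to name which $\Delta$-vertex finances which $3$-vertex. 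It then remains to check, case by case on degree and length, that every vertex and every face finishes with charge at least $0$; this, with total charge $-8$, is the contradiction.

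The main obstacle is choosing the extra reducible configurations and the apportionment rule so that all these inequalities close, with $\Delta=9$ as the tight case. A vertex of degree $d\in[6,\Delta-1]$ feeds only its incident $3$-faces and lies on at most $d$ of them, so at rate $\tfrac13$ it survives ($d/3\le d-4$); but the forced $\tfrac12$-shares on faces containing a degree-$\le 5$ vertex can overdraw a degree-$7$ or degree-$8$ vertex when $\Delta=9$, and ruling those situations out is exactly why additional reducible configurations are needed. A $\Delta$-vertex must pay both its incident $3$-faces and its pot share, and bounding the total outlay by $\Delta-4$ needs the triangle-versus-$3$-neighbor trade-off in a sharp quantitative form. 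Assembling the reducibility and the discharging so that the vertex charges, the face charges, and the pot land at $\ge 0$ simultaneously is the real content; once those pieces are in place the skeleton above is routine.
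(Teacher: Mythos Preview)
Your overall plan—balanced charging plus a central pot—matches the paper's framework (the proof is due to Cohen and Havet~\cite{CH}), but there is a genuine gap in how you finance the pot, and it is exactly the place where the real idea lives.

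You have each $\Delta$-vertex deposit $\tfrac13$ per $3$-neighbor, so the pot balances automatically; the cost is that an individual $\Delta$-vertex may be unable to afford it. With $\Delta=9$, take a $9$-vertex $v$ whose nine neighbors $u_1,\dots,u_9$ form a $9$-cycle (so $v$ lies on nine triangles) with $3$-vertices at positions $1,3,5,7$; the light-edge bound forces the remaining $u_i$ to have degree $9$. Then $v$ pays $\tfrac43$ to the pot and, under your rule, $\tfrac12$ to each of the eight triangles containing a $3$-vertex plus $\tfrac13$ to the ninth, for a total of $\tfrac{17}{3}>5=d(v)-4$. Nothing in your proposed extra reducible configurations (you aim them at ``overloaded medium-degree vertices'') excludes this, and the ``triangle-versus-$3$-neighbor trade-off'' you allude to does not close the gap.

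The paper avoids this by having every $\Delta$-vertex pay a \emph{fixed} $\tfrac12$ to the pot, independent of its number of $3$-neighbors; then a $j$-vertex with $j\ge 9$ loses at most $\tfrac12+\tfrac{j}{2}\le j-4$. The pot is nonnegative provided $n_\Delta>2n_3$, and this comes from a second reducible configuration you do not have: the edges joining $3$-vertices to $\Delta$-vertices form a bipartite graph $H$, and any cycle in $H$ is an even cycle alternating between degree $3$ and degree $\Delta$. Deleting its edges and $L$-edge-coloring the rest leaves each cycle edge with at least two available colors (it is incident to exactly $\Delta-1$ colored edges), so Lemma~\ref{evencyc} finishes the extension. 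Hence $H$ is a forest, giving $3n_3=|E(H)|<n_3+n_\Delta$. In the wheel example above, $u_1\,u_2\,u_3\,u_4\,u_5\,u_6\,u_7\,v\,u_1$ is exactly such an alternating cycle, so the configuration never survives. This reduction is the missing idea.

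A minor point: invoking Bonamy's $\Delta=8$ theorem to handle subgraphs whose maximum degree drops is unnecessary. Fix $k=\Delta(G)+1$ once and take $G$ edge-minimal among planar graphs that are not $k$-edge-choosable; every proper subgraph is then $k$-edge-choosable regardless of its maximum degree, and the reducibility arguments go through directly.
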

\begin{proof}
(Cohen and Havet~\cite{CH})
Let $G$ be a minimal counterexample, with an edge-list assignment $L$ such that
each list has size $\Delta(G)+1$ and $G$ has no $L$-edge-coloring.  An edge
with weight at most $\Delta(G)+2$ is reducible.  Hence we may assume that
$\delta(G)\ge3$ and that every neighbor of a $j$-vertex has degree at least
$\Delta(G)+3-j$.  Let $k=\Delta(G)$; since $k\ge9$, the degree-sum of any two
adjacent vertices is at least $12$.

We use balanced charging, with initial charge equal to degree or length minus
$4$.  Initially, the pot of charge is empty.  The discharging rules must make
each vertex and face happy and keep the charge in the pot nonnegative to
contradict the assumption of a counterexample.

\medskip\noindent
(R1) Every $3$-vertex takes $1$ from the pot, and every $k$-vertex gives
$\FR12$ to the pot.

\noindent
(R2) Each $3$-face takes $\FR12$ from each incident $8^+$-vertex and 
$\FR{j-4}j$ from each incident $j$-vertex with $j\in\{5,6,7\}$.

\medskip
To ensure positive charge in the pot, we prove $n_k>2n_3$, where $n_j$ is the
number of $j$-vertices in $G$.  The edges incident to $3$-vertices form a
bipartite graph $H$; its parts are the $3$-vertices and the $k$-vertices.
If $H$ has a cycle $C$, then $C$ has even length, since $H$ is bipartite.  By
the minimality of the counterexample, $G-E(C)$ has an $L$-edge-coloring.  Each
edge of $C$ is incident to $\Delta(G)-1$ edges that have now been colored, so
there remain at least two available colors on each edge (see
Figure~\ref{figlist9}).  Since even cycles are $2$-edge-choosable (by
Lemma~\ref{evencyc} and cycles being isomorphic to their line graphs), the
$L$-edge-coloring extends to $G$.  Since $G$ is a counterexample, we thus may
assume that $H$ is acyclic and therefore has fewer than $n_3+n_k$ edges.  Since
it also has $3n_3$ edges, we have $3n_3<n_3+n_k$, as desired.

For vertices, (R1) immediately makes $3$-vertices happy.  A $j$-vertex $v$ with
$j\in\{4,5,6,7\}$ loses altogether at most $j-4$, its initial charge.
An $8$-vertex loses at most $4$, since $k\ge9$.  For $j\ge9$, possibly sending
$\FR12$ to the pot, a $j$-vertex loses at most $\FR{j+1}2$ and is happy.

For faces, the $4^+$-faces lose no charge and remain happy; we must show that
each $3$-face $f$ gains at least $1$.  Let $j$ be the least degree among
vertices incident to $f$.  If $j\le 4$, then two incident $8^+$-vertices give
$\FR12$ each.  If $j=5$, then two incident $7^+$-vertices give at least $\FR37$
each, plus $\FR15$ for the $5$-vertex.  If $j\ge6$, then each vertex incident
to $f$ gives at least $\FR13$ to $f$.
\end{proof}

\begin{figure}[h]
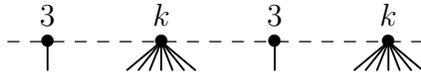

\gpic{
\expandafter\ifx\csname graph\endcsname\relax \csname newbox\endcsname\graph\fi
\expandafter\ifx\csname graphtemp\endcsname\relax \csname newdimen\endcsname\graphtemp\fi
\setbox\graph=\vtop{\vskip 0pt\hbox{%
    \graphtemp=.5ex\advance\graphtemp by 0.119in
    \rlap{\kern 0.208in\lower\graphtemp\hbox to 0pt{\hss $\bu$\hss}}%
    \graphtemp=.5ex\advance\graphtemp by 0.119in
    \rlap{\kern 0.803in\lower\graphtemp\hbox to 0pt{\hss $\bu$\hss}}%
    \graphtemp=.5ex\advance\graphtemp by 0.119in
    \rlap{\kern 1.397in\lower\graphtemp\hbox to 0pt{\hss $\bu$\hss}}%
    \graphtemp=.5ex\advance\graphtemp by 0.119in
    \rlap{\kern 1.992in\lower\graphtemp\hbox to 0pt{\hss $\bu$\hss}}%
    \special{pn 8}%
    \special{pa 0 119}%
    \special{pa 2200 119}%
    \special{da 0.059}%
    \special{pn 11}%
    \special{pa 208 119}%
    \special{pa 208 268}%
    \special{fp}%
    \special{pa 1397 119}%
    \special{pa 1397 268}%
    \special{fp}%
    \graphtemp=.5ex\advance\graphtemp by 0.000in
    \rlap{\kern 0.208in\lower\graphtemp\hbox to 0pt{\hss $3$\hss}}%
    \graphtemp=.5ex\advance\graphtemp by 0.000in
    \rlap{\kern 0.803in\lower\graphtemp\hbox to 0pt{\hss $k$\hss}}%
    \graphtemp=.5ex\advance\graphtemp by 0.000in
    \rlap{\kern 1.397in\lower\graphtemp\hbox to 0pt{\hss $3$\hss}}%
    \graphtemp=.5ex\advance\graphtemp by 0.000in
    \rlap{\kern 1.992in\lower\graphtemp\hbox to 0pt{\hss $k$\hss}}%
    \special{pa 803 119}%
    \special{pa 624 268}%
    \special{fp}%
    \special{pa 803 119}%
    \special{pa 684 268}%
    \special{fp}%
    \special{pa 803 119}%
    \special{pa 743 268}%
    \special{fp}%
    \special{pa 803 119}%
    \special{pa 803 268}%
    \special{fp}%
    \special{pa 803 119}%
    \special{pa 862 268}%
    \special{fp}%
    \special{pa 803 119}%
    \special{pa 922 268}%
    \special{fp}%
    \special{pa 803 119}%
    \special{pa 981 268}%
    \special{fp}%
    \special{pa 1992 119}%
    \special{pa 1814 268}%
    \special{fp}%
    \special{pa 1992 119}%
    \special{pa 1873 268}%
    \special{fp}%
    \special{pa 1992 119}%
    \special{pa 1932 268}%
    \special{fp}%
    \special{pa 1992 119}%
    \special{pa 1992 268}%
    \special{fp}%
    \special{pa 1992 119}%
    \special{pa 2051 268}%
    \special{fp}%
    \special{pa 1992 119}%
    \special{pa 2111 268}%
    \special{fp}%
    \special{pa 1992 119}%
    \special{pa 2170 268}%
    \special{fp}%
    \hbox{\vrule depth0.268in width0pt height 0pt}%
    \kern 2.200in
  }%
}%
}
\caption{Excluded cycles in Theorem~\ref{listedge9}\label{figlist9}}
\end{figure}
This proof fits the model of discharging to produce an unavoidable set of
reducible configurations.  The reducible configurations are light edges
(degree-sum at most $\Delta(G)+2$) and cycles alternating between $3$-vertices
and $\Delta(G)$-vertices.  The first use of arbitrarily large reducible
configurations (cycles alternating between $2$-vertices and
$\Delta(G)$-vertices) was in Borodin~\cite{B89}.  Notions analogous to the pot
of charge for long-distance transfer of charge appear in~\cite{HS}
and~\cite{BI0}; a general term for such methods is ``global discharging''.

\medskip

Now we return to the full List Coloring Conjecture $\chil'(G)=\chi'(G)$.
This was proved for bipartite multigraphs by Galvin~\cite{Ga}, where
always $\chi'(G)=\Delta(G)$.  With Vizing conjecturing $\chi'(G)=\Delta(G)$
when $G$ is planar and $\Delta(G)\ge6$ (Conjecture~\ref{vizconj}), we also
seek $\chil'(G)=\Delta(G)$ for such graphs.  Borodin~\cite{ovb} proved it for
$\Delta(G)\ge14$.  This later was strengthened to $\Delta(G)\ge12$ by Borodin,
Kostochka, and Woodall~\cite{BKW}.  We present an alternative proof of the 
result of Borodin~\cite{ovb}.  The result in~\cite{BKW} uses similar
discharging, but it requires more reducible configurations and more detailed
analysis.  

A {\it $t$-alternating} cycle alternates between $t$-vertices and vertices of
higher degree (introduced in Borodin~\cite{B89}).  We used $3$-alternating
cycles in Theorem~\ref{listedge9}.

\begin{lem}[\cite{ovb}]\label{altcyc}
If $G$ is a simple plane graph with $\delta(G)\ge 2$,
then $G$ contains \\
(C1) an edge $uv$ with $d(u)+d(v)\leq 15$, or\\
(C2) a $2$-alternating cycle $C$.
\end{lem}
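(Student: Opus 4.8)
The plan is to argue by contradiction: assume $G$ is a simple plane graph with $\delta(G)\ge 2$ that contains neither (C1) nor (C2), and reach a contradiction by discharging. First I would record the structural consequences of the hypotheses. Since there is no edge of weight at most $15$, every edge has weight at least $16$, so every neighbor of a $j$-vertex has degree at least $16-j$; in particular no two vertices of degree at most $7$ are adjacent, and every $2$-vertex has two distinct neighbors of degree at least $14$. Since there is no $2$-alternating cycle, the subgraph $H$ consisting of all edges incident to $2$-vertices is a forest (a cycle in $H$ alternates between $2$-vertices and vertices of degree at least $14$, hence is $2$-alternating). Each $2$-vertex has $H$-degree exactly $2$, so in a tree component of $H$ with $a$ two-vertices there are $a+1$ vertices of degree at least $14$; summing over components, the $2$-vertices have strictly more high-degree neighbors than there are $2$-vertices. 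I would also use two facts from planarity: for $i\in\{3,4,5\}$, the bipartite subgraph of all edges from $i$-vertices to their (degree-$\ge 16-i$) neighbors is simple, bipartite, and planar, so it has at most $2($its number of vertices$)-4$ edges, which bounds the number of $i$-vertices in terms of the number of high-degree vertices; and a vertex of degree $d$ lies on at most $d$ triangular faces, with the neighbors bounding two consecutive incident triangles subject to the weight condition, so small vertices cannot appear consecutively around it.

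Next I would run a discharging argument, most likely with vertex charging (initial charge $d(v)-6$ on vertices and $2\ell(f)-6\ge0$ on faces, total $-12$, so all faces are automatically happy) or with balanced charging if the triangular faces are more conveniently handled that way. Only low-degree vertices are deficient, and each such vertex sees only high-degree neighbors. Following the model of the proof of Theorem~\ref{listedge9}, I would route the deficits of the $2$-vertices — and probably of the $3$-, $4$-, and $5$-vertices as well — through a reservoir ("pot"): each deficient vertex draws a fixed amount from the pot, and each sufficiently-high-degree vertex pays a fixed amount into it, the pot staying nonnegative exactly because of the forest bound for $2$-vertices and the planar bipartite bounds for $3$-, $4$-, and $5$-vertices. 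Triangular faces would be fed by their incident high-degree vertices. A final verification that every high-degree vertex and every face ends with nonnegative charge then contradicts the total charge being $-12$.

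The hard part will be the bookkeeping for high-degree vertices whose degree is just above the threshold — roughly degrees $13$ through $15$ — because such a vertex can simultaneously be adjacent to many $2$- or $3$-vertices and lie on many triangular faces, while its surplus $d-6$ is small. The pot removes the dependence on the number of small neighbors, but one must still control the charge such a vertex sends to its incident triangles; this is where I expect to need that $2$-, $3$-, $4$-, $5$-neighbors cannot occur consecutively around the vertex, so that about half of its triangle-sharing neighbors have degree at least $8$ and can share the burden. Reconciling the pot payments, the direct-neighbor payments, and the triangular-face payments at once — and this is also what forces the threshold in (C1) to be $15$ rather than $14$ or $16$ — is the delicate core of the argument, and it genuinely uses planarity (the bound $2|V|-4$ on bipartite planar graphs and the fan structure around a vertex), not merely $\mad(G)<6$.
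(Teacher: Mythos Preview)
Your forest argument from the absence of (C2) --- the edges at $2$-vertices form a forest, so there are strictly fewer $2$-vertices than $14^+$-vertices --- is correct and is exactly the combinatorial input the paper uses for its pot. But your primary scheme, vertex charging with the deficits of $2$-, $3$-, $4$-, $5$-vertices covered by pots fed only by high-degree vertices while faces sit untouched as ``automatically happy,'' cannot close. Under vertex charging the total vertex charge is
\[
\sum_v\bigl(d(v)-6\bigr)\;=\;-12-\sum_f\bigl(2\ell(f)-6\bigr)\;\le\;-12,
\]
so no redistribution among vertices alone can make every vertex nonnegative; the surplus sitting on $4^+$-faces has to be used, and you never propose a rule sending charge from faces to vertices. (Your remark that ``triangular faces would be fed by their incident high-degree vertices'' points in the wrong direction and suggests you are conflating the vertex-charging and balanced-charging pictures.) Correspondingly, the bipartite-planar counts for $3$-, $4$-, $5$-vertices, while true, cannot rescue the balance: the total surplus at $7^+$-vertices is strictly smaller than the total deficit at $5^-$-vertices.

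The paper sidesteps all of this by choosing \emph{face} charging: with initial charges $2d(v)-6$ on vertices and $\ell(f)-6$ on faces, the only deficient vertices are the $2$-vertices (charge $-2$), and $3$-vertices already sit at $0$, so no separate treatment of $3$-, $4$-, $5$-vertices is needed and your bipartite bounds never enter. The pot then handles only $2$-vertices versus $14^+$-vertices via your forest count; each $2$-vertex also takes $1$ from an incident $4^+$-face (one exists since $G$ is simple); and each $4^+$-vertex distributes whatever positive charge it has left evenly over its incident faces. All the real work moves to checking that $3$-, $4$-, and $5$-faces receive enough, and that is where the weight bound $d(u)+d(v)\ge16$ does its job: a $k^-$-vertex on a face forces two $(16-k)^+$-vertices on that face, each contributing at least $\frac{2j-7}{j}$ with $j\ge16-k$. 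The ``hard'' degree range $13$--$15$ that you anticipated on the vertex side never materializes.
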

\begin{proof}
In a counterexample $G$, we have $d(u)+d(v)\ge 16$ for every edge $uv$.  
Both neighbors of any $2$-vertex are $14^+$-vertices.  Since $G$ is simple,
every $2$-vertex lies on a $4^+$-face.  

To obtain a contradiction, we use face charging, with initial charge $2d(v)-6$
at each vertex $v$ and $\ell(f)-6$ at each face $f$.  We also keep a central
pot of charge (initially empty) and use the following discharging rules
(see Figure~\ref{fig2alt}).

\medskip\noindent
(R1) Each $14^+$-vertex gives charge $1$ to the pot, and each
$2$-vertex takes $1$ from the pot.

\noindent
(R2) Each $4^+$-vertex distributes its charge remaining after (R1) equally to
its incident faces.

\noindent
(R3) Each $4^+$-face gives charge 1 to each incident $2$-vertex.

\vspace{-1pc}

\begin{figure}[h]
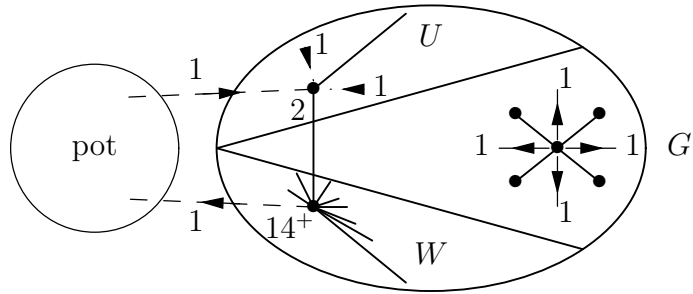

\gpic{
\expandafter\ifx\csname graph\endcsname\relax \csname newbox\endcsname\graph\fi
\expandafter\ifx\csname graphtemp\endcsname\relax \csname newdimen\endcsname\graphtemp\fi
\setbox\graph=\vtop{\vskip 0pt\hbox{%
    \special{pn 8}%
    \special{ar 440 925 440 440 0 6.28319}%
    \special{pn 11}%
    \special{ar 2201 925 1123 748 0 6.28319}%
    \special{pa 2995 395}%
    \special{pa 1079 925}%
    \special{fp}%
    \special{pa 1079 925}%
    \special{pa 2995 1454}%
    \special{fp}%
    \graphtemp=.5ex\advance\graphtemp by 1.233in
    \rlap{\kern 1.585in\lower\graphtemp\hbox to 0pt{\hss $\bu$\hss}}%
    \graphtemp=.5ex\advance\graphtemp by 0.616in
    \rlap{\kern 1.585in\lower\graphtemp\hbox to 0pt{\hss $\bu$\hss}}%
    \graphtemp=.5ex\advance\graphtemp by 0.925in
    \rlap{\kern 2.862in\lower\graphtemp\hbox to 0pt{\hss $\bu$\hss}}%
    \graphtemp=.5ex\advance\graphtemp by 1.101in
    \rlap{\kern 3.082in\lower\graphtemp\hbox to 0pt{\hss $\bu$\hss}}%
    \graphtemp=.5ex\advance\graphtemp by 1.101in
    \rlap{\kern 2.642in\lower\graphtemp\hbox to 0pt{\hss $\bu$\hss}}%
    \graphtemp=.5ex\advance\graphtemp by 0.748in
    \rlap{\kern 2.642in\lower\graphtemp\hbox to 0pt{\hss $\bu$\hss}}%
    \graphtemp=.5ex\advance\graphtemp by 0.748in
    \rlap{\kern 3.082in\lower\graphtemp\hbox to 0pt{\hss $\bu$\hss}}%
    \special{pa 2862 925}%
    \special{pa 3082 1101}%
    \special{fp}%
    \special{pa 2862 925}%
    \special{pa 2642 1101}%
    \special{fp}%
    \special{pa 2862 925}%
    \special{pa 2642 748}%
    \special{fp}%
    \special{pa 2862 925}%
    \special{pa 3082 748}%
    \special{fp}%
    \special{pa 1585 1233}%
    \special{pa 1453 1145}%
    \special{fp}%
    \special{pa 1585 1233}%
    \special{pa 1497 1057}%
    \special{fp}%
    \special{pa 1585 1233}%
    \special{pa 1673 1101}%
    \special{fp}%
    \special{pa 1585 1233}%
    \special{pa 1717 1189}%
    \special{fp}%
    \special{pa 1585 1233}%
    \special{pa 1761 1233}%
    \special{fp}%
    \special{pa 1585 1233}%
    \special{pa 1805 1321}%
    \special{fp}%
    \special{pa 1585 1233}%
    \special{pa 1893 1409}%
    \special{fp}%
    \special{pn 8}%
    \special{pa 2906 925}%
    \special{pa 3214 925}%
    \special{da 0.088}%
    \special{pa 3029 925}%
    \special{pa 3091 925}%
    \special{fp}%
    \special{sh 1.000}%
    \special{pa 2985 898}%
    \special{pa 3091 925}%
    \special{pa 2985 951}%
    \special{pa 2985 898}%
    \special{fp}%
    \special{pa 2862 969}%
    \special{pa 2862 1277}%
    \special{da 0.088}%
    \special{pa 2862 1092}%
    \special{pa 2862 1153}%
    \special{fp}%
    \special{sh 1.000}%
    \special{pa 2888 1048}%
    \special{pa 2862 1153}%
    \special{pa 2835 1048}%
    \special{pa 2888 1048}%
    \special{fp}%
    \special{pa 2862 881}%
    \special{pa 2862 572}%
    \special{da 0.088}%
    \special{pa 2862 757}%
    \special{pa 2862 696}%
    \special{fp}%
    \special{sh 1.000}%
    \special{pa 2835 801}%
    \special{pa 2862 696}%
    \special{pa 2888 801}%
    \special{pa 2835 801}%
    \special{fp}%
    \special{pa 2818 925}%
    \special{pa 2509 925}%
    \special{da 0.088}%
    \special{pa 2694 925}%
    \special{pa 2633 925}%
    \special{fp}%
    \special{sh 1.000}%
    \special{pa 2738 951}%
    \special{pa 2633 925}%
    \special{pa 2738 898}%
    \special{pa 2738 951}%
    \special{fp}%
    \graphtemp=.5ex\advance\graphtemp by 0.925in
    \rlap{\kern 0.440in\lower\graphtemp\hbox to 0pt{\hss pot\hss}}%
    \graphtemp=.5ex\advance\graphtemp by 1.497in
    \rlap{\kern 2.201in\lower\graphtemp\hbox to 0pt{\hss $W$\hss}}%
    \graphtemp=.5ex\advance\graphtemp by 0.352in
    \rlap{\kern 2.201in\lower\graphtemp\hbox to 0pt{\hss $U$\hss}}%
    \graphtemp=.5ex\advance\graphtemp by 0.741in
    \rlap{\kern 1.504in\lower\graphtemp\hbox to 0pt{\hss $2$\hss}}%
    \graphtemp=.5ex\advance\graphtemp by 1.357in
    \rlap{\kern 1.460in\lower\graphtemp\hbox to 0pt{\hss $14^+$\hss}}%
    \graphtemp=.5ex\advance\graphtemp by 0.925in
    \rlap{\kern 3.500in\lower\graphtemp\hbox to 0pt{\hss $G$\hss}}%
    \special{pn 11}%
    \special{pa 2069 1629}%
    \special{pa 1585 1233}%
    \special{fp}%
    \special{pa 1585 1233}%
    \special{pa 1585 616}%
    \special{fp}%
    \special{pa 1585 616}%
    \special{pa 2069 220}%
    \special{fp}%
    \special{pn 8}%
    \special{pa 1585 1233}%
    \special{pa 619 1192}%
    \special{da 0.088}%
    \special{pa 1199 1216}%
    \special{pa 1006 1208}%
    \special{fp}%
    \special{sh 1.000}%
    \special{pa 1110 1239}%
    \special{pa 1006 1208}%
    \special{pa 1112 1186}%
    \special{pa 1110 1239}%
    \special{fp}%
    \special{pa 619 657}%
    \special{pa 1585 616}%
    \special{da 0.088}%
    \special{pa 1006 641}%
    \special{pa 1199 633}%
    \special{fp}%
    \special{sh 1.000}%
    \special{pa 1092 611}%
    \special{pa 1199 633}%
    \special{pa 1094 664}%
    \special{pa 1092 611}%
    \special{fp}%
    \graphtemp=.5ex\advance\graphtemp by 1.321in
    \rlap{\kern 0.969in\lower\graphtemp\hbox to 0pt{\hss $1$\hss}}%
    \graphtemp=.5ex\advance\graphtemp by 0.528in
    \rlap{\kern 0.969in\lower\graphtemp\hbox to 0pt{\hss $1$\hss}}%
    \special{pa 1849 616}%
    \special{pa 1673 616}%
    \special{da 0.088}%
    \special{pa 1779 616}%
    \special{pa 1743 616}%
    \special{fp}%
    \special{sh 1.000}%
    \special{pa 1849 643}%
    \special{pa 1743 616}%
    \special{pa 1849 590}%
    \special{pa 1849 643}%
    \special{fp}%
    \special{pa 1541 396}%
    \special{pa 1585 572}%
    \special{da 0.088}%
    \special{pa 1558 467}%
    \special{pa 1567 502}%
    \special{fp}%
    \special{sh 1.000}%
    \special{pa 1567 393}%
    \special{pa 1567 502}%
    \special{pa 1516 406}%
    \special{pa 1567 393}%
    \special{fp}%
    \graphtemp=.5ex\advance\graphtemp by 0.616in
    \rlap{\kern 1.937in\lower\graphtemp\hbox to 0pt{\hss $1$\hss}}%
    \graphtemp=.5ex\advance\graphtemp by 0.396in
    \rlap{\kern 1.629in\lower\graphtemp\hbox to 0pt{\hss $1$\hss}}%
    \graphtemp=.5ex\advance\graphtemp by 0.925in
    \rlap{\kern 3.258in\lower\graphtemp\hbox to 0pt{\hss $1$\hss}}%
    \graphtemp=.5ex\advance\graphtemp by 1.277in
    \rlap{\kern 2.906in\lower\graphtemp\hbox to 0pt{\hss $1$\hss}}%
    \graphtemp=.5ex\advance\graphtemp by 0.572in
    \rlap{\kern 2.906in\lower\graphtemp\hbox to 0pt{\hss $1$\hss}}%
    \graphtemp=.5ex\advance\graphtemp by 0.925in
    \rlap{\kern 2.465in\lower\graphtemp\hbox to 0pt{\hss $1$\hss}}%
    \hbox{\vrule depth1.849in width0pt height 0pt}%
    \kern 3.500in
  }%
}%
}

\vspace{-1.5pc}
\caption{Discharging for Lemma~\ref{altcyc}\label{fig2alt}}
\end{figure}

\vspace{-.5pc}
\medskip
To keep the charge in the pot nonnegative, we need $|U|\le|W|$, where $U$ and
$W$ denote the sets of $2$-vertices and $14^+$-vertices, respectively.
Let $H$ be the bipartite subgraph of $G$ with vertex set $U\cup W$ and edge set
consisting of all edges with endpoints in both $U$ and $W$.  Since (C2) does
not occur in $G$, the components of $H$ are trees.  Also (C1) does not occur,
so $2|U|=|E(H)|<|U|+|W|$.  Thus $|U|<|W|$.

A $2$-vertex takes $1$ from the pot and $1$ from an incident $4^+$-face (since
$G$ is simple) and ends happy.  A $3$-vertex starts and ends with no charge.
By (R2), a $4^+$-vertex also ends with charge $0$.  Hence all vertices are
happy.

\nobreak
Faces give charge to $2$-vertices and take charge from $4^+$-vertices.  Under
(R2), a face takes charge $\FR{2j-6}j$ or $\FR{2j-7}j$ from an incident
$j$-vertex when $j\ge4$, the latter when $j\ge14$.  Thus the value is at least
$\FR12$ when $j\ge4$, at least $1$ when $j\ge6$, and at least $\FR32$ when
$j\ge12$.

If a face $f$ has no incident $3^-$-vertices, then it receives at least
$\FR12\ell(f)$; its final charge is at least $\FR32\ell(f)-6$, which is
nonnegative when $\ell(f)\ge4$.  When $f$ is a $3$-face or a face incident to
some $3^-$-vertex, let $k$ be the least degree among the vertices incident to
$f$.  Prohibiting (C1) gives $f$ two incident $(16-k)^+$-vertices.  

A $3$-face needs to receive charge at least $3$.  When $k\ge6$, it receives at
least $1$ from each incident vertex.  When $k=2$, the other incident vertices
have degree at least $14$, and each provides $\FR32$.  When $3\le k\le 5$, it
receives at least $\FR{2k-6}k+2\cdot\FR{26-2k}{16-k}$, which is at least $3$.  

A $4^+$-face $f$ needs $2$ (or maybe less) to become happy.  If $k\ge 3$ or
$f$ has exactly one incident $2$-vertex, then $f$ receives at least $3$ and
gives away at most $1$.  If $f$ has at least two incident $2$-vertices, then
each is followed on $f$ (in a consistent direction) by a $14^+$-vertex, which
contributes at least $\FR32$.  These pairs net at least $\FR12$ each for $f$.
If $G$ has no $2$-alternating cycle, then $f$ has another incident
$14^+$-vertex that has not been counted, which provides more than enough charge
to $f$.
\end{proof}

\begin{theorem}[\cite{ovb}]\label{list14}
If $G$ is a plane graph with $\Delta(G)\ge 14$, then $\chi'_\ell(G)=\Delta(G)$.
\end{theorem}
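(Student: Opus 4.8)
The theorem is a short deduction: the combinatorial work has already been done in Lemma~\ref{altcyc}, and all that remains is to feed its two outputs into the two reducibility results of Section~\ref{seclistedge}. The plan is to argue by minimal counterexample. Let $G$ be a plane graph with $\Delta(G)\ge 14$ and $\chi'_\ell(G)>\Delta(G)$, chosen with $|E(G)|$ minimum (recall that graphs here are simple). Put $k=\Delta(G)$ and fix a $k$-uniform edge-list assignment $L$ for which $G$ has no $L$-edge-coloring. The lower bound $\chi'_\ell(G)\ge\chi'(G)\ge\Delta(G)$ is automatic, so producing an $L$-edge-coloring of $G$ will contradict the choice of $G$ and finish the proof.

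\textbf{Steps.} First I would use Proposition~\ref{edgered}: in $G$ no edge has weight at most $k+1$. Since $k\ge 14$, an edge incident to a vertex of degree at most $1$ would have weight at most $k+1$, so $\delta(G)\ge 2$ (isolated vertices, if any, can simply be deleted). Now $G$ is a simple plane graph with $\delta(G)\ge 2$, so Lemma~\ref{altcyc} applies and yields one of two configurations. If (C1) occurs, there is an edge $uv$ with $d(u)+d(v)\le 15\le k+1$, so $uv$ is a light edge and Proposition~\ref{edgered} extends an $L$-edge-coloring of $G-uv$ to one of $G$ --- a contradiction. If (C2) occurs, there is a $2$-alternating cycle $C$; take $U$ to be the set of $2$-vertices on $C$ and $W=V(C)\setminus U$. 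Then $d_C(u)=d_G(u)=2$ for $u\in U$, and $d_G(w)-d_C(w)=d_G(w)-2\le \Delta(G)-2$ for $w\in W$, so $C$ is a $2$-alternating subgraph in the sense of Section~\ref{seclistedge}. Lemma~\ref{ialt} then produces an $L$-edge-coloring of $G$ --- again a contradiction. Since one of (C1), (C2) must occur, no counterexample exists, and hence $\chi'_\ell(G)=\Delta(G)$ for every plane graph $G$ with $\Delta(G)\ge 14$.

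\textbf{What to check; where the difficulty lies.} There is no genuine obstacle in the theorem itself --- the discharging burden was discharged in Lemma~\ref{altcyc}. The two matchings to verify are numerical and structural: that the threshold $15$ appearing in (C1) is at most $k+1$ (this is exactly why the hypothesis reads $\Delta(G)\ge 14$, the tight case being $\Delta(G)=14$), and that a $2$-alternating cycle genuinely fits the definition of a $2$-alternating subgraph so that Lemma~\ref{ialt} applies verbatim, as spelled out above. One should also note the routine point that the reductions invoked delete only edges and so land in strictly smaller instances on which a suitable coloring is available: for (C1) this is immediate, since in a graph with $\delta(G)\ge 2$ a light edge has both endpoints of degree at most $13<k$, so removing it does not even change $\Delta$; for (C2) it is precisely the content packaged in the reducibility statement of Lemma~\ref{ialt}.
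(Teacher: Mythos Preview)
Your proof is correct and follows the same route as the paper: reduce to $\delta(G)\ge2$, invoke Lemma~\ref{altcyc}, and dispatch each of its two outputs. The only difference is cosmetic: for the $2$-alternating cycle the paper argues directly that after coloring $G-E(C)$ each edge of $C$ retains at least two available colors and then cites the $2$-edge-choosability of even cycles (Lemma~\ref{evencyc}), whereas you route the same step through the heavier Lemma~\ref{ialt}/Theorem~\ref{BKW} machinery by observing that $C$ is a $2$-alternating subgraph. Both are valid; the paper's version is slightly more self-contained. One small point worth tightening: your minimality is phrased over graphs with $\Delta\ge14$ and $\chi'_\ell>\Delta$, but deleting $E(C)$ can drop the maximum degree below $14$, so the smaller instance need not literally fall under your minimality hypothesis. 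The standard fix (which the paper uses implicitly) is to take $G$ edge-minimal with respect to the fixed list assignment $L$, so that every proper subgraph is $L$-edge-colorable regardless of its maximum degree.
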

\begin{proof}
Let $G$ be a minimal counterexample, having no $L$-edge-coloring from edge-list
assignment $L$.  If $G$ has a $1$-vertex with incident edge $e$, then $G-e$ has
an $L$-edge-coloring, and it extends to $e$.  Thus $\delta(G)\ge 2$.  By
Lemma~\ref{altcyc}, $G$ has an edge $uv$ with $d(u)+d(v)\le 15$ or a
$2$-alternating cycle $C$.  In the first case, we can extend an
$L$-edge-coloring of $G-uv$, since $|L(uv)|\ge14$ and at most $13$ colors are
restricted from use on $uv$.  In the other case, by minimality $G-E(C)$ has an
$L$-edge-coloring.  Since each list has size at least $\Delta(G)$, each edge of
$C$ has at least two colors remaining available, and the $2$-edge-choosability
of even cycles allows us to extend the edge-coloring.
\end{proof}

Finally, we come full circle and return to the role of bounding the maximum
average degree.  Vizing~\cite{V3} conjectured that an $n$-vertex edge-critical
graph $G$ must have at least $\FR12 [n(\Delta(G)-1)+3]$ edges, which in our
language translates to ``$\chi'(G)=\Delta(G)$ when $\mad(G)\le\Delta(G)-1$''.
Based on the List Coloring Conjecture, Woodall~\cite{W2} conjectured that
$\mad(G)<\Delta(G)-1$ also implies $\chil'(G)=\Delta(G)$.  In this direction,
it is known that $\chil'(G)=\Delta(G)$ when $\mad(G)<\sqrt{2\Delta(G)}$.  The
result is implicit in~\cite{BKW}, using the following tool.

\begin{theorem}[Borodin--Kostochka--Woodall~\cite{BKW}]\label{BKW}
If lists on the edges of a bipartite multigraph $G$ satisfy
$|L(uv)|\ge \max\{d_G(u),d_G(v)\}$ for $uv\in E(G)$, then $G$ has an
$L$-edge-coloring.
\end{theorem}


Woodall~\cite{W2} rephrased the argument using discharging, introducing an
exciting new way of moving charge in successive stages.  When the average
degree is large, vertices with very small degree need a lot of charge.  It may
be too hard to specify exactly where it all comes from.  Hence he allows charge
to move in phases, which we call {\it iterated discharging}.  Besides light
edges, we will need another reducible configuration (see Figure~\ref{figialt}).

\begin{definition}
In a multigraph $G$, an \emph{$i$-alternating subgraph} is a bipartite
submultigraph $F$ with parts $U$ and $W$ such that $d_F(u)=d_G(u)\le i$
when $u\in U$ and $d_G(w)-d_F(w)\le \Delta(G)-i$ when $w\in W$.
Note that cycles in $F$ alternate between $W$ and $i^-$-vertices in $U$.
\end{definition}

\vspace{-1pc}

\begin{figure}[h]
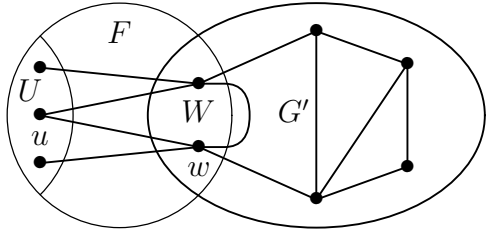

\gpic{
\expandafter\ifx\csname graph\endcsname\relax \csname newbox\endcsname\graph\fi
\expandafter\ifx\csname graphtemp\endcsname\relax \csname newdimen\endcsname\graphtemp\fi
\setbox\graph=\vtop{\vskip 0pt\hbox{%
    \special{pn 8}%
    \special{ar 588 647 588 588 0 6.28319}%
    \special{pn 11}%
    \special{ar 1618 647 882 588 0 6.28319}%
    \special{pn 8}%
    \special{ar -243 647 588 588 -0.785398 0.785398}%
    \graphtemp=.5ex\advance\graphtemp by 0.897in
    \rlap{\kern 0.172in\lower\graphtemp\hbox to 0pt{\hss $\bu$\hss}}%
    \graphtemp=.5ex\advance\graphtemp by 0.647in
    \rlap{\kern 0.172in\lower\graphtemp\hbox to 0pt{\hss $\bu$\hss}}%
    \graphtemp=.5ex\advance\graphtemp by 0.397in
    \rlap{\kern 0.172in\lower\graphtemp\hbox to 0pt{\hss $\bu$\hss}}%
    \graphtemp=.5ex\advance\graphtemp by 0.813in
    \rlap{\kern 1.004in\lower\graphtemp\hbox to 0pt{\hss $\bu$\hss}}%
    \graphtemp=.5ex\advance\graphtemp by 0.481in
    \rlap{\kern 1.004in\lower\graphtemp\hbox to 0pt{\hss $\bu$\hss}}%
    \graphtemp=.5ex\advance\graphtemp by 0.206in
    \rlap{\kern 1.618in\lower\graphtemp\hbox to 0pt{\hss $\bu$\hss}}%
    \graphtemp=.5ex\advance\graphtemp by 1.088in
    \rlap{\kern 1.618in\lower\graphtemp\hbox to 0pt{\hss $\bu$\hss}}%
    \graphtemp=.5ex\advance\graphtemp by 0.378in
    \rlap{\kern 2.095in\lower\graphtemp\hbox to 0pt{\hss $\bu$\hss}}%
    \graphtemp=.5ex\advance\graphtemp by 0.916in
    \rlap{\kern 2.095in\lower\graphtemp\hbox to 0pt{\hss $\bu$\hss}}%
    \special{pn 11}%
    \special{pa 172 897}%
    \special{pa 1004 813}%
    \special{fp}%
    \special{pa 1004 813}%
    \special{pa 172 647}%
    \special{fp}%
    \special{pa 172 647}%
    \special{pa 1004 481}%
    \special{fp}%
    \special{pa 1004 481}%
    \special{pa 172 397}%
    \special{fp}%
    \special{pa 1004 813}%
    \special{pa 1269 813}%
    \special{pa 1269 481}%
    \special{pa 1004 481}%
    \special{sp}%
    \special{pa 1004 813}%
    \special{pa 1618 1088}%
    \special{fp}%
    \special{pa 1618 1088}%
    \special{pa 2095 916}%
    \special{fp}%
    \special{pa 2095 916}%
    \special{pa 2095 378}%
    \special{fp}%
    \special{pa 2095 378}%
    \special{pa 1618 206}%
    \special{fp}%
    \special{pa 1618 206}%
    \special{pa 1004 481}%
    \special{fp}%
    \special{pa 1618 206}%
    \special{pa 1618 1088}%
    \special{fp}%
    \special{pa 1618 1088}%
    \special{pa 2095 378}%
    \special{fp}%
    \graphtemp=.5ex\advance\graphtemp by 0.235in
    \rlap{\kern 0.588in\lower\graphtemp\hbox to 0pt{\hss $F$\hss}}%
    \graphtemp=.5ex\advance\graphtemp by 0.647in
    \rlap{\kern 1.500in\lower\graphtemp\hbox to 0pt{\hss $G'$\hss}}%
    \graphtemp=.5ex\advance\graphtemp by 0.765in
    \rlap{\kern 0.172in\lower\graphtemp\hbox to 0pt{\hss $u$\hss}}%
    \graphtemp=.5ex\advance\graphtemp by 0.931in
    \rlap{\kern 1.004in\lower\graphtemp\hbox to 0pt{\hss $w$\hss}}%
    \graphtemp=.5ex\advance\graphtemp by 0.534in
    \rlap{\kern 0.119in\lower\graphtemp\hbox to 0pt{\hss $U$\hss}}%
    \graphtemp=.5ex\advance\graphtemp by 0.647in
    \rlap{\kern 1.000in\lower\graphtemp\hbox to 0pt{\hss $W$\hss}}%
    \hbox{\vrule depth1.235in width0pt height 0pt}%
    \kern 2.500in
  }%
}%
}

\vspace{-1pc}
\caption{A $2$-alternating subgraph $F$\label{figialt}}
\end{figure}

\vspace{-.5pc}

\begin{lemma}[\cite{BKW,W2}]\label{ialt}
$i$-alternating subgraphs are reducible for the property that
edge-choosability equals maximum degree.
\end{lemma}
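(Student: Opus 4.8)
The plan is the standard ``delete, recolour, extend'' scheme for reducibility in list edge-colouring, with Theorem~\ref{BKW} supplying the recolouring step. I would argue by contradiction: let $G$ be a counterexample with the fewest edges, so $\chil'(G)>\Delta(G)$ while every graph with fewer edges has list edge-chromatic number equal to its maximum degree, and suppose $G$ contains an $i$-alternating subgraph $F$ with partite sets $U$ and $W$; we may assume $E(F)\ne\varnothing$, since otherwise the configuration is vacuous. Write $\Delta=\Delta(G)$ and fix an edge-list assignment $L$ on $G$ with $|L(e)|\ge\Delta$ for every $e$; the goal is to produce an $L$-edge-colouring of $G$. Put $G'=G-E(F)$ (keeping all vertices of $G$). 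Then $\Delta(G')\le\Delta$ and $G'$ has fewer edges than $G$, so by minimality $G'$ is $\Delta(G')$-edge-choosable and hence admits an $L$-edge-colouring $\phi$.

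The only real content is to check that $\phi$ extends across $E(F)$. For an edge $e=uw$ of $F$ with $u\in U$ and $w\in W$, let $L''(e)$ be what remains of $L(e)$ after deleting the $\phi$-colours on the edges of $G'$ incident to $e$. Since every edge at $u$ lies in $F$, those incident $G'$-edges are exactly the $d_{G'}(w)=d_G(w)-d_F(w)$ edges of $G'$ at $w$, so
$$
|L''(e)|\ \ge\ \Delta-\bigl(d_G(w)-d_F(w)\bigr)\ =\ \bigl(\Delta-d_G(w)\bigr)+d_F(w).
$$
I would then observe that this quantity is at least $d_F(w)$, because $d_G(w)\le\Delta$, and at least $i$, because $w\in W$ gives $d_G(w)-d_F(w)\le\Delta-i$, while $i\ge d_G(u)=d_F(u)$ by the defining property of $U$. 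Hence $|L''(e)|\ge\max\{d_F(u),d_F(w)\}$ for every edge $e$ of $F$. Now $F$ is a bipartite multigraph, so Theorem~\ref{BKW}, applied to $F$ with the lists $L''$, yields an $L''$-edge-colouring of $F$. Together with $\phi$ this colours all of $E(G)$ from the original lists, and it is proper: an edge of $F$ and an edge of $G'$ can share only a vertex of $W$, and the definition of $L''$ removed exactly the conflicting $\phi$-colours there. This $L$-edge-colouring contradicts the choice of $G$, so no edge-minimal counterexample contains an $i$-alternating subgraph.

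I do not expect a serious obstacle; the delicate point is purely the displayed estimate, and the definition of an $i$-alternating subgraph is calibrated precisely so that it works. The two halves of the definition pull in opposite directions --- ``$U$-vertices are $i^-$-vertices with all edges inside $F$'', forcing $d_F(u)\le i$, while ``each $W$-vertex keeps at most $\Delta-i$ edges outside $F$'', which is what lets the residual list at $uw$ reach size $i$ --- and these are exactly the two inequalities needed to apply Theorem~\ref{BKW}. The only bookkeeping I would watch is deleting $E(F)$ rather than the vertices of $U$ (so that $V(G')=V(G)$ and the incidence count at $w$ is immediate) and recording that $E(F)\ne\varnothing$ so $G'$ really is a smaller graph; the possibility $\Delta(G')<\Delta$ causes no trouble, since lists of size $\Delta\ge\Delta(G')$ still suffice for $G'$.
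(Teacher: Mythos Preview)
Your proof is correct and follows essentially the same approach as the paper: delete $E(F)$, colour $G'$ from $L$ by minimality, and then verify that the residual lists on $E(F)$ satisfy $|L''(uw)|\ge\max\{d_F(u),d_F(w)\}$ via exactly the two inequalities you isolate, so that Theorem~\ref{BKW} finishes the extension. The paper's version is terser and omits your bookkeeping remarks (nonemptiness of $E(F)$, the possibility $\Delta(G')<\Delta$, and the explicit properness check at $W$), but the argument is the same.
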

\begin{proof}
Let $L$ be a $\Delta(G)$-uniform edge-list assignment for such a multigraph $G$.
Let $F$ be an $i$-alternating subgraph of $G$, and let $G'=G-E(F)$ (see
Figure~\ref{figialt}).  Choose an $L$-edge-coloring of $G'$, and delete the
chosen colors from the lists of their incident edges in $F$.  We claim that the
lists remain large enough to apply Theorem~\ref{BKW} to $F$.

For $uw\in E(F)$, no colors have been lost to edges incident at $u$, since all
edges incident to $u$ lie in $F$.  The number of colors lost to edges incident
to $w$, by definition, is at most $d_G(w)-d_F(w)$.  Since $d_G(w)\le\Delta(G)$,
the list on $uw$ retains at least $d_F(w)$ colors.  Also
$d_F(u)\le i\le \Delta(G)-(d_G(w)-d_F(w))$, so the list on $uw$ also retains
at least $d_F(u)$ colors.  Now Theorem~\ref{BKW} applies to complete the 
$L$-edge-coloring of $G$.
\end{proof}

To avoid technicalities, we make the bound on $\mad(G)$ slightly tighter
than needed.

\begin{theorem}[\cite{BKW,W2}]\label{woodall}
If $\mad(G)\le\sqrt{2\Delta(G)}-1$, then $\chil'(G)=\Delta(G)$.
\end{theorem}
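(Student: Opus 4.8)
The plan is the usual discharging setup: take a minimal counterexample, invoke the two reducibility results to forbid light edges and $i$-alternating subgraphs, and then run a discharging argument that, \emph{in the absence of those two configurations}, is forced to succeed — contradicting the sparseness hypothesis. So let $G$, together with a $\Delta(G)$-uniform edge-list assignment $L$ admitting no $L$-edge-coloring, be a counterexample with $|E(G)|$ minimum, and set $k=\Delta(G)$. By minimality and Proposition~\ref{edgered}, $G$ has no edge of weight at most $k+1$; by minimality and Lemma~\ref{ialt} (which rests on Theorem~\ref{BKW}), $G$ has no $i$-alternating subgraph, for every relevant $i$.

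The first fact pins down the local structure. If $d(v)=i$, then every neighbor of $v$ has degree at least $k+2-i$; and for $i<\sqrt{2k}$ one checks, using $(k-2)^2\ge0$, that $k+2-i>k+2-\sqrt{2k}\ge\sqrt{2k}>i$. Hence the vertices of degree less than $\sqrt{2k}$ — call them \emph{small} — form an independent set, each of whose neighbors has degree greater than $\sqrt{2k}$; and a vertex $w$ has a small neighbor only when $d(w)>k+2-\sqrt{2k}$. So only the vertices of very large degree (degree roughly $k-\sqrt{2k}$ or more, hence with a comfortable surplus over $\sqrt{2k}$) will ever be asked to give up charge.

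Now give each $v$ the charge $d(v)$. We aim to redistribute so that every vertex finishes with charge at least $\sqrt{2k}$; since the total charge is unchanged, this forces $\avd(G)\ge\sqrt{2k}>\sqrt{2k}-1\ge\mad(G)\ge\avd(G)$, a contradiction (the ``$-1$'' in the hypothesis is exactly the slack that makes this last line comfortable, and is why the bound stated is slightly tighter than necessary). A single round in which each small vertex pulls its whole deficit evenly from its (big) neighbors does \emph{not} work: a vertex of degree near $k$ incident to many degree-$2$ vertices would be drained far below $\sqrt{2k}$. This is precisely what Woodall's \emph{iterated} discharging is for. Move charge in stages, serving deficient vertices in increasing order of degree: a small vertex first draws only a bounded amount from each incident very-high-degree neighbor; a neighbor that thereby drops below $\sqrt{2k}$ is served in a later stage, drawing from its remaining high-degree neighbors; and so on. The no-$i$-alternating-subgraph hypothesis is exactly the Hall/flow condition making each stage feasible: if at some stage the set $S$ of vertices still short of the target could not cover its total deficit from the surplus available on the high-degree vertices it is allowed to draw from, then $S$ together with that neighborhood, with $F$ the edges joining them, would satisfy $d_F(u)=d_G(u)\le i$ on $S$ and $d_G(w)-d_F(w)\le\Delta(G)-i$ on the other side for the appropriate $i$ — an $i$-alternating subgraph, contradicting reducibility. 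Hence every stage goes through, every vertex ends at charge at least $\sqrt{2k}$, and the contradiction follows.

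The main obstacle is making that last paragraph precise: choosing the per-stage increments (and verifying that they sum to the full deficit of each small vertex while keeping every very-high-degree vertex above $\sqrt{2k}$ at the end), fixing the order in which deficient vertices are processed so that the recursion terminates, and — the real crux — converting ``the required charge cannot be routed at this stage'' into an honest $i$-alternating subgraph. Everything else (the independence and high-degree-neighbor structure of small vertices above, and the reducibility of the two configurations) is routine given Proposition~\ref{edgered} and Lemma~\ref{ialt}.
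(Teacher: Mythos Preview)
Your framework is correct and matches the paper: minimal counterexample, forbid light edges (Proposition~\ref{edgered}) and $i$-alternating subgraphs (Lemma~\ref{ialt}), then discharge to reach average degree at least $\sqrt{2k}$. You also correctly observe that small vertices are independent and have only high-degree neighbors, and that a single round fails.

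Where you go off track is the mechanism of the iteration. There is no cascade in which drained big vertices later ``draw from their remaining high-degree neighbors'': high-degree vertices only ever \emph{give} charge, and the calculation at the end shows they never fall below the target. The iteration is over the degree threshold $i$, not over waves of newly-deficient vertices. Concretely, for each $i$ from $2$ up to $\lfloor x\rfloor$ (with $x=\sqrt{2k}-1$) one runs a phase in which every $i^-$-vertex receives exactly one unit of charge. Thus a vertex of degree $j\le\lfloor x\rfloor$ participates in phases $j,j+1,\ldots,\lfloor x\rfloor$ and ends with $j+(\lfloor x\rfloor-j+1)>x$.

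The non-existence of $i$-alternating subgraphs is used \emph{within} each phase, and not as a Hall condition but by direct extraction: set $U=\{v:d_G(v)\le i\}$, let $W=N(U)$, and let $F$ be the bipartite graph of all edges from $U$ to $W$. If $F$ were $i$-alternating we would be done; since it isn't, some $w\in W$ violates the $W$-side condition, meaning $d_F(w)\le d_G(w)-(\Delta(G)-i)-1\le i-1$. This $w$ gives $1$ to each of its (at most $i-1$) neighbors in $U$; delete $w$ and those neighbors from $F$ and repeat. The remaining subgraph is still not $i$-alternating, so the process continues until $U$ is exhausted. This is the step you flagged as ``the real crux,'' and it is simpler than a flow argument: one just peels off a single good vertex at a time.

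Finally, a vertex $w$ of degree $j$ loses charge only in phases $i$ with $i\ge k+2-j$ (else $w$ has no $i^-$-neighbor, by the light-edge ban), and loses at most $i-1$ in phase $i$. Summing, the total loss is at most $\sum_{i=k+2-j}^{\lfloor x\rfloor}(i-1)\le \tfrac12 x(x-1)$, which for $j=\Delta(G)$ (the worst case, as you should check) leaves charge at least $k-\tfrac12 x(x-1)>\sqrt{2k}$. So the target is met everywhere, giving the contradiction.
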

\begin{proof}
Let $b=\sqrt{2\Delta(G)}-1$.  It suffices to show that every graph $G$ with
average degree at most $b$ contains a edge with weight at most $\Delta(G)+1$ or
an $i$-alternating subgraph with $i\le b$.  Suppose that $G$ contains neither.
An edge incident to a $1$-vertex would be light, so we may assume
$\delta(G)\ge2$.

Use degree charging.  In phase $i$ of discharging, for $2\le i\le \FL{b}$, each
$i^-$-vertex receives charge $1$ from a neighbor.  We want every vertex to end
with charge at least $\CL{b}$.

To begin phase $i$, let $U=\{v\st d_G(v)\le i\}$, and let $W$ be the set of
all vertices having neighbors in $U$.  Note that $U$ is independent (no light
edge).  Let $F$ be the subgraph with vertex set $U\cup W$ containing all edges
incident to $U$.  Since $G$ has no $i$-alternating subgraph, there exists
$w\in W$ such that $d_F(w)\le d_G(w)+i-\Delta(G)-1\le i-1$.  Move
charge $1$ from this vertex $w$ to each of its neighbors in $U$.

Now delete $\{w\}\cup (N(w)\cap U)$ from $F$.  Each deleted vertex in $U$ 
has received charge $1$, and $w$ lost at most $i-1$.  Iterate.  What remains of
$U$ and $W$ at each step cannot form an $i$-alternating subgraph, so we
continue to find the desired vertex until $U$ is empty.

Since each vertex with degree at most $i$ receives a unit of charge in phase
$i$, vertices with degree less than $\FL{\sqrt{2\Delta(G)}}$ have their charge
increased to at least $\FL{\sqrt{2\Delta(G)}}$ (and they never lose charge).
Since there is no light edge, vertices with larger degree $j$ lose charge
only on rounds $i$ with $i\ge\Delta(G)+2-j$.  Hence such a vertex loses charge
at most $\sum_{i=\Delta(G)+2-j}^{\FL{b}}(i-1)$.  With each reduction of $1$ in
$j$, the amount of lost charge declines by more than $1$, so it suffices to
show that vertices with degree $\Delta(G)$ keep sufficient charge.
Their lost charge is bounded by $\FR12 b(b-1)$, so they keep charge at least
$\FR32b$, which is more than enough.
\end{proof}

This use of discharging in~\cite{W2} replaced extensive manipulations of finite
sums in~\cite{BKW}; it is another illustration of the notion of ``amortized
counting'' we mentioned earlier.  Woodall also gave an example to show that the
discharging argument is essentially sharp, meaning that more reducible
configurations will be needed to weaken the hypothesis on $\mad(G)$.

For the Vizing conjecture saying approximately that $\mad(G)\le \Delta(G)-1$
implies $\chi'(G)=\Delta(G)$, Fiorini~\cite{Fio} made the first major step,
proving that $\mad(G)<\FR12(\Delta(G)+1)$ suffices (by counting edges in
edge-critical graphs).  After a number of papers using similar manipulations
of finite sums, Sanders and Zhao~\cite{SZ2} greatly simplified the proof by
using discharging and improved the result; they showed that
$\mad(G)<\FR12(\Delta(G)+\sqrt{2\Delta(G)-1})$ suffices.  Woodall~\cite{W0}
then proved that $\mad(G)<\FR23(\Delta(G)+)$ suffices.  The list version seems
to be much harder, and the more restrictive requirement of
$\mad(G)<\sqrt{2\Delta(G)}$ in Theorem~\ref{woodall} is a first step.

\medskip

{\small

%

\begin{exercise}
Let $G$ be a graph with maximum degree at least $8$ that embeds on the torus.
By a closer examination of the proof of Theorem~\ref{vizingplanar}, prove
that $\chi'(G)=\Delta(G)$ except possibly when $G$ is obtained from a 
$6$-regular triangulation $H$ of the torus by inserting vertices of degree $3$
into one-third of the faces in $H$, chosen so that each vertex in $H$ lies on
exactly two of the chosen faces, and making each new vertex adjacent to the
vertices of $H$ on its face.  It suffices to show that otherwise every vertex
ends with charge at least $6$ and some vertex ends with larger charge.
\end{exercise}


\begin{exercise}
Prove that if $\Delta(G)\le6$ and $\avd(G)<\FR72$, then $G$ contains an isolated
vertex, an edge with weight at most $7$, or a cycle alternating between
$2$-vertices and $6$-vertices.  Conclude that if $\Delta(G)\le6$ and
$\mad(G)<\FR72$, then $G$ is $6$-edge-choosable.  
%
\end{exercise}

\begin{exercise}
(Borodin~\cite{B89}, Borodin--Kostochka--Woodall~\cite{BKW})
A \emph{total coloring} assigns colors to both edges and vertices, so that
elements get distinct colors if they are either incident or adjacent.
Adapt the proofs of Theorems~\ref{list14} and~\ref{woodall} to prove analogous
versions for choosing total colorings from lists.  In each case, the bound for
the size of lists to permit choosing a total coloring is larger by $1$ than
that for choosing a proper edge-coloring.
\end{exercise}


}


\begin{thebibliography}{299}
\itemsep -1pt
\frenchspacing

{\footnotesize





\bibitem{AH}
G. Agnarsson and M.M. Halld\'orsson,
{Coloring powers of planar graphs},
\emph{SIAM J. Discrete Math.} {\bf16} (2003), no. 4, 651--662.

\bibitem{AEH1}%
J. Akiyama, G. Exoo, and F. Harary,
{Covering and packing in graphs III: Cyclic and acyclic invariants},
\emph{Math Slovaca} {\bf 30} (1980), 405--417.

\bibitem{AEH2}%
J. Akiyama, G. Exoo, and F. Harary, 
{Covering and packing in graphs. IV. Linear arboricity},
\emph{Networks} {\bf 11} (1981), no. 1, 69--72. 

\bibitem{A}%
M.O. Albertson,
{A lower bound for the independence number of a planar graph},
\emph{J. Combin. Theory Ser. B} {\bf 20} (1976), no. 1, 84--93.



\bibitem{ACKKR}%
M.O. Albertson, G.G. Chappell, H.A. Kierstead, A. K\"undgen, and R. Ramamurthi, 
{Coloring with no 2-colored $P_4$'s}, 
\emph{Electron. J. Combin.} {\bf 11} (2004), no. 1, Research Paper 26, 13 pp. 

\bibitem{Alon}%
N. Alon,
{The linear arboricity of graphs},
\emph{Israel J. Math.} {\bf 62} (1988), 311--325.




\bibitem{AH771}%
K. Appel and W. Haken, 
{Every planar map is four colorable. I. Discharging},
\emph{Illinois J. Math.} {\bf 21} (1977), no. 3, 429--490.






\bibitem{AH89}%
K. Appel and W. Haken, 
{Every planar map is four colorable}, 
with the collaboration of J. Koch, 
\emph{Contemporary Mathematics} {\bf 98}, 
American Mathematical Society, Providence, RI, 1989. xvi+741 pp.

\bibitem{AHK}%
K. Appel, W. Haken, and J. Koch, 
{Every planar map is four colorable.  II. Reducibility},
\emph{Illinois J. Math.} {\bf 21} (1977), no. 3, 491--567.


\bibitem{BKPY}%
J. Balogh, M. Kochol, A. Pluh\'ar, and X. Yu,
{Covering planar graphs with forests}, 
\emph{J. Combin. Theory Ser. B} {\bf 94} (2005), no. 1, 147--158. 











\bibitem{BH}%
B. Bollob\'as and A.J.
Harris, 
{List-colourings of graphs},
\emph{Graphs Combin.} {\bf 1} (1985), 115--127.

\bibitem{Bon}%
M. Bonamy,
{Planar graphs with $\Delta\ge8$ are $(\Delta+1)$-edge-choosable},
\emph{SIAM J. Discrete Math.} {\bf29} (2015), no. 3, 1735--1763.

\bibitem{BLP1}%
M. Bonamy, B. L\'ev\^eque, and A. Pinlou,
{Graphs with maximum degree $D$ at least $17$ and maximum average degree less
than 3 are list 2-distance $(D+2)$-colorable},
\emph{Discrete Math.} {\bf317} (2014), 19--32.

\bibitem{BLP2}%
M. Bonamy, B. L\'ev\^eque, and A. Pinlou,
{$2$-distance coloring of sparse graphs},
\emph{J. Graph Theory} {\bf77} (2014), no. 3, 190--218.

\bibitem{BLP3}%
M. Bonamy, B. L\'ev\^eque, and A. Pinlou,
{List coloring the square of sparse graphs with large degree},
\emph{European J. Combin.} {\bf41} (2014), 128--137.

\bibitem{Bacy}%
O.V. Borodin,
{On acyclic colorings of planar graphs},
\emph{Discrete Math.} {\bf 25} (1979), no. 3, 211--236.



\bibitem{B89}%
O.V. Borodin,
{On the total coloring of planar graphs},
\emph{J. Reine Angew. Math.} {\bf 394} (1989), 180--185.

\bibitem{B1989}%
O.V. Borodin,
{Solving the Kotzig and Gr\"unbaum problems on the separability of a cycle in
planar graphs}, 
\emph{Mat. Zametki} {\bf 46} (1989), no. 5, 9--12, 103 [Russian];
translation in \emph{Math. Notes} {\bf 46} (1989), no. 5-6, 835--837 (1990).

\bibitem{ovb}%
O.V. Borodin, 
{A generalization of Kotzig's theorem and prescribed edge coloring of planar
graphs}, 
\emph{Mat. Zametki} {\bf 48} (1990), no. 6, 22--28, 160 [Russian];
translation in \emph{Math. Notes} {\bf 48} (1990), no. 5-6, 1186--1190 (1991).


\bibitem{B95}
O.V. Borodin,
{A new proof of the 6 color theorem},
\emph{J. Graph Theory} {\bf 19} (1995), no. 4, 507--521.


\bibitem{B1996}%
O.V. Borodin, 
{Structural properties of plane graphs without adjacent triangles and an
application to 3-colorings},
\emph{J. Graph Theory} {\bf 21} (1996), no. 2, 183--186. 

\bibitem{B1996e}%
O.V. Borodin, 
{Structural theorem on plane graphs with application to the entire coloring
number}, 
\emph{J. Graph Theory} {\bf 23} (1996), no. 3, 233--239. 

\bibitem{Bsurv}%
O.V. Borodin, 
{Colorings of plane graphs: A survey},
\emph{Discrete Math.} {\bf 313} (2013), no. 4, 517--539. 


\bibitem{BBGH2}%
O.V. Borodin, H. J. Broersma, A. Glebov, and J. van den Heuvel,
{Stars and bunches in planar graphs, Part II: General planar graphs and
colourings},
\emph{CDAM Research Report Series} {\bf 2002--05} (2002). Original Russian version.
\emph{Diskretn. Anal. Issled. Oper. Ser. 1} {\bf 8} (2001), no. 4, 9--33,
available at:
\url{http://www.cdam.lse.ac.uk/Reports/Abstracts/cdam-2002-05.html}.

\bibitem{BCIR}%
O.V. Borodin, M. Chen, A.O. Ivanova, and A. Raspaud, 
{Acyclic 3-choosability of sparse graphs with girth at least 7},
\emph{Discrete Math.} {\bf 310} (2010), no. 17--18, 2426--2434. 

\bibitem{BFKRS}%
O.V. Borodin, D.G. Fon-Der Flaass, A.V. Kostochka, A. Raspaud, and E. Sopena,
{Acyclic list 7-coloring of planar graphs},
\emph{J. Graph Theory} {\bf 40} (2002), no. 2, 83--90.

\bibitem{BGINT}%
O.V. Borodin, A.N. Glebov, A.O. Ivanova, T.K. Neustroeva, and V.A. Tashkinov,
{Sufficient conditions for planar graphs to be 2-distance
$(\Delta+1)$-colorable},
\emph{Sib. \'{E}lektron. Mat. Izv.} {\bf 1} (2004), 129--141 [Russian; English summary].


\bibitem{BGRS}%
O.V. Borodin, A.N. Glebov, A. Raspaud, and M.R. Salavatipour, 
{Planar graphs without cycles of length from 4 to 7 are 3-colorable},
\emph{J. Combin. Theory Ser. B} {\bf 93} (2005), no. 2, 303--311. 

\bibitem{BHIKW}%
O.V. Borodin, S.G. Hartke, A.O. Ivanova, Kostochka, A. V., and D.B. West,
{Circular (5,2)-coloring of sparse graphs}, 
\emph{Sib. \`Elektron. Mat. Izv.} {\bf 5} (2008), 417--426. 

\bibitem{BI0}
O.V. Borodin and A.O. Ivanova,
{Almost proper vertex 2-colorings of sparse graphs},
\emph{Diskretn. Anal. Issled. Oper.} {\bf 16} (2009), no. 2, 16--20, 98;
translation in \emph{J. Appl. Ind. Math.} {\bf 4} (2010), no. 1, 21--23.


\bibitem{BI1}%
O.V. Borodin and A.O. Ivanova,
{2-distance $(\Delta+2)$-coloring of planar graphs with girth six and
$\Delta \ge 18$},
\emph{Discrete Math.} {\bf 309} (2009), no. 23--24, 6496--6502.


\bibitem{BI2}%
O.V. Borodin and A.O. Ivanova,
{List 2-distance $(\Delta+2)$-coloring of planar graphs with girth six},
\emph{European J. Combin.} {\bf 30} (2009), no. 5, 1257--1262.

\bibitem{BI2+}%
O.V. Borodin and A.O. Ivanova,
{List 2-distance $(\Delta+2)$-coloring of planar graphs with girth six and
$\Delta\ge24$},
\emph{Sibirsk. Mat. Zh.} {\bf50} (2009), no. 6, 1216--1224 (Russian);
translation in \emph{Sib. Math. J.} {\bf 50} (2009), no. 6, 958--964.


\bibitem{BI3}%
O.V. Borodin and A.O. Ivanova,
{Acyclic 5-choosability of planar graphs without adjacent short cycles},
\emph{J. Graph Theory} {\bf 68} (2011), no. 2, 169--176.

\bibitem{BI4}%
O.V. Borodin and A.O. Ivanova,
{Acyclic 5-choosability of planar graphs without 4-cycles},
\emph{Sibirsk. Mat. Zh.} {\bf 52} (2011), no. 3, 522--541 (Russian);
translation in
\emph{Sib. Math. J.} {\bf 52} (2011), no. 3, 411--425.

\bibitem{BI5}%
O.V. Borodin and A.O. Ivanova,
{Acyclic 4-choosability of planar graphs with no 4- and 5-cycles},
\emph{J. Graph Theory} {\bf 72} (2013), 374--397.

\bibitem{BI13}
O.V. Borodin and A.O. Ivanova,
{Describing $(d-2)$-stars at $d$-vertices, $d\le5$, in normal plane maps},
\emph{Discrete Math.} {\bf313} (2013), no. 17, 1700--1709.

\bibitem{BI13b}
O.V. Borodin and A.O. Ivanova,
{Describing 3-faces in normal plane maps with minimum degree 4},
\emph{Discrete Math.} {\bf313} (2013), no. 23, 2841--2847.


\bibitem{BIN}
O.V. Borodin, A.O. Ivanova,  and T. K. Neustroeva, 
{Sufficient conditions for planar graphs with girth 6 to be 2-distance
colourable},
\emph{Sib. Elektron. Mat. Izv.} {\bf 3} (2006), 441--450 [Russian],
available at: \url{http://semr.math.nsc.ru/v3.html}. 






\bibitem{BKKW}%
O.V. Borodin, S.-J. Kim, A.V. Kostochka, and D.B. West,
{Homomorphisms from sparse graphs with large girth},
\emph{J. Combin. Theory Ser. B} {\bf 90} (2004), no. 1, 147--159.


\bibitem{BKW}%
O.V. Borodin, A.V. Kostochka, and D.R. Woodall,
{List edge and list total colourings of multigraphs},
\emph{J. Combin. Theory Ser. B} {\bf 71} (1997), no. 2, 184--204.



\bibitem{BS}%
O.V. Borodin and D. Sanders,
{On light edges and triangles in planar graphs of minimum degree five},
\emph{Math. Nachr.} {\bf 170} (1994), 19--24.


\bibitem{BFKLSY}%
A. Brandt, M. Ferrara, M. Kumbhat, S. Loeb, D. Stolee, and M. Yancey,
{I,F-partitions of Sparse Graphs}
\emph{European J. Combinatorics} {\bf 57} (2016), 1--12.

\bibitem{Bro}
R.L. Brooks,
{On colouring the nodes of a network}.
\emph{Proc. Cambridge Philos. Soc.} {\bf 37} (1941), 194--197. 

\bibitem{BCMRW}%
Y. Bu, D. Cranston, M. Montassier, A. Raspaud, and W. Wang, 
{Star coloring of sparse graphs},
\emph{J. Graph Theory} {\bf 62} (2009), no. 3, 201--219. 

\bibitem{BW}%
Y. Bu and W. Wang, 
{Some sufficient conditions for a planar graph of maximum degree six to be
class 1}, 
\emph{Discrete Math.} {\bf 306} (2006), no. 13, 1440--1445.








\bibitem{CR13}%
M. Chen and A. Raspaud,
{Planar graphs without 4- and 5-cycles are acyclically 4-choosable},
\emph{Discrete Appl. Math.} {\bf 161} (2013), no. 7-8, 921--931.


\bibitem{CRW1}%
M. Chen, A. Raspaud, and W. Wang,
{8-star-choosability of a graph with maximum average degree less than $3$},
\emph{Discrete Math. Theor. Comput. Sci.} {\bf 13} (2011), no. 3, 97--110.

\bibitem{CRW2}%
M. Chen, A. Raspaud, and W. Wang,
{6-star-coloring of subcubic graphs},
\emph{J. Graph Theory} {\bf 72} (2013), no. 2, 128--145.


\bibitem{CH}%
N. Cohen and F. Havet, 
{Planar graphs with maximum degree $\Delta\ge9$ are
$(\Delta+1)$-edge-choosable---a short proof},
\emph{Discrete Math.} {\bf 310} (2010), no. 21, 3049--3051.


\bibitem{CHKLS}%
V. Cohen-Addad, M. Hebdige, D. Kr\'al', Z. Li, and E. Salgado,
{Steinberg's Conjecture is false},
arXiv:1604.05108 




\bibitem{C1}%
D.W. Cranston,
{Edge-choosability and total-choosability of planar graphs with no
adjacent 3-cycles}, 
\emph{Discuss. Math. Graph Theory} {\bf 29} (2009), no. 1, 163--178. 

\bibitem{CES}%
D.W. Cranston, R. Erman, and R. \v{S}krekovski,
{Choosability of the square of a planar graph with maximum degree four},
\emph{Australas. J. Combin.} {\bf 59} (2014), 86--97. 

\bibitem{CJW}%
D.W. Cranston, S. Jahanbekam, and D.B. West,
{The 1,2,3-Conjecture and 1,2-Conjecture for sparse graphs},
\emph{Discuss. Math. Graph Theory} 34 (2014), no. 4, 769--799.

\bibitem{CK}%
D.W. Cranston and S.-J. Kim,
{List-coloring the square of a subcubic graph}, 
\emph{J. Graph Theory} {\bf 57} (2008), no. 1, 65--87.

\bibitem{CKY1}%
D.W. Cranston, S.-J. Kim, and G. Yu,
{Injective colorings of sparse graphs},
\emph{Discrete Math.} {\bf 310} (2010), no. 21, 2965--2973.

\bibitem{CKY2}%
D.W. Cranston, S.-J. Kim, and G. Yu,
{Injective colorings of graphs with low average degree},
\emph{Algorithmica} {\bf 60} (2011), no. 3, 553--568.

\bibitem{CS}%
D.W. Cranston and R. \v{S}krekovski,
{Sufficient sparseness conditions for $G^2$ to be $(\Delta+1)$-choosable,
when $\Delta\ge5$},
\emph{Discrete Appl. Math.} {\bf 162} (2014), 167--176.

\bibitem{CY}%
D.W. Cranston and G. Yu,
{Linear choosability of sparse graphs},
\emph{Discrete Math.} {\bf 311} (2011), no. 17, 1910--1917.


\bibitem{CKL}%
M. Cygan, \L. Kowalik, and B. Lu\v{z}ar,
A planar linear arboricity conjecture,
in {\it Algorithms and complexity},
{\it Lecture Notes in Comput. Sci.{}} {\bf6078}, (Springer, 2010), 204--216.


\bibitem{DKNS}%
Z. Dvo\v{r}\'ak, D. Kr\'al', P. Nejedl\'y, and R. \v{S}krekovski, 
{Coloring squares of planar graphs with girth six}, 
\emph{European J. Combin.} {\bf 29} (2008), no. 4, 838--849.

\bibitem{DKT}%
Z. Dvo\v{r}\'ak, K. Kawarabayashi, and R. Thomas,
{Three-coloring triangle-free planar graphs in linear time},
\emph{Proc. 20th Annual ACM-SIAM Symposium on Discrete Algorithms (extended
abstract)}
1176--1182, SIAM, Philadelphia, PA, 2009, and 
\emph{ACM Trans. Algorithms} {\bf 7} (2011), no. 4, Art. 41, 14 pp.



\bibitem{EP}%
H. Enomoto and B. P\'eroche,
{The linear arboricity of some regular graphs},
\emph{J. Graph Theory} {\bf 8} (1984), 309--324.

\bibitem{ERT}
P. Erd\H{o}s, A.L. Rubin, and H. Taylor,
{Choosability in graphs},
\emph{Proc. West Coast Conf. on Combinatorics, Graph Theory and Computing
(Humboldt State Univ., Arcata, Calif., 1979), 
Congress. Numer.} {\bf 26} (Utilitas Math., 1980), 125--157.

\bibitem{ER}%
P. Erd\H{o}s and A. R\'enyi,
{On a problem in the theory of graphs},
\emph{Magyar Tud. Akad. Mat. Kutat\'o Int. K\"ozl.} {\bf 7} (1962), 623--641 [Hungarian].

\bibitem{EHMS}%
L. Esperet, J. van den Heuvel, F. Maffray, and F. Sipma,
{Fire containment in planar graphs},
\emph{J. Graph Theory} {\bf73} (2013), no. 3, 267--279.






\bibitem{FRR}%
G. Fertin, A. Raspaud, and B. Reed,
\emph{Star coloring of graphs},
J. Graph Theory {\bf 47} (2004), no. 3, 163--182.







\bibitem{Fio}
S. Fiorini,
{Some remarks on a paper by Vizing on critical graphs},
\emph{Math. Proc. Camb. Phil. Soc.} {\bf 77} (1975), 475--483.

\bibitem{F}%
P. Franklin, 
{The Four Color Problem}, 
\emph{Amer. J. Math.} {\bf 44} (1922), no. 3, 225--236. 

\bibitem{Ga}%
F. Galvin, 
{The List Chromatic Index of a Bipartite Multigraph},
\emph{J. Combin. Theory Ser. B} {\bf 63} (1995), no. 1, 153--158.






\bibitem{Gro}%
H. Gr\"otzsch, 
{Ein dreifarbensatz f\"ur dreikreisfreie netze auf der kugel},
\emph{Wiss. Z. Martin-Luther-Univ. Halle-Wittenberg Math.-Natur. Reihe} {\bf 8} (1959),
109--120 [German].

\bibitem{Gru}%
B. Gr\"unbaum, 
{Acyclic colorings of planar graphs},
\emph{Israel J. Math.} {\bf 14} (1973), 390--408. 


\bibitem{Gul}%
F. Guldan, 
{The linear arboricity of $10$-regular graphs},
\emph{Math Slovaca} {\bf 36} (1986), 225--228.

\bibitem{G}%
R.P. Gupta,
{The chromatic index and the degree of a graph},
\emph{Notices Amer. Math. Soc.} {\bf 13} (1966), abstract 66T-429.

\bibitem{HJT}%
S.G. Hartke, S. Jahanbekam, and B. Thomas,
{The chromatic number of the square of subcubic planar graphs},
http://arxiv.org/abs/1604.06504.


\bibitem{HHMR}%
F. Havet, J. van den Heuvel, C. McDiarmid, and B. Reed,
{List colouring squares of planar graphs}, preprint,
available at: \url{http://arxiv.org/abs/0807.3233}.

\bibitem{HS}
F. Havet and J.-S. Sereni,
{Improper choosability of graphs and maximum average degree},
\emph{J. Graph Theory} {\bf 52} (2006), no. 3, 181--199. 


\bibitem{HeS}%
P. Hell and K. Seyffarth,
{Largest planar graphs of diameter two and fixed maximum degree},
\emph{Discrete Math.} {\bf 111} (1993), no. 1--3, 313--322.

\bibitem{HM}%
J. van den Heuvel and S. McGuinness,
{Coloring the square of a planar graph},
\emph{J. Graph Theory} {\bf 42} (2003), no. 2, 110--124.




\bibitem{Hol}%
I. Holyer,
{The NP-completeness of edge-coloring},
\emph{SIAM J. Comput.} {\bf10} (1981), no. 4, 718--720. 



\bibitem{I}%
A.O. Ivanova,
{List $2$-distance $(\Delta+1)$-coloring of sparse planar graphs with
girth at least $7$},
\emph{Diskretn. Anal. Issled. Oper.} {\bf 17}  (2010),  no. 5, 22--36, 94 [Russian];
\emph{J. Appl. Indust. Math.} {\bf 5} (2011), 221--230.

\bibitem{Ja}%
F. Jaeger, 
{Nowhere-zero flow problems},
in \emph{Selected Topics in Graph Theory, Vol. 3}
(L. Beineke et al., eds), Academic Press (1988), 91--95.

\bibitem{Je}%
S. Jendrol', 
{A short proof of Kotzig's theorem on minimal edge weights of
convex 3-polytopes}, 
\emph{Proc.  Intl. Scient. Conf. Math.} (V. Balint
Ed.) \v{Z}ilina (1998) EDIS Univ.  Press, \v{Z}ilina (1999), 35--38.

\bibitem{JV}%
S. Jendrol' and  H.-J. Voss,
{Light subgraphs of graphs embedded in the plane--A survey},
\emph{Discrete Math.} {\bf 313} (2013), no. 4, 406--421.


\bibitem{Jo}%
T.K. Jonas,
{Graph coloring analogues with a condition at distance two: L(2,1)-labellings
and list lambda-labellings},
\emph{Ph.D. Thesis}, University of South Carolina. 1993. 95 pp.

\bibitem{JMS}%
M. Juvan, B. Mohar, and R. \v{S}krekovski, 
{Graphs of degree 4 are 5-edge-choosable}, 
\emph{J. Graph Theory} {\bf 32} (1999), no. 3, 250--264.

\bibitem{Ka}%
J. Kahn, 
{Asymptotics of the list chromatic index for multigraphs},
\emph{Random Structures \& Algorithms} {\bf 17} (2000), no. 2, 117--156.





\bibitem{KP0}%
S.-J. Kim and B. Park,
{Counterexamples to the list square coloring conjecture},
\emph{J. Graph Theory} {\bf78} (2015), no. 4, 239--247. 

\bibitem{KP}%
S.-J. Kim and W.-J. Park, 
{List dynamic coloring of sparse graphs},
\emph{Combinatorial optimization and applications,
Lecture Notes in Comput. Sci.} {\bf 6831}, (Springer, Heidelberg, 2011), 156--162.




\bibitem{KMe}%
A.V. Kostochka and L.S. Mel'nikov,
{Note to the paper of Gr\"unbaum on acyclic colorings},
\emph{Discrete Math.} {\bf 14} (1976), no. 4, 403--406.





\bibitem{KW}%
A.V. Kostochka and D.R. Woodall, 
{Choosability conjectures and multicircuits}, 
\emph{Discrete Math.} {\bf 240} (2001), no. 1--3, 123--143. 

\bibitem{KY}%
A.V. Kostochka and M. Yancey,
{Ore's conjecture for $k=4$ and Gr\"otzsch's theorem},
\emph{Combinatorica} {\bf 34} (2014), no. 3, 323--329. 

\bibitem{Ko}%
A. Kotzig, 
Contribution to the theory of Eulerian polyhedra,
\emph{Mat. \v{C}as SAV (Math.  Slovaca)} {\bf 5} (1955), 101--113, [Slovak].









\bibitem{Le}%
H. Lebesgue,
{Quelques cons\'equences simples de la formule d’Euler},
\emph{J. Math. Pures Appl.} {\bf 19} (1940), 27–43 [French].





\bibitem{LTWZ}%
L. Lov\'asz, C. Thomassen, Y. Wu, and C.-Q. Zhang,
{Nowhere-zero 3-flows and modulo $k$-orientations},
\emph{J. Combin. Theory Ser. B} {\bf 103} (2013), 587--598.

\bibitem{LZ}%
R. Luo and C.-Q. Zhang,
{Edge coloring of graphs with small average degrees},
\emph{Discrete Math.} {\bf 275} (2004), 207--218.

\bibitem{LQ}%
L. Miao and Q. Sun,
{On the size of critical graphs with maximum degree 8},
\emph{Discrete Math.} {\bf 310} (2010), 2215--2218.



\bibitem{MS}%
M. Molloy and M.R. Salavatipour, 
{A bound on the chromatic number of the square of a planar graph}, 
\emph{J. Combin. Theory Ser. B} {\bf 94} (2005), no. 2, 189--213.

\bibitem{Mon}%
M. Montassier,
{Acyclic 4-choosability of planar graphs with girth at least 5},
\emph{Graph theory in Paris}, {\it Trends Math.},
(Birkh\"auser, 2007), 299--310.




\bibitem{MRW06}%
M. Montassier, A. Raspaud, and W. Wang,
{Acyclic 4-choosability of planar graphs without cycles of specific
lengths}, in \emph{Topics in Discrete Mathematics}, Algorithms Combin., 
{\bf 26} (Springer, 2006), 473--491.




\bibitem{NWi}%
C.St.J.A. Nash-Williams,
{Decomposition of finite graphs into forests},
\emph{J. London Math. Soc.} {\bf 39} (1964), 12.




\bibitem{PW}%
J. Przyby{\l}o and M. Wo\'zniak, 
{On a 1,2-conjecture},
\emph{Discrete Math. Theor. Comput. Sci.} {\bf 12} (2010), 101--108.






\bibitem{RSST}%
N. Robertson, D. Sanders, P. Seymour, and R. Thomas,
{The four-colour theorem},
\emph{J. Combin. Theory Ser. B} {\bf 70} (1997), no. 1, 2--44.

\bibitem{RSTh}%
N. Robertson, P. Seymour, and R. Thomas, 
{Hadwiger's conjecture for $K_6$-free graphs},
\emph{Combinatorica} {\bf 13} (1993), 279--361.



\bibitem{SZ0}%
D.P. Sanders and Y. Zhao,
{A note on the three color problem},
\emph{Graphs Combin.} {\bf 11} (1995), no. 1, 91--94. 




\bibitem{SZ1}%
D.P. Sanders and Y. Zhao,
{Planar graphs of maximum degree seven are class I},
\emph{J. Combin. Theory Ser. B} {\bf 83} (2001), no. 2, 201--212.

\bibitem{SZ2}%
D.P. Sanders and Y. Zhao,
{On the size of edge chromatic critical graphs},
\emph{J. Combin. Theory Ser. B} {\bf 86} (2002), no. 2, 408--412.

\bibitem{SZ3}%
D.P. Sanders and Y. Zhao,
{Coloring edges of graphs embedded in a surface of characteristic zero},
\emph{J. Combin. Theory Ser. B} {\bf 87} (2003), no. 2, 254--263.











\bibitem{St}%
R. Steinberg,
{The state of the three color problem},
in \emph{Quo Vadis, Graph Theory?}
(J. Gimbel, J.W. Kennedy, L.V. Quintas, eds.),
Annals of Discrete Mathematics {\bf 55} (1993), 211--248.


\bibitem{Stock}%
L. Stockmeyer,
{Planar $3$-colorability is polynomial complete},
\emph{ACM SIGACT News} {\bf 5} (1973), 19--25.



\bibitem{T94}
C. Thomassen, 
{Every planar graph is $5$-choosable},
\emph{J. Combin. Theory Ser. B} {\bf 62} (1994), no. 1, 180--181.

\bibitem{ThomG1}%
C. Thomassen, 
{Gr\"otzsch's 3-color theorem and its counterparts for the torus and the
projective plane}, 
\emph{J. Combin. Theory Ser. B} {\bf 62} (1994), no. 2, 268--279. 

\bibitem{T3cho}
C. Thomassen,
{3-list-coloring planar graphs of girth 5},
\emph{J. Combin. Theory Ser. B} {\bf 64} (1995), no. 1, 101--107.

\bibitem{ThomG3}%
C. Thomassen,
{A short list color proof of Gr\"otzsch's theorem},
\emph{J. Combin. Theory Ser. B} {\bf 88} (2003), no. 1, 189--192. 


\bibitem{Tim}%
C. Timmons,
{Star coloring high girth planar graphs},
\emph{Electron. J. Combin.} {\bf 15} (2008), no. 1, Research Paper 124, 17 pp. 



\bibitem{V0}%
V.G. Vizing,
{On an estimate of the chromatic class of a $p$-graph},
\emph{Metody Diskret. Analiz.} {\bf 3} (1964), 25--30 [Russian].

\bibitem{V1}%
V.G. Vizing.
{Critical graphs with given chromatic class},
\emph{Metody Diskr. Anal.} {\bf 5} (1965), 9--17 [Russian]. 
\looseness-1


\bibitem{V2}%
V.G. Vizing,
{The chromatic class of a multigraph},
\emph{Kibernetika (Kiev)}, no. {\bf 3} (1965), 29--39.
[Russian]; English translation in {\it Cybernetics} {\bf 1}, 32--41.

\bibitem{V3}%
V.G. Vizing,
{Some unsolved problems in graph theory},
\emph{Uspekhi Mat. Nauk} {\bf 23} (1968), 117--134 [Russian]; English translation in
\emph{Russian Math. Surveys} {\bf 23}, 125--141.

\bibitem{V4}
V.G. Vizing, 
{Coloring the vertices of a graph in prescribed colors},
\emph{Diskret. Analiz No.} {\bf 29} Metody Diskret. Anal. v Teorii Kodov i Shem (1976), 
3--10, 101 [Russian].

\bibitem{Voigt}%
M. Voigt,
{List colourings of planar graphs}, 
\emph{Discrete Math.} {\bf 120} (1993), no. 1-3, 215--219. 





\bibitem{WC1}%
W. Wang and Y. Chen,
{A sufficient condition for a planar graph to be class 1},
\emph{Theoret. Comput. Sci.} {\bf 385} (2007), no. 1-3, 71--77.

\bibitem{WL}%
W. Wang and K.-W. Lih, 
{Choosability and edge choosability of planar graphs without five cycles}, 
\emph{Appl. Math. Lett.} {\bf 15} (2002), no. 5, 561--565.







\bibitem{WX}%
Y. Wang, and L. Xu,
{A sufficient condition for a plane graph with maximum degree 6 to be class 1},
\emph{Discrete Appl. Math.} {\bf 161} (2013), no. 1-2, 307--310. 


\bibitem{Weg}%
G. Wegner, 
{Graphs with given diameter and a colouring problem},
preprint, University of Dortmund (1977). 



\bibitem{W0}%
D.R. Woodall, 
{The average degree of an edge-chromatic critical graph II},
\emph{J. Graph Theory} {\bf 56} (2007), no. 3, 194--218.

\bibitem{W2}%
D.R. Woodall, 
{The average degree of a multigraph critical with respect to
edge or total choosability}, 
\emph{Discrete Math.} {\bf 310} (2010), no. 6-7, 1167--1171.

\bibitem{jlw}%
J.-L. Wu, 
{On the linear arboricity of planar graphs},
\emph{J. Graph Theory} {\bf 31} (1999), no. 2, 129--134.

\bibitem{WW}%
J.-L. Wu and Y.-W. Wu,
{The linear arboricity of planar graphs of maximum degree seven is four},
\emph{J. Graph Theory} {\bf 58} (2008), no. 3, 210--220.





\bibitem{Yan}
M. Yancey,
{Coloring the square of a sparse graph $G$ with almost $\Delta(G)$ colors},
preprint, available at: \url{http://arxiv.org/abs/1502.03132}.




\bibitem{Zhang}
L. Zhang, 
{Every planar graph with maximum degree 7 is of class 1},
\emph{Graphs Combin.} {\bf 16} (2000), no. 4, 467--495.

\bibitem{ZW}%
L. Zhang and B. Wu,
{Edge choosability of planar graphs without small cycles},
\emph{Discrete Math.} {\bf 283} (2004), no. 1--3, 289--293.



\bibitem{Z1}%
X. Zhu, 
{Circular chromatic number: a survey},
\emph{Discrete Math.} {\bf 229} (2001), no. 1-3, 371--410. 

\bibitem{Z2}%
X. Zhu,
Recent developments in circular colouring of graphs, 
in ``Topics in discrete mathematics'',
Algorithms Combin. {\bf 26} (Springer, 2006), 497--550.

}

\end{thebibliography}
\end{document}